\renewcommand{\email}[2][]{%
  \ifx\emails\@empty\relax\else{\g@addto@macro\emails{,\space}}\fi%
  \@ifnotempty{#1}{\g@addto@macro\emails{\textrm{(#1)}\space}}%
  \g@addto@macro\emails{#2}%
}
\author{Hiroaki  Karuo$^{(1)}$}
\address{${}^{(1)}$ Gakushuin University, Faculty of Sciences, Department of mathematics, 1-5-1 Mejiro, Toshima-ku, Tokyo 171-8588 Japan}
\email{hiroaki.karuo@gakushuin.ac.jp}
\author{Julien Korinman$^{(2)}$}
\address{${}^{(2)}$ Institut Montpelli\'erain Alexander Grothendieck - UMR 5149 Universit\'e de Montpellier. Place Eug\'ene Bataillon, 34090 Montpellier France}
\email{julien.korinman@gmail.com}
\urladdr{https://sites.google.com/site/homepagejulienkorinman/}
\subjclass{$57$R$56$,  $57$M$25$.}
\keywords{Skein algebras, TQFTs}
\def\restriction#1#2{\mathchoice
              {\setbox1\hbox{${\displaystyle #1}_{\scriptstyle #2}$}
              \restrictionaux{#1}{#2}}
              {\setbox1\hbox{${\textstyle #1}_{\scriptstyle #2}$}
              \restrictionaux{#1}{#2}}
              {\setbox1\hbox{${\scriptstyle #1}_{\scriptscriptstyle #2}$}
              \restrictionaux{#1}{#2}}
              {\setbox1\hbox{${\scriptscriptstyle #1}_{\scriptscriptstyle #2}$}
              \restrictionaux{#1}{#2}}}
\def\restrictionaux#1#2{{#1\,\smash{\vrule height .8\ht1 depth .85\dp1}}_{\,#2}}
\newcommand{\quotient}[2]{{\raisebox{.2em}{$#1$}\left/\raisebox{-.2em}{$#2$}\right.}}
\newcommand{\sslash}{\mathbin{/\mkern-6mu/}}
\newcommand{\Hom}{\operatorname{Hom}}
\newcommand{\tr}{\operatorname{tr}}
\newcommand{\Tr}{\operatorname{Tr}}
\newcommand{\qtr}{\operatorname{qtr}}
\newcommand{\SL}{\operatorname{SL}}
\newcommand{\id}{id}
\newcommand{\Span}{\operatorname{Span}}
\newcommand{\End}{\operatorname{End}}
\newcommand{\PGL}{\operatorname{PGL}}
\newcommand{\Vect}{\operatorname{Vect}}
\newcommand{\Specm}{\operatorname{MaxSpec}}
\newcommand{\Mod}{\operatorname{Mod}}
\newcommand{\qdim}{\operatorname{qdim}}
\newcommand{\Mat}{\operatorname{Mat}}
\newcommand{\skein}{\mathcal{S}_A}
\newcommand{\Aut}{\operatorname{Aut}}
\newcommand{\Rib}{\operatorname{Rib}}
\newcommand{\Rker}{\operatorname{Rker}}
\newcommand{\LKer}{\operatorname{Lker}}
\newcommand{\col}{\operatorname{col}}
\newcommand{\qbinom}[2]{
\left[ 
\begin{array}{l} #1 \\ #2 \end{array}
\right]
}
\newcommand{\Crosspos}{
\tikz[baseline=-0.4ex,scale=0.5,>=stealth]{	
\draw [fill=gray!45,gray!45] (-.6,-.6)  rectangle (.6,.6);
\draw[line width=1.2,-] (-0.4,-0.52) -- (.4,.53);
\draw[line width=1.2,-] (0.4,-0.52) -- (0.1,-0.12);
\draw[line width=1.2,-] (-0.1,0.12) -- (-.4,.53);
}}
\newcommand{\Crossneg}{
\tikz[baseline=-0.4ex,scale=0.5,>=stealth]{	
\draw [fill=gray!45,gray!45] (-.6,-.6)  rectangle (.6,.6);
\draw[line width=1.2,-] (-0.4,0.53) -- (.4,-.52);
\draw[line width=1.2,-] (-0.4,-0.52) -- (-0.1,-0.12);
\draw[line width=1.2,-] (0.1,0.12) -- (.4,.53);
}}
\begin{document}

\theoremstyle{plain}
\newtheorem{theorem}{Theorem}[section]
\newtheorem{proposition}[theorem]{Proposition}
\newtheorem{corollary}[theorem]{Corollary}
\newtheorem{lemma}[theorem]{Lemma}
\theoremstyle{definition}
\newtheorem{notations}[theorem]{Notations}
\newtheorem{convention}[theorem]{Convention}
\newtheorem{problem}[theorem]{Problem}
\newtheorem{definition}[theorem]{Definition}
\theoremstyle{remark}
\newtheorem{remark}[theorem]{Remark}
\newtheorem{conjecture}[theorem]{Conjecture}
\newtheorem{example}[theorem]{Example}
\newtheorem{strategy}[theorem]{Strategy}
\newtheorem{question}[theorem]{Question}

\title[Azumaya loci of skein algebras]{Azumaya loci of skein algebras}
%
%
%

\date{}
\maketitle


\begin{abstract} 
We compute the Azumaya loci of Kauffman-bracket skein algebras of closed surfaces at odd roots of unity and provide partial results for open surfaces as well. 
As applications, we give an alternative definition of the projective representations of the Torelli groups derived from non-semisimple TQFTs and we strengthen a result by Frohman-Kania Bartoszynska-L\^e about the dimensions of some quotients of the skein modules of closed 3-manifolds. 
\end{abstract}


\section{Introduction}

\subsection{Background on skein algebras and their representations}\label{sec_background}

Let $\Sigma_g$ be an oriented connected closed genus $g$ surface, $N\geq 3$ an odd integer,  $A\in \mathbb{C}^*$ a root of unity of order either $N$ or $2N$ and $\mathcal{S}_A(\Sigma_g)$ the associated Kauffman-bracket skein algebra. A representation $r: \skein(\Sigma_g)\to \End(V)$ is a \textit{weight representation} if $V$ is semisimple as a module over the center of $\skein(\Sigma_g)$. The purpose of this paper is to make progresses towards the 

\begin{problem}\label{problem_classification}
Classify all  finite dimensional weight representations of  $\skein(\Sigma_g)$. 
\end{problem}
Our study is based on several recent results that we now summarize briefly and review in details in Section \ref{sec_skein}. 

\vspace{2mm}
\par \underline{\textit{Chebyshev-Frobenius morphisms:}}
Write $\varepsilon = A^N$, so $\varepsilon=+1$  if $A$ has order $N$ and $\varepsilon = -1$ if $A$ has order $2N$.
Bonahon and Wong defined in \cite{BonahonWong1} an injective morphism of algebras
$$ Ch_A : \mathcal{S}_{\varepsilon}(\Sigma_g) \hookrightarrow \mathcal{Z}\left( \skein(\Sigma_g) \right)$$
named \textit{Chebyshev-Frobenius morphism}, from the (commutative) skein algebra at $A=\varepsilon$ into the center of the skein algebra at root of unity of odd order. It is proved in \cite{FrohmanKaniaLe_UnicityRep} that $Ch_A$ is onto. 
A classical result of Bullock \cite{Bullock, PS00, ChaMa} combined with the result in \cite{Barett} shows the existence of a (non-canonical when $\varepsilon=+1$) isomorphism
$$ \Psi : \mathcal{S}_{\varepsilon}(\Sigma_g) \xrightarrow{\cong} \mathcal{O}[\mathcal{X}_{\SL_2}(\Sigma_g)]$$
between the skein algebra at $A=\varepsilon$ and the ring of regular functions of the character variety defined by the GIT quotient
$$ \mathcal{X}_{\SL_2}(\Sigma_g) := \Hom(\pi_1(\Sigma_g), \SL_2) \sslash \SL_2.$$
Every indecomposable weight representation of $\skein(\Sigma_g)$ induces a character over the center of $\skein(\Sigma_g)$, thus a closed point in $ \mathcal{X}_{\SL_2}(\Sigma_g) $ named its \textit{classical shadow}.

\vspace{2mm}
\par \underline{\textit{Azumaya locus:}} Let $[\rho]\in  \mathcal{X}_{\SL_2}(\Sigma_g) $ be the class of a representation $\rho : \pi_1(\Sigma_g) \to \SL_2$ seen as a closed point in $\mathcal{X}_{\SL_2}(\Sigma_g)$ and denote by $\chi_{[\rho]}$ the corresponding induced character over the center of   $\skein(\Sigma_g)$. Write $\mathcal{I}_{[\rho]}
\subset \skein(\Sigma_g)$ the ideal generated by elements  $\chi_{[\rho]}(z)-Ch_A(z)$ for all $z\in \mathcal{S}_{\varepsilon}(\mathbf{\Sigma})$ and consider the finite dimensional algebra
$$ \left( \skein(\Sigma_g) \right)_{[\rho]} := \quotient{ \skein(\Sigma_g)}{\mathcal{I}_{[\rho]}}.$$
The \textit{Azumaya locus} of $\mathcal{S}_A(\Sigma_g)$ is defined as the subset
$$ \mathcal{AL}:= \left\{ [\rho] \in \mathcal{X}_{\SL_2}(\Sigma_g) \mbox{ such that } \left( \skein(\Sigma_g) \right)_{[\rho]} \cong \Mat_{N^{3g-3}}(\mathbb{C}) \right\} \subset \mathcal{X}_{\SL_2}(\Sigma_g).$$
Therefore, for every $[\rho]\in \mathcal{AL}$, there exists a unique indecomposable weight representation $ r_{[\rho]} : \skein(\Sigma_g)\to \End(V_{[\rho]})$ with classical shadow $[\rho]$: this representation corresponds to the unique irreducible representation of $\Mat_{N^{3g-3}}(\mathbb{C}) $, so $r_{[\rho]}$ is irreducible and has dimension $N^{3g-3}$. Therefore, the representation theory of $\skein(\Sigma_g)$ over its Azumaya locus is trivial.
 A classical result of De Concini-Kac \cite{DeConciniKacRepQGroups}, further extended in \cite{BrownGoodearl, Brown_AL_discriminant, FrohmanKaniaLe_UnicityRep}, asserts that if a complex algebra is $(i)$ affine, $(ii)$ prime and $(iii)$ of finite rank over its center, then its Azumaya locus is a Zariski open dense set. This result was applied to $\skein(\Sigma_g)$ in \cite{FrohmanKaniaLe_UnicityRep} to prove that $\mathcal{AL}\subset \mathcal{X}_{\SL_2}(\Sigma_g)$ is open dense.
 
 \vspace{2mm}
\par \underline{\textit{Poisson order:}} 
The character variety $\mathcal{X}_{\SL_2}(\Sigma_g)$ has a Poisson structure defined by Goldman in \cite{Goldman86}. When $g\geq 2$, it decomposes as 
$$ \mathcal{X}_{\SL_2}(\Sigma_g) = \mathcal{X}^{(0)} \sqcup \mathcal{X}^{(1)} \sqcup \mathcal{X}^{(2)}$$
where:
\begin{itemize}
\item 
$\mathcal{X}^{(0)}$ is the smooth locus which corresponds to the classes of irreducible representations $\rho : \pi_1(\Sigma_g) \to \SL_2$, 
\item
$\mathcal{X}^{(1)}$ is the smooth locus of $\mathcal{X}_{\SL_2}(\Sigma_g) \setminus \mathcal{X}^{(0)} $ and corresponds to the classes of representations which are diagonal but not central, 
\item $\mathcal{X}^{(2)}\cong \mathrm{H}^1(\Sigma_g ;  \mathbb{Z}/2\mathbb{Z})$ is the finite locus of central representations.
\end{itemize}
Both $\mathcal{X}^{(1)}$ and $\mathcal{X}^{(0)}$ are smooth symplectic varieties and the symplectic leaves of $\mathcal{X}_{\SL_2}(\Sigma_g)$ are the sets  $\mathcal{X}^{(1)}$, $\mathcal{X}^{(0)}$ and the singletons $\{[\rho]\}$ for $[\rho]\in \mathcal{X}^{(2)}$. 
Ganev-Jordan-Safronov noticed in \cite{GanevJordanSafranov_FrobeniusMorphism} that skein algebras admit  structures of Poisson orders over $\mathcal{X}_{\SL_2}(\Sigma_g)$. The theory of Poisson orders was initiated by De Concini-Kac in \cite{DeConciniKacRepQGroups} and fully developed in \cite{BrownGordon_PO}. The main result of the theory implies that whenever $[\rho_1]$ and $[\rho_2]$ belong to the same symplectic leaf, then 
$$ \left( \skein(\Sigma_g) \right)_{[\rho_1]} \cong \left( \skein(\Sigma_g) \right)_{[\rho_2]}.$$
In particular, if a leaf intersects non-trivially the Azumaya locus, then it is included in it. Since both the Azumaya locus and $\mathcal{X}^{(0)}$ are dense, they intersect non-trivially, therefore one has (this is part of  \cite[Theorem $1.1$]{GanevJordanSafranov_FrobeniusMorphism}):
\begin{equation}\label{eq_AL1}
 \mathcal{X}^{(0)} \subset \mathcal{AL}.
 \end{equation}
An alternative proof of (a weaker version of) this result can also be found in \cite{DetcherrySantharoubane_UnpuncturedQTrace}.

\par \underline{\textit{WRT representations:}} When $[\rho_1], [\rho_2] \in \mathcal{X}^{(2)}$, we easily see that the two algebras $ \left( \skein(\Sigma_g) \right)_{[\rho_1]}$ and $ \left( \skein(\Sigma_g) \right)_{[\rho_2]}$ are isomorphic so either all the points of $\mathcal{X}^{(2)}$ belong to the Azumaya locus or none of them does.
The Witten-Reshetikhin-Turaev TQFTs in \cite{Wi2, RT} induce representations $r^{WRT}: \skein(\Sigma_g) \to \End( V(\Sigma_g))$ which are known to be irreducible \cite{GelcaUribe_SU2} and to have classical shadows in $\mathcal{X}^{(2)}$ \cite{BonahonWong4}. Moreover, their dimensions are given by the so-called Verlinde formula which satisfies $1<\dim (V(\Sigma_g))<N^{3g-3}$, so these representations are not induced by  representations of $\Mat_{N^{3g-3}}(\mathbb{C}) $. Therefore the existence of the WRT representations proves that 
\begin{equation}\label{eq_AL2}
 \mathcal{AL}\cap \mathcal{X}^{(2)} = \emptyset.
 \end{equation}

\subsection{Main results}

By Equations \eqref{eq_AL1} and \eqref{eq_AL2}, the only remaining open question regarding the Azumaya locus of $\skein(\Sigma_g) $ is how it intersects the leaf $\mathcal{X}^{(1)}$. The goal of the present paper it to answer this question.
 By the theory of Poisson orders, we are left with the alternative: 
$$ \mbox{either one has  } \mathcal{X}^{(1)} \subset \mathcal{AL} \mbox{, or } \mathcal{X}^{(1)} \cap \mathcal{AL} = \emptyset.$$

\begin{theorem}\label{main_theorem}
One has $ \mathcal{X}^{(1)} \subset \mathcal{AL}$, therefore the Azumaya locus of  $\skein(\Sigma_g) $ is the locus of non-central representations.
\end{theorem}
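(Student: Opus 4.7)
By the Poisson-order dichotomy recalled in Section~\ref{sec_background}, the symplectic leaf $\mathcal{X}^{(1)}$ is either entirely contained in $\mathcal{AL}$ or entirely disjoint from it. It therefore suffices to exhibit a single class $[\rho_0]\in \mathcal{X}^{(1)}\cap\mathcal{AL}$, which amounts to producing one irreducible weight representation of $\skein(\Sigma_g)$ of the maximal dimension $N^{3g-3}$ whose classical shadow is diagonal but non-central. The plan is to obtain such a representation by cutting $\Sigma_g$ along a non-separating simple closed curve and reducing to the open-surface results the paper promises.

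Fix a non-separating simple closed curve $\gamma\subset\Sigma_g$ and let $\Sigma^{\mathrm{cut}}:=\Sigma_{g-1,2}$ denote the surface obtained by cutting $\Sigma_g$ along $\gamma$, with boundary components $\partial_\pm$. Choose a diagonal non-central representation $\rho_0:\pi_1(\Sigma_g)\to\SL_2$ satisfying $\rho_0(\gamma)=\operatorname{diag}(\alpha,\alpha^{-1})$ with $\alpha^2\neq 1$; its restriction to $\pi_1(\Sigma^{\mathrm{cut}})$ automatically sends both boundary loops to $\operatorname{diag}(\alpha,\alpha^{-1})$, so the two peripheral traces match. Using the partial open-surface Azumaya results that the paper establishes, one produces an irreducible weight representation $V'$ of $\skein(\Sigma^{\mathrm{cut}})$ of the maximal possible dimension, with classical shadow $[\rho_0|_{\Sigma^{\mathrm{cut}}}]$ and on which the two peripheral central elements associated to $\partial_\pm$ act by the same scalar $\alpha+\alpha^{-1}$.

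Finally, invoke a gluing description of $\skein(\Sigma_g)$ in terms of $\skein(\Sigma^{\mathrm{cut}})$ (in the spirit of Costantino-L\^e) which identifies the two peripheral central elements with $Ch_A(\gamma)$ and adjoins the actions of $\gamma$ together with a dual curve subject to their Kauffman-bracket commutation relation. Since the peripheral scalars already coincide on $V'$, the $\skein(\Sigma^{\mathrm{cut}})$-action descends to the relevant quotient, and it extends to $\skein(\Sigma_g)$ by choosing an irreducible action of the remaining quantum-torus pair of height $N$; a dimension count based on $\dim\mathcal{X}_{\SL_2}(\Sigma_g)=\dim\mathcal{X}_{\SL_2}(\Sigma^{\mathrm{cut}})-2$ yields exactly $N^{3g-3}$. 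The hard part will be the gluing step itself: proving that both peripheral Chebyshev centers of $\skein(\Sigma^{\mathrm{cut}})$ are sent to $Ch_A(\gamma)$ under the cutting embedding, and that the extension by $\gamma$ and its dual remains irreducible, requires careful compatibility of the Chebyshev--Frobenius morphism with cutting and a non-degeneracy argument for the quantum-torus relations in a neighborhood of $\gamma$.
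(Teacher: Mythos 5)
Your reduction step is the same as the paper's: since $\mathcal{X}^{(1)}$ is a symplectic leaf, Corollary \ref{coro_X1} (via Theorem \ref{theorem_PO} and Corollary \ref{coro_roro}) reduces the theorem to exhibiting a single irreducible representation of dimension $N^{3g-3}$ with diagonal non-central shadow. But that existence statement is the entire content of the theorem, and your proposed construction of it has a genuine gap. The ``gluing description'' you invoke does not exist in the form you need: for ordinary Kauffman-bracket skein algebras there is no presentation of $\skein(\Sigma_g)$ as $\mathcal{S}_A(\Sigma_{g-1,2})$ with the two peripheral elements identified and a quantum-torus pair $(\gamma,\delta)$ adjoined. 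The Costantino--L\^e splitting map concerns \emph{stated} skein algebras, goes in the direction of an embedding of the glued algebra into the stated skein algebra of the cut surface (with coinvariance conditions), and in $\skein(\Sigma_g)$ the curve $\gamma$ and a dual curve $\delta$ do not $q$-commute (their product is a sum of two other multicurve classes), so there is no ``remaining quantum-torus pair'' to act on a height-$N$ factor. Moreover, under the inclusion $\Sigma_{g-1,2}\hookrightarrow\Sigma_g$ both boundary curves $\partial_\pm$ are sent to the class $[\gamma]$ itself (not to $Ch_A(\gamma)$), and $[\gamma]$ is \emph{not} central in $\skein(\Sigma_g)$; consequently the restriction to $\mathcal{S}_A(\Sigma_{g-1,2})$ of any candidate glued module of dimension $N^{3g-3}$ cannot be an isotypic multiple $V'\otimes\mathbb{C}^N$ of one irreducible $V'$ on which $\partial_\pm$ act by a single scalar. (Also note that on $V'$ the peripheral scalars are some $z$ with $T_N(z)=\alpha+\alpha^{-1}$, not $\alpha+\alpha^{-1}$ itself.) One expects instead $N$ pairwise non-isomorphic cut-surface constituents whose peripheral eigenvalues are the $N$ roots of $T_N(z)=\operatorname{tr}\rho_0(\gamma)$, permuted by the action of the dual curve; producing this structure and proving the resulting module is irreducible is precisely the hard point, which your sketch acknowledges but does not supply.

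There is also a strategic issue: the open-surface input you cite (part (2) of Theorem \ref{theorem_opensurfaces}) is proved in the paper by exactly the same machinery as the closed case, namely irreducibility of the representations $r_{\omega,\mathcal{P}}$ built from the non-semisimple TQFT data, with the key non-degeneracy coming from the non-vanishing of $6j$-symbols (Proposition \ref{prop_6j}); so the reduction to the cut surface does not bypass the analytic core of the argument. The paper instead constructs the representation directly: it defines $\mathbb{V}(\Sigma_g,\omega)$ from the $\mathcal{C}$-skein module of a handlebody modulo the kernel of the Hopf pairing, shows it has dimension $N^{3g-3}$ via an explicit trivalent-graph basis (Theorem \ref{theorem_basis}), identifies the classical shadow as a diagonal non-central representation (Proposition \ref{prop_shadow}), and proves irreducibility by Lemma \ref{lemma_irrep} together with Lemmas \ref{lemma_step1}--\ref{lemma_step3}. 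To repair your approach you would have to either prove a gluing/extension theorem of the above type for skein algebra representations (which is not available) or revert to a direct construction such as the paper's.
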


This theorem disproves a conjecture of L\^e and Yu in \cite{LeYu_Survey} who conjectured that the Azumaya locus of  $\skein(\Sigma_g) $ was the smooth locus of $\mathcal{X}_{\SL_2}(\Sigma_g)$.
\vspace{2mm}
\par 
The strategy to prove Theorem \ref{main_theorem} goes as follows. By a theorem of Posner, (see \cite[Theorem III.I.6]{BrownGoodearl}) if $[\rho]$ does not belong to the Azumaya locus, then every irreducible representations of  $\left( \skein(\Sigma_g) \right)_{[\rho]}$ have dimension strictly smaller than $N^{3g-3}$, so we get the following alternative characterization of the Azumaya locus:
$$\mathcal{AL}= \{ [\rho] \mbox{ s.t. }[\rho]\mbox{ is the classical shadow of an irreducible representation of (maximal) dimension } N^{3g-3} \}.$$
In particular if  $r: \skein(\Sigma_g)\to \End(V)$ is a central representation of dimension $N^{3g-3}$ whose classical shadow is $[\rho]$ then $[\rho]$ belongs to the Azumaya locus if and only if $r$ is irreducible. 
\par For every $[\rho] \in \mathcal{X}^{(1)}$, there exist two central representations of dimension $N^{3g-3}$ whose classical shadow is $[\rho]$: 
\begin{enumerate}
\item the representations defined by Bonahon and Wong in \cite{BonahonWong3} using quantum Teichm\"uller theory and 
\item the representations defined by Blanchet-Costantino-Geer-Patureau Mirand in \cite{BCGPTQFT} derived from non-semisimple TQFTs.
\end{enumerate}

The skein representations defined in \cite{BCGPTQFT} are part of a larger algebraic structure named (non-semisimple) \textit{extended topological quantum field theory} well studied in \cite{BCGPTQFT, DeRenzi_NSETQFT}. This additional structure permits one to define interesting bases and non-degenerate pairings which permit one to perform explicit computations.
Theorem \ref{main_theorem} will be derived from the following:

\begin{theorem}\label{theorem2} The skein representations derived from non-semisimple TQFTs with classical shadows in $\mathcal{X}^{(1)}$ are irreducible.
\end{theorem}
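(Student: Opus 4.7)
\emph{Proof proposal.} The overall plan is to prove irreducibility of $r^{BCGP}_{[\rho]}\colon \skein(\Sigma_g)\to \End(V^{BCGP}_{[\rho]})$ by a Schur-lemma argument carried out in a basis of $V^{BCGP}_{[\rho]}$ adapted to a pants decomposition of $\Sigma_g$, computing the action of the skein algebra from the extended non-semisimple TQFT structure developed in \cite{BCGPTQFT, DeRenzi_NSETQFT}. Since $\dim V^{BCGP}_{[\rho]}=N^{3g-3}$ and $\dim (\skein(\Sigma_g))_{[\rho]}=N^{6g-6}$ by the Chebyshev-Frobenius rank result, irreducibility at a single $[\rho]^*\in \mathcal{X}^{(1)}$ would place $[\rho]^*\in \mathcal{AL}$, and the Poisson-order machinery together with Posner's theorem (as recalled in the introduction) would propagate the conclusion to the whole symplectic leaf $\mathcal{X}^{(1)}$. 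I would therefore focus first on a conveniently generic $[\rho]^*\in \mathcal{X}^{(1)}$.

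First I would fix a pants decomposition $\mathcal{P}=\{\gamma_1,\dots,\gamma_{3g-3}\}$ of $\Sigma_g$ and recall the adapted basis $\{e_{\mathbf{c}}\}$ of $V^{BCGP}(\Sigma_g)$ indexed by admissible colorings $\mathbf{c}$ of $\mathcal{P}$, then restrict to the sub-basis singled out by the Cartan data of the classical shadow $[\rho]^*$ (which in the $\mathcal{X}^{(1)}$ case amounts to prescribing the holonomy eigenvalues along the pants curves). The cardinality check against the Verlinde-type formula for $V^{BCGP}_{[\rho]}$ should yield exactly $N^{3g-3}$ admissible colorings. Using the colored ribbon graph calculus for the admissible BCGP category, I would then show that each skein element $[\gamma_i]$ acts diagonally on $\{e_{\mathbf{c}}\}$ with eigenvalue an explicit quantum-trigonometric function of the $i$-th color, and that for generic $[\rho]^*$ the joint spectrum $(\lambda_i(\mathbf{c}))_i$ separates the basis vectors. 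The second step is to exhibit shift operators: for each $\gamma_i$ choose a dual simple closed curve $\delta_i$ crossing $\gamma_i$ once; fusion in the pair of pants adjacent to $\gamma_i$ expands $[\delta_i]$ as a sum of color-shift operators $e_{\mathbf{c}}\mapsto e_{\mathbf{c}\pm \varepsilon_i}$ whose coefficients are $6j$-type symbols of the non-semisimple category. The crucial verification is that these $6j$-coefficients are nonzero on all admissible colorings, so that iterated applications of the $[\delta_i]$'s suffice to connect any two basis vectors.

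With these two families of operators in hand the conclusion is the standard Schur dichotomy: any intertwiner $T$ of $r^{BCGP}_{[\rho]^*}$ must preserve the simple joint eigenspaces of the $[\gamma_i]$'s and so be diagonal in $\{e_{\mathbf{c}}\}$, while commutation with each $[\delta_i]$ forces all diagonal entries to agree; hence $T$ is scalar and $r^{BCGP}_{[\rho]^*}$ is irreducible. The main obstacle I anticipate is precisely the nonvanishing verification of the relevant $6j$-symbols in the non-semisimple BCGP category across the whole range of admissible colorings at a non-central shadow: the admissibility conditions differ substantially from the semisimple Reshetikhin-Turaev case, and one has to set up the computation so that the genericity one can afford at $[\rho]^*$ is compatible with $[\rho]^*\in \mathcal{X}^{(1)}$ (diagonal but non-central) rather than with $[\rho]^*\in \mathcal{X}^{(0)}$. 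A secondary delicate point is checking that the simultaneous eigenvalues of the $[\gamma_i]$'s are genuinely distinct in the non-semisimple basis, which may force the argument through an auxiliary family of multicurves whose joint spectrum is known to be simple.
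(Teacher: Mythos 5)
Your skeleton is the same as the paper's: a basis of graph/pants colorings of dimension $N^{3g-3}$ (Theorem \ref{theorem_basis}), diagonal action of the decomposition curves with quantum-trigonometric eigenvalues and simple joint spectrum, dual curves $\beta_i$ acting as color-shift operators whose coefficients are $6j$-symbols (Lemma \ref{lemma_step1}), and propagation from one shadow to the whole leaf $\mathcal{X}^{(1)}$ via Poisson orders and Posner (Corollaries \ref{coro_roro} and \ref{coro_X1}). The genuine gap is exactly the step you flag but do not carry out: the nonvanishing of the relevant $6j$-symbols. This is the technical heart of the paper's proof, and it is not a genericity statement one can simply invoke: Proposition \ref{prop_6j} shows that each such symbol is the value of a nonzero rational function $R(e^{i\pi\alpha},e^{i\pi\beta},e^{i\pi\gamma})$ (proved in Appendix \ref{sec_appendixB} by explicit formulas and a valuation argument ruling out cancellation between the two summands), and Lemma \ref{lemma_step3} then produces, by analyticity, a single class $\omega$ in the open set $\mathcal{U}$ at which the finite product of all symbols in $Y(\omega)$ is nonzero. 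Note that this also dissolves your worry about reconciling genericity with membership in $\mathcal{X}^{(1)}$: every $\omega\in\mathcal{U}$, i.e. every $\omega\in \mathrm{H}^1(\Sigma_g;\mathbb{C}/2\mathbb{Z})$ with non-half-integral holonomy along the decomposition curves, already has diagonal non-central shadow $[\rho_\omega]\in\mathcal{X}^{(1)}$ (Proposition \ref{prop_shadow}), and for every such $\omega$ the eigenvalues of the $[\gamma_e]$ separate the basis, so no auxiliary multicurves are needed.

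A second, smaller flaw is your concluding ``Schur dichotomy'': triviality of the commutant does not imply irreducibility of a finite-dimensional representation (the algebra of upper-triangular $2\times 2$ matrices acting on $\mathbb{C}^2$ has scalar commutant but is reducible), so the intertwiner argument as stated does not finish the proof. The correct closing move is the invariant-subspace argument of Lemma \ref{lemma_irrep}: any invariant subspace $W$ is spanned by joint eigenvectors of the commutative subalgebra generated by the $[\gamma_e]$ (the weight spaces being one-dimensional), and applying the shift operators, all of whose coefficients are products of nonvanishing $6j$-symbols and nonvanishing quantum binomials, connects any coloring to any other, forcing $W=V$; the paper packages this via the nondegenerate invariant pairing coming from the CGP invariant of $(\mathbb{S}^1\times\mathbb{S}^2)^{\# g}$ and the orthogonality of the graph basis. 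This repair is routine, but as written your final step does not yield irreducibility, and the $6j$-nonvanishing remains the missing ingredient that the paper supplies.
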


Note that, by the theory of Poisson orders, either every such representation is irreducible or none of them is. We will prove that the latter alternative leads to a contradiction with the non-vanishing of certain $6j$-symbols.

\begin{corollary}\label{coro_intro}
 Any two skein representations from \cite{BonahonWong3} and \cite{BCGPTQFT} respectively having the same classical shadow in $\mathcal{X}^{(1)}$ are isomorphic.
\end{corollary}

Therefore, the only remaining question to complete Problem \ref{problem_classification} for closed surfaces is to classify all indecomposable modules of the finite dimensional algebras  $\left( \skein(\Sigma_g) \right)_{[\rho]}$ for $[\rho] \in \mathcal{X}^{(2)}$ (these algebras are pairwise isomorphic).
\vspace{2mm}
\par A first consequence of Theorem \ref{main_theorem} is that  it permits one to give an alternative very simple definition of the projective representations of the Torelli groups defined in \cite{BCGPTQFT} (see Section \ref{sec_MCG}). A second consequence will be stated in Corollary \ref{coro2} below. 
\par 
\vspace{2mm} \par Let $\Sigma_{g,n}$ be a genus $g$ surface with $n\geq 1$ boundary components. If $Z$ denotes the center of $\mathcal{S}_A(\Sigma_{g,n})$, then $\widehat{\mathcal{X}}(\Sigma_{g,n}):= \Specm(Z)$ is a finite branched covering of the character variety $\mathcal{X}_{\SL_2}(\Sigma_{g,n})$. Its closed points are $n+1$-tuples $([\rho], z_1, \ldots, z_n)$ with $[\rho]\in \mathcal{X}_{\SL_2}(\Sigma_{g,n})$ and $z_i\in \mathbb{C}$ such that if $\gamma_{i}$ is the peripheral curve encircling the $i$-th boundary component once, then $T_N(z_i)=-\tr(\rho(\gamma_i))$ (here $T_N$ is the Chebyshev polynomial of the first kind).
\par The Azumaya locus $\mathcal{AL}\subset \widehat{\mathcal{X}}(\Sigma_{g,n})$ is still a dense open subset and we call \textit{fully Azumaya locus} the subset $\mathcal{FAL}\subset \mathcal{X}_{\SL_2}(\Sigma_{g,n})$ of classes $[\rho]$ such that $([\rho], z_1, \ldots, z_n) \in \mathcal{AL}$ for every $z_i$ (satisfying $T_N(z_i)=-\tr(\rho(\gamma_i))$). Using Poisson orders, we see that the  fully Azumaya locus is a union of symplectic leaves. Let $\nu : \mathcal{X}_{\SL_2}(\Sigma_{g,n}) \to \mathbb{C}^n$, $\nu([\rho]):= (\tr(\rho(\gamma_1)), \ldots, \tr(\rho(\gamma_n)))$. The pull-backs $\mathcal{X}_{\SL_2}(\Sigma_{g,n}, \mathbf{z}):= \nu^{-1}(\mathbf{z})$ are called \textit{relative} or \textit{sliced character varieties} and the smooth loci of these sliced character varieties are symplectic leaves of $\mathcal{X}_{\SL_2}(\Sigma_{g,n})$. In particular, unlike the closed case, no symplectic leaf is open dense. However, by analyzing the quotient algebras $\quotient{\mathcal{S}_A(\Sigma_{g,n})}{(\gamma_i -z_i)}$, Frohman-Kania Bartoszynska-L\^e proved in \cite{FKL_GeometricSkein} that the smooth loci of sliced character varieties are included in the fully Azumaya loci. The computation of these smooth loci is not known in general though many partial results are presented in  \cite{FKL_GeometricSkein}.
\par  For $\Sigma_{1,1}$ and $\Sigma_{0,4}$, Takenov exhibited in \cite{Takenov_Azumaya} some open dense subsets which are included in the Azumaya loci of their skein algebras. 
 Large families of  irreducible representations of $\mathcal{S}_A(\Sigma_{1,1})$ were defined by Havl\'i\v{c}ek-Po\v{s}ta
in \cite{HavlicekPosta_Uqso3} (after remarking that  $\mathcal{S}_A(\Sigma_{1,1})\cong U'_q\mathfrak{so}_3$), and Takenov's result for $\Sigma_{1,1}$  is derived from this work. A classical result of Fricke asserts that $\mathcal{X}_{\SL_2}(\Sigma_{1,1})\cong \mathbb{C}^3$ through the isomorphism sending $[\rho]$ to $\{\tr(\rho(\alpha_i)\}_{i=1,2,3}$ where  $\alpha_1, \alpha_2, \alpha_3$ are described in Figure~\ref{fig_holed_torus}. 
The \textit{quaternionic representation} is the point $[\rho_0]$ sent to $(0,0,0)$ by this isomorphism.

 Fix  $A^{1/2}\in \mathbb{C}^*$ and let $q$ denote $A^{2}$.
\begin{theorem}\label{theorem_opensurfaces} Suppose that $\chi(\Sigma_{g,n}) <0$. Let $x=([\rho], z_1, \ldots, z_n)\in \widehat{\mathcal{X}}_{\SL_2}(\Sigma_{g,n})$ and write $c_i:=\tr(\rho(\gamma_i))$. 
\begin{enumerate}
\item If $(g,n)\neq (1,1)$ and there exists $i$ such that $z_i=-q-q^{-1}$ then $x\notin \mathcal{AL}$. So if there exists $i$ such that $c_i=2$, then $[\rho]\notin \mathcal{FAL}$. 
\item Both $\mathcal{AL}$ and $\mathcal{FAL}$ are invariant under the natural $\mathrm{H}^1(\Sigma_{g,n}; \mathbb{Z}/2\mathbb{Z})$ action. So if $n\geq 2$ and $c_i=\pm 2$ for some $i$, then $[\rho]\notin \mathcal{FAL}$.
\item If $[\rho]$ is central and $z_i=-q^{n_i} - q^{-n_i}$ with $n_i \in \{1, \ldots, N-1\}$, then $x \notin \mathcal{AL}$. So no central representation is fully Azumaya.
\item If $g\geq 1$ and $[\rho]$ is diagonal non-central and for all $1\leq i \leq n$ either $c_i \neq \pm 2$ or $z_i= \pm 2$, then $([\rho], z_1, \ldots, z_n) \in \mathcal{AL}$. In particular, if $[\rho]$ is diagonal non-central and $c_i \neq \pm 2$ for all $i$, then $[\rho]\in \mathcal{FAL}$.  
\item  Let $x=([\rho],z) \in \widehat{\mathcal{X}}_{\SL_2}(\Sigma_{1,1})$.
\begin{itemize}
\item If $[\rho]$ is central and $z\neq -2$, $x$ does not belong to the Azumaya locus.
\item If $[\rho]=[\rho_0]$ is the quaternionic representation, then $x$ belongs to the Azumaya locus if and only if $z=-2$.
\item If $[\rho]$ is neither central nor quaternionic, then $x$ belongs to the Azumaya locus.
\end{itemize}

\end{enumerate}
\end{theorem}
The first two points are immediate. The third follows from the existence of the Witten-Reshetikhin-Turaev representations whereas the fourth follows from our study of the representations in  \cite{BCGPTQFT}. The  last point follows by combining the works in   \cite{FKL_GeometricSkein} and  \cite{HavlicekPosta_Uqso3}. In this case, we could not decide whether the four points $([\rho], -2)$ for $[\rho]$ central belong to the Azumaya locus or not. 
Soon after the prepublication of the present paper, Yu proved in \cite{Yu_ALSkein_SmallSurfaces} that these four points belong to the Azumaya locus and compute the Azumaya locus of $\mathcal{S}_A(\Sigma_{0,4})$ as well.

\vspace{2mm}
\par For $M$ a $3$-manifold, the Chebyshev-Frobenius morphism $Ch_A: \mathcal{S}_{\varepsilon}(M)\to \mathcal{S}_A(M)$ turns $\mathcal{S}_A(M)$ into a $\mathcal{S}_{\varepsilon}(M)$-module; equivalently it defines a coherent sheaf  $\mathscr{L}^M\to \mathcal{X}_{\SL_2}(M)$. For $[\rho]\in \mathcal{X}_{\SL_2}(M)$ corresponding to a maximal ideal  $\mathfrak{m}_{[\rho]} \subset \mathcal{S}_{\varepsilon}(M)$, the \textit{reduced skein module} is the quotient:
$$ \mathcal{S}_A(M)_{[\rho]} := \quotient{\mathcal{S}_A(M)}{Ch_A(\mathfrak{m}_{[\rho]})\mathcal{S}_A(M)}. $$
Equivalently, it is the space of sections of the fiber $\restriction{\mathscr{L}^M}{[\rho]}$ of $\mathscr{L}^M$ over $[\rho]$. 
In  \cite{FKL_GeometricSkein}, the authors proved that for $M$ a closed $3$-manifold and $[\rho]\in \mathcal{X}_{\SL_2}(M)$ the class of an irreducible representation then $ \mathcal{S}_A(M)_{[\rho]}$
is one-dimensional as a consequence of the fact that irreducible characters are in the Azumaya locus of closed surfaces $\Sigma_g$ and of punctured disks $\Sigma_{0,g+1}$.
 As explained in Section \ref{sec_onedim}, their argument extends word-by-word to deduce from Theorem \ref{main_theorem} and \ref{theorem_opensurfaces}, the following stronger statement.

\begin{theorem}\label{coro2} Let $M$ be a closed oriented $3$ manifold and let $[\rho] \in \mathcal{X}_{\SL_2}(M)$ be the class of a non-central representation.  Then $ \mathcal{S}_A(M)_{[\rho]}$
is one-dimensional.
\end{theorem}

So $\mathscr{L}^M$  is a line bundle over the locus of non-central representations. For applications of Theorem \ref{coro2}, see \cite{DetcherryKalfagianniSikora_SkeinDim, Detcherry_NSTQFT}. 
 Let $\mathcal{S}_v(M) $ be the skein module over the ring $\mathbb{Q}[v^{\pm 1}]$, so $v$ is generic formal parameter.
We say that the skein module $\mathcal{S}_v(M)$ is $A$-\textit{tame} if is a direct sum of a cyclic $\mathbb{Q}[v^{\pm 1}]$-modules which does not contain any summand of the form $\quotient{\mathbb{Q}[v^{\pm 1}]}{(\phi_N(v))}$ if $\varepsilon = +1$ or of the form $\quotient{\mathbb{Q}[v^{\pm 1}]}{(\phi_{2N}(v))}$ if $\varepsilon=-1$, where $\phi_N(X)$ is the cyclotomic polynomial; see \cite{DetcherryKalfagianniSikora_SkeinDim} for a slightly different definition of $A$-tame. 
Using the results of \cite{DetcherryKalfagianniSikora_SkeinDim}, we easily deduce the following:

\begin{theorem}\label{theorem_central_intro} Suppose that $M$ is a closed $3$-manifold such that $(1)$ $\mathcal{X}_{\SL_2}(M)$ is finite and reduced and $(2)$ the skein module $\mathcal{S}_v(M)$  is $A$-tame. Then for any central representation $[\rho]$, $ \mathcal{S}_A(M)_{[\rho]}$ is one-dimensional. So $\mathscr{L}^M \to \mathcal{X}_{\SL_2}(M)$ is a line bundle.
\end{theorem}

It is very hard to decide when $M$ satisfies hypotheses $(1)$ and $(2)$; see however  \cite{DetcherryKalfagianniSikora_SkeinDim} for large classes of examples. 
 Two years after the prepublication of the present paper, the second author proved in \cite{Koju_SkeinLineBundle} that 
 $\mathscr{L}^M\to \mathcal{X}_{\SL_2}(M)$  is a line bundle for any closed $3$-manifold $M$ without assumptions $(1)$ or $(2)$ needed.

\subsection{Plan of the paper}
In Section \ref{sec_skein}, we review the results stated in Subsection \ref{sec_background}, mainly the Chebyshev-Frobenius morphism, the Azumaya locus, the theory of Poisson orders and the WRT representations.
 Sections \ref{sec_category} and \ref{sec_representations} are devoted to the study of the skein representations derived from non-semisimple TQFTs. In the original construction in \cite{BCGPTQFT}, the authors considered $A$ to be a root of unity of even order, whereas our paper is concerned about odd roots of unity, so we will need to adapt the construction in \cite{BCGPTQFT} to our setting.  In Section \ref{sec_category} we describe the category underlying the construction of these non-semisimple TQFTs and in Section \ref{sec_representations} we construct the skein representations derived from these TQFTs and  prove Theorem \ref{theorem2}. 
In Section \ref{sec_opensurfaces} we extend the previous results to open surfaces and prove Theorem \ref{theorem_opensurfaces}.
In Section \ref{sec_MCG} we give a simple alternative construction of the representations of the Torelli groups arising in non-semisimple TQFTs.
In Section \ref{sec_onedim} we prove Theorems \ref{coro2} and \ref{theorem_central_intro}. 
In the appendices, we prove some technical results concerning the multiplicity modules and $6j$-symbols in the category of representations of the unrolled quantum group.

\textit{Acknowledgments.} The authors thank S.Baseilhac, D.Calaque,  F.Costantino,  R.Detcherry, M. De Renzi, C.Frohman,   J. March\'e, T.L\^e and R.Santharoubane for valuable conversations and the anonymous referee for her/his careful analysis and important suggestions which improved the quality and exactness of the paper. J.K. acknowledges support from the Japanese Society for Promotion of Sciences, from the Centre National de la Recherche Scientifique and from the European Research Council (ERC DerSympApp) under the European Union’s Horizon 2020 research and innovation program (Grant Agreement No. 768679).
 Part of this paper was accomplished while he was hosted by Waseda University.
 H.K. was supported by JSPS KAKENHI Grant Numbers JP22K20342, JP23K12976.

\section{Preliminary results on Kauffman-bracket skein algebras}\label{sec_skein}

\subsection{Kauffman-bracket skein algebras}

\begin{definition}
Let $M$ be a compact oriented $3$-manifold, $k$ be a (unital, associative) commutative ring and $A\in k^{\times}$ an invertible element. The \textit{Kauffman-bracket skein module} $\mathcal{S}_A(M)$ is the quotient of the $k$-module freely generated by embedded framed links $L\subset M$ by the ideal generated by elements $L-L'$, for $L,L'$ two isotopic links, and by the following skein relations: 
  	\begin{equation*} 
\begin{tikzpicture}[baseline=-0.4ex,scale=0.5,>=stealth]	
\draw [fill=gray!45,gray!45] (-.6,-.6)  rectangle (.6,.6)   ;
\draw[line width=1.2,-] (-0.4,-0.52) -- (.4,.53);
\draw[line width=1.2,-] (0.4,-0.52) -- (0.1,-0.12);
\draw[line width=1.2,-] (-0.1,0.12) -- (-.4,.53);
\end{tikzpicture}
=A
\begin{tikzpicture}[baseline=-0.4ex,scale=0.5,>=stealth] 
\draw [fill=gray!45,gray!45] (-.6,-.6)  rectangle (.6,.6)   ;
\draw[line width=1.2] (-0.4,-0.52) ..controls +(.3,.5).. (-.4,.53);
\draw[line width=1.2] (0.4,-0.52) ..controls +(-.3,.5).. (.4,.53);
\end{tikzpicture}
+A^{-1}
\begin{tikzpicture}[baseline=-0.4ex,scale=0.5,rotate=90]	
\draw [fill=gray!45,gray!45] (-.6,-.6)  rectangle (.6,.6)   ;
\draw[line width=1.2] (-0.4,-0.52) ..controls +(.3,.5).. (-.4,.53);
\draw[line width=1.2] (0.4,-0.52) ..controls +(-.3,.5).. (.4,.53);
\end{tikzpicture}
\hspace{.5cm}
\text{ and }\hspace{.5cm}
\begin{tikzpicture}[baseline=-0.4ex,scale=0.5,rotate=90] 
\draw [fill=gray!45,gray!45] (-.6,-.6)  rectangle (.6,.6)   ;
\draw[line width=1.2,black] (0,0)  circle (.4)   ;
\end{tikzpicture}
= -(A^2+A^{-2}) 
\begin{tikzpicture}[baseline=-0.4ex,scale=0.5,rotate=90] 
\draw [fill=gray!45,gray!45] (-.6,-.6)  rectangle (.6,.6)   ;
\end{tikzpicture}
.
\end{equation*}
When $M=\Sigma \times [0,1]$ is a thickened surface, then $\mathcal{S}_A(\Sigma):=\mathcal{S}_A(\Sigma\times [0,1])$ has a structure of associative unital algebra where 
the product of two classes links  $[L_1]$ and $[L_2]$ is defined by  isotoping $L_1$ and $L_2$  in $\Sigma\times (1/2, 1) $ and $\Sigma\times (0, 1/2)$ respectively and then setting $[L_1]\cdot [L_2]:=[L_1\cup L_2]$. 
\end{definition}

A \textit{multi-curve} $\gamma \subset \Sigma$ is a one-dimensional compact submanifold without contractible component. It defines a framed link through the inclusion $\gamma \subset \Sigma \times \{1/2\} \subset \Sigma \times (0,1)$ with blackboard framing. A \textit{curve} is a connected multi-curve. 

\begin{theorem}\label{theorem_basic}
\begin{enumerate}
\item (Przytycki \cite{Przytycki_skein}) The set 
$$\mathcal{B} := \{ [\gamma]\mid  \gamma \mbox{ is a multi-curve in }\Sigma\}$$
is a basis of $\skein(\Sigma)$.
\item (Bullock \cite{BullockGeneratorsSkein}, see also \cite{AbdielFrohman_SkeinFrobenius, FrohmanKania_SkeinRootUnity, SantharoubaneSkeinGenerators}) There exists a finite set of curves whose images generate $\skein(\Sigma)$ as an algebra.
\item (Przytycki-Sikora \cite{PrzytyckiSikora_SkeinDomain} for closed surfaces, Bonahon-Wong \cite{BonahonWongqTrace} for open surfaces) $\skein(\Sigma)$ is a domain.
\end{enumerate}
\end{theorem}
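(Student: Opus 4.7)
The plan is to prove the three parts separately, since they use quite different techniques, and the statement is essentially a summary of classical results in the subject.

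For part (1), the strategy is a diamond--lemma argument. Start by projecting any framed link to a generic link diagram in $\Sigma$. The first Kauffman relation replaces a crossing by two non--crossing resolutions, so iterated application (which terminates because the number of crossings strictly decreases) produces a finite linear combination of diagrams without crossings. The second relation removes each contractible circle at the cost of the scalar $-(A^2+A^{-2})$. What remains is a linear combination of multi--curves, proving that $\mathcal{B}$ spans. To see the map is well defined on isotopy classes one checks invariance under the three framed Reidemeister moves; this is the standard Kauffman bracket computation. The delicate step is linear independence of $\mathcal{B}$. The cleanest route is to work over $\mathbb{Z}[A^{\pm1}]$ and specialize at $A=\pm 1$: there the skein algebra maps onto the coordinate ring of $\mathcal{X}_{\SL_2}(\Sigma)$, where the multi--curves are sent to products of $\SL_2$--trace functions, and classical $\SL_2$ trace identities together with a dimension/Hilbert series argument show these are linearly independent; one then lifts by a standard flat deformation argument in $A$.

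For part (2), I would pick a pants decomposition of $\Sigma$ (possibly augmented by dual arcs in the presence of boundary) and take as candidate generating set the finitely many simple closed curves coming from the pants curves together with a fixed finite collection of dual curves (for a closed surface of genus $g\geq 2$, a standard choice yields $9g{-}9$ generators). Given any simple closed curve $\gamma$, argue by induction on its total geometric intersection number with the pants curves: at any intersection point apply the skein relation to replace $\gamma$ by a linear combination of curves with strictly smaller intersection number, and iterate. A multi--curve factors as a product of its components up to lower--order skein corrections, so the curves chosen above generate $\skein(\Sigma)$ as an algebra.

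For part (3), I would handle the closed and open cases by the two different methods hinted at in the citations. In the closed case, equip $\skein(\Sigma)$ with the filtration by ``number of components counted with weights'' inherited from the basis $\mathcal{B}$, and show that the associated graded is a commutative ring that embeds into the coordinate ring of $\mathcal{X}_{\SL_2}(\Sigma)$ (via the $A\mapsto -1$ specialization and the Bullock/Przytycki--Sikora identification); since the character variety of a closed surface is irreducible, the associated graded is a domain, and a standard leading--term argument then shows that $\skein(\Sigma)$ itself has no zero divisors. In the open case the argument is much cleaner: fix an ideal triangulation of $\Sigma$ and invoke the Bonahon--Wong quantum trace to embed $\skein(\Sigma)$ into the associated quantum torus, which is a well--known Ore domain.

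The main obstacle is the closed surface case of part (3): verifying that the associated graded of the chosen filtration is commutative and agrees with the function ring of the character variety requires a careful check that every skein relation either preserves the filtration degree or strictly lowers it, and identifying the leading terms with products of trace functions. The corresponding steps in parts (1) and (2) are more bookkeeping than conceptual, and the open case of (3) is essentially immediate from the quantum trace.
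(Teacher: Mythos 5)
The paper itself offers no proof of this theorem: it is a list of results quoted from Przytycki, Bullock, Przytycki--Sikora and Bonahon--Wong, so your sketch can only be measured against the standard published arguments. Measured that way, two of your three parts have genuine gaps.

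In part (1), the real content of Przytycki's theorem is exactly the well-definedness check you mention in passing: once one verifies that the crossing-resolution/trivial-loop-removal procedure is independent of the chosen diagram and of the order of resolutions (Reidemeister invariance), it defines a map from $\mathcal{S}_A(\Sigma)$ to the free module on multi-curves that is inverse to the obvious surjection, and linear independence comes for free, over any ring $k$ and any invertible $A$. Your separate independence argument instead outsources the work to the claim that distinct multi-curves give linearly independent trace functions on $\mathcal{X}_{\SL_2}(\Sigma)$. That claim is true, but it is a nontrivial theorem in its own right (proved via hyperbolic length functions or valuations coming from measured laminations, as in Przytycki--Sikora and Charles--March\'e), and ``classical trace identities together with a dimension/Hilbert series argument'' is not a proof of it. Moreover the ``flat deformation'' step only transports independence from $A=\pm1$ up to $\mathbb{Z}[A^{\pm1}]$; to reach the case the paper actually uses ($A$ a fixed root of unity) you must first conclude freeness over $\mathbb{Z}[A^{\pm1}]$ and then base-change, a step you do not spell out.

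In part (3), the closed-surface argument fails as stated: for any filtration by geometric complexity of multi-curves (e.g.\ Dehn--Thurston intersection numbers), the top terms of $[\alpha][\beta]$ and $[\beta][\alpha]$ agree only up to a power of $A$, so for a fixed $A$ with $A^4\neq 1$ the associated graded is a twisted, non-commutative monoid algebra; it is not commutative and does not embed in $\mathcal{O}[\mathcal{X}_{\SL_2}(\Sigma)]$, and the specialization $A\mapsto -1$ is not a filtration of $\mathcal{S}_A(\Sigma)$ at a fixed root of unity, so irreducibility of the character variety cannot enter this way. The actual Przytycki--Sikora argument is combinatorial: in Dehn--Thurston coordinates the product of two basis multi-curves has a unique maximal term, a single multi-curve with coefficient a power of $A$, so leading terms never cancel and there are no zero divisors (the non-commutative associated graded is still a domain, but not for the reason you give). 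Your open-surface argument via the Bonahon--Wong quantum trace embedding into a quantum torus is correct and is precisely the cited route. Finally, in part (2) the phrase ``apply the skein relation at an intersection point of $\gamma$ with a pants curve'' is not an operation on $[\gamma]$ alone: the skein relation resolves crossings of diagrams, and Bullock's induction in fact shows that a curve of high complexity appears with invertible coefficient in a product of curves of lower complexity, from which one solves for it; as written, your induction step is not well formed.
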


When $A=\pm 1$, the skein algebra $\mathcal{S}_{\pm 1}(\Sigma)$ is commutative: this is a straightforward consequence of the following transparency relation: 
$$\Crosspos
= \pm 
\begin{tikzpicture}[baseline=-0.4ex,scale=0.5,>=stealth] 
\draw [fill=gray!45,gray!45] (-.6,-.6)  rectangle (.6,.6)   ;
\draw[line width=1.2] (-0.4,-0.52) ..controls +(.3,.5).. (-.4,.53);
\draw[line width=1.2] (0.4,-0.52) ..controls +(-.3,.5).. (.4,.53);
\end{tikzpicture}
 \pm
\begin{tikzpicture}[baseline=-0.4ex,scale=0.5,rotate=90]	
\draw [fill=gray!45,gray!45] (-.6,-.6)  rectangle (.6,.6)   ;
\draw[line width=1.2] (-0.4,-0.52) ..controls +(.3,.5).. (-.4,.53);
\draw[line width=1.2] (0.4,-0.52) ..controls +(-.3,.5).. (.4,.53);
\end{tikzpicture}
= \Crossneg.$$
\par The $n$-th Chebyshev polynomial of first kind is the polynomial  $T_n(X) \in \mathbb{Z}[X]$ defined by the recursive formulas $T_0(X)=2$, $T_1(X)=X$ and $T_{n+2}(X)=XT_{n+1}(X) -T_n(X)$ for $n\geq 0$. It is characterized by the formula
$$T_n(\tr(M)) = \tr(M^n) \mbox{, for all }M\in \SL_2(\mathbb{C}).$$

\begin{theorem}\label{theorem_chebyshev}
Let $N\geq 3$ an odd integer.
Suppose that $A\in \mathbb{C}^*$ is a root of unity of  order  $N$ or $2N$. Write $\varepsilon = +1$ if the order of $A$ is $N$ and $\varepsilon = -1$ if the order of $A$ is $2N$.
\begin{enumerate}
\item (Bonahon-Wong \cite{BonahonWong1}) There exists an embedding 
$$Ch_A: \mathcal{S}_{\varepsilon}(\Sigma) \hookrightarrow \mathcal{Z}\left(\skein(\Sigma) \right)$$
into the center of $\skein(\Sigma)$ characterized by the formula $Ch_A([\gamma]) := T_N([\gamma])$ for $\gamma\subset \Sigma$ a curve.
\item (Frohman-Kania Bartoszynska-L\^e \cite{FrohmanKaniaLe_UnicityRep}) For $\Sigma$ a closed surface $Ch_A$ is onto, so provides an isomorphism between the skein algebra at $A=\varepsilon$ and the center $Z_{\Sigma}$ of $\skein(\Sigma)$. 
Moreover, $\mathcal{S}_A(\Sigma)$ is finitely generated over $Z_{\Sigma}$. 
\end{enumerate}
\end{theorem}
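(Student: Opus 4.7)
The plan is to handle the two parts separately, leveraging in both cases the quantum torus embedding of the skein algebra.

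For part $(1)$, I would first treat the case of a surface $\Sigma$ with at least one puncture, where Bonahon-Wong's quantum trace
$$\Tr_\Delta^A : \skein(\Sigma) \hookrightarrow \mathbb{T}_\Delta$$
associated to an ideal triangulation $\Delta$ embeds the skein algebra into a quantum torus generated by edge variables $X_e$. At an odd root of unity of order $N$, the $N$-th powers $X_e^N$ are central in $\mathbb{T}_\Delta$. The heart of the argument is the \emph{quantum miracle}: for every simple closed curve $\gamma$ one has
$$\Tr_\Delta^A\left(T_N([\gamma])\right) \;=\; \Tr_\Delta^{+1}([\gamma])\,\big|_{X_e \mapsto X_e^N},$$
which says that Chebyshev polynomials transport the classical Frobenius map on $\mathbb{T}_\Delta$ to an honest algebra morphism $\mathcal{S}_{+1} \to \mathcal{Z}(\skein)$. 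Once this identity is in hand, centrality of $T_N([\gamma])$ is automatic; extending multiplicatively to multicurves (permissible because the values are central) defines $Ch_A$ on the Przytycki basis $\mathcal{B}$ of Theorem \ref{theorem_basic}(1). Well-definedness as an algebra morphism from $\mathcal{S}_{+1}$ reduces to matching product-to-sum identities among $T_N$-values in $\skein$ with the skein relations at $A=+1$, verifiable on multicurves inside the quantum-torus model. Injectivity follows because $\mathcal{S}_{+1}(\Sigma)$ is a domain (Theorem \ref{theorem_basic}(3)) and $Ch_A$ is nonzero on each basis element. For a closed surface $\Sigma_g$ one punctures once to obtain $\Sigma_{g,1}$, runs the construction there, and descends along the restriction induced by the inclusion of the punctured surface.

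For part $(2)$, set $Z := Ch_A(\mathcal{S}_{+1}(\Sigma_g))$. Both the surjectivity $Z = \mathcal{Z}(\skein(\Sigma_g))$ and the rank computation rest on comparing $\skein(\Sigma_g)$ with a quantum torus generically. My steps would be: $(i)$ show that $\mathrm{Frac}(\skein(\Sigma_g))$ is a quantum torus skew-field of rank $6g-6$ when $g\geq 2$, for instance by localizing along pants-decomposition coordinates (for $g=1$ this is the Frohman-Gelca-Sallenave description cited later in the paper, of rank $2$); $(ii)$ deduce from the structure of quantum tori at odd roots of unity that the PI degree of $\skein(\Sigma_g)$ is $N^{3g-3}$ and its rank over the center is $N^{6g-6}$ (respectively $N^2$ for $g=1$); $(iii)$ upgrade $Z \subseteq \mathcal{Z}(\skein(\Sigma_g))$ to an equality by a Krull-dimension argument: both rings are finitely generated affine domains of Krull dimension $6g-6 = \dim \mathcal{X}_{\SL_2}(\Sigma_g)$, $Z$ is integrally closed because the character variety is normal, and the extension is integral and birational by $(ii)$, hence an isomorphism.

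The main obstacle is the quantum miracle identity in part $(1)$: it is the non-formal input making the Chebyshev-Frobenius phenomenon work, and its proof in \cite{BonahonWong1} needs a delicate induction on the combinatorial complexity of $\gamma$ together with sharp $q$-binomial identities of the shape $T_N(x+y) = T_N(x)+T_N(y)$ for $q$-commuting variables at a primitive $N$-th root of unity, crucially using that $N$ is odd. Step $(ii)$ of part $(2)$ is also subtle because computing the rank over the subring $Z$ (rather than over its fraction field) requires careful control of the localization; running the rank computation inside a quantum-torus model and invoking Poincar\'e-Birkhoff-Witt type freeness is the cleanest route.
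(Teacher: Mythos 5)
The paper does not prove Theorem \ref{theorem_chebyshev}: it is quoted from \cite{BonahonWong1} and \cite{FrohmanKaniaLe_UnicityRep}, so your outline is measured against those arguments, which it follows in spirit; two of your steps, however, are genuine gaps. First, your injectivity argument for $Ch_A$ is not valid: being nonzero on every element of the multicurve basis of a domain does not imply injectivity (the map $\mathbb{C}[x,y]\to\mathbb{C}[x]$ sending both $x$ and $y$ to $x$ is nonzero on every monomial but not injective). What the actual proofs establish is linear independence of the images of the basis multicurves, via a leading-term argument: in Dehn--Thurston (or edge-intersection) coordinates the threaded element $Ch_A([\gamma])$ has dominant term the $N$-fold parallel copy of $\gamma$ with invertible coefficient, and triangularity gives injectivity; in the punctured case one can instead note that $\Tr^A_\Delta\circ Ch_A$ equals the injective classical trace followed by the Frobenius $X_e\mapsto X_e^N$. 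Second, the closed case cannot simply be ``descended'' from $\Sigma_{g,1}$: the induced map $\skein(\Sigma_{g,1})\to\skein(\Sigma_g)$ is a quotient, so even after checking the kernel compatibilities (including that $T_N$ of the peripheral curve is sent to $T_N(-(A^2+A^{-2}))=-2$), injectivity is not inherited by the descended map, and this is exactly where Bonahon--Wong and L\^e need a separate skein-theoretic (``miraculous cancellation''/transparency) argument valid on closed surfaces. Since the present paper only uses the closed case, this is the part your sketch leaves most open.

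For part $(2)$ your reduction scheme (an integral, birational extension of a normal affine domain is an equality) is sound, but every substantive hypothesis is deferred. Module-finiteness of $\skein(\Sigma_g)$ over $Z=Ch_A(\mathcal{S}_{+1}(\Sigma_g))$ does not follow formally from each curve satisfying the monic relation $T_N([\gamma])=Ch_A([\gamma])$, because the algebra is noncommutative; in \cite{FrohmanKaniaLe_UnicityRep} it comes from bounding Dehn--Thurston coordinates of multicurves modulo $Z$. Birationality, i.e.\ $\operatorname{rk}_Z\skein(\Sigma_g)=N^{6g-6}=\operatorname{rk}_{\mathcal{Z}}\skein(\Sigma_g)$, is precisely the hard leading-term computation of the cited theorem and does not follow from the generic quantum-torus structure of $\operatorname{Frac}(\skein(\Sigma_g))$ alone, which only controls the rank over the full center. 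Finally, normality of $\mathcal{X}_{\SL_2}(\Sigma_g)$ (equivalently of $\mathcal{S}_{+1}(\Sigma_g)$, which also presupposes the identification of Theorem \ref{theorem_skein+1}) is a nontrivial input you would need to source; the published proof avoids it by showing directly, in Dehn--Thurston coordinates, that a central element has leading terms with coordinates divisible by $N$ and hence lies in the image of $Ch_A$. So the architecture is right, but the steps that make the theorem true --- the miraculous cancellations, the linear independence of threaded multicurves, and the rank count over $Z$ --- are asserted rather than proved.
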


\begin{proposition}\label{prop_Barett}(Barrett \cite{Barett}) Let $S$ be a spin structure on $\Sigma$ with Johnson quadratic form $w_S: \mathrm{H}_1(M; \mathbb{Z}/2\mathbb{Z}) \to \mathbb{Z}/2\mathbb{Z}$. There is an algebra isomorphism $\mathcal{S}_{-A}(\Sigma) \xrightarrow{\cong} \mathcal{S}_A(\Sigma)$ sending $[\gamma]$ to $(-1)^{w_S([\gamma])} [\gamma]$. 
\end{proposition}

\subsection{Azumaya locus}

\begin{definition} A $\mathbb{C}$-algebra $\mathcal{A}$  is \textit{almost Azumaya} if: 
\begin{itemize}
\item[(i)] $\mathcal{A}$ is affine , i.e. $A$ is a finitely generated $\mathbb{C}$-algebra; 
\item[(ii)] $\mathcal{A}$ is prime, i.e. any $a,b\in A$ satisfy that if $arb=0$ for all $r\in A$ then $a=0$ or $b=0$;
\item[(iii)]$ \mathcal{A}$ has finite rank over its center $Z$.
\end{itemize}
\end{definition}

Let $S:= Z\setminus \{0\}$ so that $K:=ZS^{-1}$ is a field. A theorem of Posner-Formanek (see \cite[Theorem $I.13.3$]{BrownGoodearl}) asserts that the localization $\mathcal{A}\otimes_Z K$ is a central simple algebra over $K$, therefore there exists a finite extension $\overline{K}$ of $K$ such that $\mathcal{A}\otimes_Z \overline{K} \cong \Mat_D(\overline{K})$ is a matrix algebra. The integer $D$ is called the \textit{PI-degree} of $\mathcal{A}$ and is characterized by the formula:
$$ \dim_{\overline{K}} \mathcal{A}\otimes_Z \overline{K} = D^2.$$
By Theorem \ref{theorem_chebyshev}, $\mathcal{S}_A(\Sigma)$ is almost Azumaya. 

\begin{theorem}(Frohman-Kania Bartoszynska-L\^e \cite{FrohmanKaniaLe_DimSkein}) \label{theorem_PIDeg}
The algebra $\mathcal{S}_A(\Sigma_g)$ has PI-degree $N^{3g-3}$ if $g\geq 2$ and $N$ if $g=1$.
\end{theorem}


Let $\mathcal{A}$ be an almost Azumaya algebra with center $Z$,  $\mathcal{X}:=\Specm(Z)$ and for each  maximal ideal $x \in \mathcal{X}$,  consider the finite dimensional algebra 
$$\mathcal{A}_x:= \quotient{\mathcal{A}}{x\mathcal{A}}.$$
Note that when $\mathcal{A}$ is a domain, by the Artin-Tate lemma, $Z$ is finitely generated so $\mathcal{X}$ is an irreducible affine variety. 

\begin{definition}
Let $\rho: \mathcal{A}\to \End(V)$ be a representation. 
\begin{enumerate}
\item 
$\rho$ is a  \textit{central representation}  if 
it sends central elements  to  scalar operators. In this case, it induces a character over the center $Z$, so a  point in $\mathcal{X}$ named its \textit{classical shadow}.
\item $\rho$ is a \textit{weight representation} if $V$ is semisimple as a $Z$-module, i.e. if $V$ is a direct sum of central representations.
\end{enumerate}
\end{definition}

Indecomposable weight representations are clearly central. A central representation  $\rho: \mathcal{A}\to \End(V)$ with classical shadow $x$ induces by quotient a representation $\rho : \mathcal{A}_x \to \End(V)$. 

\begin{definition} The \textit{Azumaya locus} of $\mathcal{A}$ is the subset
$$\mathcal{AL}:= \{ x\in \mathcal{X}, \mbox{ such that } \mathcal{A}_x \cong \Mat_D(\mathbb{C}) \}.$$
\end{definition}

Note that $\Mat_D(\mathbb{C})$ is simple with only simple module  the standard module $\mathbb{C}^D$.
So if $x\in \mathcal{AL}$, there exists a unique indecomposable representation of $\mathcal{A}$ with classical shadow $x$ (up to isomorphism); this representation is irreducible and has dimension $D$.
The following theorem is essentially due to De Concini-Kac in their original work on quantum enveloping algebras \cite{DeConciniKacRepQGroups} and was further generalized by several authors including \cite{DeConciniLyubashenko_OqG, Brown_AL_discriminant, BrownGoodearl, FrohmanKaniaLe_UnicityRep}.
\begin{theorem}\label{theorem_AL_dense} Let $\mathcal{A}$ be an almost Azumaya algebra with PI-degree $D$.
\begin{enumerate}
\item (\cite[Theorem III.I.7]{BrownGoodearl}) The Azumaya locus is a Zariski open dense subset.
\item  (\cite[Theorem III.I.6]{BrownGoodearl}) If $\rho: \mathcal{A}\to \End(V)$ is an irreducible representation whose classical shadow is not in the Azumaya locus, then $\dim(V)<D$. So the Azumaya locus admits the following alternative description:
$$\mathcal{AL}=\{ x \in \mathcal{X} | x \mbox{ is induced by an irreducible representation of maximal dimension }D\}.$$
  Therefore, if $\rho:\mathcal{A} \to \End(V)$ is a $D$-dimensional central representation inducing $x\in \mathcal{X}$, then $\rho$ is irreducible if and only if $x\in \mathcal{AL}$.
\end{enumerate}
\end{theorem}

\begin{corollary}\label{coro_unicity}(Frohman-Kania Bartoszynska-L\^e \cite{FrohmanKaniaLe_UnicityRep})
The Azumaya locus of $\skein(\Sigma)$ is a Zariski open dense subset.
\end{corollary}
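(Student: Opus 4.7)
The plan is simply to verify that $\skein(\Sigma_g)$ meets the three hypotheses $(i)$, $(ii)$, $(iii)$ of Theorem~\ref{theorem_AL_dense}, and then invoke part $(1)$ of that theorem. No new argument is needed beyond a careful bookkeeping of what was collected in the two preceding theorems.

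First, I will check affineness $(i)$ by appealing to Theorem~\ref{theorem_basic}(2): the Bullock-type result (refined by Abdiel-Frohman, Frohman-Kania Bartoszynska and Santharoubane) provides a finite set of curves whose classes generate $\skein(\Sigma_g)$ as a $\mathbb{C}$-algebra, so $\skein(\Sigma_g)$ is affine. Second, primeness $(ii)$ follows immediately from Theorem~\ref{theorem_basic}(3) of Przytycki-Sikora, which asserts that $\skein(\Sigma_g)$ is a domain in the closed case. Third, the finite rank condition $(iii)$ is exactly the content of Theorem~\ref{theorem_chebyshev}(2): the Chebyshev-Frobenius map $Ch_A$ surjects onto $\mathcal{Z}(\skein(\Sigma_g))$, and the rank of $\skein(\Sigma_g)$ over its center equals $N^{6g-6}$ for $g\geq 2$ and $N^2$ for $g=1$. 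In particular, the PI-degree is $D=N^{3g-3}$ for $g\geq 2$ and $D=N$ for $g=1$, matching the value stated in the closed-surface discussion.

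With $(i)$, $(ii)$, $(iii)$ in hand, Theorem~\ref{theorem_AL_dense}(1) (Brown-Yakimov) gives $\mathcal{AL}=\mathcal{X}\setminus V(\mathcal{D})$, where $\mathcal{X}=\Specm(Z)$ and $\mathcal{D}$ is the discriminant ideal of $\skein(\Sigma_g)$. Since the discriminant ideal of a prime affine PI-algebra is always a nonzero ideal of $Z$, its vanishing locus $V(\mathcal{D})$ is a proper Zariski-closed subset of $\mathcal{X}$, and its complement $\mathcal{AL}$ is therefore Zariski-open and dense. There is essentially no obstacle here: the only subtlety is keeping straight that ``PI-degree'' in Theorem~\ref{theorem_AL_dense} corresponds to the square root of the generic rank over the center, which is what Theorem~\ref{theorem_chebyshev}(2) supplies.
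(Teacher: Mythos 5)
Your proposal is correct and follows essentially the same route as the paper: the paper simply observes that Theorems \ref{theorem_basic} and \ref{theorem_chebyshev} give conditions $(i)$, $(ii)$, $(iii)$ (with PI-degree $N^{3g-3}$ for $g\geq 2$ and $N$ for $g=1$) and then invokes Theorem \ref{theorem_AL_dense}(1). Your extra remark justifying nonvanishing of the discriminant ideal is harmless but unnecessary, since the cited Brown--Yakimov statement already includes openness and density of $\mathcal{AL}$.
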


\begin{corollary}\label{coro_roro}
If $\rho: \skein(\Sigma_g) \to \End(V)$ is a central representation with classical shadow $x$ such that $\dim(V)=N^{3g-3}$ when $g\geq 2$ and $\dim(V)=N$ when $g=1$, then 
$$ \rho \mbox{ is irreducible if and only if } x\in \mathcal{AL}.$$
\end{corollary}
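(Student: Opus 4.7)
The plan is to invoke Theorem \ref{theorem_AL_dense}(2) directly, specialized to $\mathcal{A} = \skein(\Sigma_g)$. First I would check that the three standing hypotheses on $\mathcal{A}$ are all in place: affineness is Theorem \ref{theorem_basic}(2), primeness (actually, being a domain) is Theorem \ref{theorem_basic}(3), and finite rank over the center is part of Theorem \ref{theorem_chebyshev}(2). The same Theorem \ref{theorem_chebyshev}(2) pins down the PI-degree as $D=N^{3g-3}$ when $g\geq 2$ and $D=N$ when $g=1$, matching precisely the hypothesis $\dim(V)=D$ of the corollary. So both sides of the biconditional we want are now sides of a biconditional that Theorem \ref{theorem_AL_dense}(2) asserts in general.

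For the implication $x\in\mathcal{AL}\Rightarrow\rho$ irreducible, I would use the definition of the Azumaya locus: $\mathcal{A}_x\cong\Mat_D(\mathbb{C})$. Since $\rho$ is central with classical shadow $x$, it descends to a module $\rho: \mathcal{A}_x\to\End(V)$ of dimension $D$. Any such module of $\Mat_D(\mathbb{C})$ is (up to isomorphism) the tautological simple module of dimension $D$, so $\rho$ is irreducible.

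For the converse, I would argue by contraposition using the Posner half of Theorem \ref{theorem_AL_dense}(2): if $x\notin\mathcal{AL}$, then $\mathcal{A}_x$ has PI-degree strictly less than $D$, hence all its simple modules have dimension $<D$. Since $\rho$ factors through $\mathcal{A}_x$ as a module of dimension $D$, it cannot be simple, contradicting irreducibility. Therefore $x\in\mathcal{AL}$.

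There is really no obstacle to this argument; the corollary is the specialization of a general PI-algebra dichotomy to the skein algebra, and all the substantive inputs (the verification of (i)-(iii) and the computation of the PI-degree) have already been imported from Theorems \ref{theorem_basic} and \ref{theorem_chebyshev}. The only point worth flagging is that centrality of $\rho$ is essential for it to descend to a representation of the finite-dimensional quotient $\mathcal{A}_x$ — this is what makes the comparison with the PI-degree of $\mathcal{A}_x$ meaningful.
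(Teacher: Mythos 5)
Your proposal is correct and follows essentially the same route as the paper: the paper deduces Corollary \ref{coro_roro} directly from Theorem \ref{theorem_AL_dense}(2) after noting (just before Corollary \ref{coro_unicity}) that Theorems \ref{theorem_basic} and \ref{theorem_chebyshev} give conditions (i)--(iii) and the PI-degree $N^{3g-3}$ (resp.\ $N$). Your spelled-out verification of both implications (the $\Mat_D(\mathbb{C})$ module argument and the Posner/PI-degree contrapositive) is exactly the content packaged in that theorem, so there is nothing to add.
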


Note that these results were extended to stated skein algebras and their reduced versions in \cite{KojuAzumayaSkein}.

\subsection{Character varieties and deformation quantization}

The commutative algebra $\mathcal{S}_{\varepsilon}(\Sigma)$ has a natural Poisson structure defined as follows. Let $\mathcal{S}_{\hbar}(\Sigma)$ be the skein algebra associated to the ring $k=\mathbb{C}[[\hbar]]$ of formal power series in $\hbar$ with parameter $A_{\hbar}:= \varepsilon \exp(\hbar/2)$. Since the set $\mathcal{B}$ of classes of multicurves is a basis for both the algebras $\mathcal{S}_{\hbar}(\Sigma)$ and $\mathcal{S}_{\varepsilon}(\Sigma)$, we can define a $\mathbb{C}[[\hbar]]$-linear isomorphism $\Psi: \mathcal{S}_{\varepsilon}(\Sigma)\otimes_{\mathbb{C}}\mathbb{C}[[\hbar]] \xrightarrow{\cong} \mathcal{S}_{\hbar}(\Sigma)$ characterized by the fact that it sends a basis elements $[\gamma]\in \mathcal{B}$ to itself. Let $\star$ be the pull-back in $\mathcal{S}_{\varepsilon}(\Sigma)\otimes_{\mathbb{C}}\mathbb{C}[[\hbar]]$ of the product in $\mathcal{S}_{\hbar}(\Sigma)$. 

\begin{definition}\label{def_DQSkein} The Poisson bracket $\{\cdot, \cdot\}$ on $\mathcal{S}_{\varepsilon}(\Sigma)$ is defined by the formula
$$ x \star y - y \star x = \hbar \{x, y\} \pmod{\hbar^2}.$$
\end{definition}

In particular, the associativity of $\star$ implies the Jacobi identity for $\{\cdot, \cdot\}$. The Poisson algebra $(\mathcal{S}_{\varepsilon}(\Sigma), \{\cdot, \cdot\})$ admits the following geometric interpretation.

\begin{definition} Suppose that $\Sigma$ is connected and fix a base point $v\in \Sigma$.
\begin{enumerate}
\item The \textit{representation variety} $\mathcal{R}_{\SL_2}(\Sigma)$ is the space of representations $\rho: \pi_1(\Sigma, v) \to \SL_2$ equipped with its natural structure of affine variety. The group $\SL_2$ acts algebraically on $\mathcal{R}_{\SL_2}(\Sigma)$ by conjugacy. The \textit{character variety} is the algebraic (GIT) quotient 
$$\mathcal{X}_{\SL_2}(\Sigma):= \mathcal{R}_{\SL_2}(\Sigma) \sslash \SL_2.$$
Note that $\mathcal{X}_{\SL_2}(\Sigma)$ does not depend on the base point $v$ up to canonical isomorphism.
\item For $\gamma \in \pi_1(\Sigma, v)$, the \textit{trace function} is the regular map $\tau_{\gamma} \in \mathcal{O}[\mathcal{X}_{\SL_2}(\Sigma)]$ defined by 
$$ \tau_{\gamma}([\rho]) := \tr (\rho(\gamma)) \mbox{, for all }\gamma \in \pi_1(\Sigma, v), \rho: \pi_1(\Sigma)\to \SL_2.$$
\item Let $\varphi : \mathbb{C}^* \hookrightarrow \SL_2$ be the diagonal embedding sending $z$ to $\begin{pmatrix} z & 0 \\ 0 & z^{-1} \end{pmatrix}$.
\\ A representation $\rho : \pi_1(\Sigma, v) \to \SL_2$ is said 
\begin{itemize}
\item  \textit{irreducible} if $\rho$ is irreducible; 
\item \textit{central} if it factorizes as $\rho: \pi_1(\Sigma, v) \to \{-1, +1\} \xrightarrow{\subset} \mathbb{C}^* \xrightarrow{\varphi} \SL_2$;
\item \textit{diagonal} if it is not central and it is conjugate to a representation which factorizes as $\pi_1(\Sigma, v) \to \mathbb{C}^* \xrightarrow{\varphi} \SL_2$.
\end{itemize}
For $\Sigma=\Sigma_g$, we denote by $\mathcal{X}^{(0)}$, $\mathcal{X}^{(1)}$, $\mathcal{X}^{(2)}$ the subvarieties of $\mathcal{X}_{\SL_2}(\Sigma_g)$ of classes of representations which are irreducible, diagonal and central respectively, so we get the decomposition
$$ \mathcal{X}_{\SL_2}(\Sigma_g) = \left\{ 
\begin{array}{ll} 
\mathcal{X}^{(0)} \bigsqcup \mathcal{X}^{(1)} \bigsqcup \mathcal{X}^{(2)} 
& \mbox{when }g\geq 2; \\
\mathcal{X}^{(1)} \bigsqcup \mathcal{X}^{(2)} 
& \mbox{when }g=1.
\end{array} \right.$$
\end{enumerate}
\end{definition}
Note that $\mathcal{X}^{(2)} \cong \mathrm{H}^1(\Sigma; \mathbb{Z}/2\mathbb{Z})$ is finite  and that, when $g\geq 2$,  the locus of diagonal representations is defined by the following algebraic equations which say that the trace of the commutator of $\rho(\alpha)$ and $\rho(\beta)$ is $+2$ (see e.g.  \cite{MarcheCharVarSkein} for details): 
$$ \tau_{\alpha}^2 + \tau_{\beta}^2 + \tau_{\alpha\beta}^2 -\tau_{\alpha}\tau_{\beta}\tau_{\alpha\beta} - 4 = 0  \mbox{, for }\alpha, \beta \in \pi_1(\Sigma, v); $$
so  $\mathcal{X}^{(0)}$ is indeed open in the Zariski topology.
For $\rho: \pi_1(\Sigma, v) \to \SL_2$, the stabilizer for the $\SL_2$ action by conjugacy is equal to $\{ \pm \mathds{1}_2\}$, $\varphi(\mathbb{C}^*)$ and $\SL_2$ when $\rho$ is irreducible, diagonal and central respectively. It follows that, when $g\geq 2$,  $\mathcal{X}^{(0)}$ is the smooth locus of $\mathcal{X}_{\SL_2}(\Sigma_g)$ and that $\mathcal{X}^{(1)}$ is the smooth locus of $\mathcal{X}_{\SL_2}(\Sigma_g)\setminus \mathcal{X}^{(0)}$. In the genus one case, $\mathcal{X}^{(0)} =\emptyset$ and $\mathcal{X}^{(1)}$ is the smooth locus of $\mathcal{X}_{\SL_2}(\Sigma_1)$.

\vspace{2mm}\par 
The character variety has a Poisson structure defined by Goldman in \cite{Goldman86} and described as follows. By a theorem of Procesi (\cite{Procesi87}), the algebra of regular functions of the character variety is generated by curve functions, so we need to define the Poisson bracket of two such functions $\tau_{\alpha}$ and $\tau_{\beta}$. Clearly, a trace function $\tau_{\alpha}$ only depends on the isotopy class of $\alpha$ seen as an immersed curve in $\Sigma$. 

 Suppose that $\alpha$ and $\beta$ are two embedded curves which intersect transversally with at most double intersection points. For $v\in \alpha \cap \beta$, let $\alpha_v, \beta_v \in \pi_1(\Sigma, v)$ be representatives of $\alpha$ and $\beta$ and $\rho_v: \pi_1(\Sigma, v) \to \SL_2$ a representative of $[\rho]$. 
  Let $\epsilon(v)\in \{ - 1, +1\}$ be equal to $+1$ if the set $(T_v\alpha, T_v\beta)$ of tangent vectors of $\alpha$ and $\beta$ at $v$ forms an oriented basis of $T_v\Sigma$ and set $\epsilon(v)=-1$ else. 
 Let $\left(\cdot , \cdot \right) : \mathfrak{sl}_2 \otimes \mathfrak{sl}_2 \to \mathbb{C}$ be the non-degenerate Ad-invariant symmetric form normalized so that $(A,B)=\tr(AB)$ for $A,B \in \mathfrak{sl}_2$ (i.e. $\frac{1}{4}$ times the Killing form). For $f\in \mathcal{O}[\SL_2]$ a conjugacy invariant function (for instance $f=\tau$ the trace function), let 
    $X_{f, \alpha}(v) \in \mathfrak{sl}_2$ be the unique vector satisfying 
    $$(X_{f, \alpha}, Y)= \restriction{\frac{d}{dt}}{t=0} f(\rho_v(\alpha_v) e^{tY}),  \mbox{ for all } Y\in \mathfrak{sl}_2.$$
     Extend $(\cdot, \cdot)$ to a bilinear form on $\mathfrak{sl}_2^{\otimes 2}$ by $(x\otimes y, z \otimes t):= (x, z) (y,t)$.

  \begin{definition}\label{GoldmanFormula}(Goldman \cite{Goldman86}) The Poisson bracket $\{\cdot, \cdot \}$ on $\mathcal{O}[\mathcal{X}_{\SL_2}(\mathbf{\Sigma})]$ is defined by the following formula
  \begin{eqnarray*} \{ \tau_{\alpha} , \tau_{\beta} \}([\rho])  &=&  2\sum_{v\in c_1\cap c_2} \epsilon(v) \left( X_{\tau, \alpha}(v), X_{\tau, \beta}(v) \right).
 \end{eqnarray*}
  \end{definition}
  
  Goldman gave in \cite{Goldman86} an explicit formula which shows that $\{\tau_{\alpha}, \tau_{\beta}\}$ is a linear combination of trace functions, so the Poisson bracket of two regular functions is a regular function. Goldman original work concerns closed surfaces, though it was extended to open surfaces as well in \cite{Lawton_PoissonGeomSL3, KojuTriangularCharVar}.
   When $\Sigma$ is closed, it is proved in \cite{Goldman_symplectic, Goldman86} that the smooth locus of the character variety is isomorphic to the smooth part of the moduli space of flat $\SL_2$ structures on $\Sigma$ equipped with its Atiyah-Bott symplectic structure defined in \cite{AB}. 
   In particular, one has:
  
  \begin{theorem}\label{theorem_smooth_symplectic}(Goldman \cite{Goldman_symplectic, Goldman86}) When $\Sigma$ is closed, the smooth locus of $\mathcal{X}_{\SL_2}(\Sigma)$  is symplectic. \end{theorem}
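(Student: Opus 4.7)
The plan is to identify, at a smooth point $[\rho] \in \mathcal{X}^{(0)}$, the cotangent (or tangent) space with a group cohomology group and then interpret Goldman's bracket from Definition \ref{GoldmanFormula} as coming from a non-degenerate pairing provided by Poincaré duality together with the Ad-invariant form $(\cdot,\cdot)$ on $\mathfrak{sl}_2$.

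First, I would recall standard deformation theory for the representation variety: for an irreducible $\rho\colon \pi_1(\Sigma) \to \SL_2$ with finite centralizer, the Zariski tangent space to $\mathcal{R}_{\SL_2}(\Sigma)$ at $\rho$ is the space $Z^1(\pi_1(\Sigma); \mathfrak{sl}_2^{\mathrm{Ad}\rho})$ of $1$-cocycles, and the tangent space to the $\SL_2$-orbit through $\rho$ is the space $B^1$ of coboundaries. Passing to the GIT quotient yields the identification
\[ T_{[\rho]}\mathcal{X}_{\SL_2}(\Sigma) \;\cong\; H^1(\pi_1(\Sigma); \mathfrak{sl}_2^{\mathrm{Ad}\rho}). \]
Since $\Sigma$ is a closed aspherical surface, this coincides with $H^1(\Sigma; \mathfrak{sl}_2^{\mathrm{Ad}\rho})$.

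Next, I would construct a non-degenerate antisymmetric form $\omega_{[\rho]}$ on this tangent space as the composition
\[ H^1(\Sigma; \mathfrak{sl}_2) \otimes H^1(\Sigma; \mathfrak{sl}_2) \xrightarrow{\smile} H^2(\Sigma; \mathfrak{sl}_2 \otimes \mathfrak{sl}_2) \xrightarrow{(\cdot,\cdot)_*} H^2(\Sigma; \mathbb{C}) \xrightarrow{\int_\Sigma} \mathbb{C}, \]
where $\smile$ is the cup product, $(\cdot,\cdot)$ is the Ad-invariant form on $\mathfrak{sl}_2$ appearing in Definition \ref{GoldmanFormula}, and the last arrow uses the orientation of $\Sigma$. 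Non-degeneracy of $\omega_{[\rho]}$ is then an instance of Poincaré duality with local coefficients, valid because the form $(\cdot,\cdot)$ is non-degenerate on $\mathfrak{sl}_2$.

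The remaining and most technical step is to verify that the bivector field dual to $\omega_{[\rho]}$ reproduces Goldman's bracket $\{\tau_\alpha,\tau_\beta\}$ of Definition \ref{GoldmanFormula}. For this I would compute the differentials $d\tau_\alpha, d\tau_\beta$ at $[\rho]$ as explicit cocycles supported on $\alpha, \beta$ (the vectors $X_{\tau,\alpha}(v), X_{\tau,\beta}(v)\in \mathfrak{sl}_2$ appear precisely as the local values of these cocycles at an intersection point $v \in \alpha \cap \beta$), then evaluate the cup product by Poincaré-dualizing one of the two homology classes represented by the curves. The transverse intersection points $v \in \alpha \cap \beta$ produce the sum, the sign $\varepsilon(v)$ encodes the orientation convention in the cup product, and the factor $(X_{\tau,\alpha}(v), X_{\tau,\beta}(v))$ is exactly the image under $(\cdot,\cdot)_*$. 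This matches Goldman's formula up to the explicit factor.

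Finally, closedness of $\omega$ (hence the symplectic, not merely Poisson, nature of the structure) can either be derived from the fact that the Jacobi identity for $\{\cdot,\cdot\}$ already holds by construction in Definition \ref{GoldmanFormula} (so the inverse $\omega$ of the resulting non-degenerate Poisson tensor is automatically a closed $2$-form on the smooth locus), or alternatively imported from Atiyah-Bott \cite{AB}: the smooth locus of $\mathcal{X}_{\SL_2}(\Sigma)$ identifies, via the holonomy map, with the smooth part of the moduli of flat $\SL_2$-connections on $\Sigma$, and the Atiyah-Bott form $\omega^{AB}(\alpha,\beta) = \int_\Sigma (\alpha \wedge \beta)$ on $\Omega^1(\Sigma;\mathfrak{sl}_2)$ descends to a closed $2$-form on this moduli space whose de Rham representative is exactly the one described above. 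The main obstacle in this program is the bookkeeping in the cup-product comparison: one has to track the signs coming from the orientation of $\Sigma$ and from the skew-symmetrization of the cup product carefully in order to recover Goldman's formula with the correct constant.
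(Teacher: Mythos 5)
The paper offers no proof of this statement: it is imported directly from Goldman, the justification being the identification (cited from \cite{Goldman_symplectic, Goldman86}) of the smooth locus of $\mathcal{X}_{\SL_2}(\Sigma)$ with the smooth part of the moduli space of flat $\SL_2$-connections carrying the Atiyah--Bott symplectic form \cite{AB}. Your sketch reconstructs the content of that citation rather than bypassing it: the identification $T_{[\rho]}\mathcal{X}_{\SL_2}(\Sigma)\cong H^1(\Sigma;\mathfrak{sl}_2^{\mathrm{Ad}\rho})$ at irreducible $\rho$ (smoothness of $\mathcal{R}_{\SL_2}(\Sigma)$ there follows since the obstruction space $H^2(\Sigma;\mathfrak{sl}_2^{\mathrm{Ad}\rho})$ is Poincar\'e-dual to $H^0=0$), the cup-product form composed with the Ad-invariant pairing, non-degeneracy by Poincar\'e duality with local coefficients, and the local-cocycle computation recovering Goldman's intersection formula are exactly Goldman's argument, so the approach is essentially the one the paper points to. One caveat: your first route to closedness, namely that ``the Jacobi identity already holds by construction in Definition \ref{GoldmanFormula},'' is not legitimate as stated --- the formula there is just a bilinear expression and its Jacobi identity is itself a theorem (in this paper it is obtained only a posteriori, through the deformation-quantization bracket on $\mathcal{S}_{+1}(\Sigma)$ transported by the isomorphism of Theorem \ref{theorem_skein+1}, or in Goldman's work through the flat-connection model), so invoking it here would be circular. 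Your second route --- identifying the cup-product $2$-form with the Atiyah--Bott form $\int_\Sigma(\alpha\wedge\beta)$, which is closed on the moduli of flat connections --- is the correct fix and is precisely the mechanism the paper's citation relies on; with that choice made, the outline is sound, modulo the sign/constant bookkeeping you already acknowledge.
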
.
  
  \begin{lemma}\label{lemma_X1} When $\Sigma$ is closed, the diagonal locus $\mathcal{X}^{(1)}$ is symplectic. \end{lemma}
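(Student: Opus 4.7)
The plan is to identify $\mathcal{X}^{(1)}$ explicitly with the smooth part of a Weyl quotient of a torus, equip that torus with a natural holomorphic symplectic form, and verify that the descended form agrees with the Poisson bracket induced by Goldman's formula.

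First, every diagonal representation $\rho:\pi_1(\Sigma_g)\to \SL_2$ is conjugate to one taking values in the maximal torus $T:=\{\mathrm{diag}(z,z^{-1})\,:\,z\in \mathbb{C}^*\}\cong \mathbb{C}^*$, and since $T$ is abelian it factors through the abelianization $H_1(\Sigma_g;\mathbb{Z})\cong \mathbb{Z}^{2g}$. Two such representations are $\SL_2$-conjugate if and only if they are related by the Weyl group $W:=N_{\SL_2}(T)/T\cong \mathbb{Z}/2\mathbb{Z}$, which acts on $\Hom(H_1(\Sigma_g;\mathbb{Z}),T)\cong T^{2g}$ by simultaneous inversion. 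The $W$-fixed points correspond exactly to the central representations, so $\mathcal{X}^{(1)}\cong (T^{2g}\setminus \{\pm 1\}^{2g})/W$ as a smooth complex variety of dimension $2g$.

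Next, pick a symplectic basis $a_1,b_1,\dots,a_g,b_g$ of $H_1(\Sigma_g;\mathbb{Z})$ and let $(\alpha_1,\beta_1,\dots,\alpha_g,\beta_g)$ denote the corresponding coordinates on $T^{2g}$. Consider the holomorphic $2$-form
$$\omega:=\sum_{i=1}^{g}\frac{d\alpha_i}{\alpha_i}\wedge\frac{d\beta_i}{\beta_i}.$$
This form is closed, non-degenerate, intrinsic (depending only on the algebraic intersection pairing on $H_1(\Sigma_g;\mathbb{Z})$), and $W$-invariant, because inversion negates each logarithmic differential and sign changes cancel in pairs. Since $W$ acts freely on $T^{2g}\setminus \{\pm 1\}^{2g}$, the form $\omega$ descends to a symplectic form $\omega_{\mathcal{X}^{(1)}}$ on $\mathcal{X}^{(1)}$.

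The remaining step is to verify that $\omega_{\mathcal{X}^{(1)}}$ coincides with the symplectic form induced by restricting the Goldman Poisson bracket to $\mathcal{X}^{(1)}$. For $\alpha\in \pi_1(\Sigma_g)$ with homology class $\sum_i(m_i a_i+n_i b_i)$, write $z_\alpha:=\prod_i\alpha_i^{m_i}\beta_i^{n_i}$ so that $\tau_\alpha=z_\alpha+z_\alpha^{-1}$ on the diagonal locus. A direct calculation based on Definition \ref{GoldmanFormula}, using the fact that at a diagonal $\rho$ one has $X_{\tau,\alpha}(v)=\tfrac12(z_\alpha-z_\alpha^{-1})H$ with $H=\mathrm{diag}(1,-1)$, yields
$$\{\tau_\alpha,\tau_\beta\}_{\mathrm{Goldman}}([\rho])=\langle \alpha,\beta\rangle\,(z_\alpha-z_\alpha^{-1})(z_\beta-z_\beta^{-1}),$$
where $\langle \alpha,\beta\rangle$ is the algebraic intersection number. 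The Poisson bracket associated with $\omega$ satisfies $\{z_\alpha,z_\beta\}_\omega=\langle \alpha,\beta\rangle z_\alpha z_\beta$, so expanding $\{\tau_\alpha,\tau_\beta\}_\omega$ reproduces the Goldman expression above. Since by Procesi's theorem the trace functions separate and generate the regular functions on $\mathcal{X}^{(1)}$, this identifies $\omega_{\mathcal{X}^{(1)}}$ with the symplectic form arising from Goldman's Poisson structure, proving that $\mathcal{X}^{(1)}$ is symplectic. The main obstacle is organising this last verification coherently: it is ultimately a short $\mathfrak{sl}_2$ computation, but it is the step that ties the abstract symplectic form on the torus quotient to the intrinsic Poisson structure on the character variety.
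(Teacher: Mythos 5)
Your proof is correct and follows essentially the same route as the paper: the paper also passes to the torus $\mathcal{X}_{\mathbb{C}^*}(\Sigma)=\mathrm{H}^1(\Sigma;\mathbb{C}^*)$ with the intersection-form Poisson structure (which is exactly the structure induced by your logarithmic form $\omega$), views $\mathcal{X}^{(1)}\cup\mathcal{X}^{(2)}$ as its quotient/double branched cover under inversion, and performs the same $\mathfrak{sl}_2$ computation $X_{\tau,\alpha}(v)=\tfrac12(z_\alpha-z_\alpha^{-1})H$ to match Goldman's bracket with the torus bracket. The only cosmetic difference is that you phrase the comparison as descending a $W$-invariant $2$-form along the free quotient, while the paper phrases it as showing the covering map $\Phi$ is Poisson and unbranched over $\mathcal{X}^{(1)}$.
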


\begin{proof}
Consider the symplectic torus $\mathcal{X}_{\mathbb{C}^*}(\Sigma)$ defined as follows. As an affine variety $$\mathcal{X}_{\mathbb{C}^*}(\Sigma):= \Specm \left( \mathbb{C}[\mathrm{H}_1(\Sigma; \mathbb{Z})] \right).$$ 
 Here $\mathbb{C}[\mathrm{H}_1(\Sigma; \mathbb{Z})] $ denotes the group algebra
with  basis consisting of $X_{[\gamma]}$ for $[\gamma] \in  \mathrm{H}_1(\Sigma; \mathbb{Z})$ and relations $X_{[\gamma_1]} X_{[\gamma_2]} = X_{[\gamma_1]+[\gamma_2]}$.
The symplectic structure is given by the formula $$\{X_{[\alpha]}, X_{[\beta]} \} := ([\alpha], [\beta]) X_{[\alpha+\beta]},$$ where $(\cdot, \cdot)$ is the intersection form. Since $\Sigma$ is closed, the intersection form has trivial kernel so the Poisson structure is indeed symplectic. Note that the set of closed points of $\mathcal{X}_{\mathbb{C}^*}(\Sigma)$ is identified with $\mathrm{H}^1(\Sigma; \mathbb{C}^*)$. The diagonal inclusion $\varphi: \mathbb{C}^* \hookrightarrow \SL_2$ induces a regular morphism $\Phi: \mathcal{X}_{\mathbb{C}^*}(\Sigma) \to \mathcal{X}_{\SL_2}(\Sigma)$ sending $\chi: \mathrm{H}_1(\Sigma; \mathbb{Z}) \to \mathbb{C}^*$ to $\Phi(\chi):= [\varphi \circ \chi]$. Said differently, $\Phi^*: \mathcal{O}[\mathcal{X}_{\SL_2}(\Sigma)] \to \mathbb{C}[\mathrm{H}_1(\Sigma; \mathbb{Z})] $ sends $\tau_{\gamma}$ to $X_{[\gamma]}+X_{-[\gamma]}$. Clearly $\Phi: \mathcal{X}_{\mathbb{C}^*}(\Sigma) \to \mathcal{X}^{(1)}\cup \mathcal{X}^{(2)}$ is a double branched covering branched along $\mathcal{X}^{(2)}$.
\par Let us prove that $\Phi$ is Poisson. Let $[\rho]\in \mathcal{X}^{(1)}\cup \mathcal{X}^{(2)}$, consider two simple closed curves $\alpha, \beta$ which intersects transversally as before and let $v\in \alpha \cap \beta$. Let 
 $\rho_v : \pi_1(\Sigma, v) \xrightarrow{\lambda} \mathbb{C}^* \xrightarrow{\varphi} \SL_2$ be a diagonal representation representing the class $[\rho]$. For $Y= \begin{pmatrix}a& b\\ c & -a\end{pmatrix} \in \mathfrak{sl}_2$, then 
 $$ \tau (\rho_v(\alpha_v)Y)= \Tr \left( \begin{pmatrix} \lambda(\alpha_v) & 0 \\ 0 & \lambda(\alpha_v)^{-1} \end{pmatrix} \begin{pmatrix} a & b\\ c & -a \end{pmatrix}\right) = (\lambda(\alpha)-\lambda(\alpha)^{-1})a =(\lambda(\alpha)-\lambda(\alpha)^{-1}) (\frac{H}{2}, Y).$$
 Therefore $X_{\tau, \alpha}(v)= (\lambda(\alpha)-\lambda(\alpha)^{-1}) \frac{H}{2}$ so 
 $$ (X_{\tau, \alpha}(v), X_{\tau, \beta}(v) ) = (\lambda(\alpha)-\lambda(\alpha)^{-1})(\lambda(\beta)-\lambda(\beta)^{-1}) \frac{(H,H)}{4} = \frac{1}{2}(\tau_{[\alpha +\beta]} - \tau_{[\alpha - \beta]})(\rho_v).$$
 So Goldman's formula in Definition \ref{GoldmanFormula} reads:
$$ \{ \tau_{[\alpha]} , \tau_{[\beta]} \} ([\rho])=  \left( \sum_{v \in \alpha \cap \beta} \epsilon(v) \right)( \tau_{[\alpha+ \beta]}-\tau_{[\alpha-\beta]})([\rho])= ([\alpha], [\beta]) (\tau_{[\alpha+\beta]}-\tau_{[\alpha-\beta]})([\rho]).$$
On the other hand, one has 
\begin{multline*}
 \{ \Phi^* ( \tau_{\alpha}), \Phi^{*}(\tau_{\beta}) \}= \{ X_{[\alpha]} + X_{[\alpha]}^{-1}, X_{[\beta]} + X_{[\beta]}^{-1}\} = ([\alpha], [\beta]) (X_{[\alpha +\beta]} + X_{[\alpha +\beta]}^{-1} - X_{[\alpha- \beta]} - X_{[\alpha - \beta]}^{-1}) \\ = ([\alpha], [\beta]) (\Phi^*(\tau_{[\alpha+\beta]} + \tau_{[\alpha - \beta]})).\end{multline*}
Therefore $\{ \Phi^*(\tau_{[\alpha]}), \Phi^*(\tau_{[\beta]}) \}= \Phi^* ( \{\tau_{[\alpha]}, \tau_{[\beta]} \} )$ so $\Phi^*$ is Poisson. Since $\Phi: \mathcal{X}_{\mathbb{C}^*}(\Sigma) \to \mathcal{X}^{(1)}\cup \mathcal{X}^{(2)}$ is a double branched covering, regular at  $\mathcal{X}^{(1)}$, the fact that $\mathcal{X}_{\mathbb{C}^*}(\Sigma)$ is symplectic implies that $\mathcal{X}^{(1)}$ is symplectic as well.
\end{proof}

\begin{theorem}\label{theorem_skein+1}(\cite{Bullock, PS00, Barett, Turaev91})
Let $S$ be a spin structure on $\Sigma$ and denote by $w_S: \mathrm{H}_1(\Sigma; \mathbb{Z}/2\mathbb{Z}) \to \mathbb{Z}/2\mathbb{Z}$ its associated Johnson quadratic form. There exists a Poisson isomorphisms
$$\Psi : (\mathcal{S}_{-1}(\Sigma), \{\cdot, \cdot \} ) \xrightarrow{\cong} (\mathcal{O}[\mathcal{X}_{\SL_2}(\Sigma)], \{ \cdot, \cdot \} )
\  \mbox{ and } \ 
\Psi^S : (\mathcal{S}_{+1}(\Sigma), \{\cdot, \cdot \} ) \xrightarrow{\cong} (\mathcal{O}[\mathcal{X}_{\SL_2}(\Sigma)], \{ \cdot, \cdot \} )$$
characterized by 
$$ \Psi([\gamma]):= -\tau_{\gamma}, \quad \Psi^S([\gamma]):= (-1)^{w_S([\gamma])+1} \tau_{\gamma}.$$
\end{theorem}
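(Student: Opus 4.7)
The proof splits into two halves: establishing $\Psi^S$ as an algebra isomorphism, and verifying that it is Poisson. For the algebra part, I would combine three classical inputs. Bullock's argument in \cite{Bullock} produces a surjective algebra morphism $\overline{\Psi}: \mathcal{S}_{-1}(\Sigma) \twoheadrightarrow \mathcal{O}[\mathcal{X}_{\SL_2}(\Sigma)]$ sending a connected curve to $-\tau_\gamma$ (the minus sign comes from the unknot evaluating to $-(A^2+A^{-2}) = -2$ at $A=-1$ versus $\tr(\mathds{1}_2) = +2$). Przytycki--Sikora \cite{PS00} show this morphism is injective. Finally, Barrett \cite{Barett} provides, for each spin structure $S$, an algebra isomorphism $\phi_S: \mathcal{S}_{+1}(\Sigma) \xrightarrow{\cong} \mathcal{S}_{-1}(\Sigma)$ sending a connected curve $\gamma$ to $(-1)^{w_S([\gamma])}\gamma$ and extended multiplicatively to the basis of multicurves. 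Composing yields $\Psi^S := \overline{\Psi} \circ \phi_S$ with the formula stated in the theorem.

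For the Poisson compatibility, Theorem \ref{theorem_basic}(2) supplies a finite set of connected curves generating $\mathcal{S}_{+1}(\Sigma)$, and both brackets obey the Leibniz rule, so it suffices to check
$$\Psi^S(\{[\alpha],[\beta]\}) = \{\Psi^S([\alpha]),\Psi^S([\beta])\}$$
for two simple closed curves $\alpha, \beta$ placed in transverse position. I would compute each side explicitly as a linear combination of the multicurves $\gamma_v^\pm$ obtained from $\alpha \cup \beta$ by smoothing at a single intersection point $v$ (in the two possible ways).

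The left-hand side is obtained by expanding the Kauffman skein relation at $A = e^{\hbar/2} = 1 + \hbar/2 + O(\hbar^2)$: a transverse crossing of sign $\epsilon(v)$ resolves as $s_1^{(v)}+s_2^{(v)} + \epsilon(v)\tfrac{\hbar}{2}(s_1^{(v)}-s_2^{(v)}) + O(\hbar^2)$. Computing $[\alpha]\star[\beta] - [\beta]\star[\alpha]$ modulo $\hbar^2$ (the order reversal flips $\epsilon(v)$) gives
$$\{[\alpha],[\beta]\} = \sum_{v\in \alpha\cap\beta} \epsilon(v)\bigl([\gamma_v^+]-[\gamma_v^-]\bigr).$$
For the right-hand side, a direct gradient calculation shows $X_{\tau,\alpha}(v) = \rho_v(\alpha_v) - \tfrac{1}{2}\tr(\rho_v(\alpha_v))\mathds{1}_2$; combining this with the $\SL_2$ identity $\tr(M)\tr(N) = \tr(MN) + \tr(MN^{-1})$ simplifies Goldman's formula (Definition \ref{GoldmanFormula}) to $\{\tau_\alpha,\tau_\beta\} = \sum_v \epsilon(v)(\tau_{\alpha_v\beta_v} - \tau_{\alpha_v\beta_v^{-1}})$. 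As immersed curves in $\Sigma$, the concatenated loops $\alpha_v\beta_v^{\pm 1}$ coincide with the two smoothings $\gamma_v^\pm$, so the two sides match term-by-term up to signs.

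The main obstacle is this final sign check. It relies on the defining property of $w_S$ as a quadratic refinement of the mod-$2$ intersection form,
$$w_S([\alpha]+[\beta]) \equiv w_S([\alpha]) + w_S([\beta]) + [\alpha]\cdot[\beta] \pmod 2,$$
combined with the multiplicative extension of $\Psi^S$ to multicurves (which introduces an extra sign $(-1)^{c(\gamma)}$ counting components) and careful treatment of smoothings producing a contractible component via the relation $\mbox{unknot} = -2$ at $A=+1$. The quadratic refinement property is what ultimately converts the factor $(-1)^{[\alpha]\cdot[\beta]}$ expected by intersection-theoretic signs into the pointwise sign required to equate the two bracket formulas.
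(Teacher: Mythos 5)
The paper itself offers no proof of this statement (it is quoted from \cite{Bullock, PS00, Barett, Tu88}), so your proposal can only be measured against the standard arguments behind those citations; and indeed your assembly of the algebra isomorphism (Bullock's surjection at $A=-1$, injectivity from Przytycki--Sikora, Barrett's spin twist $\mathcal{S}_{+1}\cong\mathcal{S}_{-1}$) and your reduction of the Poisson check to simple closed curves, via the first-order expansion of the Kauffman relation and Goldman's product formula $\{\tau_\alpha,\tau_\beta\}=\sum_v\epsilon(v)(\tau_{\alpha_v\beta_v}-\tau_{\alpha_v\beta_v^{-1}})$ (your computation of $X_{\tau,\alpha}(v)$ is correct and does absorb the factor $2$ in Definition \ref{GoldmanFormula}), is exactly the intended route.

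There is, however, a genuine gap in the final sign verification as you have set it up. The one-point smoothings $\gamma_v^{\pm}$ of $\alpha\cup\beta$ are \emph{not} embedded multicurves: they are connected blackboard-framed diagrams carrying the remaining $\#(\alpha\cap\beta)-1$ intersection points as self-crossings, and their free homotopy classes $\alpha_v\beta_v^{\pm1}$ are in general non-simple. Hence you may not evaluate $\Psi^S$ on them by the characterizing formula $(-1)^{w_S+1}\tau$ (corrected only by component counts and the unknot value $-2$): at $A=+1$ a kink contributes $-A^{\pm3}=-1$, so the skein class of such a diagram --- equivalently Barrett's sign --- depends on the writhe/framing and not only on the mod-$2$ homology class, and the missing factor is precisely $(-1)^{\#(\alpha\cap\beta)-1}=(-1)^{[\alpha]\cdot[\beta]+1}$. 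With only the ingredients you list, the two sides of the Poisson identity differ by this global sign whenever $\alpha$ and $\beta$ meet in an even number of points: for the $(1,0)$- and $(1,2)$-curves on the torus your bookkeeping produces $-2(\tau_{(2,2)}-\tau_{(0,2)})$ where $+2(\tau_{(2,2)}-\tau_{(0,2)})$ is required. Once the writhe sign is included (i.e.\ Barrett's isomorphism is applied to framed knots rather than to homotopy classes, or equivalently the one-point smoothings are first corrected to zero writhe using the kink relation), the quadratic refinement $w_S([\alpha]+[\beta])=w_S([\alpha])+w_S([\beta])+[\alpha]\cdot[\beta]$ closes the computation exactly as you anticipate; so the fix is local to this step, but as written the "term-by-term match up to signs" would fail.
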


From now on, we fix once and for all a spin structure on $\Sigma$ if $\varepsilon=+1$  and identify $\Specm( \mathcal{S}_{\varepsilon}(\Sigma))$ with $\mathcal{X}_{\SL_2}(\Sigma)$. In particular, we consider classical shadows as closed points in $\mathcal{X}_{\SL_2}(\Sigma)$.

\subsection{Poisson orders}\label{sec_PO}

The theory of Poisson orders was created by DeConcini-Kac in their original work on quantum  algebras \cite{DeConciniKacRepQGroups} and generalized by Brown-Gordon in \cite{BrownGordon_PO}.

\begin{definition}
A \textit{Poisson order} is a $4$-tuple $(\mathcal{A}, {X}, \phi, D)$ where: 
\begin{enumerate}
\item $\mathcal{A}$ is an (associative, unital) affine prime $\mathbb{C}$-algebra finitely generated over its center $\mathcal{Z}$; 
\item ${X}$ is a Poisson affine $\mathbb{C}$-variety; 
\item $\phi: \mathcal{O}[{X}] \hookrightarrow \mathcal{Z}$ a finite injective morphism of algebras; 
\item $D: \mathcal{O}[{X}] \to Der (\mathcal{A}) : z\mapsto D_z$ a linear map such that for all $f,g \in  \mathcal{O}[{X}]$, we have
$$ D_f(\phi(g))= \phi(\{f,g\}).$$
\end{enumerate}
\end{definition}

Let $X$ be an affine complex Poisson variety. 
\begin{itemize}
\item Define a first partition $X=X^0\sqcup \ldots \sqcup X^n$ where $X^0$ is the smooth locus of $X$ and for $i=0, \ldots, n-1$, $X^{i+1}$ is the smooth locus of $X\setminus (X^0 \sqcup \ldots \sqcup X^{i})$. Then each $X^i$ is a smooth complex affine variety that can be seen as an analytic Poisson variety. 
\item  Define an equivalence relation $\sim$ on $X^i$ by 
  writing $x\sim y$ if there exists a finite sequence $x=p_0, p_1, \ldots, p_k=y$ and functions $h_0, \ldots, h_{k-1} \in \mathcal{O}[X^i]$ such that $p_{i+1}$ is obtained from $p_i$ by deriving along the Hamiltonian flow of $h_i$. Write $X^i= \bigsqcup_j X^{i,j}$ the orbits of this relation.   
  \end{itemize}
  
  \begin{definition} 
  The elements $X^{i,j}$ of the (analytic) partition $X= \bigsqcup_{i,j} X^{i,j}$ are called the \textit{symplectic leaves} of $X$.
  \end{definition}

The symplectic leaves are analytic subsets. One can also define a partition into algebraic subsets as follows. An ideal $\mathcal{I}\subset \mathcal{O}[X]$ is a \textit{Poisson ideal} if $\{ \mathcal{I}, \mathcal{O}[{X}] \} \subset \mathcal{I}$. Since the sum of two Poisson ideals is a Poisson ideal, every maximal ideal $\mathfrak{m}\subset \mathcal{O}[{X}]$ contains a unique maximal Poisson ideal $P(\mathfrak{m}) \subset \mathfrak{m}$. Define an equivalence relation $\sim$ on ${X}=\mathrm{Specm}(\mathcal{O}[{X}])$ by $\mathfrak{m} \sim \mathfrak{m}'$ if $P(\mathfrak{m})=P(\mathfrak{m}')$. The equivalence class of $\mathfrak{m}$ is called its \textit{symplectic core} and is denoted by $C(\mathfrak{m})$.
 
 \begin{definition} The partition ${X}=\bigsqcup_{\mathfrak{m}\in {X/\sim}} C(\mathfrak{m})$ is called the \textit{symplectic cores partition} of ${X}$.
 \end{definition} 
 
If $x, y\in {X}$ belong to the same smooth strata of ${X}$ and are such that $y$ can be obtained from $x$ by deriving along some Hamiltonian flow, Brown and Gordon proved that $C(x)=C(y)$ from which they deduce the

\begin{proposition}(Brown-Gordon \cite[Proposition $3.6$]{BrownGordon_PO}) The symplectic leaves partition is a refinement of the symplectic cores partition.
\end{proposition}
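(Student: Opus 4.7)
The plan is to reduce the statement to the single--step lemma mentioned immediately before the proposition: if $x, y$ lie in the same smooth stratum $X^i$ and $y$ arises from $x$ by flowing along a Hamiltonian vector field on $X^i$, then $C(x) = C(y)$. Once this lemma is granted, two points in the same symplectic leaf are by definition connected by a finite chain $x = p_0, \ldots, p_k = y$ in $X^i$ with each $p_{\ell+1}$ obtained from $p_\ell$ by a Hamiltonian flow, so iterating the lemma yields $C(x) = C(y)$, which says precisely that the leaf containing $x$ is contained in a single symplectic core. This is exactly the refinement statement.

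To prove the underlying one--step lemma, I would proceed as follows. By construction $C(x)$ is the vanishing locus $V(P(\mathfrak{m}_x))$ of the maximal Poisson ideal contained in $\mathfrak{m}_x$. Since $P(\mathfrak{m}_x)$ is a Poisson ideal, every Hamiltonian derivation $\{h,\cdot\}$ with $h\in \mathcal{O}[X]$ preserves $P(\mathfrak{m}_x)$, so the corresponding vector field on $X$ is tangent to the subvariety $V(P(\mathfrak{m}_x))$. Consequently the integral curve through $x$ cannot leave $V(P(\mathfrak{m}_x))$, hence $y \in V(P(\mathfrak{m}_x))$, which gives $P(\mathfrak{m}_x) \subset \mathfrak{m}_y$. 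Being a Poisson ideal contained in $\mathfrak{m}_y$, one has $P(\mathfrak{m}_x) \subset P(\mathfrak{m}_y)$; reversing the roles of $x$ and $y$ (the relation $\sim$ on $X^i$ is symmetric, as one may run the Hamiltonian flow backwards) yields the opposite inclusion, so $P(\mathfrak{m}_x) = P(\mathfrak{m}_y)$ and therefore $C(x) = C(y)$.

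The main technical point, and what I expect to require most care, is that the symplectic leaves are built from flows of Hamiltonians $h\in \mathcal{O}[X^i]$ defined only on the smooth stratum, whereas Poisson ideals live in $\mathcal{O}[X]$. To bridge this, one uses that $X^i$ is open in $X\setminus (X^0\cup\cdots\cup X^{i-1})$, so that $\mathcal{O}[X^i]$ is a localisation of $\mathcal{O}[X\setminus(X^0\cup\cdots\cup X^{i-1})]$ and every $h\in\mathcal{O}[X^i]$ is locally of the form $f/s$ for $f,s\in\mathcal{O}[X]$ with $s$ non--vanishing near $x$. The tangency of $\{f,\cdot\}$ and $\{s,\cdot\}$ to $V(P(\mathfrak{m}_x))$ then transfers, by the Leibniz rule and the invertibility of $s$ in a neighbourhood of the point, to tangency of $\{h,\cdot\}$, which is what the flow argument requires. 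Everything else is a routine iteration, so no further obstacle is expected.
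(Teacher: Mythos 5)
Your proposal is correct and follows essentially the same route as the paper, which itself only quotes the one-step fact from Brown--Gordon (Hamiltonian flow within a stratum preserves the symplectic core, i.e. $C(x)=C(y)$) and deduces the proposition by iterating along the finite chain defining the leaf equivalence. Your additional proof of that one-step lemma --- $P(\mathfrak{m}_x)$ is a Poisson ideal, so Hamiltonian vector fields (including those of local quotients $f/s$ on the stratum, whose Poisson structure is the induced one since the strata ideals are Poisson) are tangent to $V(P(\mathfrak{m}_x))$, hence the flow stays in it, giving $P(\mathfrak{m}_x)\subset P(\mathfrak{m}_y)$ and equality by running the flow backwards --- is exactly the standard argument behind the cited result, so there is nothing to object to.
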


 If $X^{i,j}$ is a symplectic leaf contained in a symplectic core $C$, then $C$ is the smallest algebraic subset containing $X^{i,j}$.

\begin{theorem}\label{theorem_PO}(Brown-Gordon \cite[Theorem $4.2$]{BrownGordon_PO}) Let $(\mathcal{A}, {X}, \phi, D)$ be a Poisson order. If $x,y\in{X}$ belong to the same symplectic core, then $\mathcal{A}_x\cong \mathcal{A}_y$.
\end{theorem}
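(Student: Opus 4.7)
I would split the proof into two stages: first establish the isomorphism of fibers along symplectic leaves (which refine the symplectic cores partition), then upgrade to symplectic cores using algebraic geometry.

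\emph{Stage 1 (leaves).} Two points $x,y$ lie on the same symplectic leaf iff $y$ is reached from $x$ by a finite sequence of Hamiltonian flows, so it suffices to fix $h\in\mathcal{O}[X]$ and prove $\mathcal{A}_x\cong \mathcal{A}_{\phi_t^h(x)}$ for small $t$, where $\phi_t^h$ denotes the time-$t$ flow of the Hamiltonian vector field $X_h$. The Poisson-order axiom $D_h\circ\phi=\phi\circ\{h,-\}$ says that the derivation $D_h$ of $\mathcal{A}$ lifts $X_h$ via $\phi$. The key technical step is to integrate $D_h$: although $\exp(tD_h)$ has no global meaning on $\mathcal{A}$, module-finiteness of $\mathcal{A}$ over $\phi(\mathcal{O}[X])$ permits one to choose a finite set of generators, write the ODE $\dot{a}(t)=D_h(a(t))$ on these generators, and solve analytically for small $t$ in an analytic localization (or a suitable $\mathfrak{m}_x$-adic completion) of $\mathcal{A}$. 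The resulting algebra automorphism $\alpha_t^h$ intertwines $\mathcal{I}_x$ with $\mathcal{I}_{\phi_t^h(x)}$, because its restriction to $\phi(\mathcal{O}[X])$ is the pullback along the Hamiltonian flow, and hence descends to $\mathcal{A}_x\xrightarrow{\cong}\mathcal{A}_{\phi_t^h(x)}$.

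\emph{Stage 2 (cores).} First, $P(\mathfrak{m})$ is prime: the radical of a Poisson ideal is Poisson, and more strongly each minimal prime over a Poisson ideal is Poisson; applied to $P(\mathfrak{m})\subset\mathfrak{m}$ this yields a Poisson prime contained in $\mathfrak{m}$ which, by maximality of $P(\mathfrak{m})$, must equal $P(\mathfrak{m})$. Hence $V(P(\mathfrak{m}_x))$ is irreducible and the core $C(x)$ is dense in it. Now combine $(a)$ upper-semicontinuity and constructibility of the fiber structure: the isomorphism class of $\mathcal{A}_y$ is constant on a Zariski dense open $U\subseteq V(P(\mathfrak{m}_x))$, with $(b)$ the classical fact that every symplectic leaf contained in $C(x)$ is dense (analytically, hence Zariski) in $V(P(\mathfrak{m}_x))$ and so meets $U$. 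Given $y\in C(x)$, pick $y'\in U$ on the symplectic leaf of $y$ and $x'\in U$ on the leaf of $x$; Stage 1 then gives $\mathcal{A}_x\cong\mathcal{A}_{x'}\cong\mathcal{A}_{y'}\cong\mathcal{A}_y$.

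\emph{Main obstacle.} The delicate point is Stage 1: promoting the algebraic derivation $D_h$ to an honest algebra isomorphism between fibers. This requires leaving the purely algebraic category (analytic ODE or formal completion) while keeping track of how the resulting automorphism twists the ideal $\mathcal{I}_x$ along the Hamiltonian trajectory. Stage 2, by contrast, is a standard application of Noetherian/complex algebraic geometry once Stage 1 is secured.
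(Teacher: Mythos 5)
The paper does not actually prove this statement: it is quoted from Brown--Gordon, so the only baseline for comparison is their argument, whose two-step architecture (integrate the derivations $D_h$ analytically along Hamiltonian flow lines to get constancy of the fiber isomorphism type along symplectic leaves, then pass from leaves to cores by a Zariski-density argument) your proposal reproduces. Your Stage 1 is essentially the standard De Concini--Kac/Brown--Gordon integration step and is fine in outline, with one caveat: an $\mathfrak{m}_x$-adic completion only sees the infinitesimal neighbourhood of $x$ and cannot transport the fiber to a genuinely different point $\phi^h_t(x)$; one must solve the ODE in an analytic localization, and the resulting map is an isomorphism of analytic trivializations inducing $\mathcal{A}_x\cong\mathcal{A}_{\phi^h_t(x)}$, not an automorphism of $\mathcal{A}$ itself.

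The genuine gap is your Stage 2, item $(a)$. Generic constancy of the isomorphism class of the fibers is \emph{not} a consequence of ``upper-semicontinuity and constructibility of the fiber structure'': finite-dimensional algebras have moduli (for instance the fibers of $\mathbb{C}\langle u,v\rangle/(u^2,v^2,uv+t\,vu)$ are pairwise non-isomorphic for generic $t$), so a coherent sheaf of finite-dimensional algebras over an irreducible base can have non-constant isomorphism type on every dense open set. In the Poisson-order setting the statement you need is true, but it has to be \emph{derived from Stage 1} rather than invoked independently: by Stage 1 the locus $S_x=\{z:\ \mathcal{A}_z\cong\mathcal{A}_x\}$ contains the symplectic leaf $L_x$, whose Zariski closure equals $V(P(\mathfrak{m}_x))$ (the ideal of $\overline{L_x}$ is a Poisson ideal contained in $\mathfrak{m}_x$, hence contained in $P(\mathfrak{m}_x)$ by maximality, while the reverse inclusion is the flow-invariance of Poisson subvarieties); moreover $S_x$ is constructible, which itself needs an argument (stratify by fiber dimension, use local freeness on the reduced strata, map to the variety of structure constants and use that $\GL_n$-orbits are constructible). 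A dense constructible set contains a dense open, and that open set can play the role of your $U$; with this substitution the chain $\mathcal{A}_x\cong\mathcal{A}_{x'}\cong\mathcal{A}_{y'}\cong\mathcal{A}_y$ goes through. As written, however, step $(a)$ is both unsupported and, read as a general principle about families of algebras, false, so this is the point that must be repaired.
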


Note that Theorems \ref{theorem_AL_dense} and \ref{theorem_PO} imply that if $X$ contains a symplectic leaf (or core) which is dense, then it is included into the Azumaya locus. 
\par Here is our main source of examples of Poisson orders; we closely follow \cite{BrownGordon_PO} to which we refer for further details. 
  Let $\mathcal{A}_q$ be a free affine $\mathbb{C}[q^{\pm 1}]$-algebra which is a domain. Let   $N\geq 1$,  consider  the $k_N:= \quotient{\mathbb{C}[q^{\pm 1}]}{(q^N-1)}$ algebra $\mathcal{A}_N:= \mathcal{A}_q \otimes_{\mathbb{C}[q^{\pm 1}]} k_N$ and denote by $\pi: \mathcal{A}_q \to \mathcal{A}_N$ the quotient map. Fix a basis $\mathcal{B}$ of the $\mathbb{C}[q^{\pm 1}]$ module $\mathcal{A}_q$. Then  $\mathcal{A}_N$ has basis $\{q^i \pi(b), b \in \mathcal{B}, i=0, \ldots, N-1\}$ over $\mathbb{C}$. 
Define a $\mathbb{C}$ linear embedding $\hat{\cdot}: 
\mathcal{A}_{N} \to \mathcal{A}_q$ sending a basis element $x= q^i \pi(b) \in \mathcal{A}_{N}$ to the  element $\hat{x}:=q^i b \in \mathcal{A}_q$. Note that $\hat{\cdot}$ is a left inverse for $\pi$. Suppose that the  algebra $\mathcal{A}_{+1}=\mathcal{A}_q\otimes_{q=1}\mathbb{C}$ is commutative and suppose there exists a central embedding  $\phi_N: \mathcal{A}_{+1} \hookrightarrow \mathcal{Z}(\mathcal{A}_{N})$ into the center of $\mathcal{A}_{N}$ such that $\mathcal{A}_N$ is finitely generated as an $\mathcal{A}_{+1}$-module. The pair $(\mathcal{A}_q, \phi_N)$ will be referred to as a \textit{quantum algebra}.
 Write $\mathcal{X}:=\mathrm{Specm}(\mathcal{A}_{+1})$ and define $D: \mathcal{A}_{+1} \to Der(\mathcal{A}_N)$ by the formula
 $$ D_xy:= \pi \left( \frac{ [\widehat{\phi_N(x)}, \hat{y}]}{N(q^N-1)} \right) .$$
Clearly $D_x$ is a derivation, is independent on the choice of the basis $\mathcal{B}$ and preserves $\phi_N(\mathcal{A}_{+1})$, so it defines a Poisson bracket $\{\cdot, \cdot \}_N$ on $\mathcal{A}_{+1}$ by 
\begin{equation}\label{eq_PoissonOrder}
 D_x \phi_N(y)= \phi_N (\{x,y\}_N).
 \end{equation}
So  $\mathcal{X}=\mathrm{Specm}(\mathcal{A}_{+1})$ is a Poisson variety. Let $\zeta$ be a root of unity of order $N$, write $\mathcal{A}_{\zeta}=\mathcal{A}_q \otimes_{q=\zeta} \mathbb{C}$ and consider $D: \mathcal{A}_{+1} \to Der(\mathcal{A}_{\zeta})$ and $\phi_{\zeta}$ obtained by tensoring by $\mathbb{C}$. 
Then $(\mathcal{A}_{\zeta}, \mathcal{X}, \phi_{\zeta}, D)$ is a Poisson order. Note that $\mathcal{X}$ also admits a Poisson structure coming from deformation quantization, namely setting $\mathcal{A}_{\hbar}=\mathcal{A}_q\otimes_{q=\exp(\hbar)}\mathbb{C}[[\hbar]]$ with product $\star_{\hbar}$, we also have a bracket $\{\cdot, \cdot\}$ by the formula
$$ a \star b - b \star a \equiv \{a,b\} \hbar \pmod{\hbar^2}.$$
It is not clear that the two brackets $\{\cdot, \cdot\}$ and $\{\cdot, \cdot\}_N$ coincide in general. When the two brackets coincide, we say that the quantum algebra $(\mathcal{A}_q, \phi_N)$ is \textit{regular}.
Let $\mathcal{S}_q(\Sigma)$ be the skein algebra over the ring $\mathbb{C}[q^{\pm 1}]$ and $Ch_N: \gamma \mapsto T_N(\gamma)$.
Clearly $(\mathcal{S}_q(\Sigma), Ch_N)$ is a quantum algebra so  we have a Poisson order $(\mathcal{S}_A(\Sigma), \mathcal{X}_{\SL_2}(\Sigma), Ch_A, D)$. 

\begin{lemma}(\cite[Lemma $4.6$]{KojuRIMS}) \label{lemma_PB} $(\mathcal{S}_q(\Sigma), Ch_A)$ is regular, i.e. 
the Poisson brackets $\{\cdot, \cdot\}$ and $\{\cdot, \cdot\}_N$ on $\mathcal{S}_{\varepsilon}(\Sigma)$  coming respectively from deformation quantization (Definition \ref{def_DQSkein}) and from the Poisson order 
structure coincide.
\end{lemma}

Since Lemma \ref{lemma_PB} is published in a non peer-reviewed journal, we add its proof for the reader convenience. 
\begin{proof}
We suppose $\varepsilon=+1$, the case $\varepsilon=-1$ is similar.
We first prove the analogue of Lemma \ref{lemma_PB} in the easier case of a quantum torus. Let $E$ be a free $\mathbb{Z}$-module of finite rank (so $E\cong \mathbb{Z}^n$) and $(\cdot, \cdot): E\otimes E \to \mathbb{Z}$ a skew-symmetric pairing. The associated \textit{quantum torus} $\mathbb{T}_q$ is the $\mathbb{C}[q^{\pm 1}]$-algebra equal to  $\mathbb{C}[q^{\pm 1}][Z_e, e\in E]$ as a module with product $Z_a Z_b = q^{(a,b)}Z_{a+b}$. Clearly $\mathbb{T}_q$ is free with basis $\{Z_e, e\in E\}$, is a domain and is finitely generated (by the set $Z_e$ for $e$ in a generating set of $E$). 
When $\zeta$ is a root of unity of order $N$, the Frobenius morphism $Fr_N: \mathbb{T}_{+1} \hookrightarrow \mathcal{Z}(\mathbb{T}_{\zeta})$ is defined by $Fr_N(Z_e)= (Z_e)^N (= Z_{Ne})$. 
By the preceding discussion we obtain a Poisson order $(\mathbb{T}_{\zeta}, \mathcal{X}, Fr_N, D)$ where $\mathcal{X}= \Specm(\mathbb{T}_{+1}) \cong (\mathbb{C}^*)^n$ is equipped with the Poisson bracket $\{\cdot, \cdot\}_N$. On the one hand, the Poisson bracket coming from deformation quantization is computed via: 
$$ Z_a \star Z_b - Z_b \star Z_a = (q^{(a,b)}-q^{-(a,b)})Z_{a+b}= \hbar 2(a,b) Z_{a+b} \pmod{\hbar^2}, $$
  giving the Poisson bracket $\{ Z_a, Z_b\}= 2(a,b) Z_{a+b}$. On the other hand, the Poisson bracket coming from the Poisson order structure is computed via:
  \begin{align*}
& \pi \left( \frac{ [Fr_N(Z_a), Fr_N(Z_b)]}{N(q^N-1)} \right) =  \pi \left( \frac{ [Z_a^N, Z_b^N]}{N(q^N-1)} \right) \\
&= \pi \left( \frac{q^{N^2(a,b)} - q^{-N^2(a,b)}}{N(q^N-1)} \right) Z_{a+b}^N \\
&= \pi \left( q^{-N^2(a,b)}\frac{1}{N} \sum_{i=0}^{2N(a,b)-1} q^{Ni} \right) Fr_N(Z_{a+b})
= Fr_N(2(a,b)Z_{a+b}).
\end{align*}
So $\{Z_a, Z_b\}_N= 2(a,b)Z_{a+b}$ and the Poisson bracket $\{\cdot, \cdot \}_N$ coincides with the bracket coming from deformation quantization and $(\mathbb{T}_q, Fr_N)$ is regular. 
\par A morphism of quantum algebras $f : (\mathcal{A}_q, \phi_N)\to (\mathcal{B}_q, \Psi_N)$ is an algebra morphism $f: \mathcal{A}_q \to \mathcal{B}_q$ such that the induced morphisms $f_N: \mathcal{A}_{N} \to \mathcal{B}_N$ and $f_{+1}: \mathcal{A}_{+1} \to \mathcal{B}_{+1}$ satisfy $\Psi_N\circ f_{+1}=f_N\circ \phi_N$. Clearly $f_{+1}$ is a Poisson morphism for both the deformation quantization and the Poisson order Poisson brackets. In particular if $f$ is injective, then $ (\mathcal{B}_q, \Psi_N)$ is regular implies that $ (\mathcal{A}_q, \phi_N)$ is regular as well. For an open surface $\Sigma_{g,n}$ with $n\geq 1$, Bonahon and Wong defined in \cite{BonahonWongqTrace} an injective morphism of quantum algebras $\Tr_q^{\Delta}: (\mathcal{S}_q(\Sigma_{g,n}), Ch_N) \hookrightarrow (\mathbb{T}_q, Fr_N)$, named the quantum trace, from the skein algebra to a quantum torus named the balanced Chekhov-Fock algebra so $(\mathcal{S}_q(\Sigma_{g,n}), Ch_N)$ is regular. 
\par Similarly, if $p:  (\mathcal{A}_q, \phi_N)\twoheadrightarrow (\mathcal{B}_q, \Psi_N)$ is a surjective morphism, then $(\mathcal{A}_q, \phi_N)$ is regular implies that $(\mathcal{B}_q, \Psi_N)$ is regular as well. The inclusion $\iota: \Sigma_{g,1}\hookrightarrow \Sigma_g$ induces a surjective morphism $\iota_*: (\mathcal{S}_q(\Sigma_{g,1}), Ch_N) \to (\mathcal{S}_q(\Sigma_g), Ch_N)$ (sending $\gamma$ to $\iota(\gamma)$), so $(\mathcal{S}_q(\Sigma_g), Ch_N)$ is regular.
\end{proof}

\begin{remark} The above proof generalizes word-by-word to prove that the stated skein algebras and the reduced stated skein algebras define regular quantum algebras as well. Indeed, stated skein algebras can be embedded into reduced stated skein algebra (see \cite{LeYu_SSkeinQTraces}) and Bonahon-Wong quantum trace embeds reduced stated skein algebras into quantum tori as well (see \cite{LeStatedSkein, CostantinoLe19}). 
\end{remark}

By Theorem \ref{theorem_smooth_symplectic} and Lemma \ref{lemma_X1}, both $\mathcal{X}^{(0)}$ and $\mathcal{X}^{(1)}$ are symplectic leaves of $\mathcal{X}_{\SL_2}(\Sigma)$. Since the smooth locus is dense, Corollary \ref{coro_unicity} and Theorem \ref{theorem_PO} imply the 

\begin{corollary}\label{coro_GJS}(Ganev-Jordan-Safronov \cite[Theorem $1.1$]{GanevJordanSafranov_FrobeniusMorphism}) The smooth locus of $\mathcal{X}_{\SL_2}(\Sigma)$ is included into the Azumaya locus of $\skein(\Sigma)$.
\end{corollary}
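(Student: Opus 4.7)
The plan is to combine the Poisson order structure just constructed with the density of two key subsets of the character variety. The preceding paragraph equips $(\skein(\Sigma), \mathcal{X}_{\SL_2}(\Sigma), Ch_A, D)$ with the structure of a Poisson order, so Theorem \ref{theorem_PO} tells us that the isomorphism class of the finite dimensional quotient $\left(\skein(\Sigma)\right)_{[\rho]}$ is constant on each symplectic core of $\mathcal{X}_{\SL_2}(\Sigma)$. Since the partition into symplectic leaves refines the partition into symplectic cores (by the Brown-Gordon proposition quoted above), the isomorphism class is in particular constant on each symplectic leaf. Consequently, any symplectic leaf which meets the Azumaya locus is contained in it.

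Next I would identify the smooth locus as a single symplectic leaf. For $g\geq 2$, Theorem \ref{theorem_smooth_symplectic} asserts that the smooth locus $\mathcal{X}^{(0)}$ is symplectic; since $\mathcal{X}_{\SL_2}(\Sigma_g)$ is irreducible, this smooth locus is connected, so it is a single symplectic leaf. For $g=1$, Lemma \ref{lemma_X1} plays the analogous role for $\mathcal{X}^{(1)}$, which then coincides with the smooth locus. Thus in both cases, the smooth locus of $\mathcal{X}_{\SL_2}(\Sigma)$ is a symplectic leaf to which the invariance principle of the previous paragraph applies.

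It remains to exhibit a single point of the smooth locus that lies in the Azumaya locus. By Corollary \ref{coro_unicity}, $\mathcal{AL}$ is open and dense in $\mathcal{X}_{\SL_2}(\Sigma)$; the smooth locus is likewise open and dense in the irreducible variety $\mathcal{X}_{\SL_2}(\Sigma)$. The intersection of two dense open subsets of an irreducible variety is nonempty, so there exists $[\rho_0]$ in both. Applying the first paragraph to the symplectic leaf through $[\rho_0]$ — namely the whole smooth locus — yields the inclusion of the smooth locus in $\mathcal{AL}$. No serious obstacle arises: the proof is a direct combination of the Poisson order transport of Azumaya property along symplectic leaves, the symplectic nature of the smooth locus, and the generic density of the Azumaya locus, all of which have been established above.
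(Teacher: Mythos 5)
Your proposal is correct and follows essentially the same route as the paper: the smooth locus is a (dense) symplectic leaf by Theorem \ref{theorem_smooth_symplectic} and Lemma \ref{lemma_X1}, the Azumaya locus is open dense by Corollary \ref{coro_unicity}, and Theorem \ref{theorem_PO} (via the leaves-refine-cores proposition) transports the Azumaya property along the leaf. Your extra remarks on connectedness of the smooth locus and the genus-one case are only slightly more explicit versions of what the paper leaves implicit.
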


\begin{corollary}\label{coro_X1} For $g\geq 2$, if the Azumaya locus of  $\skein(\Sigma)$ intersects $\mathcal{X}^{(1)}$ non-trivially, then it contains $\mathcal{X}^{(1)}$.
\end{corollary}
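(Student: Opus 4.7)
The plan is to invoke the theory of Poisson orders: Theorem \ref{theorem_PO} forces the fibers $\skein(\Sigma)_x$ to be isomorphic across a single symplectic core, and the Azumaya property is an isomorphism invariant of the fiber. It therefore suffices to prove that the stratum $\mathcal{X}^{(1)}$ is contained in a single symplectic core of the Poisson variety $\mathcal{X}_{\SL_2}(\Sigma_g)$. I will actually prove the stronger statement that $\mathcal{X}^{(1)}$ is a single symplectic leaf in Brown--Gordon's sense, which then forces it into one symplectic core because the leaves partition refines the cores partition.

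To show $\mathcal{X}^{(1)}$ is a single leaf, first I would check connectedness. Tracing the proof of Lemma \ref{lemma_X1}, the union $\mathcal{X}^{(1)}\cup \mathcal{X}^{(2)}$ is the image under the finite morphism $\Phi$ of the irreducible torus $\mathcal{X}_{\mathbb{C}^*}(\Sigma_g)\cong (\mathbb{C}^*)^{2g}$, hence is irreducible; removing the finite branch locus $\mathcal{X}^{(2)}\cong \mathrm{H}^1(\Sigma_g;\mathbb{Z}/2\mathbb{Z})$ preserves irreducibility (and hence connectedness) since $g\geq 2$ makes this a finite set removed from an irreducible complex variety of positive dimension. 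By Lemma \ref{lemma_X1} the subvariety $\mathcal{X}^{(1)}$ is moreover smooth and the restriction of the Poisson bracket on $\mathcal{O}[\mathcal{X}_{\SL_2}(\Sigma_g)]$ to it is non-degenerate. Non-degeneracy implies that Hamiltonian vector fields of regular functions in $\mathcal{O}[\mathcal{X}^{(1)}]$ span the tangent space at every point of $\mathcal{X}^{(1)}$, and when combined with connectedness this guarantees that any two closed points of $\mathcal{X}^{(1)}$ can be joined by a finite concatenation of such Hamiltonian flows. This is precisely the definition of a single Brown--Gordon symplectic leaf stated in the text.

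Conclusion: all points of $\mathcal{X}^{(1)}$ lie in a common symplectic core of $\mathcal{X}_{\SL_2}(\Sigma_g)$, so Theorem \ref{theorem_PO} applied to the Poisson order $(\skein(\Sigma_g), \mathcal{X}_{\SL_2}(\Sigma_g), Ch_A, D)$ yields an isomorphism $\skein(\Sigma_g)_x \cong \skein(\Sigma_g)_y$ for all $x, y \in \mathcal{X}^{(1)}$. Since membership in $\mathcal{AL}$ depends only on the isomorphism class of the fiber, the dichotomy $\mathcal{X}^{(1)} \subset \mathcal{AL}$ or $\mathcal{X}^{(1)} \cap \mathcal{AL} = \emptyset$ follows, giving the statement.

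The main (modest) obstacle is the bridge between the analytic intuition that a connected smooth symplectic variety is a single leaf and Brown--Gordon's strictly algebraic definition via Hamiltonian flows of \emph{regular} functions on the stratum. Once it is noted that $\mathcal{X}^{(1)}$ is a smooth (quasi-affine) variety whose ring of regular functions separates tangent directions, a standard compactness-plus-connectedness argument along a continuous path produces the desired finite chain of regular Hamiltonian flows, so this subtlety is resolved by essentially formal considerations.
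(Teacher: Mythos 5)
Your argument is correct and is essentially the paper's own route: the paper obtains this corollary directly from the assertion (via Lemma \ref{lemma_X1} and Theorem \ref{theorem_smooth_symplectic}) that $\mathcal{X}^{(1)}$ is a symplectic leaf, hence lies in a single symplectic core, combined with Theorem \ref{theorem_PO} for the Poisson order $(\skein(\Sigma_g), \mathcal{X}_{\SL_2}(\Sigma_g), Ch_A, D)$. Your additional verification that $\mathcal{X}^{(1)}$ is connected and that smoothness plus non-degeneracy yields a single Brown--Gordon leaf simply fills in details the paper leaves implicit.
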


\subsection{Witten-Reshetikhin-Turaev representations}\label{sec_WRT}

The Witten-Reshetikhin-Turaev TQFTs from \cite{Wi2, RT} provide representations of Kauffman-bracket skein algebras at roots of unity. Let us describe a simple construction following \cite{BHMV2}. 
  Let $H_g$ be a genus $g$ handlebody and consider the skein module $\mathcal{S}_A(H_g)$. By identifying a tubular neighborhood $\mathcal{N}$ of $\partial H_g$ with $\Sigma_g \times [0,1]$, we get an oriented embedding $i : \Sigma_g\times [0,1] \to H_g$. We  consider $\mathcal{S}_A(H_g)$ as a $\mathcal{S}_A(\Sigma_g)$-left module by defining for two framed links $L_1 \subset \Sigma_g\times [0,1]$ and $L_2\subset H_g$ the module product $[L_1]\cdot [L_2] := [i(L_1)\cup L_2]$, where $L_2$ has been first isotoped outside $\mathcal{N}$.
  Fix a genus $g$ Heegaard splitting of the sphere, i.e. two oriented homeomorphisms  $S: \partial H_g \cong \overline{\partial H_g}$ and 
$ H_g\bigcup_{S:\partial H_g\rightarrow \overline{\partial H_g}} H_g \cong S^3$. Denote by $\varphi_1, \varphi_2 : H_g \hookrightarrow S^3$ the embeddings into the first and second factors.
\par For two framed links $L_1,L_2 \subset H_g$ , the above gluing defines a link $\varphi_1(L_1)\bigcup \varphi_2(L_2) \subset S^3$. Let $\left< \cdot \right> : \mathcal{S}_A(S^3) \cong \mathbb{C}$ be the isomorphism sending the empty link to $1$. The Hopf pairing is the Hermitian form:
$$ \left( \cdot , \cdot \right)_A^H :\mathcal{S}_A(H_g)\times\mathcal{S}_A(H_g) \rightarrow \mathbb{C}$$
defined by $$\left( L_1 , L_2 \right)_{A}^H := \left< \varphi_1(L_1)\bigcup \varphi_2(L_2) \right>$$
and the spaces $V_A(\Sigma_g)$ are the quotients:
$$ V_A(\Sigma_g) := \quotient{ \mathcal{S}_A(H_g)}{\Rker \left(\left( \cdot , \cdot \right)_{A}^H \right)}$$
  The structure of left $\mathcal{S}_A(\Sigma_g)$-module of $\mathcal{S}_A(H_g)$ induces, by passing to the quotient, the so-called \textit{Witten-Reshetikhin-Turaev representation}
  $$ r^{WRT} : \mathcal{S}_A(\Sigma_g) \to \End(V_A(\Sigma_g)).$$

  The following theorem was stated for $A$ a root of unity of even order, though as detailed in \cite[Theorem $5.7$]{KojuSurvey}, the proofs extend straightforwardly to the odd case.
  \begin{theorem}\label{theorem_WRT}
  \begin{enumerate}
  \item (Reshetikhin-Turaev \cite{RT, Tu}, see also \cite{BHMV2}) The space $V_A(\Sigma_g)$ are non-zero and finite dimensional with dimension given by the Verlinde formula. In particular $\dim (V_A(\Sigma_g)) < PI-deg (\skein(\Sigma_g))$.
  \item (Gelca-Uribe \cite[Theorem $6.6$]{GelcaUribe_SU2}, see also \cite{BonahonWong4}) The representation $r^{WRT}$ is irreducible.
  \item (Bonahon-Wong \cite{BonahonWong4}) The classical shadow of $r^{WRT}$ is a central representation.
  \end{enumerate}
  \end{theorem}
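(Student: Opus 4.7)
The plan is to treat the three parts uniformly by constructing an explicit pants-decomposition basis for $V_A(\Sigma_g)$ and performing direct skein computations on it. Fix a pants decomposition of $\Sigma_g$ into $2g-2$ pairs of pants along $3g-3$ simple closed curves $C_1, \ldots, C_{3g-3}$, and let $\Gamma \subset H_g$ be the trivalent spine dual to this decomposition. Denote by $\mathrm{Adm}$ the set of admissible colorings of the edges of $\Gamma$ by Jones-Wenzl indices $j_i \in \{0, 1, \ldots, N-2\}$ satisfying the triangle inequality and the parity condition at each trivalent vertex. To each $c \in \mathrm{Adm}$ we associate the vector $v_c \in \mathcal{S}_A(H_g)$ obtained by cabling each edge of $\Gamma$ by the corresponding Jones-Wenzl idempotent $f_{j_i}$.

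For part (1), I would first show that $\{[v_c]\}_{c \in \mathrm{Adm}}$ descends to a basis of $V_A(\Sigma_g)$. Spanning follows because any framed link in $H_g$ can be isotoped into a regular neighborhood of $\Gamma$ and then decomposed using the Jones-Wenzl expansion of the identity. Linear independence in $V_A(\Sigma_g)$ would follow from an explicit computation of the Gram matrix of the Hopf pairing: the killing property of the $f_{j_i}$ when cabled over a curve bounding a disk makes the off-diagonal entries vanish, while the diagonal entries factor as products of theta and tetrahedron evaluations, which are non-zero at odd $N$. The trivial coloring lies in $\mathrm{Adm}$, giving $V_A(\Sigma_g) \neq 0$, and counting $|\mathrm{Adm}|$ reproduces the Verlinde formula; an elementary asymptotic comparison gives $|\mathrm{Adm}| < N^{3g-3}$.

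For part (2), I would follow the Gelca-Uribe strategy. Each skein operator $[C_i]$ acts diagonally on the basis $\{v_c\}$ with eigenvalue $-2\cos(\pi(j_i+1)/N)$, so any invariant subspace must be spanned by a subset $\{v_c\}_{c \in S}$ with $S \subset \mathrm{Adm}$. Choosing a curve $D_i$ dual to $C_i$ and expanding $[D_i] \cdot v_c$ by the Racah/fusion formula yields a linear combination of vectors $v_{c'}$ where $c'$ differs from $c$ only in the $i$-th coordinate by $\pm 1$, with coefficients given by explicit $6j$-symbols. An inductive orbit argument based on the non-vanishing of these $6j$-symbols then forces $S = \mathrm{Adm}$, establishing irreducibility.

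For part (3), I would evaluate $Ch_A([\gamma]) \cdot v_c = T_N([\gamma]) \cdot v_c$ for every simple closed curve $\gamma \subset \Sigma_g$. Resolving each intersection of a push-off of $\gamma$ with $\Gamma$ via fusion reduces the computation to evaluating $T_N$ on a Jones-Wenzl cabling, and the identity $T_N(-2\cos(\pi k/N)) = -2$ for $1 \leq k \leq N-1$ yields a scalar $\pm 2$ independent of $c$. Hence $\tau_\gamma([\rho]) = \pm 2$ for every $\gamma$ at the classical shadow $[\rho]$, which is exactly the condition $[\rho] \in \mathcal{X}^{(2)}$. The main obstacle is part (2): making the orbit argument exhaustive demands careful control of $6j$-symbol non-vanishing at odd roots of unity, and this is where a naive pants-decomposition approach could in principle fail if the dual curves did not generate the full admissible lattice.
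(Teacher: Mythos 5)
The paper itself offers no proof of Theorem \ref{theorem_WRT}: all three parts are quoted from the literature (Reshetikhin--Turaev and \cite{BHMV2} for the Verlinde dimension, Gelca--Uribe for irreducibility, Bonahon--Wong for the central shadow), with only the remark that the even-order proofs extend to odd order. So your proposal has to be judged as a reconstruction of those proofs, and its overall architecture -- trivalent spine basis, diagonal action of the pants curves, fusion along dual curves, evaluation of $T_N$ -- is indeed the standard one, and is also the template the paper uses for its non-semisimple analogues (Theorem \ref{theorem_basis}, Proposition \ref{prop_shadow}, Lemmas \ref{lemma_step1}--\ref{lemma_step3}). However, three of your steps fail as written.

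First, linear independence: the Hopf pairing is defined via a Heegaard splitting of $S^3$, so it pairs a colored spine of $H_g$ against a colored spine of the \emph{complementary} handlebody; its Gram matrix is built from colored Hopf-link evaluations (in genus one it is the $S$-matrix, none of whose entries vanish), so it is invertible but certainly not diagonal, and the ``killing property'' argument does not apply to it. The orthogonality you describe belongs to the pairing obtained by doubling $H_g$ along $\mathrm{id}_{\Sigma_g}$ (evaluation in $(S^1\times S^2)^{\# g}$); to use it you must also prove that this pairing has the same kernel as the Hopf pairing, or instead argue via joint eigenspaces of the operators $[\gamma_e]$ as in Theorem \ref{theorem_basis}. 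Second, and related: at odd $N$ the eigenvalue $-(q^{j+1}+q^{-(j+1)})$ takes the same value on $j$ and $N-2-j$, so distinct admissible colorings in the label set $\{0,\dots,N-2\}$ can share \emph{all} pants-curve eigenvalues (e.g.\ $(0,0,0)$ and $(N-2,N-2,0)$ on the genus-two theta spine); hence both your independence argument and the first step of your part (2) (``any invariant subspace is spanned by a subset of basis vectors'') break down unless you work with the correct SO(3) label set of even colors, where the collision partner has the wrong parity, or supply an additional separation argument. Third, part (3): your fusion computation only evaluates $T_N(r^{WRT}([\gamma]))$ on curves compatible with the pants decomposition, where the action is diagonal, and knowing traces $\pm 2$ on those curves alone does not force a character to be central. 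For a curve transverse to the decomposition the operator is not diagonal, and the fact that its image under $T_N$ is even a scalar, let alone $\pm 2$, is precisely the non-trivial content of the cited Bonahon--Wong result; the paper's analogous computation (Proposition \ref{prop_shadow}) has to introduce the dual module built from the complementary handlebody and use invariance of the non-degenerate pairing to treat the dual curves, and your sketch needs some such device as well.
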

  
  "Which central representation is the classical shadow of $r^{WRT}$" depends on the choice of spin structure used in the identification of Theorem \ref{theorem_skein+1}. This knowledge is irrelevant thanks to the
  
  \begin{lemma}\label{lemma_X2}
  If $[\rho_1]$, $[\rho_2]$ are both in $\mathcal{X}^{(2)}$ then $\left( \skein(\Sigma) \right)_{[\rho_1]} \cong \left( \skein(\Sigma) \right)_{[\rho_2]}$. 
  \end{lemma}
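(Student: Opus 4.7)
The plan is to construct, for every $\phi \in \mathrm{H}^1(\Sigma;\mathbb{Z}/2\mathbb{Z})$, an algebra automorphism $\sigma_\phi$ of $\skein(\Sigma)$ intertwining the Chebyshev-Frobenius morphism with the twisting action of $\phi$ on $\mathcal{X}_{\SL_2}(\Sigma)$. Since $\mathrm{H}^1(\Sigma;\mathbb{Z}/2\mathbb{Z})$ acts simply transitively on $\mathcal{X}^{(2)}$, this family of automorphisms will identify the fibers $(\skein(\Sigma))_{[\rho_1]}$ and $(\skein(\Sigma))_{[\rho_2]}$ for any two central classes $[\rho_1],[\rho_2]\in \mathcal{X}^{(2)}$.

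The candidate is defined on a framed link $L$ by $\sigma_\phi(L):=(-1)^{\phi([L])}L$, where $[L]\in \mathrm{H}_1(\Sigma;\mathbb{Z}/2\mathbb{Z})$ is the $\mathbb{Z}/2$-homology class of $L$, and extended linearly. The first step is to check that $\sigma_\phi$ descends to an algebra automorphism of $\skein(\Sigma)$ (and gives, by the same recipe, an analogous automorphism $\sigma_\phi^{(1)}$ of $\mathcal{S}_{+1}(\Sigma)$). The key point is that the two Kauffman resolutions at a crossing yield links whose projections to $\Sigma$ differ from the original X-shape by chains supported in a small disk around the crossing, and any such local chain is automatically null-homologous in $\Sigma$; the unknotted-loop relation is trivially compatible since a contractible loop is null-homologous. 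Multiplicativity is immediate from additivity of the $\mathbb{Z}/2$-homology class under disjoint union, and $\sigma_\phi$ is clearly an involution, hence an automorphism.

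The crucial algebraic compatibility is $\sigma_\phi \circ Ch_A = Ch_A \circ \sigma_\phi^{(1)}$. Using that $N$ is odd, the polynomial $T_N$ is odd, so on a curve $\gamma$ one computes $\sigma_\phi(T_N([\gamma])) = T_N((-1)^{\phi([\gamma])}[\gamma]) = (-1)^{N\phi([\gamma])}T_N([\gamma]) = (-1)^{\phi([\gamma])}T_N([\gamma]) = Ch_A(\sigma_\phi^{(1)}([\gamma]))$. Under the isomorphism $\Psi^S$ of Theorem \ref{theorem_skein+1}, the automorphism $\sigma_\phi^{(1)}$ corresponds to the pullback of the action of $\phi$ on $\mathcal{X}_{\SL_2}(\Sigma)$ defined by $(\phi\cdot \rho)(g):=\phi(g)\rho(g)$, because $\tau_\gamma(\phi\cdot\rho)=(-1)^{\phi([\gamma])}\tau_\gamma(\rho)$ and the Johnson-form signs in $\Psi^S$ appear on both sides of the identification and cancel. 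In particular one obtains $\chi_{[\rho]}\circ \sigma_\phi^{(1)}=\chi_{\phi\cdot[\rho]}$.

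Putting everything together, $\sigma_\phi$ sends a generator $Ch_A(z)-\chi_{[\rho]}(z)$ of $\mathcal{I}_{[\rho]}$ to $Ch_A(\sigma_\phi^{(1)}(z))-\chi_{[\rho]}(z) = Ch_A(w)-\chi_{\phi\cdot[\rho]}(w)$ upon setting $w=\sigma_\phi^{(1)}(z)$; hence $\sigma_\phi(\mathcal{I}_{[\rho]})=\mathcal{I}_{\phi\cdot[\rho]}$ and $\sigma_\phi$ induces an algebra isomorphism $(\skein(\Sigma))_{[\rho]}\xrightarrow{\cong}(\skein(\Sigma))_{\phi\cdot[\rho]}$. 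Choosing the unique $\phi$ with $\phi\cdot[\rho_1]=[\rho_2]$ concludes. The main step to verify carefully is the well-definedness of $\sigma_\phi$, namely invariance of the $\mathbb{Z}/2$-homology class under both Kauffman resolutions; once this is in place, the rest of the argument is essentially formal.
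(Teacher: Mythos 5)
Your proposal is correct and follows essentially the same route as the paper: twist the skein algebra by the sign automorphisms coming from $\mathrm{H}^1(\Sigma;\mathbb{Z}/2\mathbb{Z})$, check compatibility with the Chebyshev--Frobenius morphism and the induced action on $\mathcal{X}_{\SL_2}(\Sigma)$, and conclude by transitivity of this action on $\mathcal{X}^{(2)}$. The only cosmetic difference is that the paper defines the automorphism directly on the multicurve basis (making well-definedness automatic), whereas you define it on links and verify invariance of the $\mathbb{Z}/2$-homology class under the skein relations, which amounts to the same thing.
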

  
  \begin{proof} The group $ \mathrm{H}^1(\Sigma; \mathbb{Z}/2\mathbb{Z})$ acts by automorphisms on $\skein(\Sigma)$ by the formula defined on a multi-curve $\gamma=\gamma^{(1)}\cup \ldots \cup \gamma^{(n)}$ with connected components $\gamma^{(i)}$ and $\omega \in \mathrm{H}^1(\Sigma; \mathbb{Z}/2\mathbb{Z})$  by the formula
  $$\omega_* [\gamma] := (-1)^{\sum_i \omega([\gamma^{(i)}])} [\gamma].$$
  By Theorem \ref{theorem_chebyshev}, this action restricts to an action on the center of $\skein(\Sigma)$ which, once identified with $\mathcal{O}[\mathcal{X}_{\SL_2}(\Sigma)]$, corresponds to the algebraic action of $ \mathrm{H}^1(\Sigma; \mathbb{Z}/2\mathbb{Z})$  on $\mathcal{X}_{\SL_2}(\Sigma)$ defined by 
  $$ (\omega \cdot \rho) (\gamma) := (-1)^{\omega([\gamma])} \rho (\gamma).$$
  Therefore, the automorphism $\omega_* \in \Aut(\skein(\Sigma))$ induces an isomorphism  $\left( \skein(\Sigma) \right)_{[\rho]} \cong \left( \skein(\Sigma) \right)_{\omega \cdot [\rho]}$.
We conclude using the fact that  the action of $ \mathrm{H}^1(\Sigma; \mathbb{Z}/2\mathbb{Z})$  on $\mathcal{X}^{(2)}$ is transitive.
  \end{proof}
  
  Theorem \ref{theorem_WRT} implies that the classical shadow of $r^{WRT}$ is not in the Azumaya locus. Since it belongs to $\mathcal{X}^{(2)}$, Lemma \ref{lemma_X2} implies the
  
  \begin{corollary}\label{coro_X2} The Azumaya locus of $\skein(\Sigma)$ does not intersect $\mathcal{X}^{(2)}$.
  \end{corollary}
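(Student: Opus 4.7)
The plan is to exploit the WRT representation $r^{WRT}:\skein(\Sigma_g)\to \End(V_A(\Sigma_g))$ from Section \ref{sec_WRT} as a witness that some point of $\mathcal{X}^{(2)}$ fails to lie in the Azumaya locus, and then use Lemma \ref{lemma_X2} to spread this failure over all of $\mathcal{X}^{(2)}$ at once.

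First, I would invoke Theorem \ref{theorem_WRT}(3) of Bonahon--Wong, which tells me that the classical shadow $[\rho_0]$ of $r^{WRT}$ is a central representation, i.e.\ $[\rho_0]\in \mathcal{X}^{(2)}$. Next, by Theorem \ref{theorem_WRT}(2) the representation $r^{WRT}$ is irreducible, and by Theorem \ref{theorem_WRT}(1) its dimension is strictly smaller than the PI-degree of $\skein(\Sigma_g)$ (which equals $N^{3g-3}$ for $g\geq 2$ and $N$ for $g=1$). Here is where Posner's characterization of the Azumaya locus from Theorem \ref{theorem_AL_dense}(2) enters: a point of $\mathcal{X}_{\SL_2}(\Sigma_g)$ lies in $\mathcal{AL}$ if and only if it is the classical shadow of an irrep of maximal dimension. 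Since $r^{WRT}$ is irreducible with dimension strictly smaller than the PI-degree, the finite dimensional algebra $\left(\skein(\Sigma_g)\right)_{[\rho_0]}$ admits an irreducible quotient of non-maximal dimension, hence cannot be a matrix algebra $\Mat_{D}(\mathbb{C})$ of the PI-degree size. Thus $[\rho_0]\notin \mathcal{AL}$.

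Finally, to conclude that \emph{no} point of $\mathcal{X}^{(2)}$ lies in the Azumaya locus, I apply Lemma \ref{lemma_X2}: for any $[\rho]\in \mathcal{X}^{(2)}$ one has an isomorphism $\left(\skein(\Sigma_g)\right)_{[\rho]}\cong \left(\skein(\Sigma_g)\right)_{[\rho_0]}$, induced by the action of some $\omega\in \mathrm{H}^1(\Sigma_g;\mathbb{Z}/2\mathbb{Z})$ via an automorphism of $\skein(\Sigma_g)$. Being Azumaya is a property of the isomorphism class of $\left(\skein(\Sigma_g)\right)_{[\rho]}$, so if $[\rho_0]\notin \mathcal{AL}$ then no point of $\mathcal{X}^{(2)}$ lies in $\mathcal{AL}$ either.

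There is no real obstacle; the entire content is already packaged in Theorem \ref{theorem_WRT} (irreducibility, dimension, and central classical shadow of $r^{WRT}$), Posner's half of Theorem \ref{theorem_AL_dense}, and the transitivity argument encapsulated in Lemma \ref{lemma_X2}. The only mild subtlety is to emphasize that one does \emph{not} need to identify which specific central character is the classical shadow of $r^{WRT}$ (this depends on the choice of spin structure in Theorem \ref{theorem_skein+1}), precisely because Lemma \ref{lemma_X2} removes any such dependence.
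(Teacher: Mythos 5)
Your proof is correct and follows essentially the same route as the paper: the paper also deduces from Theorem \ref{theorem_WRT} (irreducibility, sub-maximal dimension, and central shadow of $r^{WRT}$) that the shadow of $r^{WRT}$ lies outside $\mathcal{AL}$, and then invokes Lemma \ref{lemma_X2} to conclude that $\mathcal{AL}\cap\mathcal{X}^{(2)}=\emptyset$, exactly as you do. Your closing remark about not needing to identify the specific central shadow (because of the spin-structure dependence) is also the point the paper makes just before Lemma \ref{lemma_X2}.
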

Together with Corollary \ref{coro_GJS}, this implies the

\begin{corollary}\label{coro_genusOne} The Azumaya locus of $\mathcal{S}_A(\Sigma_1)$ is the locus of non-central representations. \end{corollary}

\section{Categories of representations of the unrolled quantum groups}\label{sec_category}

The non-semi extended TQFTs from \cite{BCGPTQFT} are defined using some $\mathbb{C}$-modular relative categories as described by De Renzi in \cite{DeRenzi_NSETQFT} which are defined as representation categories of (unrolled) quantum groups  at root of unity. In this paper, we are interested in the category of representations of $U^H_q\mathfrak{sl}_2$ when $q:=A^2$ has odd order $N$. The  category of representations of $U_q^H\mathfrak{sl}_2$ is studied in great details in \cite{CGP_unrolledQG} when $q$ has even order whereas the category of representations of $U^H_q\mathfrak{g}$ for any simple $\mathfrak{g}$ at odd roots of unity is studied in \cite{DeRenziGeerPatureau_TQFT_QG}. Our case of interest ($U^H_q\mathfrak{sl}_2$ at odd roots of unity) is much simpler than these two cases though many computations (such as the $6j$ symbols) have never been done in this case.
Since we will need to perform precise computations in this category, we now describe it completely.

\subsection{Unrolled quantum groups and their representations}

Throughout all this section, $q:=A^2$ is a root of unity of odd order $N\geq 3$, so $q=\exp( \frac{2i \pi k}{N})$ for some $k\in \{1, \ldots, N-1\}$ prime to $N$. 
 For $z\in \mathbb{C}$, we write $q^z:= \exp(\frac{2i \pi k z}{N})$ and $A=q^{1/2}$. Set also $\{ z \} := q^z - q^{-z}$ and $[z]:= \frac{q^z -q^{-z}}{q- q^{-1}}$.

\begin{definition} 
The \textit{unrolled quantum group} $U_q^H \mathfrak{sl}_2$ is the algebra generated by $E,F,K^{\pm 1}$ and $H$ with relations:
\begin{align*}
&  KE=q^2EK, \quad KF=q^{-2}FK, \quad [E,F]=\frac{K-K^{-1}}{q-q^{-1}} \\
& [H,E]=2E, \quad [H,F]=-2F, \quad [H,K]=0.
\end{align*}
It has a Hopf algebra structure with coproduct:
$$
\Delta(E)=1\otimes E + E\otimes K, \quad \Delta(F)= K^{-1}\otimes F + F\otimes 1, \quad \Delta(K)=K\otimes K, \quad \Delta(H)= H\otimes 1 + 1\otimes H, $$
with counit:
$$ \epsilon(E)=\epsilon(F)=\epsilon(H)=0, \quad \epsilon(K)=1, $$
and antipode:
$$ S(E)= -EK^{-1}, \quad S(F)=-KF, \quad S(K)=K^{-1}, \quad S(H)=-H.$$

We denote by $\mathcal{C}$ the monoidal category of finite dimensional  representations  $\rho: U_q^H\mathfrak{sl}_2 \to \End(V)$ such that: 
\begin{enumerate}
\item $\rho(E^N)=\rho(F^N)=0$, $\rho(K^N)$ is semisimple and 
\item if $\rho(H)\cdot v = \lambda v$ for $v\in V$, then $\rho(K)\cdot v = q^{\lambda}v$.
\end{enumerate}
\end{definition}

 So $V$ is a direct sum of submodules on which $\rho(K^N)$ is scalar; in such a submodule, the minimal polynomial of $\rho(K)$ divides $X^N-z$ for $z\in \mathbb{C}^*$, so its zeroes have multiplicity one. This implies that 
 $\rho(K)$ is diagonalizable, so $V$ is spanned by vectors $v\in V$ such that $Hv=\lambda v$; we call such  $v$ a \textit{weight vector of weight} $\lambda$. For $\overline{\alpha} \in \mathbb{C}/\mathbb{Z}$, we say that $V$ \textit{has weight } $\overline{\alpha}$ if it is spanned by weight vectors with weights in $\overline{\alpha}$.
The category $\mathcal{C}$ is $\mathbb{C}/\mathbb{Z}$-graded $\mathcal{C}=\bigsqcup_{\overline{\alpha}\in \mathbb{C}/\mathbb{Z}} \mathcal{C}_{\overline{\alpha}}$ where $\mathcal{C}_{\overline{\alpha}}$ is the full subcategory of modules with weights $\alpha \in \overline{\alpha}$. Since $H$ is primitive, one has $\mathcal{C}_{\overline{\alpha}} \otimes \mathcal{C}_{\overline{\beta}} \subset \mathcal{C}_{\overline{\alpha + \beta}}$. If $V\in \mathcal{C}_{\overline{\alpha}}$, $W\in \mathcal{C}_{\overline{\beta}}$ with $\overline{\alpha}\neq \overline{\beta}$, then clearly $\Hom_{\mathcal{C}}(V,W)=0$.

\begin{definition}
\begin{enumerate}
\item For $\alpha \in \mathbb{C}$,  the module $V_{\alpha} \in \mathcal{C}_{\overline{\alpha}}$ has the basis $\{v_0, \ldots, v_{N-1}\}$ and module structure
  $$ Hv_n = (\alpha + N-1 -2n)v_n \quad, Fv_n=v_{n+1}, Fv_{N-1}=0 \quad, Ev_n=[n][\alpha - n] v_{n-1}, Ev_0=0.$$
\item For $n\in \{0, \ldots, N-1\}$, the module $S_n\in \mathcal{C}_{\overline{0}}$ is the $n+1$ dimensional module with the basis $\{e_0, \ldots, e_n\}$ and module structure
$$ He_i= (n-2i)e_i, \quad Fe_i = e_{i+1}, Fe_n=0, \quad Ee_i= [i][n-i+1]e_{i-1}, Ee_0=0.$$
\item For $n\in \{0, \ldots, N-2\}$, the module $P_n \in \mathcal{C}_{\overline{0}}$ is the $2N$ dimensional module with the basis $\{x_i, y_i, i=0, \ldots N-1\}$ and module structure 
\begin{align*}
{} & Hx_i = (2N-2-n-2i)x_i, \quad H y_i = (n-2i) y_i, \quad Fx_i=x_{i+1}, \quad F y_i = y_{i+1}, \quad (\mbox{here }x_N=y_N=0 );
\\
{} & E x_i = -[i][i+1+n] x_{i-1}, \quad Ey_0= x_{N-2-n}, \quad E y_{n+1}=x_{N-1}, \quad Ey_i = [i][n+1+i]y_{i-1} \mbox{ for } i\neq 0, n+1.
\end{align*}
\item For $n\in \mathbb{Z}$, the module $\sigma^{n}$ is the one-dimensional module $\mathbb{C}v$ such that $Ev=Fv=0, Kv=v$ and $Hv= n\frac{N}{k}v$.
\end{enumerate}
\end{definition}

The functor sending $V$ to $V\otimes \sigma^{\otimes n}$ induces an equivalence between $\mathcal{C}_{\overline{\alpha}}$ and $\mathcal{C}_{\overline{\alpha + n\frac{N}{k}}}$. Let $\mathcal{P}\subset \mathbb{C}/\mathbb{Z}$ be the class of $\frac{N}{k}\mathbb{Z}$, so $\mathcal{C}_{\overline{\alpha}}\cong \mathcal{C}_{\overline{\alpha + p}}$ for $p\in \mathcal{P}$. We say that $\mathcal{C}$ is $\mathbb{C}/\mathbb{Z}$-graded with periodicity group $\mathcal{P}$.
  Note that  $\quotient{(\mathbb{C}/\mathbb{Z})}{\mathcal{P}}\cong \mathbb{C}/\frac{N}{k}\mathbb{Z} \cong \mathbb{C}^*$ where the second bijection is $\alpha \mapsto q^{\alpha}$. 

\begin{notations}\label{notation_C..} Let  $\ddot{\mathbb{C}}:= (\mathbb{C}\setminus \frac{1}{2k}\mathbb{Z})\cup \frac{N}{2k}\mathbb{Z}$.
\end{notations}

\begin{lemma}\label{lemma_representations}
\begin{enumerate}
\item For  $\alpha \in \mathbb{C}$, $V_{\alpha}$ is simple if and only if  $\alpha \in \ddot{\mathbb{C}}$. Also $V_{\alpha}$ is projective if and only if $\alpha \in \ddot{\mathbb{C}}$.
Every $S_n$ are simple. The $P_n$ are indecomposable, projective but never simple.
\item The category $\mathcal{C}_{\overline{\alpha}}$ is semisimple if and only if $\alpha \notin \frac{1}{2k}\mathbb{Z}$ in which case the simple objects of $\mathcal{C}_{\overline{\alpha}}$ are the modules isomorphic to a $V_{\alpha}$ for $\alpha \in \overline{\alpha}$. 
\item The indecomposable projective modules of $\mathcal{C}_{\overline{0}}$ are the modules $V_0 \otimes \sigma^{mk}$ and $P_n \otimes \sigma^{mk}$. 
\item One has $V_{\alpha + n\frac{N}{k}}\cong V_{\alpha}\otimes \sigma^n$ and $S_{N-1}=V_0$.
\item For $n\in \{0, \ldots, N-2\}$, one has an exact sequence in $\mathcal{C}$:
$$ 0 \to S_n \xrightarrow{\iota} V_{N-1-n} \xrightarrow{p} S_{N-n-2}\otimes \sigma^{k} \to 0, $$
where $\iota(e_i) := v_{i+N-1-n}$ and $p(v_m):=e_m$ for $m\leq N-2-n$ and $p(v_l)=0$ for $l> N-2-n$.
\item For $n\in \{0, \ldots, N-2\}$, one has an exact sequence in $\mathcal{C}$:
$$ 0 \to V_{N-1-n} \xrightarrow{\iota} P_n \xrightarrow{p} V_{-(N-1-n)} \to 0, $$
where $\iota(v_i):= x_i$ and $p(x_i):=0$, $p(y_i):=v_i$. Moreover, this exact sequence does not split in $\mathcal{C}$.
\item Let $H_N:= \{ N-1-2n , n=0, \ldots, N-1\}$. If  $\alpha + \beta \notin  \frac{1}{2k}\mathbb{Z}$,  then 
$$ V_{\alpha}\otimes V_{\beta} \cong \oplus_{n\in H_N} V_{\alpha + \beta +n}.$$
\item If $\alpha \notin  \frac{1}{2k}\mathbb{Z} $, one has 
$$ V_{\alpha}\otimes S_n \cong \oplus_{j\in \{0, \ldots, n\}} V_{\alpha +(n-2j)}.$$
\item For $\alpha \in \ddot{\mathbb{C}}$ then 
$$ V_{\alpha}\otimes V_{-\alpha} \cong V_0 \oplus \oplus_{n=0}^{(N-3)/2} P_{2n}.$$
\end{enumerate}
\end{lemma}

\begin{proof} 
The proof is a straightforward adaptation of the arguments in \cite{CGP_unrolledQG} (where $q$ has order $2N$ instead of $N$) which we include for the reader convenience.
\par $(1)$ First note that $Ev_i=0$ if and only if $[\alpha-i]=0$, i.e. if and only if $\alpha \in i + \frac{N}{2k} \mathbb{Z}$. Such an $i \in \{1, \ldots, N-1\}$ exists if and only if $\alpha \in \{1, \ldots, N-1\}+ \frac{N}{2k}\mathbb{Z}$, i.e. if and only if $\alpha \notin \ddot{\mathbb{C}}$. 
So if  $\alpha  \in \ddot{\mathbb{C}}$ then  $Ev_i \neq 0$ for all $1 \leq i \leq N-1$ and every vector $v_n$, for $0\leq n \leq N-1$,  is cyclic. Moreover each axis $\mathbb{C}v_n$ is an eigenspace of the operator associated to $H$ so if $W \subset V_{\alpha}$ is a non-null submodule, then $W$ contains one vector $v_n$ and thus all of them since they are cyclic, so $W=V_{\alpha}$ and $V_{\alpha}$ is simple.
 The fact that $S_n$ is simple is proved similarly. 
\par $(2)$ Let $\widetilde{U_q\mathfrak{sl}_2}\subset U_q^H \mathfrak{sl}_2$ be the sub-Hopf algebra generated by $E,F, K^{\pm 1/2}$ (where $K^{1/2}=q^{H/2}$)  and let $\widetilde{\mathcal{C}}$ be the full subcategory of $ \widetilde{U_q\mathfrak{sl}_2}-\Mod$ of finite dimensional representations for which $E^N=F^N=0$ and $K^{N/2}$ acts semi-simply.  Consider the graduation $\widetilde{\mathcal{C}}=\sqcup_{\overline{\alpha}\in \mathbb{C} / \mathbb{Z}} \widetilde{\mathcal{C}}_{\overline{\alpha}}$ where $K^{N/2}$ acts as $e^{i \pi \alpha} \id$ on the modules in $\widetilde{\mathcal{C}}_{\overline{\alpha}}$.
There is an obvious forgetful functor $\mathcal{C}\to \widetilde{\mathcal{C}}$ sending $V$ to $\widetilde{V}$ which consists in forgetting how $H$ acts. A lift from $\widetilde{V}$ to $V$ is simply determined by how we lift the action of $K^{1/2}$ to an action of $H$ and we easily see that if $V, V' \in \mathcal{C}$ are indecomposable and sent to the same $\widetilde{V} \in \widetilde{C}$ then $V' \cong V \otimes \sigma^{\otimes n}$ for some $n\in \mathbb{Z}$. So  it suffices to prove that $\widetilde{\mathcal{C}}_{\overline{\alpha}}$ is semisimple when $\alpha \notin \mathbb{Z}$ with simple objects isomorphic to the $\widetilde{V}_{\alpha}$ with $\alpha \in \overline{\alpha}$. It is standard facts, using PBW basis and the filtration associated, that $\widetilde{U_q\mathfrak{sl}_2}$ is a domain and that its center $Z$ is generated by $K^{\pm N/2}$ and the Casimir $C:= EF + \frac{q K^{-1}+q^{-1}K}{(q-q^{-1})^2}$ with relation $T_N(-(q-q^{-1})^2 C)= K^N + K^{-N} $.  Moreover, $\widetilde{U_q\mathfrak{sl}_2}$ is free over $Z^0:= \mathbb{C}[K^{\pm N/2}]$ with basis given by: 
$$ \{ E^a K^{b/2} F^c,  0 \leq a,b,c \leq N-1\}.$$
So $\widetilde{U_q\mathfrak{sl}_2}$  has rank $N^3$ over $Z^0$ and  $Z$ is an algebraic extension of $Z^0$ of degree $N$ (since $Z\cong \quotient{Z^0[X]}{(T_N(X)-c)}$ for $c\in Z^0$). 
Therefore $\widetilde{U_q\mathfrak{sl}_2}$ is almost Azumaya and its PI-degree is $N$($=\sqrt{\frac{N^3}{N}}$). Since $\widetilde{V}_{\alpha}$ has dimension $N$ and is simple for $\alpha \notin \mathbb{Z}$, its classical shadow belongs to the Azumaya locus of $\widetilde{U_q\mathfrak{sl}_2}$; this implies that $\widetilde{\mathcal{C}}_{\overline{\alpha}}$ is semisimple with simple objects the $\widetilde{V}_{\alpha}$, $\alpha \in \overline{\alpha}$ which are thus projective (in a semisimple category, every object is projective).  
\par Item $(3)$ follows from the fact, proved in \cite{KojuKaruo_RepRSSkein}, that in $\widetilde{\mathcal{C}}_{\overline{0}}\cong \mathfrak{u}_q\mathfrak{sl}_2-\Mod$ the indecomposable projective objects are the $\widetilde{V}_0$ and $\widetilde{P}_n$. The proof given in  \cite{KojuKaruo_RepRSSkein} is a straightforward adaptation of the similar statement proven in \cite{Suter_Uqsl2Modules} when $q$ has order $2N$. 
\par Items $(4)$, $(5)$ and $(6)$ are straightforward verifications. They imply that $P_n$ is not simple and indecomposable and that  when $\alpha \in \ddot{\mathbb{C}}$, then $V_{\alpha}$ is  not simple so its classical shadow does not belong to the Azumaya locus and $\mathcal{C}_{\overline{\alpha}}$ is not semisimple. 
\par To prove items $(7)$, $(8)$  and $(9)$, 
consider the symbol morphism $\chi : K^0(\mathcal{C}) \to \mathbb{Z}[ \mathbb{C}]$ sending a weight module $V$ with weight decomposition $V=\oplus_{\alpha \in \mathbb{C}}V(\alpha)$ (here $V(\alpha)$ is a weight submodule of weight $\alpha$) to its symbol $\chi([V]) = \sum_{\alpha \in \mathbb{C}} \dim(V(\alpha))X^{\alpha}$. Since $H$ is primitive, the symbol map is a ring morphism.  For instance $\chi(V_{\alpha})=\sum_{n\in H_N} X^{\alpha +n}$, $\chi(S_n)=X^n + X^{n-2} + \ldots + X^{-n}$,  $\chi(\sigma)=X^{\frac{N}{2k}}$ and $\chi(P_m)= \sum_{n \in H_N} X^{N-1-n+m} + X^{-(N-1-n+m)}$.
By $(2)$ and $(3)$,  the restriction of  $\chi$ to the subcategory of projective objects in $\mathcal{C}$ is injective since the $\chi(V_{\alpha}\otimes \sigma^m)$, $\alpha \in \ddot{\mathbb{C}}$ and $\chi(P_n\otimes \sigma^m)$ are linearly independent. Items $(7)$, $(8)$ and $(9)$ 
 thus follow from the equalities 
$$ \chi(V_{\alpha})\chi(V_{\beta})= \sum_{n\in H_N} \chi(V_{\alpha+\beta+n}) \ \mbox{ and }\  \chi(V_{\alpha})\chi(S_n) = \sum_{j=0}^{n} \chi(V_{\alpha+(n-2j)})$$
$$ \chi(V_{\alpha})\chi(V_{-\alpha})= \chi(V_0)+ \sum_{n=0}^{N-3/2} \chi(P_{2n}).$$
\end{proof}

\begin{remark} The categories $\mathcal{C}_{\overline{\alpha}}$ with $\alpha \in \frac{1}{k}\mathbb{Z}$ are pairwise isomorphic (by tensoring with some $\sigma^{\otimes n}$). By definition, $\widetilde{\mathcal{C}}_{\overline{0}}$ is the category of (weight) representations of the small quantum group $\mathfrak{u}_q \mathfrak{sl}_2$ whose indecomposables were classified in \cite{Suter_Uqsl2Modules} when $q$ has even order and in \cite{KojuKaruo_RepRSSkein} at odd order. It 
 contains an infinite (countable) number of indecomposable representations up to isomorphism but the only indecomposable projective representations are isomorphic to the $\widetilde{P_n}$ and  $\widetilde{V_0}$ (see \cite[Appendix A]{KojuKaruo_RepRSSkein} for details). In this paper we will essentially use the semisimple categories $\mathcal{C}_{\overline{\alpha}}$ for $\alpha \notin \frac{1}{2k}\mathbb{Z}$.
\end{remark}

\subsection{The ribbon structure and twisted skein algebra}
Consider the completions $\widehat{U_q^H\mathfrak{sl}_2}:= \int^{V \in \mathcal{C}} V^* \otimes V$ and $\widehat{(U_q^H\mathfrak{sl}_2)^{\otimes 2}}:= \int^{V,W \in \mathcal{C}} V^* \otimes V \otimes W^* \otimes W$. Let $q^{\frac{H\otimes H}{2}} \in \widehat{(U_q^H\mathfrak{sl}_2)^{\otimes 2}}$ be the operator defined on $V\otimes W$ by 
$$ q^{\frac{H\otimes H}{2}} v\otimes w := A^{wt(v)wt(w)}v\otimes w, $$
where $v\in V$ and $w\in W$ are weight vectors of weights $wt(v)$ and $wt(w)$ respectively (i.e. $Hv=wt(v) v$ and $Hw=wt(w)w$). The $R$-matrix $\mathcal{R} \in \widehat{(U_q^H\mathfrak{sl}_2)^{\otimes 2}}$ is defined by 
$$ \mathcal{R}:= q^{\frac{H\otimes H}{2}} \exp_q^{<N}( (q-q^{-1}) E\otimes F), $$
where 
$$ \exp_q^{<N}(X):= \sum_{n=0}^{N-1} \frac{q^{n(n-1)/2}}{[n] !}X^n .$$

The twist $\Theta_0 \in \widehat{U_q^H\mathfrak{sl}_2}$ is the operator defined  by 
$$ \Theta_0 v := K^{N-1} \sum_{n=0}^{N-1}\frac{q^{n(n-1)/2}}{[n] !} (q-q^{-1})^n S(F^n) q^{-H^2/2}E^n, $$
where $q^{-H^2/2}$ is defined by $q^{-H^2/2}v=A^{-wt(v)^2}v$ for a weight vector $v$. The triple $(U_q^H \mathfrak{sl}_2, \mathcal{R}, \Theta_0)$ forms a topological ribbon Hopf algebra which endows the category $\mathcal{C}$ with a structure of ribbon category. The braiding will be denoted by $c_{V,W}: V\otimes W \to W\otimes V$ and the twist will be denoted by $\theta_V: V \to V$. 
 The duality in $\mathcal{C}$ is described as follows. For $V\in \mathcal{C}$, its dual is $V^*=\Hom(V, \mathds{1})$. The left duals are 
$$ \overrightarrow{ev}_V : V^*\otimes V \to \mathds{1}, \overrightarrow{ev}_V( f\otimes v) := f(v), \quad \overrightarrow{coev}_V: \mathds{1} \to V\otimes V^*, \overrightarrow{coev}_V(1):= \sum_i e_i \otimes e_i^*, $$
where $(e_i)$ is a basis of $V$ and $e_i^*$ its dual basis. The right duals are defined by 
$$ \overleftarrow{ev}_V: V\otimes V^* \to \mathds{1}, \overleftarrow{ev}_V (v\otimes f) :=  f(K^{1-N}v), \quad \overleftarrow{coev}_V: \mathds{1} \to V^* \otimes V, \overleftarrow{coev}_V(1)= \sum_i e_i^* \otimes K^{N-1}e_i.$$
In particular $\qdim (S_n)= [n+1]$,  $\qdim(V_{\alpha})=0$ and $\qdim(\sigma^{n})=1$.

\begin{lemma}\label{lemma_almost_transparent}  One has $c_{V_{\alpha}, \sigma^n} \circ c_{\sigma^n, V_{\alpha}} = e^{2i\pi \alpha n} \id_{\sigma^n \otimes V_{\alpha}}$ and $c_{\sigma^{nk}, \sigma^m}\circ c_{\sigma^{m}, \sigma^{nk}}=\id_{\sigma^{nmk}}$.
\end{lemma}

\begin{proof} The first equality follows from the fact that $\exp_q^{<N}((q-q^{-1}) E\otimes F)$ acts by the identity operator on both $\sigma^n \otimes V_{\alpha}$ and $V_{\alpha}\otimes \sigma^n$ and $q^{H\otimes H/2}$ acts by $e^{i\pi n\alpha}$ times the identity on both spaces. Similarly, in $\sigma^m \otimes \sigma^{nk}$, we have  
$$c_{\sigma^{nk}, \sigma^m}\circ c_{\sigma^{m}, \sigma^{nk}}(v\otimes v)= q^{H\otimes H} v\otimes v= q^{(m \frac{N}{k})(nk(\frac{N}{k}))}v\otimes v=v\otimes v.$$
\end{proof}

\begin{notations}\label{notations_ribC} We denote by $\Rib_{\mathcal{C}}$ the category of $\mathcal{C}$ ribbon graphs and by $F^{RT} : \Rib_{\mathcal{C}} \to \mathcal{C}$ the Reshetikhin-Turaev (ribbon, surjective) functor as defined in \cite{Tu}. So objects are tuples $( (V_1, \epsilon_1), \ldots, (V_n, \epsilon_n))$ with $V_i \in \mathcal{C}$ and $\epsilon_i=\pm$, and morphisms are linear combinations of ribbon tangles read from bottom to top. More precisely, for each $n\geq 0$ we consider the set $P_n\subset [0,1]^2$, $P_n=\{p_1, \ldots, p_n\}$ where $p_i= (\frac{i}{n+1}, \frac{i}{2})$. A ribbon tangle morphism $T: ( (V_1, \epsilon_1), \ldots, (V_n, \epsilon_n)) \to ( (W_1, \nu_1), \ldots, (W_m, \nu_m))$ is a $\mathcal{C}$-colored ribbon tangle $T\subset [0,1]^3$ with $\partial T= (T\cap P_n \times \{0\}) \cup (T \cap P_m \times \{1\})$ with compatible colors and orientations
 (see \cite[Chapter $1$ ]{Tu} for details). 
\end{notations}

For $V \in \mathcal{C}$ a simple object and $f\in \End(V)$, we denote by $\left< f \right> \in \mathbb{C}$ the (unique) scalar such that $f= \left< f \right> \id_V$. For $V,W\in \mathcal{C}$ two simple objects, we denote by $S'(V,W) \in \mathbb{C}$ the scalar 
$$ S'(V,W) = \left< F^{RT} \left(   \adjustbox{valign=c}{\includegraphics[width=0.8cm]{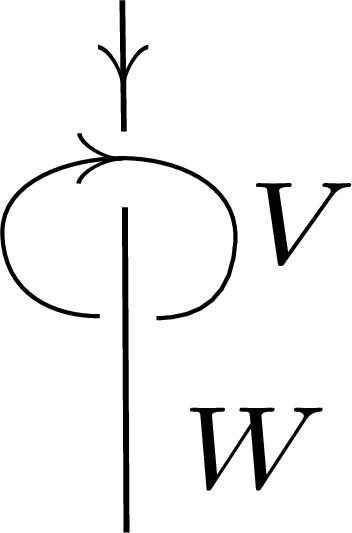}}\right) \right> \in \mathbb{C}.$$

\begin{lemma}\label{lemma_S}
One has 
$$ S'(S_1, V_{\alpha}) = q^{\alpha}+q^{-\alpha}, \quad S'(S_1, S_i) = q^{i+1}+q^{-(i+1)}.$$
\end{lemma}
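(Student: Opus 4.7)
The plan is to identify $S'(V,W)$ for simple $V,W \in \mathcal{C}$ with the scalar defined by the partial quantum trace
\[ \mathrm{ptr}_V\bigl(c_{V,W}\circ c_{W,V}\bigr) = S'(V,W)\cdot \id_W, \]
closing the strand labelled $V$. Since the right duality $\overleftarrow{ev}_V$ involves the factor $K^{1-N}$, for $V=S_1$ with weight basis $e_0, e_1$ of weights $\pm 1$ and $f\in\End(W\otimes S_1)$ with blocks $f(w\otimes e_i)=\sum_j f_{ji}(w)\otimes e_j$, this reduces to
\[ \mathrm{ptr}_{S_1}(f) = q\cdot f_{00} + q^{-1}\cdot f_{11}, \]
since $q^{1-N}=q$ and $q^{N-1}=q^{-1}$ modulo $q^N=1$.

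Both formulas then reduce to a direct computation of the diagonal blocks of the double braiding $c_{S_1,W}\circ c_{W,S_1}$ via the universal $R$-matrix $\mathcal{R} = q^{H\otimes H/2}\exp_q^{<N}((q-q^{-1})E\otimes F)$. Because $E^2=F^2=0$ on $S_1$, the $q$-exponential truncates at $m=1$, making the computation elementary: on a weight vector $w_n\in W$ with $Hw_n=\nu_n w_n$ and $Ew_n=\kappa_n w_{n-1}$, a direct expansion yields
\[ f_{00}(w_n) = \bigl(q^{\nu_n} + (q-q^{-1})^2 q^{-1}\kappa_n\bigr)\,w_n,\qquad f_{11}(w_n) = q^{-\nu_n}\,w_n, \]
(the off-diagonal blocks take values in different weight spaces, hence disappear upon partial tracing), so that
\[ \mathrm{ptr}_{S_1}(c_{S_1,W}c_{W,S_1})(w_n) = \bigl[q^{\nu_n+1} + (q-q^{-1})^2\kappa_n + q^{-\nu_n-1}\bigr]\,w_n. \]

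It remains to simplify in each case. For $W=V_\alpha$ with $\nu_n=\alpha/k+N-1-2n$ and $\kappa_n=[n](q^{-n}c-q^n c^{-1})/(q-q^{-1})$ where $c:=e^{2i\pi\alpha/N}=q^{\alpha/k}$, expanding $(q-q^{-1})^2\kappa_n=c(1-q^{-2n})+c^{-1}(1-q^{2n})$ and using $q^N=1$ to rewrite $q^{\nu_n+1}=cq^{-2n}$ and $q^{-\nu_n-1}=c^{-1}q^{2n}$, all $n$-dependent terms cancel and the bracket collapses to $c+c^{-1}$. For $W=S_i$ with $\nu_n=i-2n$ and $\kappa_n=[n][i-n+1]$, expanding $(q-q^{-1})^2[n][i-n+1]=q^{i+1}+q^{-i-1}-q^{i-2n+1}-q^{2n-i-1}$ and telescoping against the outer $q^{\nu_n\pm 1}$ terms yields $q^{i+1}+q^{-(i+1)}$.

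The genuine technicality is the cancellation of the $n$-dependence in both cases; abstractly this is forced by Schur's lemma (since $W$ is simple, $\mathrm{ptr}_{S_1}(c^2)$ is an intertwiner $W\to W$), and the explicit algebraic identities above confirm it quantitatively.
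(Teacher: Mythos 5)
Your proof is correct, and it is essentially the argument the paper has in mind: the paper simply defers to the analogous computation in Costantino--Geer--Patureau (order $2N$) "left to the reader," and your explicit $R$-matrix/partial-trace calculation is exactly that adaptation to order $N$ (the truncation of $\exp_q$ on $S_1$, the pivotal factor $K^{1-N}$ giving $q f_{00}+q^{-1}f_{11}$ since $q^{N}=1$, and the $n$-independent cancellations all check out against the module structures of $V_\alpha$ and $S_i$). So this fills in the details of the paper's omitted proof rather than taking a different route.
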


\begin{proof} The proof is an easy adaptation (left to the reader)  of the similar results in \cite{CGP_unrolledQG} when $q$ has order $2N$.
\end{proof}

\subsection{Renormalized trace and link invariants}\label{sec_links}

Recall that a trace on a category $\mathcal{D}$ is a ring morphism $\tau : \mathrm{HH}_0(\mathcal{D})\to \mathbb{C}$, where $\mathrm{HH}_0(\mathcal{D})=\int^{V\in \mathcal{D}}\End(V)$. The set $\mathrm{HH}_0(\Rib_{\mathcal{D}})$ identifies canonically with the set $\mathcal{RG}_{\mathcal{D}}(\mathbb{S}^3)$ of $\mathcal{D}$-colored ribbon graphs in $\mathbb{S}^3$, so a trace $\tau$ on a ribbon category defines a link invariant 
$$ \left< \cdot \right> : \mathcal{RG}_{\mathcal{D}}(\mathbb{S}^3)\cong \mathrm{HH}_0(\Rib_{\mathcal{D}}) \xrightarrow{F^{RT}} \mathrm{HH}_0(\mathcal{D}) \xrightarrow{\tau} \mathbb{C}.$$
Any ribbon category $\mathcal{D}$ has two canonical traces $\qtr^{L, R}$ given on $f \in \End(V)$ by $\qtr^L(f):= \overrightarrow{ev}_V \circ (\id_{V^*} \otimes f) \circ \overleftarrow{coev}_V$ and $\qtr^R(f):= \overleftarrow{ev}_V \circ (f \otimes \id_{V^*}) \circ \overrightarrow{coev}_V$.  On $\mathcal{C}$ both traces coincide. However, because $\qdim(V_{\alpha}):= \qtr( \id_{V_{\alpha}})=0$ for all $\alpha \in \mathbb{C}$, the associated link invariant vanishes on any ribbon graph with an edge colored by a $V_{\alpha}$. To circumvent this problem, the authors of \cite{GeerPatureauTuraev_Trace} introduced the so-called \textit{modified trace} further studied in \cite{GeerPatureauVirelizier_Trace} that we now briefly describe. 
 Let 
 $$ B := \{ V_{\alpha}, \alpha \in \ddot{\mathbb{C}} \}$$
and $\mathbf{d} : B \to \mathbb{C}^*$ be the map defined by 
$$ \mathbf{d}(V_{\alpha}) =  \frac{ \{ \alpha +1-N \} }{ \{ N(\alpha +1-N) \} }.$$
Then $(B, \mathbf{d})$ is an ambidextrous pair in the sense of \cite{GeerPatureauVirelizier_Trace} and defines a trace $\tau: \mathrm{HH}_0(\mathcal{P}) \to \mathbb{C}$ on the full subcategory $\mathcal{P} \subset \mathcal{C}$ of projective objects of $\mathcal{C}$
such that when $f \in \End(V_{\alpha})$ then $\tau(f)= \mathbf{d}(V_{\alpha}) \left< f \right>$.
 Let $\mathcal{RG}_{\mathcal{P}}(\mathbb{S}^3)$ be the set of (isotopy classes of) $\mathcal{C}$-colored ribbon graphs in $\mathbb{S}^3$ having at least one edge colored by a projective object. The renormalized trace $\tau$ defines an invariant 
$$ \left< \cdot \right> : \mathcal{RG}_{\mathcal{P}}(\mathbb{S}^3) \to \mathbb{C}$$
studied in great details in \cite{GeerPatureau_LinksInv}. When $\Gamma \in \mathcal{RG}_{\mathcal{P}}(\mathbb{S}^3)$ is a ribbon graph with one edge $e$ colored by a module $V_{\alpha}$ for $\alpha \in  \ddot{\mathbb{C}}$, let $\Gamma(e)$ be the $1-1$ tangle in $\Rib_{\mathcal{P}}( (V_{\alpha}, +) , (V_{\alpha}, +) )$ obtained by cutting $\Gamma$ along $e$. The invariant then satisfies 
$$ \left< \Gamma \right> = \mathbf{d}(V_{\alpha}) \left< F^{RT}(\Gamma(e)) \right>.$$
The main result of \cite{GeerPatureau_LinksInv} is the fact that this invariant does not depend on the edge $e$. 

\subsection{Multiplicity modules and trivalent graph calculus}\label{sec_graphcalculus}
For $\alpha, \beta, \gamma \in \ddot{\mathbb{C}}$ in virtue of Items $(7)$, $(9)$ in Lemma \ref{lemma_representations}  the spaces $\Hom_{\mathcal{C}}(V_{\alpha}\otimes V_{\beta} \otimes V_{\gamma}, \mathds{1})$ and $\Hom_{\mathcal{C}}(\mathds{1}, V_{\alpha}\otimes V_{\beta} \otimes V_{\gamma})$ have dimension either $0$ or $1$. The goal of this subsection is to fix, whenever this dimension is $1$, a non-zero element in each of these spaces. This will permit one to consider trivalent ribbon graphs colored by indecomposable modules without specifying the coupons inserted at each vertex and by only considering the cyclic ordering of the adjacent half-edges (see \cite[Section $6.1$]{Tu} for an intensive discussion on these conventions).
\vspace{2mm} \par 
\textbf{Trivalent graph calculs}
A triple $(\alpha, \beta, \gamma)$ is $0$-\textit{admissible} if $\alpha, \beta, \gamma \in \ddot{\mathbb{C}}$  and  $\alpha+\beta-\gamma \in H_N$. In this case, we now define non-zero morphisms
$$ Y_{\gamma}^{\alpha, \beta} : V_{\gamma}\to V_{\alpha}\otimes V_{\beta}, \quad Y_{\alpha, \beta}^{\gamma}: V_{\alpha}\otimes V_{\beta}\to V_{\gamma}, \quad \cap_{\alpha}: V_{\alpha}\otimes V_{-\alpha}\to \mathds{1}, \quad \cup_{\alpha} : \mathds{1}\to V_{\alpha}\otimes V_{-\alpha}, \quad w_{\alpha}: V_{\alpha}\cong (V_{-\alpha})^*$$
such that, writing $ \adjustbox{valign=c}{\includegraphics[width=3cm]{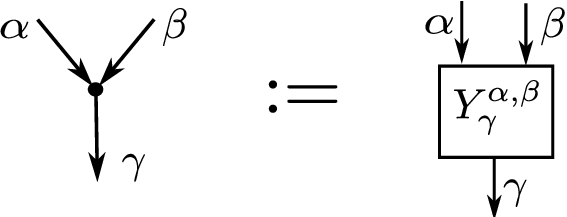}}$, $ \adjustbox{valign=c}{\includegraphics[width=2.5cm]{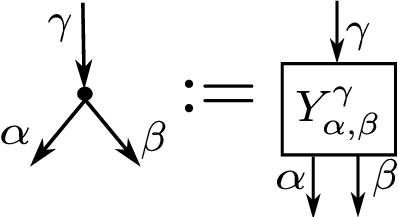}}$, $ \adjustbox{valign=c}{\includegraphics[width=2.5cm]{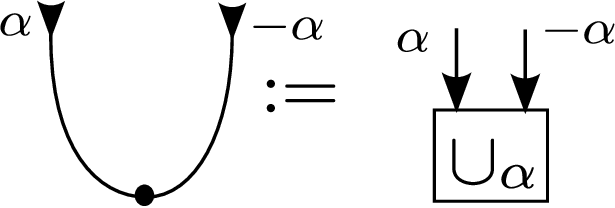}}$ and $ \adjustbox{valign=c}{\includegraphics[width=2.5cm]{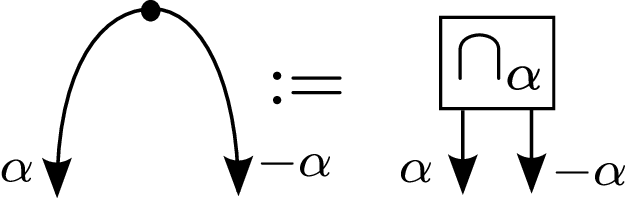}}$, 

 the following equalities hold:
\begin{equation}\label{eq_Cap}
\adjustbox{valign=c}{\includegraphics[width=10cm]{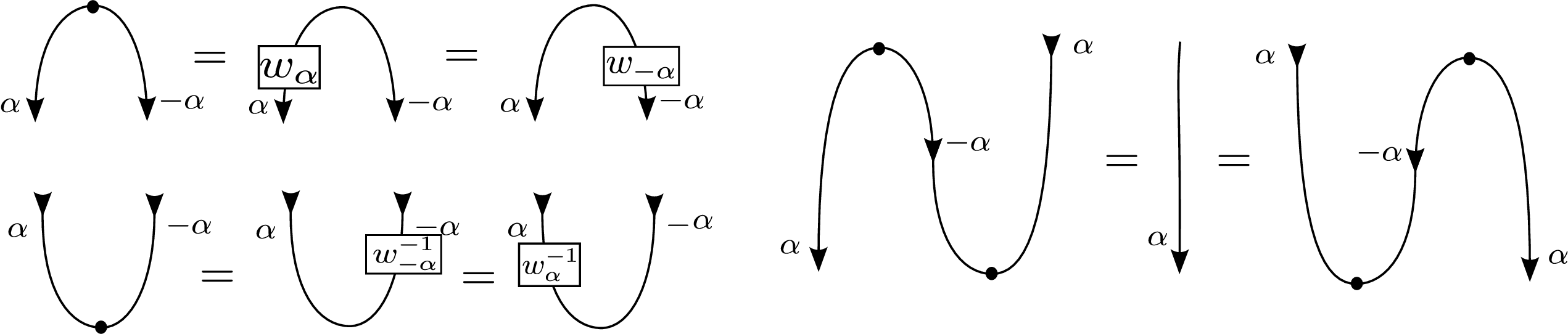}}
\end{equation}
\begin{equation}\label{eq_Y}
\adjustbox{valign=c}{\includegraphics[width=7cm]{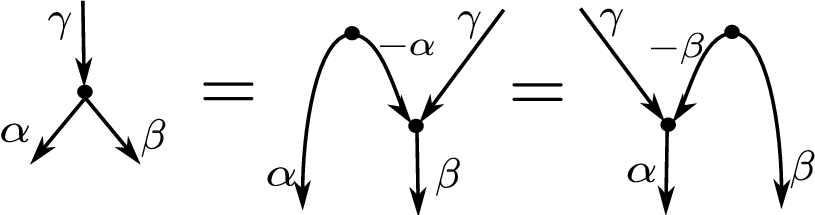}}
\end{equation}
\begin{equation}\label{eq_3j}
\adjustbox{valign=c}{\includegraphics[width=9cm]{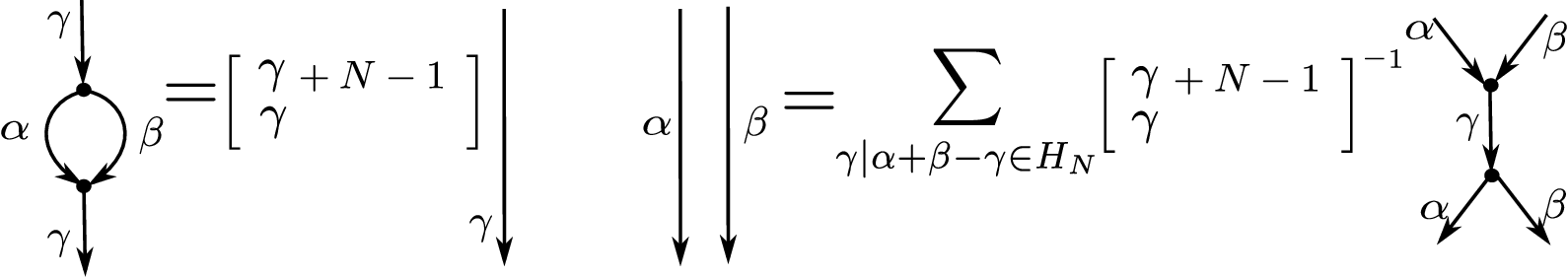}}
\end{equation}
Note that, by  Items $(7)$, $(9)$ of Lemma \ref{lemma_representations}, each of these operators is uniquely defined up to multiplication by a non-zero scalar. 
Here and in all the paper, we use the convention that if a strand of a ribbon graph is colored by $\alpha \in \mathbb{C}$, we actually mean that it is colored by $V_{\alpha}$.
The existence and explicit formulas for these operators will be derived from the work of Costantino-Murakami in \cite{CostantinoMurakami_6j} who compute such morphisms in the case of the (non-unrolled) quantum group $U_q\mathfrak{sl}_2$ with $q$ a $2N$-root of unity. 

\begin{notations}\label{notations_formulas} Let $n \in \mathbb{N}$ and $\alpha, \beta \in \mathbb{C}$ such that $\alpha - \beta \in \{0, \ldots, N-1\}$. We write
$$ \{n\} := q^n - q^{-n}; \quad \{n\} ! := \{n\} \{n-1\} \ldots \{1\}; \{\alpha, \beta\} := \{\alpha\} \{\alpha -1\} \ldots \{ \beta+1\}; \quad \qbinom{\alpha}{\beta}:= \frac{ \{\alpha, \beta\} }{ \{\alpha - \beta\} ! }.$$
When $\alpha=\beta$, we set $\{\alpha, \alpha\}= \qbinom{\alpha}{\alpha}=1$. Let
 $$\kappa_n^{\alpha}:= q^{- \frac{\alpha}{2}+\frac{n(n-1)}{2}} \left\{ \alpha+N-1, \alpha +N-1-n \right\},\quad 
w_{\alpha}^n := \kappa^{\alpha}_n \kappa^{-\alpha}_{N-1-n} q^{-\frac{\alpha}{2}(N-1)-n-\frac{1}{2}}.
$$
Now suppose that $\alpha + \beta - \gamma \in H_N$ and let $i,j,n \in \{0, \ldots, N-1\}$. If $i+j-n \neq \frac{\alpha + \beta - \gamma}{2} +\frac{N-1}{2}$, we set $C_{i,j,n}^{\alpha, \beta, \gamma} := 0$. If $i+j-n = \frac{\alpha + \beta - \gamma}{2} +\frac{N-1}{2}$, we set
\begin{multline*}
C_{i,j,n}^{\alpha, \beta, \gamma} :=q^{\frac{1}{2}\left[ j(\beta-j) - i(\alpha-i)\right]} \qbinom{\gamma+N-1}{\gamma +N-1-n}^{-1} \qbinom{\gamma +N-1}{\alpha+ \beta+\gamma+\frac{N-1}{2}} \\
\sum_{z+w=n} (-1)^z q^{\frac{1}{2}\left[(2z-n)(\gamma -n)\right]}
\qbinom{i+j-n}{i-z} \qbinom{\alpha -i+z+N -1}{\alpha-i+N-1} \qbinom{\beta -j+w +N -1}{\beta-j+N-1}.
\end{multline*}
We eventually set
$$ D_{i,j,n}^{\alpha, \beta, \gamma}:= \frac{\kappa_n^{\gamma}}{\kappa_i^{\alpha} \kappa_j^{\beta}}  C_{i,j,n}^{\alpha, \beta, \gamma} \quad \mbox{ and }
E_{i,j,n}^{\alpha, \beta, \gamma}:= \frac{\kappa_i^{\alpha} \kappa_j^{\beta}}{\kappa_n^{\gamma}} C_{N-1-j, N-1-i, N-1-n}^{-\beta, -\alpha, - \gamma}.$$
\end{notations}

\begin{definition}\label{def_morphisms}
For $(\alpha, \beta, \gamma)$ admissible, the morphisms 
$$ Y_{\gamma}^{\alpha, \beta} : V_{\gamma}\to V_{\alpha}\otimes V_{\beta}, \quad Y_{\alpha, \beta}^{\gamma}: V_{\alpha}\otimes V_{\beta}\to V_{\gamma}, \quad \cap_{\alpha}: V_{\alpha}\otimes V_{-\alpha}\to \mathds{1}, \quad \cup_{\alpha} : \mathds{1}\to V_{\alpha}\otimes V_{-\alpha}, \quad w_{\alpha}: V_{\alpha}\cong (V_{-\alpha})^*$$
are defined by 
\begin{itemize}
\item $w_{\alpha}(v_i):= w_{\alpha}^i (v_{N-1-i})^*; $
\item $\cap_{\alpha}(v_i\otimes v_j) := \delta_{i+j, N-1}w_{\alpha}^i 
; \quad \cup_{\alpha}(1):=  \sum_i  \left( w_{-\alpha}^{N-1-i} \right)^{-1}v_i \otimes v_{N-1-i}; $
\item $Y_{\gamma}^{\alpha, \beta} (v_n) = \sum_{i,j} D_{i,j,n}^{\alpha, \beta, \gamma} v_i\otimes v_j; \quad  Y_{\alpha, \beta}^{\gamma}(v_i\otimes v_j)=\sum_n E_{i,j,n}^{\alpha, \beta, \gamma} v_n.$
\end{itemize}
\end{definition}

The following proposition follows from a straightforward adaptation of the computations in \cite{CostantinoMurakami_6j} detailed in Appendix \ref{sec_appendixA}.

\begin{proposition}\label{prop_appendixA}
The morphisms $w_{\alpha}$, $\cap_{\alpha}$, $\cup_{\alpha}$, $Y_{\alpha, \beta}^{\gamma}$ and $Y_{\gamma}^{\alpha, \beta}$ are equivariant (i.e. are morphisms in $\mathcal{C}$) and satisfy the equalities \eqref{eq_Cap}, \eqref{eq_Y} and \eqref{eq_3j}.
\end{proposition}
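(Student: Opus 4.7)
The plan is to verify the three ingredients (equivariance, the snake/cup-cap relations \eqref{eq_Cap}, the triangle relations \eqref{eq_Y}, and the 3j-symbol normalization \eqref{eq_3j}) separately, by direct computation on the basis $\{v_i\}$ of each $V_\alpha$, and then to argue that the adaptation from the Costantino--Murakami computations is merely bookkeeping of the unrolling parameters. Throughout, the definition of $\mathcal{C}$ (condition that $K$ acts as $q^H$ on weight vectors) means it is sufficient to check equivariance of each morphism with respect to $H$, $E$ and $F$; $K$-equivariance is then automatic.

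First I would handle $w_\alpha$, $\cap_\alpha$ and $\cup_\alpha$. The map $w_\alpha$ sends $v_i$ to a scalar multiple of $(v_{N-1-i})^*$, and since the weight of $v_i$ in $V_\alpha$ is $\alpha/k + N-1-2i$ while the weight of $(v_{N-1-i})^*$ in $(V_{-\alpha})^*$ is the opposite of the weight of $v_{N-1-i}$ in $V_{-\alpha}$, i.e.\ $-(-\alpha/k + N-1 - 2(N-1-i)) = \alpha/k + N - 1 - 2i$, the $H$-equivariance is immediate. For $E$- and $F$-equivariance one applies the explicit $U_q^H\mathfrak{sl}_2$-action on $V_{-\alpha}^*$ (with antipode $S(E) = -EK^{-1}$, $S(F)=-KF$) and one reads off that the constants $w_\alpha^i$ are precisely those forced by the recursion. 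The cup and cap are treated identically by applying $\Delta(E) = 1\otimes E + E\otimes K$ and $\Delta(F) = K^{-1}\otimes F + F\otimes 1$ to $v_i\otimes v_j$ and noting that the $\delta_{i+j,N-1}$ support of $\cap_\alpha$ together with the compatibility constants force the cancellations. Relation \eqref{eq_Cap} (the snake identity for duals) is then a one-line check using $w_\alpha^i (w_{-\alpha}^{N-1-i})^{-1}$ on the chosen basis.

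Next, for the Clebsch--Gordan morphisms $Y^{\alpha,\beta}_\gamma$ and $Y_{\alpha,\beta}^\gamma$, the support condition $i+j-n = \tfrac{\alpha+\beta-\gamma}{2k} + \tfrac{N-1}{2}$ is chosen precisely so that the image weight matches on both sides, giving $H$-equivariance for free. The $E$- and $F$-equivariance reduce to recursions for the coefficients $C_{i,j,n}^{\alpha,\beta,\gamma}$: if one applies $\Delta(F) = K^{-1}\otimes F + F\otimes 1$ to $Y^{\alpha,\beta}_\gamma(v_n)$ and compares with $Y^{\alpha,\beta}_\gamma(v_{n+1})$, one gets a two-term recursion in the summation variable $z$ that is the usual $q$-Pascal relation, and the explicit sum in Notations \ref{notations_formulas} is exactly the unique solution with the correct boundary behaviour; similarly for $E$. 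This is precisely the calculation carried out in \cite{CostantinoMurakami_6j} for $U_q\mathfrak{sl}_2$; the only modification is that the quantum integers $[\alpha/k + N - 1 - i]$ etc.\ replace the integer-indexed ones, which is harmless since all the $q$-binomial identities (Chu--Vandermonde, etc.) hold with shifted arguments.

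Finally, for \eqref{eq_Y} and \eqref{eq_3j}, the relations reduce to two identities on explicit $q$-hypergeometric sums involving the $C^{\alpha,\beta,\gamma}_{i,j,n}$: namely the composition $Y^{\gamma}_{\alpha,\beta}\circ Y^{\alpha,\beta}_{\gamma}$ acting on $v_n$ should be a scalar multiple of $v_n$, and the value of the theta graph (the closed three-edge trivalent diagram) should be the claimed 3j-symbol normalization. The scalars $\kappa_n^\alpha$ and the shift by $E_{i,j,n}^{\alpha,\beta,\gamma} = \tfrac{\kappa_i^\alpha \kappa_j^\beta}{\kappa_n^\gamma} C^{-\beta,-\alpha,-\gamma}_{N-1-j,N-1-i,N-1-n}$ in Notations \ref{notations_formulas} are chosen exactly to produce these scalar identities from the $q$-Chu--Vandermonde summation. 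The main obstacle, and the reason Appendix \ref{sec_appendixA} is needed, is the bookkeeping of these $q$-hypergeometric sums: one must keep track of the non-integer weight parameters $\alpha/k \in \mathbb{C}$ (not just integers as in \cite{CostantinoMurakami_6j}) while verifying that all $q$-binomial summation identities still apply, and one must check that the normalizing constants $\kappa^{\alpha}_n$ and $w^i_\alpha$ are mutually compatible between the three relations \eqref{eq_Cap}, \eqref{eq_Y} and \eqref{eq_3j}. Once this compatibility is established, each relation becomes a one-line identity of $q$-hypergeometric type.
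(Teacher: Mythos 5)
There is a genuine gap, and it sits exactly where you declare the adaptation to be ``merely bookkeeping''. Costantino--Murakami already work with generic \emph{complex} highest weights, so the presence of non-integer parameters $\alpha/k\in\mathbb{C}$ is not the new difficulty; what is new in this paper is that $q$ has odd order $N$, whereas in \cite{CostantinoMurakami_6j} it has order $2N$, together with the removal of the factor $\sqrt{-1}^{\,a+b-c}$ from the Clebsch--Gordan coefficients. Your claim that ``all the $q$-binomial identities (Chu--Vandermonde, etc.) hold with shifted arguments'' is precisely what fails: at order $N$ one has $\qbinom{N-1-y}{N-1-x}=(-1)^{x-y}\qbinom{x}{y}$ instead of $\qbinom{N-1-y}{N-1-x}=\qbinom{x}{y}$, the identity $\qbinom{x}{y}=(-1)^{x-y}\qbinom{x-N}{y-N}$ loses its sign, and $q^{\frac{N}{2}m}$ equals $1$ rather than $(-1)^m$. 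The paper's appendix consists essentially of (a) a dictionary identifying $U^H_q\mathfrak{sl}_2$-modules $V_\alpha$ with the Costantino--Murakami modules $V^a$ via $v_i\leftrightarrow \kappa_i^\alpha e_i^a$ (and $\mathbf{K}\mapsto K^{1/2}$, etc.), and (b) a careful tracking of the resulting $(-1)^m$ discrepancies: for \eqref{eq_Y} the verification reduces to the scalar identity $C^{-\alpha,\gamma,\beta}_{N-1,0,m}q^{-a(N-1)}=C^{\gamma,-\beta,\alpha}_{0,N-1-m,0}q^{b(N-1)+m}$, which holds only because an extra $(-1)^m$ from the modified binomial symmetry replaces the factor $q^{\frac{N}{2}m}$ present at order $2N$; for \eqref{eq_3j} two separate sign anomalies of $(-1)^m$ arise and must be seen to cancel each other, and the deletion of the $\sqrt{-1}^{\,a+b-c}$ normalization is needed for this to come out right.

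So while your overall strategy (reduce to the Costantino--Murakami computations, check equivariance on weight bases, reduce \eqref{eq_Y} and \eqref{eq_3j} to $q$-hypergeometric identities) matches the paper's in outline, the step you dismiss as harmless is the entire content of the proposition's proof: without verifying that the order-$N$ sign corrections cancel (and that the coefficients have been renormalized accordingly), the relations \eqref{eq_Y} and \eqref{eq_3j} could fail by signs, and Proposition \ref{prop_6j} and hence Theorem \ref{theorem2} would be compromised. To complete your argument you would need to redo the sign analysis of \cite[Appendices A.2--A.3]{CostantinoMurakami_6j} at odd order, as the paper does, rather than appeal to the unchanged validity of the $q$-binomial identities.
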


For $(\alpha, \beta, - \gamma)$ admissible, we define the multiplicity modules $H^{\alpha, \beta, \gamma}\in \Hom(\mathds{1}, V_{\alpha}\otimes V_{\beta} \otimes V_{\gamma})$ and $H_{\alpha, \beta, \gamma}\in \Hom(V_{\alpha}\otimes V_{\beta} \otimes V_{\gamma}, \mathds{1})$  by 
$$\adjustbox{valign=c}{\includegraphics[width=10cm]{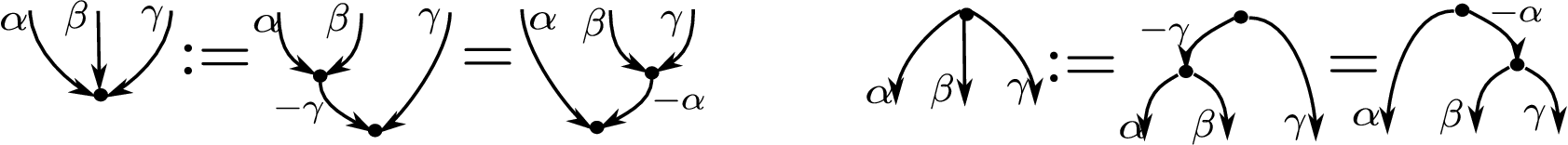}}.$$

We can now describe our conventions for trivalent graph calculus. Let $G \subset \mathbb{S}^3$ be a ribbon trivalent graph. A \textit{coloring} of $G$ is a chain $c \in \mathrm{C}_1(G; \mathbb{C})$, where $G$ is seen as a CW complex. Equivalently, $c$ is described by a choice of orientation $\mathfrak{o}$ to the edges ${E}$ of $G$ together with a map $c: E\to \mathbb{C}$ which we identify with the chain $\sum_{e\in E} c(e)e$. A coloring is \textit{strictly admissible} if $(i)$ $c(e) \in \ddot{\mathbb{C}}$ and $(ii)$ the map $\partial c : V(G)\to \mathbb{C}$ satisfies $\partial c (v) \in H_N=\{-(N-1), \ldots, N-3, N-1\}$ for all vertices $v\in V(G)$. For $(G,c)$ a trivalent graph with strictly admissible coloring, we define the invariant 
$$ \left< G, c \right> \in \mathbb{C}$$
obtained by considering a standard planar projection of $G$ with normal double crossings, coloring an oriented edge $e$ by $V_{c(e)}$, 
 inserting the evaluations and coevaluations $\cup_{\alpha}$ and $\cap_{\alpha}$ and considering the renormalized invariant of the obtained $\mathcal{C}$-colored ribbon graph. The strict admissibility condition ensures that for three adjacent edges oriented towards a common vertex and cyclically ordered by the ribbon graph, the associated highest weight modules $V_{\alpha}$, $V_{\beta}$, $V_{\gamma}$ are such that $(\alpha, \beta, -\gamma)$ is admissible. 

\vspace{3mm}

\par \textbf{The inclusion of the modules $\sigma^n$}
\begin{definition}(Admissible coloring)
A coloring $c\in \mathrm{C}_1(G; \mathbb{C})$  is said \textit{admissible} if  $(i)$ $c(e) \in \ddot{\mathbb{C}}$ for all oriented edge $e$ and $(ii)$ $\partial c (v) \in \mathbb{Z}$ for all vertices $v\in V(G)$.
\end{definition}
 We now extend our trivalent graph calculus in order to include invariants $\left<G, c\right>$ of (non-necessarily strictly) admissible colorings using the modules $\sigma^n$. We will use the convention that a dotted edge colored by $n\in \mathbb{Z}$ represents an edge colored by $\sigma^n$, i.e. $\adjustbox{valign=c}{\includegraphics[width=1cm]{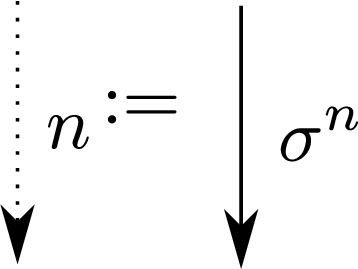}} $. Using the isomorphism $\sigma^{n_1}\otimes \sigma^{n_2}\cong \sigma^{n_1+n_2}$ sending $v\otimes v$ to $v$,  the isomorphisms $V_{\alpha}\otimes \sigma^n \cong V_{\alpha+ n\frac{N}{k}}$ sending $v_i\otimes v$ to $v_i$ and the isomorphisms $\sigma^n \otimes V_{\alpha} \cong V_{\alpha+ n\frac{N}{k}}$ sending $v\otimes v_i$ to $v_i$, we extend our trivalent graph calculus to trivalent graphs having edges possibly colored by some $\sigma^n$.
\par Suppose that $\alpha, \beta, \gamma \in \ddot{\mathbb{C}}$ are such that $\alpha + \beta +\gamma \in \frac{1}{k}\mathbb{Z}$. Then there exists a unique $n\in \mathbb{Z}$ such that $\alpha + \beta + \gamma \in \frac{N}{k}n +H_N$.
In this case, the triple $(\alpha, \beta, -\gamma)$ is said $n$-\textit{admissible}.
Define the multiplicity modules $H^{\alpha, \beta, \gamma}_n\in \Hom(\sigma^n, V_{\alpha}\otimes V_{\beta} \otimes V_{\gamma})$ and $H_{\alpha, \beta, \gamma}^n\in \Hom(V_{\alpha}\otimes V_{\beta} \otimes V_{\gamma}, \sigma^n)$ by 
$$\adjustbox{valign=c}{\includegraphics[width=12cm]{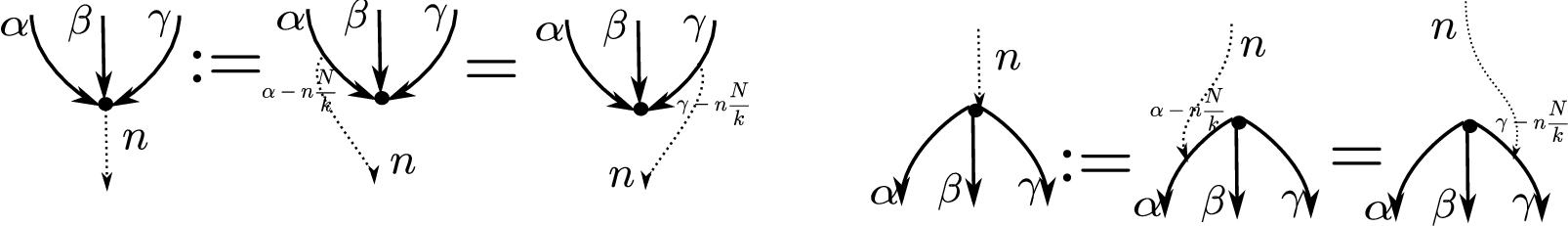}}.$$

The above equalities are ensured by the fact that the matrix coefficients $w_{\alpha}^i$, $D_{i,j,n}^{\alpha, \beta, \gamma}$ and $E_{i,j,n}^{\alpha, \beta, \gamma}$ only depend on the classes of $\alpha, \beta, \gamma$ in ${\mathbb{C}}/{\frac{N}{k}\mathbb{Z}}$ (mainly because $\{x+ \frac{N}{k}n\}= \{x\}$). For $(G,c)$ with $c$ admissible, for each vertex $v\in V(G)$ then there exists a unique $n\in \mathbb{Z}$ such that $\partial c (v) \in nN +H_N$ and we want to introduce a module $\sigma^{nk}$ adjacent to $v$ that is, we want to replace $\adjustbox{valign=c}{\includegraphics[width=1cm]{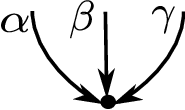}}$ by $\adjustbox{valign=c}{\includegraphics[width=1cm]{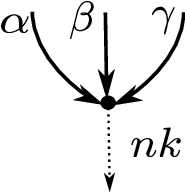}}$ since $(\alpha, \beta, \gamma)$ is $nk$-admissible. This requires to specify $(1)$ where the edge colored by $\sigma^{nk}$ is located in the cyclic order of the half-edges adjacent to $v$ by specifying a ciliated structure on $G$ (in $\adjustbox{valign=c}{\includegraphics[width=1cm]{HnDef2.eps}}$, the edge colored by $\sigma^{nk}$ is between the edge colored by $\alpha$ and the edge colored by $\gamma$) and $(2)$ how the edges colored by $\sigma^n$ are connected. Recall that in a ribbon graph the half-edges adjacent to a vertex $v$ are cyclically ordered and that a \textit{ciliated structure} is the data of a lift of this cyclic ordering to a total ordering (for instance by drawing a cilium between the last and the first half-edge).

\begin{definition}($\sigma$ decoration) A $\sigma$-\textit{decoration} on a trivalent ribbon graph $G \subset S^3$ is the data of $(1)$ a ciliated structure on $G$, $(2)$ a point $v_{\infty} \in S^3\setminus G$ and $(3)$ for each vertex $v$, a smooth path $p_v: [0,1] \to S^3\setminus G \cup \{v\}$ with $p_v(0)=v$ and $p_v(1)=v_{\infty}$.
\end{definition}

When $G$ is equipped with a $\sigma$-decoration and $c$ is an admissible coloring, we define a $\mathcal{C}$-colored graph $G(c)\subset S^3$ by first orienting the edges of $G$ arbitrary and coloring each edge $e$ by $V_{c(e)}$. Then for each vertex $v$, with $\partial c (v)\in H_N +Nn_v$ where $n_v\in \mathbb{Z}$, color the path $p_v$ with $\sigma^{n_vk}$ (with arbitrary framing), replace $v$ by a coupon colored by $H^{\alpha, \beta, \gamma}_{n_v}$ (if the three adjacent edges are colored by $\alpha, \beta, \gamma$ in counter-clockwise order) and eventually replace $v_{\infty}$ by a coupon colored by the morphism $\bigotimes_{v \in V(G)} \sigma^{kn_v} \to \mathds{1}$ sending $\bigotimes v$ to $1$ (well-defined since $\sum_v n_v=0$).
\par We eventually define
$$ \left< G, c \right> \in \mathbb{C} $$
as the invariant of the graph $G(c)$.

For instance for the theta graph with $\sigma$ decoration $\adjustbox{valign=c}{\includegraphics[width=2.8cm]{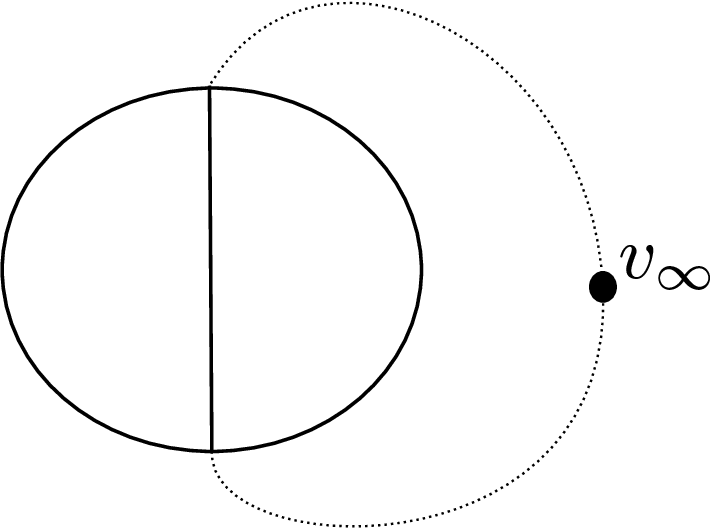}}$
and colored by $\alpha, \beta, \gamma \in \ddot{\mathbb{C}}$ with $\alpha+\beta+\gamma \in nN+H_N$, we set
$$\adjustbox{valign=c}{\includegraphics[width=10cm]{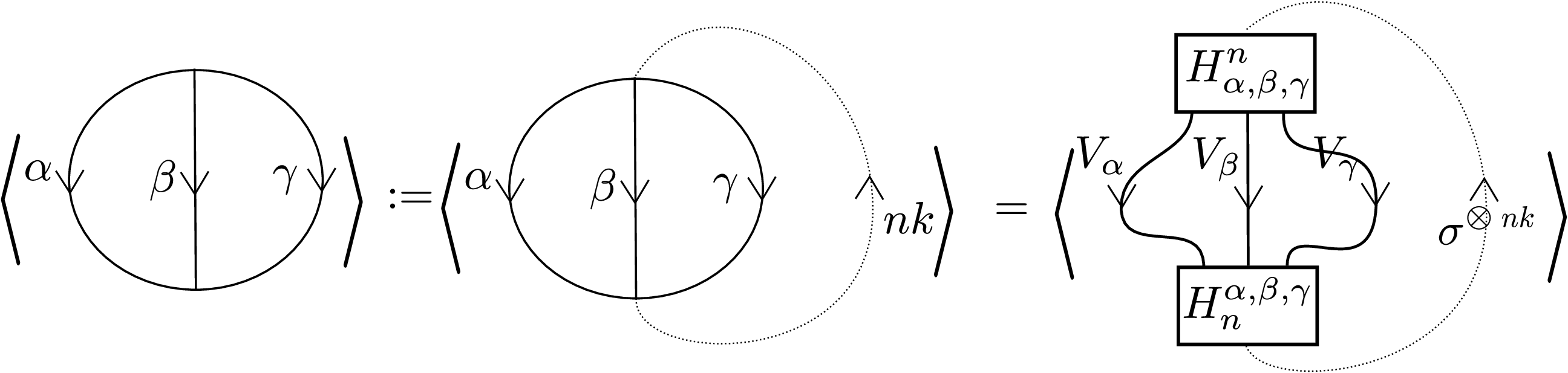}}.$$

\par When $c$ is an admissible coloring of $G$, since $\partial c \in \mathrm{C}_0(G; \mathbb{Z})$, then $c$ induces a class $\overline{c} \in \mathrm{H}_1(G; \mathbb{C}/\mathbb{Z})$ to which we associate a class $\omega_{\overline{c}} \in \mathrm{H}^1(S^3\setminus G; \mathbb{C}/\mathbb{Z})$ as follows. The inclusion $G \subset S^3$ induces a long exact sequence in homology:
$$ 0= \mathrm{H}_2(S^3) \to \mathrm{H}_2(S^3, G) \xrightarrow[\cong]{\partial} \mathrm{H}_1(G) \to \mathrm{H}_1(S^3)=0$$
where homology is taken with coefficients in $\mathbb{C}/\mathbb{Z}$. By excision, we have $\mathrm{H}_2(S^3, G) \cong \mathrm{H}_2(S^3 \setminus G)$ so we obtain an isomorphism $\delta_1:  \mathrm{H}_1(G)\xrightarrow{\partial^{-1}} \mathrm{H}_2(S^3, G) \cong \mathrm{H}_2(S^3\setminus G)$. Let $\delta_2: \mathrm{H}_2(S^3\setminus G) \to \mathrm{H}^1(S^3\setminus G)$ the Poincar\'e duality morphism sending a class $[S]$ to the cocycle $[S]\cap \bullet$. 

\begin{definition}(Shadow of a coloring)\label{def_shadow_coloring} The \textit{shadow} $\omega_{\overline{c}}\in \mathrm{H}^1(S^3\setminus G; \mathbb{C}/\mathbb{Z})$ of an admissible coloring $c$ of $G \subset S^3$ is the image of $\overline{c}$ by the composition $\mathrm{H}_1(G; \mathbb{C}/\mathbb{Z}) \xrightarrow{\delta_1} \mathrm{H}_2(S^3\setminus G; \mathbb{C}/\mathbb{Z}) \xrightarrow{\delta_2} \mathrm{H}^1(S^3\setminus G; \mathbb{C}/\mathbb{Z}).$
\end{definition}

Unfolding the definition, $\omega_{\overline{c}}$ has the following down-to-earth description. For $\gamma \subset S^3\setminus G$ an oriented simple closed curve, let $D_{\gamma} \subset S^3$ be an oriented immersed disc positively bounded by $\gamma$ which intersects $G$ transversally. Orient the edges of $G$ arbitrarily. Then 
$$ \omega_{\overline{c}}([\gamma]) = \sum_{v \in D_{\gamma} \cap G} \epsilon(v) \overline{c}(v), $$
where $\epsilon(v)\in \{-1, +1\}$ is the intersection sign and $\overline{c}(v)\in \mathbb{C}/\mathbb{Z}$ is the image by $\overline{c}$ of the oriented edge of $G$ which intersects $D_{\gamma}$ at $v$. 
\par Suppose that $G$ is $\sigma$-decorated and let $\widehat{G}\subset S^3$ be the ribbon graph obtained from $G$ by adding the vertex $v_{\infty}$ and the edges $p_v$ with arbitrary framing. Define an embedding $\mathrm{C}_1(G; \mathbb{Z})\hookrightarrow \mathrm{Z}_1(\widehat{G}; \mathbb{Z})=\mathrm{H}_1(\widehat{G}; \mathbb{Z})$ by sending $c_0 \in \mathrm{C}_1(G; \mathbb{Z})$ such that $\partial c_0=\sum_{v\in V(G)} n_v v$ to $\widehat{c_0}\in\mathrm{Z}_1(\widehat{G}; \mathbb{Z})$ defined by $\widehat{c}_0:= c_0 + \sum_{v} n_v p_v$ (here $p_v$ is oriented from $v$ to $v_{\infty}$). Using the framing of $G$, we can push an edge of $\widehat{G}$ inside $S^3\setminus G$ thus defining a morphism $\mathrm{H}_1(\widehat{G}; \mathbb{Z}) \to \mathrm{H}^1(S^3\setminus G; \mathbb{Z})$ by which we define $\omega_{\overline{c}}(\widehat{c_0}) \in \mathbb{C}/\mathbb{Z}$.

\begin{lemma}\label{lemma_transparent} 
If $c$ is an admissible coloring and $c_0 \in \mathrm{C}_1(G; \mathbb{Z})$, then 
$$ \left<G, c +\frac{N}{k}c_0 \right>= \left< G, c\right> e^{2i\pi \omega_{\overline{c}}(\widehat{c_0})}.$$
\end{lemma}

\begin{proof} 
By Lemma \ref{lemma_almost_transparent}, we have the skein relation $ \adjustbox{valign=c}{\includegraphics[width=0.7cm]{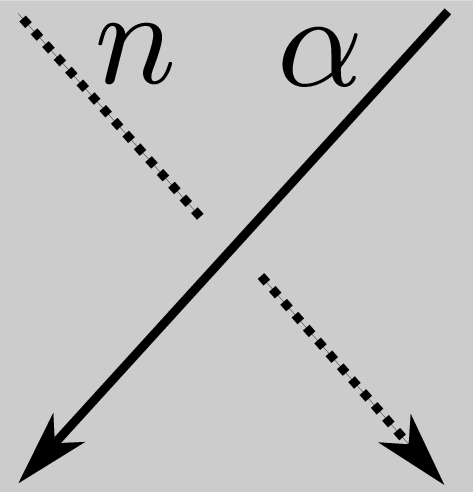}}= e^{2i\pi \alpha n}  \adjustbox{valign=c}{\includegraphics[width=0.7cm]{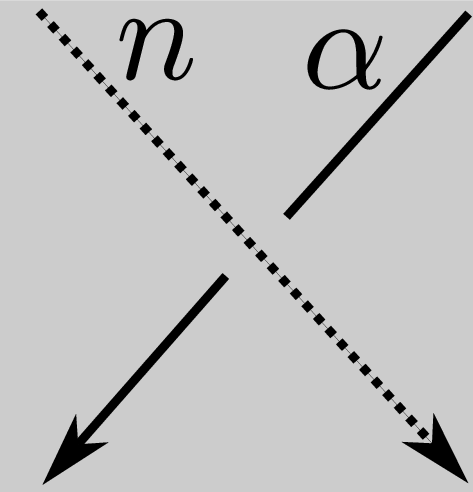}}$ which, together with the fact that $\qdim(\sigma)=1$,  implies the relation:
\begin{equation}\label{eq_skein_sigma}\adjustbox{valign=c}{\includegraphics[width=5cm]{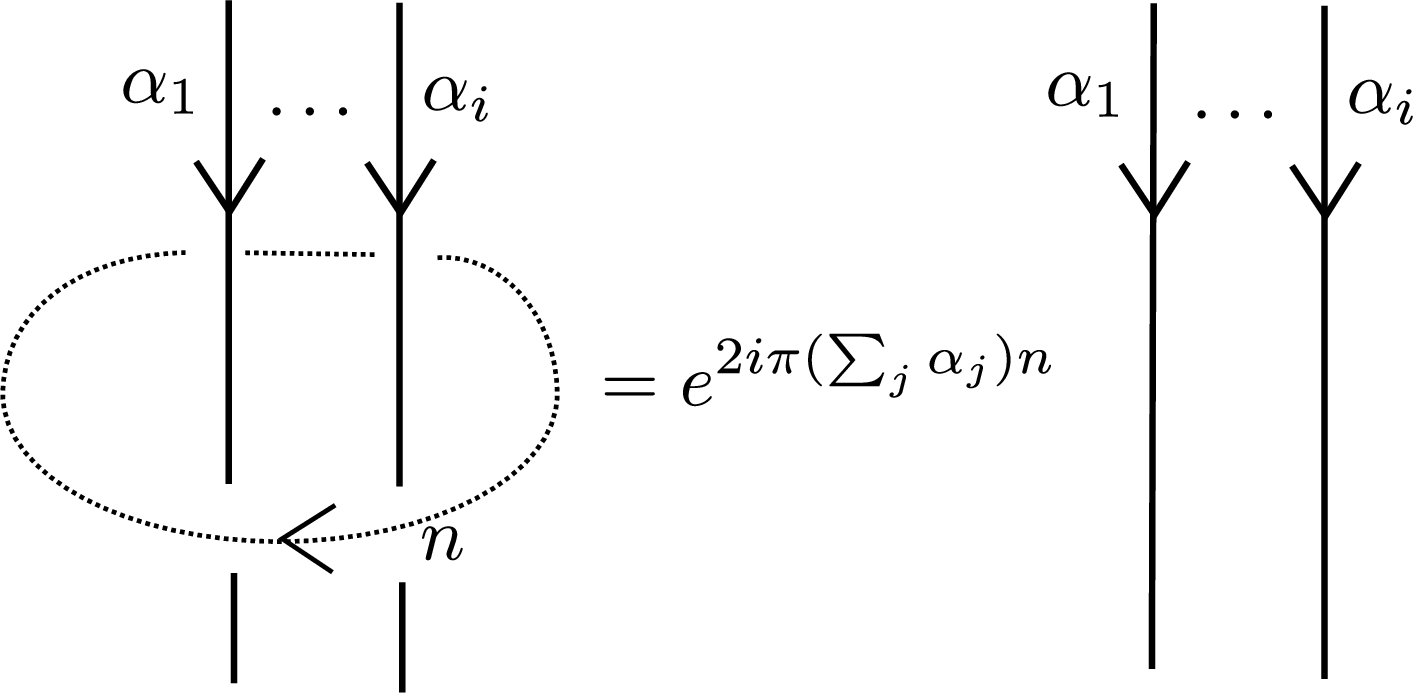}}.\end{equation}

An \textit{inner curve} in the ribbon graph $\widehat{G}$ is a simple closed curve defined by a cycle $v_0 \xrightarrow{e_0} v_1 \xrightarrow{e_1}v_2 \xrightarrow{e_2} \ldots \xrightarrow{e_{n-1}} v_n \xrightarrow{e_n}v_0$ ($v_i$ are vertices, $e_i$ are oriented edges from $v_i$ to $v_{i+1}$) such that the half-edges of $e_i$ and $e_{i-1}$ adjacent to $v_i$ are consecutive in the cyclic ordering at $v_i$ (indexes are modulo $n+1$).
Note that if we thickened the ribbon graph $\widehat{G}$ to a surface $S$ then the inner curves are isotopic to the boundary components of $S$ so the homology classes $[\gamma]$ 
  of inner curves $\gamma$  generate $\mathrm{H}^1(\widehat{G}; \mathbb{Z})$. We can suppose that $\widehat{c_0}=[\gamma]$ for such an inner curve $\gamma$. 
\par As illustrated in Figure \ref{fig_proof_sigma},  $(G, c+ \frac{N}{k}c_0)$ is skein equivalent to the union of $(G, c)$ with the curve $\gamma$ (pushed inside $S^3\setminus G$) colored by $\sigma$. 
Using the skein relation $ \adjustbox{valign=c}{\includegraphics[width=2cm]{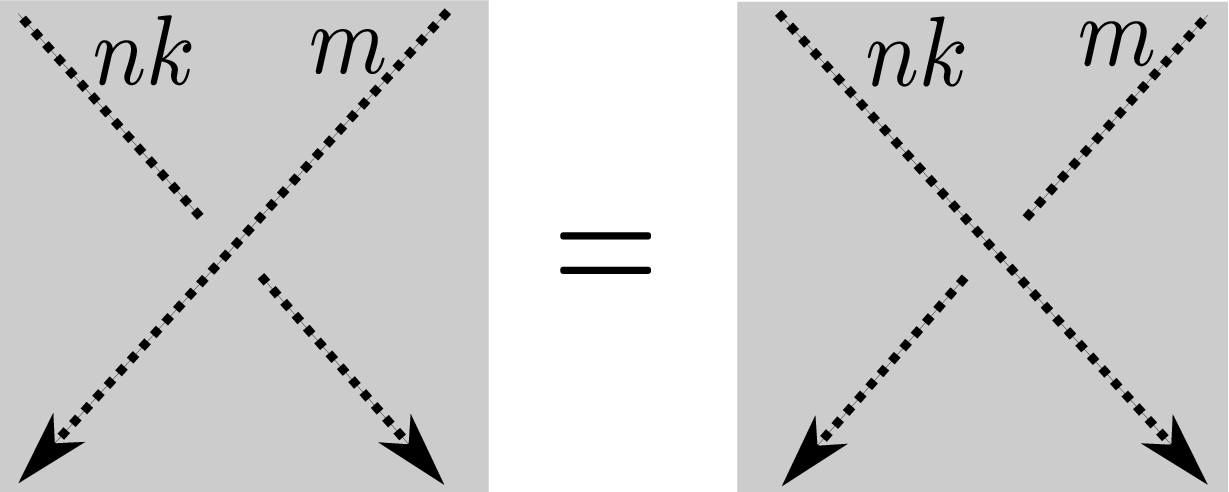}}$ from Lemma \ref{lemma_almost_transparent} we see that $(\gamma, \sigma)$ is transparent with respect to the edges $p_v$ of $\widehat{G}$.
We conclude using the previous skein relation \eqref{eq_skein_sigma}.

\begin{figure}[!h] 
\centerline{\includegraphics[width=12cm]{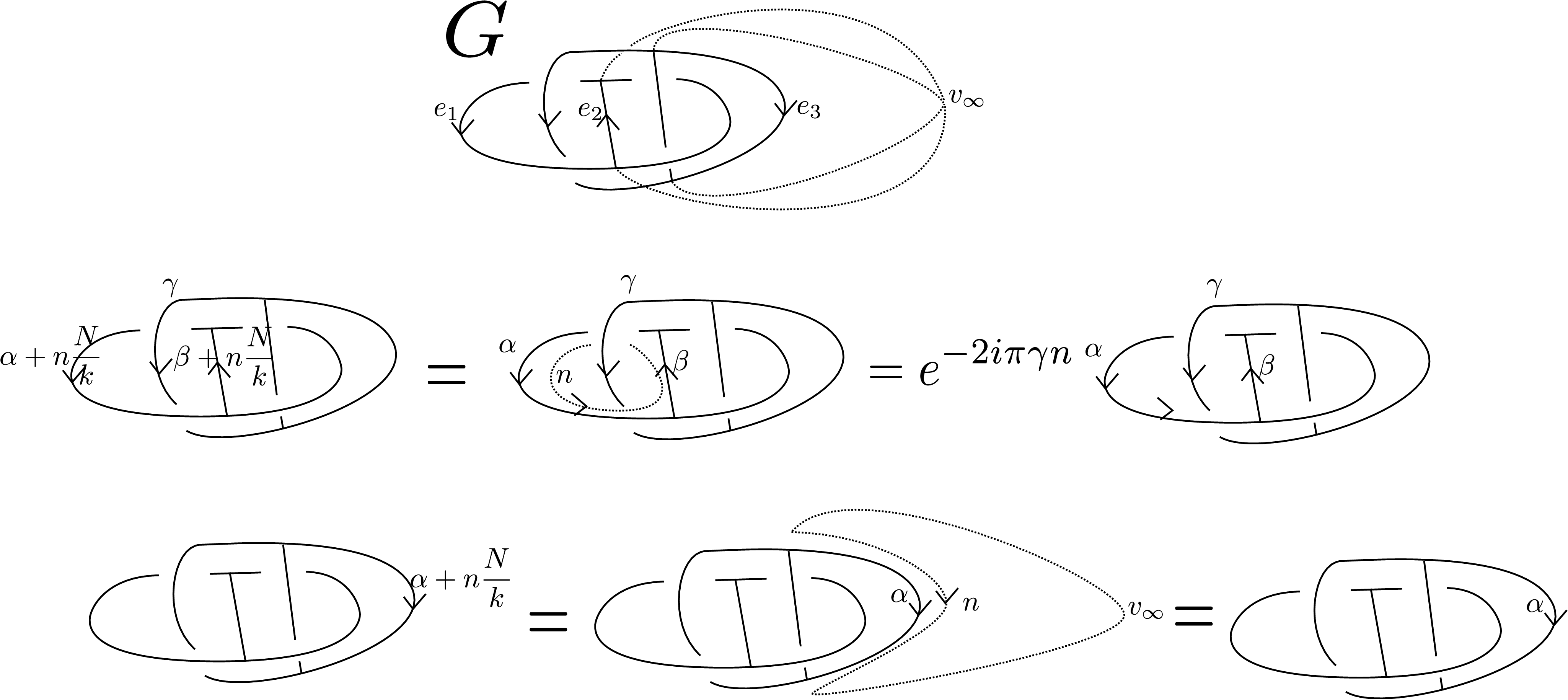} }
\caption{On the top: a ribbon graph $G\subset S^3$ and a $\sigma$ decoration. On the bottom: two illustrations of the equality $ \left<G, c +\frac{N}{k}c_0 \right>= \left< G, c\right> e^{2i\pi \omega_{\overline{c}}(\widehat{c_0})}$ when $c_0= n e_1+ ne_2$ is a cycle so $\widehat{c_0}=c_0$ and when $c_0=n e_3$ (in this case $\widehat{c_0}$ is contractible). 
} 
\label{fig_proof_sigma} 
\end{figure} 
\end{proof}

\par \textbf{The inclusion of the standard module $S_1$}
We now extend our trivalent graph calculus to include trivalent graphs with edges colored by the standard representations $S_1$. We will color these edges in blue.
Recall from Item $(8)$ in Lemma \ref{lemma_representations} that we have morphisms $\iota: S_1\hookrightarrow V_{N-2}$ and $p: V_{N-2}\to S_1$ defined by $\iota(e_n):= v_{N-2+n}$, $p(v_{N-2+n})=e_n$ for $n=0,1$ and $p(v_i)=0$ for $i=0, \ldots, N-3$ (so $p\circ i = \id$). We extend our trivalent graph calculus using the conventions:
$$\adjustbox{valign=c}{\includegraphics[width=10cm]{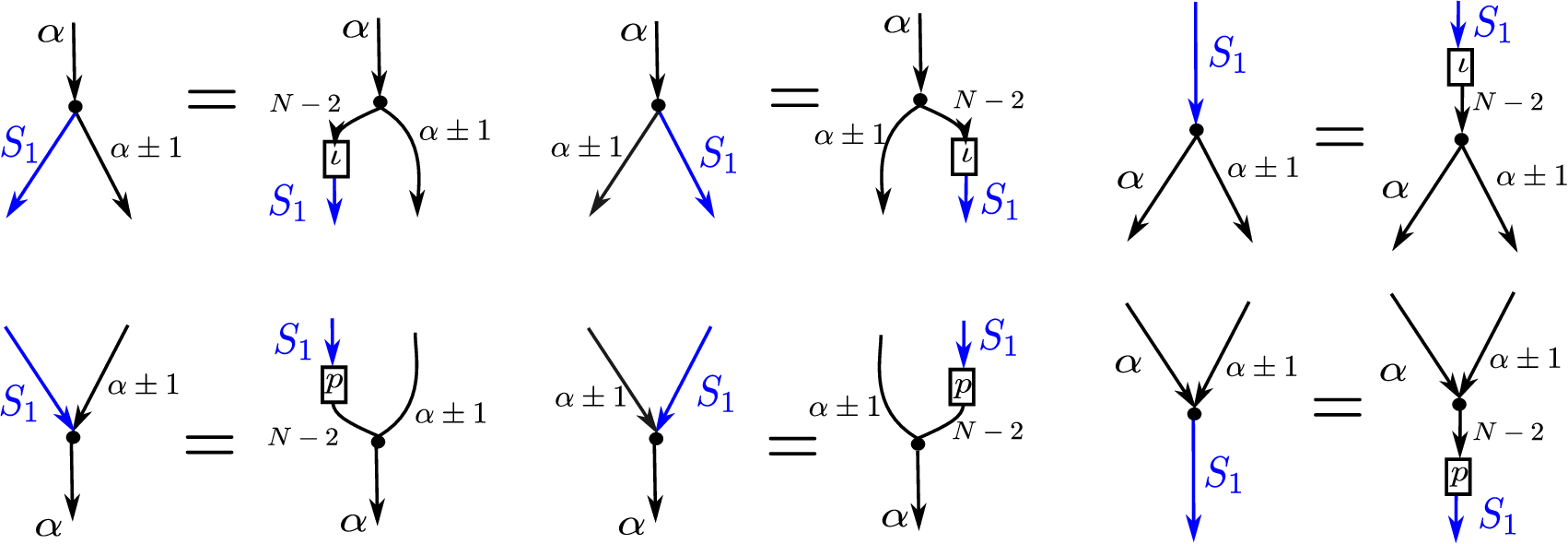}}.$$

\subsection{6j symbols}\label{sec_6j}

\begin{definition} For  $(\alpha, \beta, \gamma)$ $0$-admissible and $\varepsilon_1, \varepsilon_2 \in \{-1, +1\}$, the $6j$-symbol $6S(\alpha, \beta, \gamma; \varepsilon_1, \varepsilon_2)\in \mathbb{C}$ is defined by the skein relation
$$\adjustbox{valign=c}{\includegraphics[width=8cm]{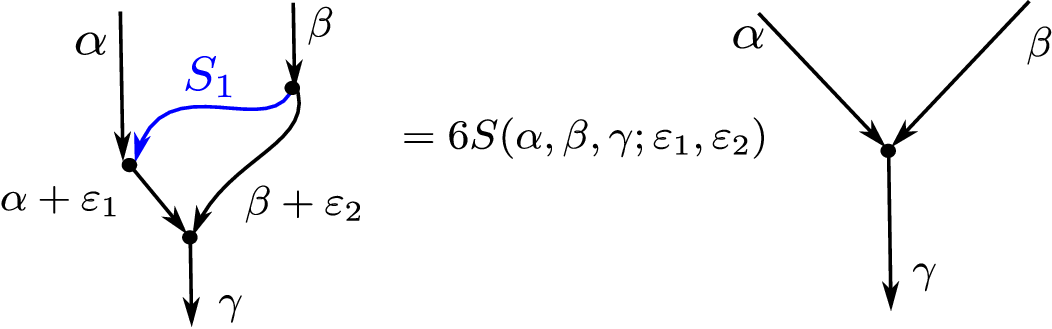}}$$
\end{definition}

The following proposition is the key to prove Theorem \ref{theorem2}. We postpone its (technical) proof to Appendix \ref{sec_appendixB}.

\begin{proposition}\label{prop_6j}
There exists a non-zero rational fraction $R(X_1, X_2, X_3) \in  \mathbb{Q}(A)(X_1, X_2, X_3)$ (which depends on $\varepsilon_1,\varepsilon_2$)  such that 
$$6S(\alpha, \beta, \gamma; \varepsilon_1, \varepsilon_2) = R(q^{\frac{\alpha}{2}}, q^{\frac{\beta}{2}}, q^{\frac{\gamma}{2}}).$$
\end{proposition}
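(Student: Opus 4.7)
My plan is to compute $6S$ explicitly from its defining skein relation using the trivalent graph calculus, and then separately check the two claims: that the result is a rational function in the given variables and that it is not identically zero.

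First, I would unpack the skein relation that defines $6S(\alpha,\beta,\gamma;\varepsilon_1,\varepsilon_2)$, translating both sides into compositions of the elementary morphisms $Y_\gamma^{\alpha,\beta}$, $Y_{\alpha,\beta}^\gamma$, $\cup_\alpha$, $\cap_\alpha$, $w_\alpha$ fixed in Definition \ref{def_morphisms}, together with the matrix coefficients of the braiding $\mathcal{R}$ and twist $\Theta_0$ recalled in Subsection 3.2, which account for the signs $\varepsilon_1,\varepsilon_2$ as positive or negative crossings. Projecting onto the weight bases $\{v_n\}$ of the relevant highest weight modules reduces the skein identity to an equation between two explicit matrix coefficients, from which $6S$ can be solved for as a finite sum of products of the scalars $D_{i,j,n}^{\cdot,\cdot,\cdot}$, $E_{i,j,n}^{\cdot,\cdot,\cdot}$, $w_\alpha^n$ listed in Notations \ref{notations_formulas}.

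Second, I would verify the rational form. Every factor $\{x\} = q^x - q^{-x}$ appearing in the building blocks $\kappa_n^\alpha$ and $C_{i,j,n}^{\alpha,\beta,\gamma}$ has $x$ of the form $\alpha/k + \mathbb{Z}$, so it is a Laurent polynomial in $q^{\alpha/k}$ over $\mathbb{Z}[A^{\pm 1}]$; the fractional prefactors $q^{\pm\alpha/(2k)}$ and the half-integer exponents in $C_{i,j,n}^{\alpha,\beta,\gamma}$ introduce \emph{a priori} square roots of these variables. The key observation is that the vertex balancing condition $\alpha+\beta-\gamma \in (k/2)\mathbb{Z}$ inherited from the graph calculus, together with the triangle of admissibility imposed at every vertex of the diagram underlying $6S$, forces the exponents of $q^{\alpha/(2k)}$, $q^{\beta/(2k)}$, $q^{\gamma/(2k)}$ occurring in each surviving summand to combine into integer powers of $q^{\alpha/k}$, etc., after appropriate phase bookkeeping; the final expression is then a rational function in $e^{i\pi\alpha},e^{i\pi\beta},e^{i\pi\gamma}$ with coefficients in $\mathbb{Q}(A)$.

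Third, to see $R \not\equiv 0$, I would evaluate $6S$ at a single well-chosen triple $(\alpha,\beta,\gamma)$. A convenient choice is to take $\alpha,\beta,\gamma$ in the generic semi-simple stratum $\ddot{\mathbb{C}}$, so that by Lemma \ref{lemma_representations}(6) the tensor products $V_\alpha \otimes V_\beta$ and $V_\beta \otimes V_\gamma$ decompose multiplicity-freely. Then the two sides of the skein relation defining $6S$ can be interpreted as two different bases of the same one-dimensional $\Hom$-space, so the corresponding change of basis scalar is automatically invertible, hence non-zero. Alternatively, one can compute directly from the formula assembled in Step 1 using the Racah-type identity of Costantino--Murakami (which underlies Proposition \ref{prop_appendixA} and is detailed in Appendix \ref{sec_appendixA}) and degenerate $q \to 1$ to recover the classical $\mathfrak{sl}_2$ $6j$-symbol, which is well known to be non-vanishing for generic weights.

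The main obstacle is Step 2: on the nose the matrix coefficients of $Y$, $\cup$, $\cap$, $w$ involve $q^{\pm\alpha/(2k)}$ and similar half-integer powers, whose natural variable is $e^{i\pi\alpha/N}$ rather than $e^{i\pi\alpha}$, so one must carefully track how the admissibility of the coloring and the Reshetikhin--Turaev normalizations conspire to eliminate all fractional powers and leave a rational dependence on $e^{i\pi\alpha},e^{i\pi\beta},e^{i\pi\gamma}$. Once this structural point is secured, the non-vanishing follows either by the categorical argument above or by exhibiting a single numerical value.
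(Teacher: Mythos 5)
Your Steps 1 and 2 are essentially the paper's route: the paper evaluates the defining skein relation on the lowest weight vector $v_0$, which turns it into the explicit two-term identity \eqref{eq_6j_formula} expressing $6S$ through the coefficients $D^{\cdot,\cdot,\cdot}_{i,j,n}$ and $E^{\cdot,\cdot,\cdot}_{i,j,n}$ of Notations \ref{notations_formulas}, and then realizes each such coefficient as an explicit rational function of $(e^{i\pi\alpha},e^{i\pi\beta},e^{i\pi\gamma})$ with coefficients in $\mathbb{Q}(A)$; the ``fractional power'' issue you flag is settled there by writing these rational functions out, not by an abstract admissibility argument, but that part of your sketch is workable.

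The genuine gap is Step 3, the non-vanishing of $R$. Your first argument --- that for generic $\alpha,\beta,\gamma$ the two sides of the skein relation are ``two different bases of the same one-dimensional $\Hom$-space, so the change of basis scalar is automatically invertible'' --- is circular: one-dimensionality only gives that the left-hand side is \emph{proportional} to the right-hand side (which is how $6S$ is defined in the first place); it does not say the left-hand side is a nonzero vector of that $\Hom$-space, and that is precisely what has to be proved. Even in semisimple, multiplicity-free situations an individual $6j$ coefficient, with all intermediate colours fixed, can vanish; invertibility of a change-of-basis matrix says nothing about a single entry. Your fallback, degenerating $q\to 1$ to the classical $\mathfrak{sl}_2$ $6j$-symbol, is not available either: $q=A^2$ is a fixed root of unity of odd order $N$, the modules $V_\alpha$ are $N$-dimensional with $[N]=0$, and $R$ lives in $\mathbb{Q}(A)(X_1,X_2,X_3)$, so there is no classical limit to take; and non-vanishing at generic $q$ would not survive specialization to a root of unity in any case. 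The real difficulty, which your proposal never confronts, is that \eqref{eq_6j_formula} writes (a nonzero multiple of) $R$ as a sum of exactly two terms $\mathbb{F}_1+\mathbb{F}_2$, each a product of nonzero factors, and one must rule out cancellation between them. The paper does this by a discrete valuation argument (order of vanishing in the variable $X_1$), computing $v(\mathbb{F}_1)=7\theta_1-2N+5$ and $v(\mathbb{F}_2)=7\theta_1-2N+10$; since the valuations differ, the sum cannot vanish. Without an argument of this kind (or an honest evaluation of $6S$ at one explicit triple), the non-vanishing claim remains unproven.
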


\section{Skein representations coming from non-semisimple TQFTs}\label{sec_representations}

\subsection{The spaces $\mathbb{V}(\Sigma, \omega)$}\label{sec_spaces}

The link invariants described in Section \ref{sec_links} were extended in \cite{CGPInvariants} to invariants of triples $(M, \Gamma, \omega)$ where $M$ is a $3$-manifold equipped with an embedded $\mathcal{C}$-colored ribbon graph $\Gamma \subset M$ with at least one edge colored by a projective module and a cohomology class $\omega \in \mathrm{H}^1(M\setminus \Gamma; \mathbb{C}/\frac{N}{2k}\mathbb{Z})$ compatible with $\Gamma$. These  invariants were proved in \cite{BCGPTQFT} to be part of a more elaborate construction named \textit{extended non-semisimple TQFT} where the authors considered roots of unity of even order. The construction of non-semisimple TQFTs was latter extended to arbitrary modular relative categories in  \cite{DeRenzi_NSETQFT}  and it follows from  \cite[Theorem $1.3$]{DeRenziGeerPatureau_TQFT_QG} that 
 $\mathcal{C}$ is a  relative $\mathbb{C}$-modular category in the sense of \cite{DeRenzi_NSETQFT} so it defines an extended TQFT $\mathbb{V}$. As a byproduct of this elaborated construction, to each surface $\Sigma = \Sigma_g$ with $g\geq 2$ endowed with a cohomology class  $\omega \in \mathrm{H}^1(\Sigma; \mathbb{C}/\mathbb{Z}) \setminus \mathrm{H}^1(\Sigma; \frac{1}{2k}\mathbb{Z}/\mathbb{Z})$, we associate a $N^{3g-3}$ dimensional representation 
$$ r_{\omega} : \mathcal{S}_A(\Sigma) \to \End( \mathbb{V}(\Sigma, \omega) ). $$
 We propose below an alternative construction of this space and of its skein module structure, similar to the construction of the Witten-Reshetikhin-Turaev representations described in Section \ref{sec_WRT}. 
 This will make the paper (almost) self-contained since we will almost never rely on the results in \cite{BCGPTQFT} with the exception of Lemma \ref{lemma_InvPairing}.
 The main difference with the WRT case is the fact that we use a different skein module for the handlebody. The Kauffman-bracket skein modules are particular cases of the following more general class of skein modules.

\vspace{2mm}
\par 

Let $\mathcal{D}$ be a ribbon category and $M$ a compact oriented $3$-manifold. Let $F^{RT}: \Rib_{\mathcal{D}} \to \mathcal{D}$ be the Reshetikhin-Turaev functor where the morphisms in $\Rib_{\mathcal{D}}$ are $\mathcal{D}$-colored standard tangles in the cube $C=[0,1]^3$ with incoming and outgoing endpoints equal to some prescribed points in the sets $\partial_{in}C= [0,1]^2 \times \{0\}$ and $\partial_{out}C= [0,1]^2\times \{1\}$.
Let $\mathbb{C}[\mathcal{RG}_{\mathcal{D}}(M)]$ be the $\mathbb{C}$-vector space freely spanned by $\mathcal{D}$-colored ribbon graphs in $M$. Consider an oriented embedding $\phi : C \hookrightarrow M$. A ribbon graph $\Gamma \subset M$ is $\phi$-adapted if it intersects $\phi(\partial C)$ transversally along $\phi(\partial_{in}C) \cup \phi(\partial_{out}C)$ so that the restriction $\restriction{\Gamma}{\phi(C)}$ defines a morphism $\restriction{\Gamma}{\phi}: V_{in}(\Gamma) \to V_{out}(\Gamma) $ in $\Rib_{\mathcal{D}}$ (so, using Notations \ref{notations_ribC}, $\phi^{-1}(\Gamma) \cap \partial_{in}= P_n\times \{0\}$ and $\phi^{-1}(\Gamma)\cap \partial_{out}=P_m \times \{1\}$)
. A linear combination 
$X= \sum_i x_i \Gamma_i \in \mathbb{C}[\mathcal{RG}_{\mathcal{D}}(M)]$ is \textit{a skein relation with support in} $\phi$ if 
\begin{enumerate}
\item every $\Gamma_i$ are $\phi$ adapted and such that $V_{in}(\Gamma_i) $ and $V_{out}(\Gamma_i)$ do not depend on $i$; 
\item the restriction of the $\Gamma_i$ to $M\setminus \phi(C)$ are pairwise equal; 
\item one has $\sum_i x_i F^{RT}\left( \restriction{\Gamma}{\phi} \right) = 0$.
\end{enumerate}

\begin{definition}(Skein module) \label{def_skeinmodule} The \textit{skein module} $\mathscr{S}^{\mathcal{D}}(M)$ is the quotient of the space $\mathbb{C}[\mathcal{RG}_{\mathcal{D}}(M)]$ by the subspace spanned by skein relations.
\end{definition}

For instance, the Kauffman-bracket skein module is the skein module associated to the Temperley-Lieb category $TL$. As we shall prove in the next subsection, it is closely related to the skein module associated to the full subcategory $\mathcal{C}^{small}\subset \mathcal{C}_{\overline{0}}$ generated by the representations $(S_1^{\pm 1})^{\otimes n}$ where $S_1^{+1}=S_1$ and $S_1^{-1}=(S_1)^*$. By construction, $\mathscr{S}^{\mathcal{D}}(M)$ is functorial in both the variables $\mathcal{D}$ and $M$ in the sense that: 
\begin{enumerate}
\item an oriented embedding $f: M_1\to M_2$ induces a linear map $f_* : \mathscr{S}^{\mathcal{D}}(M_1) \to \mathscr{S}^{\mathcal{D}}(M_2)$ defined by $f_*([\Gamma])=[f(\Gamma)]$, so $f_*$ only depends on the isotopy class of $f$; 
\item a ribbon functor $F: \mathcal{D}_1\to \mathcal{D}_2$ induces a linear map $F_*: \mathscr{S}^{\mathcal{D}_1}(M) \to \mathscr{S}^{\mathcal{D}_2}(M)$ by acting by $F$ on the colors and coupons of ribbon graphs. 
\end{enumerate}
 More precisely, if $\mathcal{M}$ is the category with objects the compact oriented $3$-manifolds and morphisms the oriented embeddings, then $\mathscr{S}^{\mathcal{D}} : \mathcal{M} \to \Vect$ is a functor and $F_* : \mathscr{S}^{\mathcal{D}_1} \to \mathscr{S}^{\mathcal{D}_2}$ is a natural morphism.
 In particular $\mathscr{S}^{\mathcal{D}}(\Sigma):= \mathscr{S}^{\mathcal{D}}(\Sigma \times [0,1])$ has an algebra structure and if $\partial M = \Sigma$ then $\mathscr{S}^{\mathcal{D}}(M)$ is a $\mathscr{S}^{\mathcal{D}}(\Sigma)$-module (since $\Sigma\times [0,1]$ is an algebra object in $\mathcal{M}$ and $M$ a module object over it).
 
Moreover, the inclusion functor $\mathcal{C}^{small} \subset \mathcal{C}_{\overline{0}}$ induces an algebra morphism $\iota: \mathscr{S}^{\mathcal{C}^{small}}(\Sigma) \to \mathscr{S}^{\mathcal{C}_{\overline{0}}}(\Sigma)$ and if $\partial M =\Sigma$ then $\mathscr{S}^{\mathcal{C}}(M)$ is a $\mathscr{S}^{\mathcal{C}^{small}}(\Sigma)$-module. For now on, we will denote by $\mathcal{S}_A^{QG}$ the functor $\mathscr{S}^{\mathcal{C}^{small}}$ for simplicity.

\par Let us now suppose that the ribbon category $\mathcal{D}$ is graded by an abelian group $\mathcal{A}$, i.e. that $\mathcal{D}=\cup_{g\in \mathcal{A}} \mathcal{D}_g$ with $\mathcal{D}_{g_1}\otimes \mathcal{D}_{g_2} \subset \mathcal{D}_{g_1+g_2}$. The category $\Rib_{\mathcal{D}}$ is then $\mathcal{A}$-graded as well where the object $( (V_1, \epsilon), \ldots, (V_n, \epsilon_n)) \in \Rib_{\mathcal{D}}$ belong to the graded part associated to $g= \sum_{i=1}^n \epsilon_i g_i$ where $V_i \in \mathcal{D}_{g_i}$ and $\epsilon_i\in \{\pm 1\}$. By construction, the Reshetikhin-Turaev functor $F^{RT}: \Rib_{\mathcal{D}} \to \mathcal{D}$ preserves the grading (has degree $0$). 

\par Let $G\subset \mathbb{S}^3$ be an oriented ribbon trivalent graph embedded in the sphere, let $V$ and $E$ be its sets of vertices and edges respectively and let $H= N(G)$ be a tubular neighborhood, so $H$ is a handlebody with boundary $\Sigma:= \partial H$. For each edge $e\in E$, one associates a properly embedded oriented disc $D_e \subset H$ with $\gamma_e := \partial D_e \subset \Sigma$ chosen such that $D_e$ intersects $G$ in a single point $p_e$ in the center of $D_e$. The framing of $G$ at $p_e$ defines an oriented axis of $D_e$ passing by $p_e$. An embedded ribbon graph $\Gamma \subset H$ is in good position if it intersects each $D_e$ transversally along the oriented axis with framing given by the oriented axis. So the intersection $\Gamma \cap D_e$ defines an element $X_{\Gamma, e} \in \Rib_{\mathcal{D}}$ and we denote by $g_e(\Gamma) \in A$ its grading. Any ribbon graph can be isotoped such that it is in good position and two isotopic ribbon graphs in good position define the same graded elements $g_e(\Gamma)$ (note that the fact that $A$ is abelian is important here). Note also that for $v\in V$ corresponding to a pair of pants $P_v$ with adjacent edges $e_1, e_2, e_3$ (not necessarily pairwise distinct, so it corresponds rather to half-edges), writing $\epsilon_i =+1$ if $e_i$ is incoming to $v$ and $\epsilon_i=-1$ else, then $\epsilon_1g_{e_1}(\Gamma)+\epsilon_2g_{e_2}(\Gamma) +\epsilon_3 g_{e_3}(\Gamma)=0$. Therefore the $g_e(\Gamma)$ define a cycle $g(\gamma) \in \mathrm{Z}^1(G; \mathcal{A})\cong \mathrm{H}_1(H; \mathcal{A})$.
Therefore, the skein module $\mathscr{S}^{\mathcal{D}}(H)$ is $\mathrm{H}^1(H; \mathcal{A})$-graded. For $\omega_H \in \mathrm{H}^1(H; \mathcal{A})$, we denote by $\mathscr{S}^{\mathcal{D}}(H, \omega_H)$ the $\omega_H$ graded part.

\par Now consider the two  trivalent ribbon graphs $G_g, G'_g \subset \mathbb{S}^3$ of Figure \ref{fig_graphs} and choose the tubular neighborhoods $H_g=N(G_g)$ and $H_g'=N(G_g')$ such that $H_g\cap H_g'= \partial H_g = \partial H_g' =: \Sigma_g$ has genus $g$. 
Note that the graphs have been chosen such that for any edge $e$ of either $G_g$ or $G'_g$, then the curve $\gamma_e \subset \Sigma_g$ is non-separating. 
Let $\omega \in \mathrm{H}^1(\Sigma_g; \mathbb{C}/\mathbb{Z}) \setminus \mathrm{H}^1(\Sigma_g; \frac{1}{2k}\mathbb{Z}/\mathbb{Z})$. By pulling back $\omega$ along the  inclusion maps $\Sigma_g \subset H_g$ and $\Sigma_g \subset H'_g$, we obtain two classes $\omega_{H_g} \in \mathrm{H}^1(H_g; \mathbb{C}/\mathbb{Z})$ and $\omega_{H'_g} \in \mathrm{H}^1(H'_g; \mathbb{C}/\mathbb{Z})$ and the pair $(\omega_{H_g}, \omega_{H_g'})$ determines $\omega$. Consider the Hopf pairing 
$$ \left( \cdot, \cdot \right)^H_{\omega} : \mathscr{S}^{\mathcal{C}}(H'_g, \omega_{H'_g}) \otimes \mathscr{S}^{\mathcal{C}}(H_g, \omega_{H_g}) \to \mathbb{C}, \quad (\Gamma', \Gamma)^H_\omega := \left< \Gamma' \cup \Gamma \right>, $$
where $\left< \Gamma' \cup \Gamma \right>$ is the link invariant of Section \ref{sec_links} defined using the renormalized trace of the ribbon graph $\Gamma \cup \Gamma' \in \mathcal{RG}_{\mathcal{P}}(\mathbb{S}^3)$ (the condition $\omega \notin \mathrm{H}^1(\Sigma_g; \frac{1}{2k}\mathbb{Z}/\mathbb{Z})$ implies that at least one edge of $\Gamma \cup \Gamma'$ is colored by a projective module).

\begin{figure}[!h] 
\centerline{\includegraphics[width=12cm]{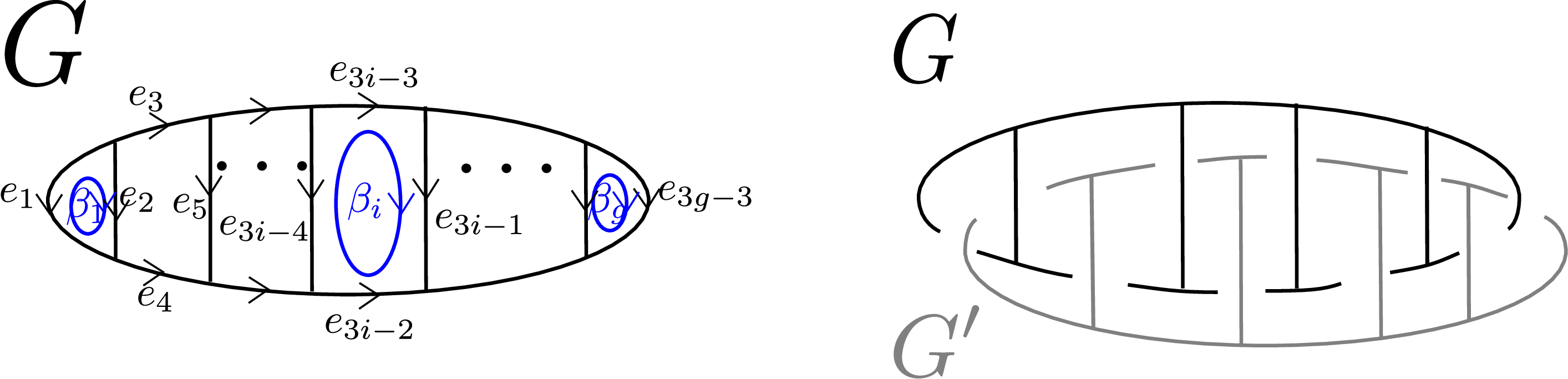} }
\caption{On the left: a trivalent oriented graph $G$, an enumeration $e_1,\ldots, e_{3g-3}$ of its edges and some curves $\beta_1, \ldots, \beta_g$. On the right: two copies $G$ and $G'$ of the graph embedded in $\mathbb{S}^3$ which define a Heegaard splitting of the sphere.} 
\label{fig_graphs} 
\end{figure}

\begin{definition}\label{def_spaceV} The space $\mathbb{V}(\Sigma_g; \omega)$ is defined as the quotient 
$$ \mathbb{V}(\Sigma_g; \omega):= \quotient{ \mathscr{S}^{\mathcal{C}}(H_g, \omega_{H_g})}{\Rker \left( \cdot , \cdot \right)_{\omega}^H}.$$
\end{definition}
We also define the space
$$ \mathbb{V}'(\Sigma_g; \omega):= \quotient{ \mathscr{S}^{\mathcal{C}}(H'_g, \omega_{H'_g})}{\LKer \left( \cdot , \cdot \right)_{\omega}^H}$$
so the Hopf pairing $(\cdot, \cdot)^H :\mathbb{V}'(\Sigma_g, \omega) \otimes \mathbb{V}(\Sigma_g, \omega) \to \mathbb{C}$ is non-degenerate.
\par 
The skein module $\mathscr{S}^{\mathcal{C}}(\Sigma_g)$ naturally acts on the left on $\mathscr{S}^{\mathcal{C}}(H_g)$
and on the right on $\mathscr{S}^{\mathcal{C}}(H'_g)$ 
 and the Hopf pairing is invariant for this action (i.e. $(x, z\cdot y)^H=(x\cdot z, y)^H$). The action of the submodule $\mathscr{S}^{\mathcal{C}_{\overline{0}}}(\Sigma_g) \subset \mathscr{S}^{\mathcal{C}}(\Sigma_g)$ preserves the graded subspaces $\mathscr{S}^{\mathcal{C}}(H_g, \omega_{H_g})$ and $\mathscr{S}^{\mathcal{C}}(H'_g, \omega_{H_g'})$, so we obtain a left action of $\mathscr{S}^{\mathcal{C}_{\overline{0}}}(\Sigma_g) $ on $\mathbb{V}(\Sigma_g, \omega)$ by quotient. 
 (and a right action on $\mathbb{V}'(\Sigma_g, \omega)$).

\begin{definition}\label{def_rep_twisted} The  representation $\overline{r}_{\omega} : \mathcal{S}_A^{QG}(\Sigma_g) \to \End(\mathbb{V}(\Sigma_g, \omega))$ is defined as the composition
$$ \overline{r}_{\omega} : \mathcal{S}_A^{QG}(\Sigma_g) \xrightarrow{\iota} \mathscr{S}^{\mathcal{C}_{\overline{0}}}(\Sigma_g) \to  \End(\mathbb{V}(\Sigma_g, \omega)),$$
 where $\mathcal{S}_A^{QG}$ denotes the functor $\mathscr{S}^{\mathcal{C}^{small}}$.
\end{definition}

\subsection{Relating Kauffman-bracket and quantum groups skein modules}

We now define an algebra morphism $f : \mathcal{S}_A(\Sigma) \to  \mathcal{S}_A^{QG}(\Sigma)$. In this subsection, we do not assume $\Sigma$ to be closed, nor do we assume that $A$ is a root of unity (the results work over the ring $\mathbb{Z}[A^{\pm 1}]$ for instance). This task is not trivial due to the fact that the Temperley-Lieb category $TL$ defining the Kauffman-bracket skein algebra is not ribbon equivalent to $\mathcal{C}^{small}$. As detailed in \cite{Tingley_MinusSign}, there is an equivalence of categories $\mathrm{Cauchy}(TL)\to \mathcal{C}^{small}$ between the Cauchy closure of $TL$ and $\mathcal{C}^{small}$ which is a braided functor preserving the pivotal structure but which does not preserve the twist. Said differently, $\mathrm{Cauchy}(TL)$ is equivalent to $\mathcal{C}^{small}$ with a "modified twist" responsible for the fact that the class of an unknot is sent to $-(q+q^{-1})$ in $\mathcal{S}_A(\Sigma) $ whereas the q-dimension of $S_1$ is $\qdim(S_1)=+(q+q^{-1})$.
\par 
First note that since $S_1$ is isomorphic to its dual $(S_1)^*$, the class $[L(S_1)]\in \mathscr{S}^{\mathcal{C}^{small}}(\Sigma)$ of  an oriented framed link $L\subset \Sigma\times [0,1]$ with all its components colored by $S_1$, does not depend on the orientations of the components of $L$, so $[L(S_1)]$ is well-defined for unoriented framed links $L$. We use a similar notation $\gamma(S_1)$ for $\gamma$ a multicurve.

\begin{theorem}\label{theorem_skeinQG} Let $S$ be a spin structure on $\Sigma$ with Johnson quadratic form $w_S$. 
There is an algebra morphism $f:  \mathcal{S}_A(\Sigma) \to  \mathcal{S}_A^{QG}(\Sigma)$ sending the class $[\gamma]$ of a multicurve to $(-1)^{|\gamma| + w_S([\gamma])} [\gamma(S_1)]$, where $|\gamma|$ is the number of components of $\gamma$. 
\end{theorem}

\begin{proposition}\label{prop_skeinQG} Let $M$ be a $3$-manifold. We have a linear morphism $h: \mathcal{S}_{-A}(M) \to \mathcal{S}_A^{QG}(M)$ defined by $h([L])=(-1)^{| L |} [L(S_1)]$ for any framed link $L \subset M$. Moreover $h$ is an algebra morphism when $M$ is a thickened surface.
\end{proposition}

\begin{proof} By definition,  $\mathcal{S}_{-A}(M)$ is defined as the quotient of the module $\mathbb{C}[ L\subset M]$ freely generated by isotopy classes of framed links by a submodule  of skein relations. 
Let $h: \mathbb{C}[ L\subset M] \to \mathcal{S}^{QG}_A(M)$ be the linear morphism defined by $h([L]):= (-1)^{| L|} [L(S_1)]$ and let us prove that the images $h([L])$ satisfy the Kauffman bracket skein relations of $\mathcal{S}_{-A}(M)$. For $L\subset M$ and $U\subset M$ an unknot bounding a disc not intersecting $L$, since $\qdim(S_1)=q+q^{-1}$, we find 
$$h( [L \cup U])= (-1)^{| L | +1} [L(S_1)] \qdim(S_1) = (-q-q^{-1}) (-1)^{|L|} [L(S_1)]= -(q+q^{-1})h([L]).$$
For the other skein relation, let $f:S_1\xrightarrow{\cong} (S_1)^*$ be the equivariant isomorphism defined by $f(e_0):=e_1^*$ and $f(e_1):= -q e_0^*$. We claim that in $ \mathcal{S}^{QG}(\Sigma)$ the following skein relation is satisfied: 

\begin{equation}\label{eq_twist_skein}
\adjustbox{valign=c}{\includegraphics[width=1cm]{CrossingDown.eps}} = A \adjustbox{valign=c}{\includegraphics[width=1cm]{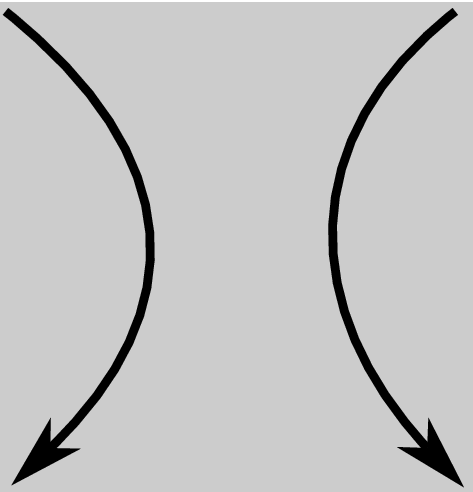}} + A^{-1} \adjustbox{valign=c}{\includegraphics[width=1cm]{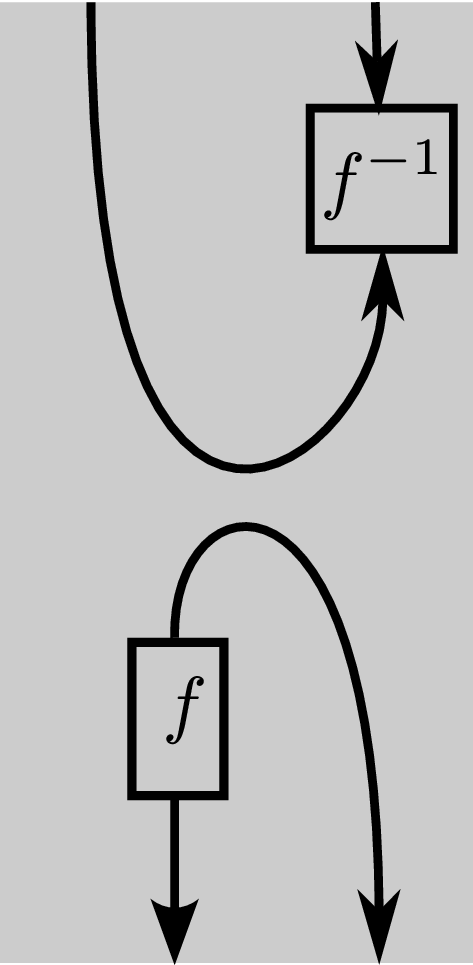}}
\end{equation}
The claim is well-known (see e.g. \cite[Appendix $H.1$]{OhtsukiBook}) and follows from the following computation. First we compute: 
$$ \overrightarrow{ev}_{S_1} (f \otimes \id) : e_i \otimes e_j \mapsto \left\{ 
\begin{array}{ll} 
0 & \mbox{, if }i=j, \\
1 & \mbox{, if }(i,j)=(0,1), \\
-q & \mbox{, if }(i,j)=(1,0).
\end{array} \right. 
\quad 
(\id \otimes f^{-1})\overrightarrow{coev}_{S_1} : 1 \mapsto -q^{-1} e_0 \otimes e_1 + e_1\otimes e_0.
$$
From which we deduce that in the ordered basis $B= (e_0 \otimes e_0, e_0 \otimes e_1, e_1\otimes e_0, e_1\otimes e_1 )$, we have the matrices 
$$ \mathcal{U}:= F^{RT} \left( \adjustbox{valign=c}{\includegraphics[width=1cm]{Cupcap.eps}} \right) = 
\begin{pmatrix} 
0 & 0 & 0 & 0 \\
0 & -q^{-1} & 1 & 0 \\
0 & 1 & -q & 0 \\
0 & 0 & 0 & 0
\end{pmatrix}
, \quad 
\mathscr{R} := F^{RT}\left(  \adjustbox{valign=c}{\includegraphics[width=1cm]{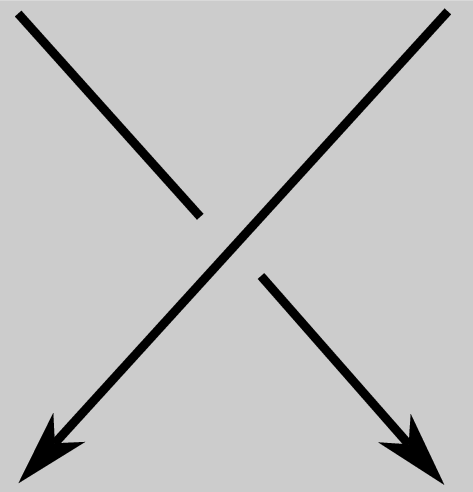}} \right) =
\begin{pmatrix}
 q^{1/2} & 0 & 0 & 0 \\
  0 & 0 & q^{-1/2} & 0 \\
  0 & q^{-1/2} & q^{1/2}-q^{-1/2} & 0 \\
  0 & 0 & 0 & q^{1/2}
  \end{pmatrix}
  $$
The skein relation \eqref{eq_twist_skein} now follows from the equality $\mathscr{R} = q^{1/2} \mathds{1}_4 + q^{-1/2} \mathcal{U}.$
\par Let $L_0, L_1, L_2 \subset M$ be three framed links which are equal outside a ball $B\subset M$ inside which they look like  $\Crosspos$,
   $\begin{tikzpicture}[baseline=-0.4ex,scale=0.5,>=stealth] 
\draw [fill=gray!45,gray!45] (-.6,-.6)  rectangle (.6,.6)   ;
\draw[line width=1.2] (-0.4,-0.52) ..controls +(.3,.5).. (-.4,.53);
\draw[line width=1.2] (0.4,-0.52) ..controls +(-.3,.5).. (.4,.53);
\end{tikzpicture}$ and 
 $ \begin{tikzpicture}[baseline=-0.4ex,scale=0.5,rotate=90]	
\draw [fill=gray!45,gray!45] (-.6,-.6)  rectangle (.6,.6)   ;
\draw[line width=1.2] (-0.4,-0.52) ..controls +(.3,.5).. (-.4,.53);
\draw[line width=1.2] (0.4,-0.52) ..controls +(-.3,.5).. (.4,.53);
\end{tikzpicture}$
 respectively. Let us prove that $h(L_0 +AL_1+A^{-1}L_2)=0$. 
\par \underline{Case 1:} Suppose that the two brands in the crossing $\Crosspos$ belong to two distinct components, i.e. that  $L_0= \adjustbox{valign=c}{\includegraphics[width=1.5cm]{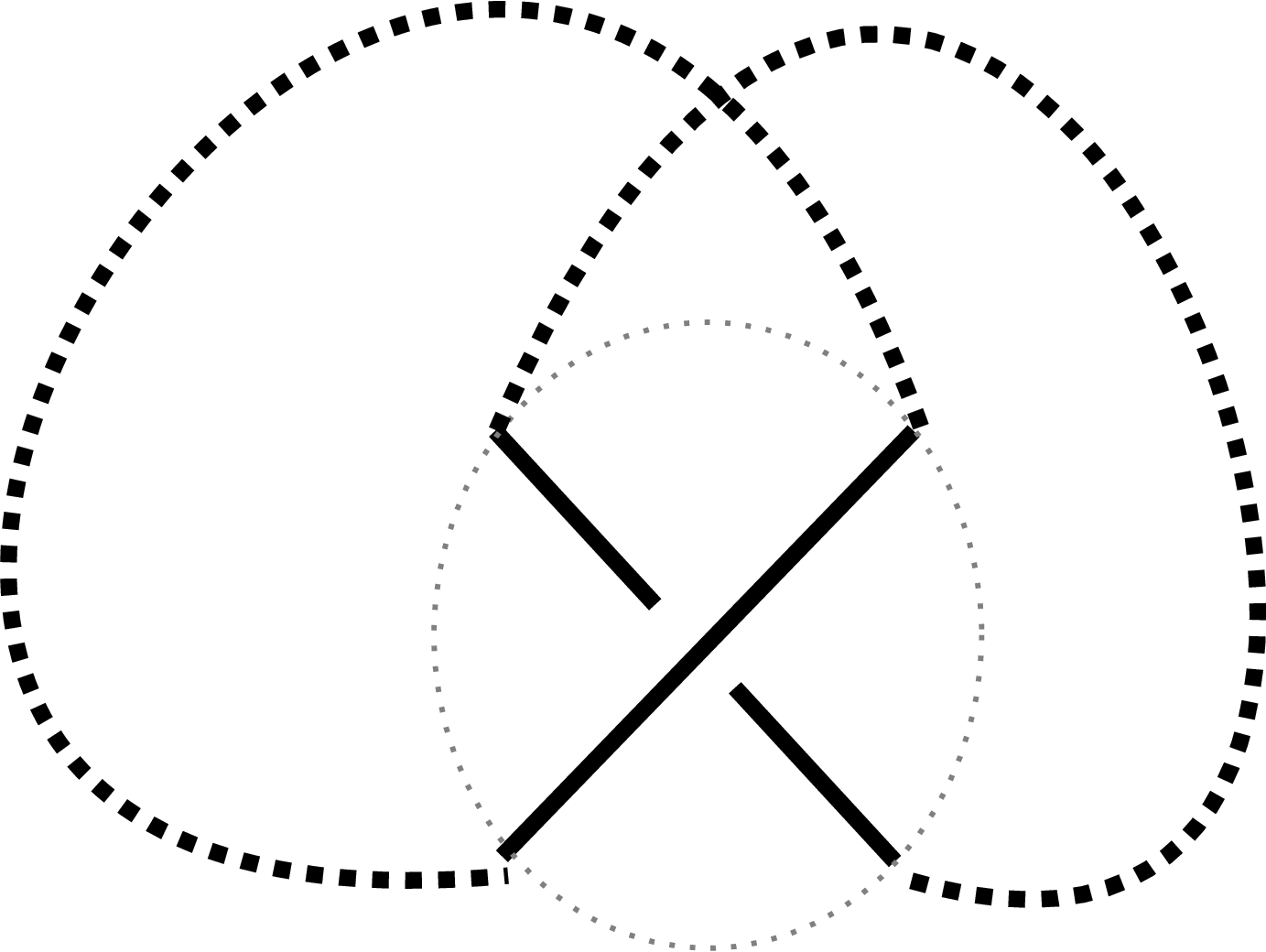}}$. In this case $|L_1|=|L_2|=|L_0|+1$ so 
\begin{multline*} h([L_0]) = (-1)^{|L_0|} \adjustbox{valign=c}{\includegraphics[width=1.5cm]{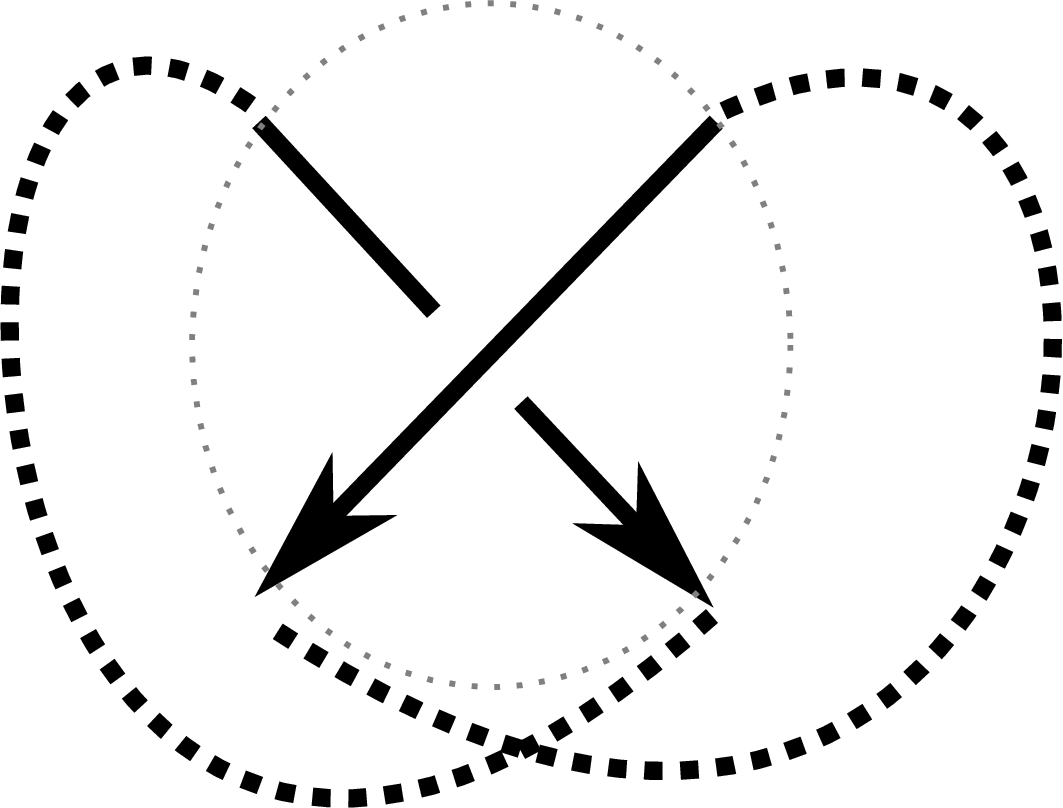}} = (-A) (-1)^{|L_1|} \adjustbox{valign=c}{\includegraphics[width=1.5cm]{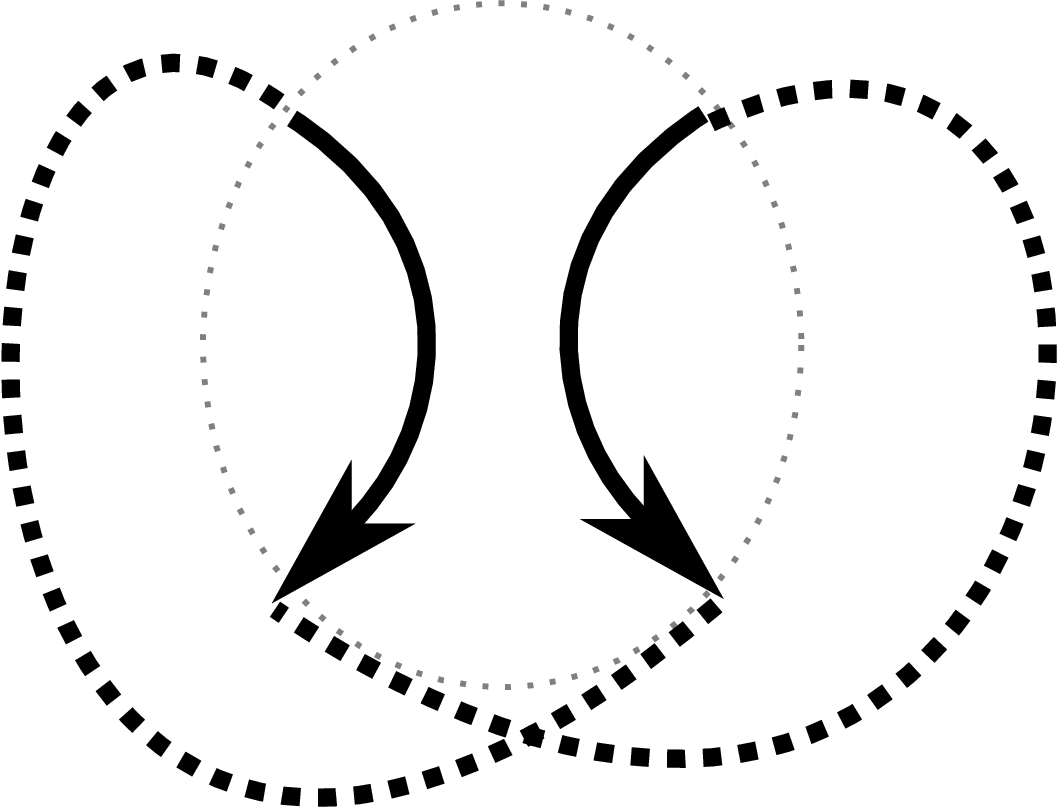}}- A^{-1}(-1)^{|L_2|} \adjustbox{valign=c}{\includegraphics[width=1.5cm]{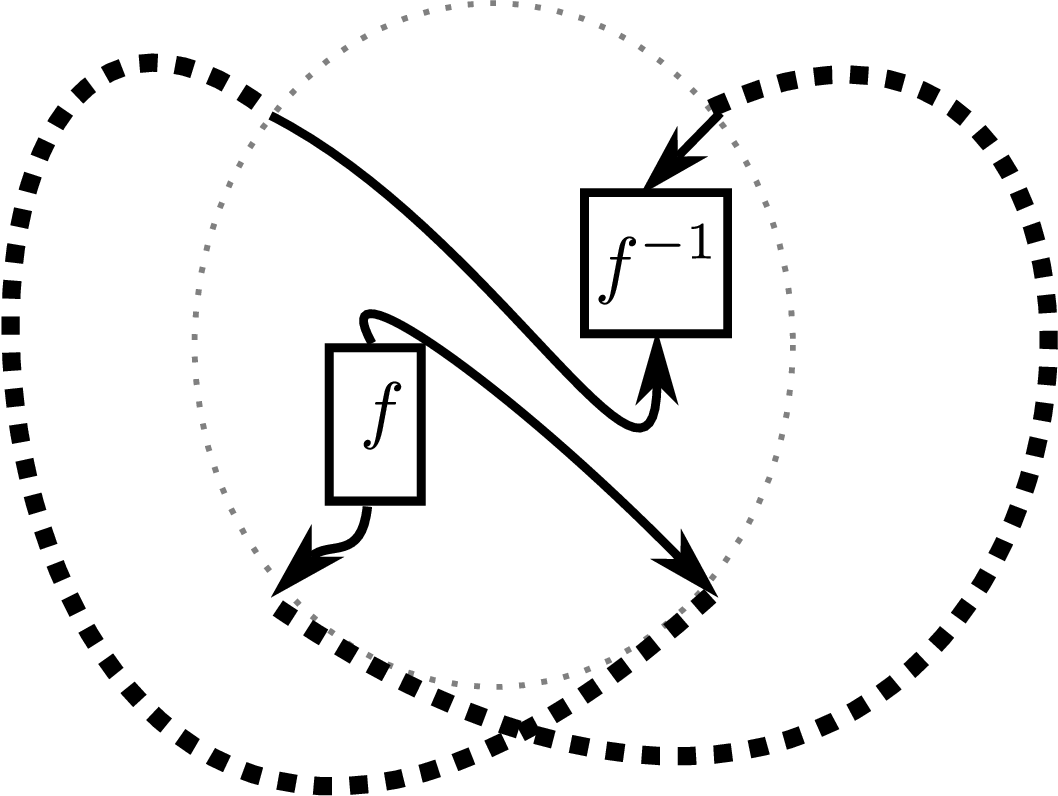}}
\\  = (-A) (-1)^{|L_1|} \adjustbox{valign=c}{\includegraphics[width=1.5cm]{72.eps}} - A^{-1}(-1)^{|L_2|} \adjustbox{valign=c}{\includegraphics[width=1.5cm]{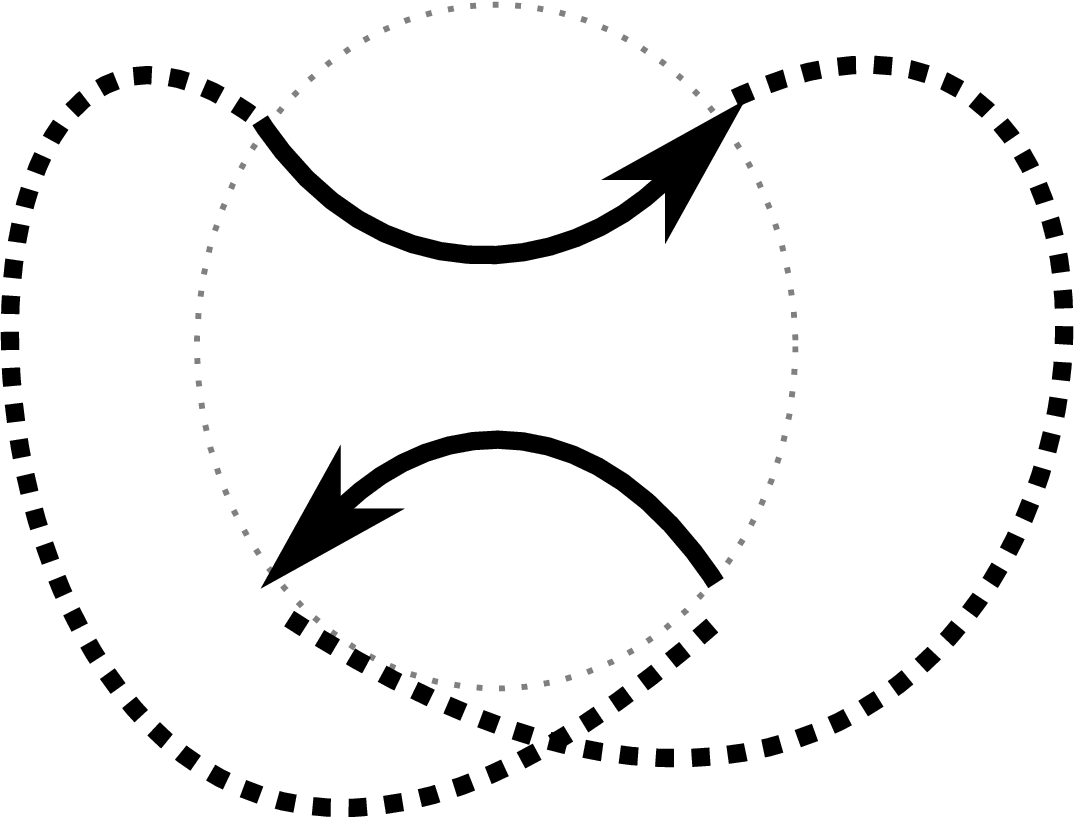}}=-A h([L_1]) - A^{-1} h([L_2]),
 \end{multline*}
where we pushed the box colored by $f^{-1}$ along the strand and canceled it with the box colored by $f$ using a skein relation while passing from the first to the second line.
\par \underline{Case 2:} If the two brands in the crossing $\Crosspos$ belong to the same component, i.e. if $L_0= \adjustbox{valign=c}{\includegraphics[width=1.5cm]{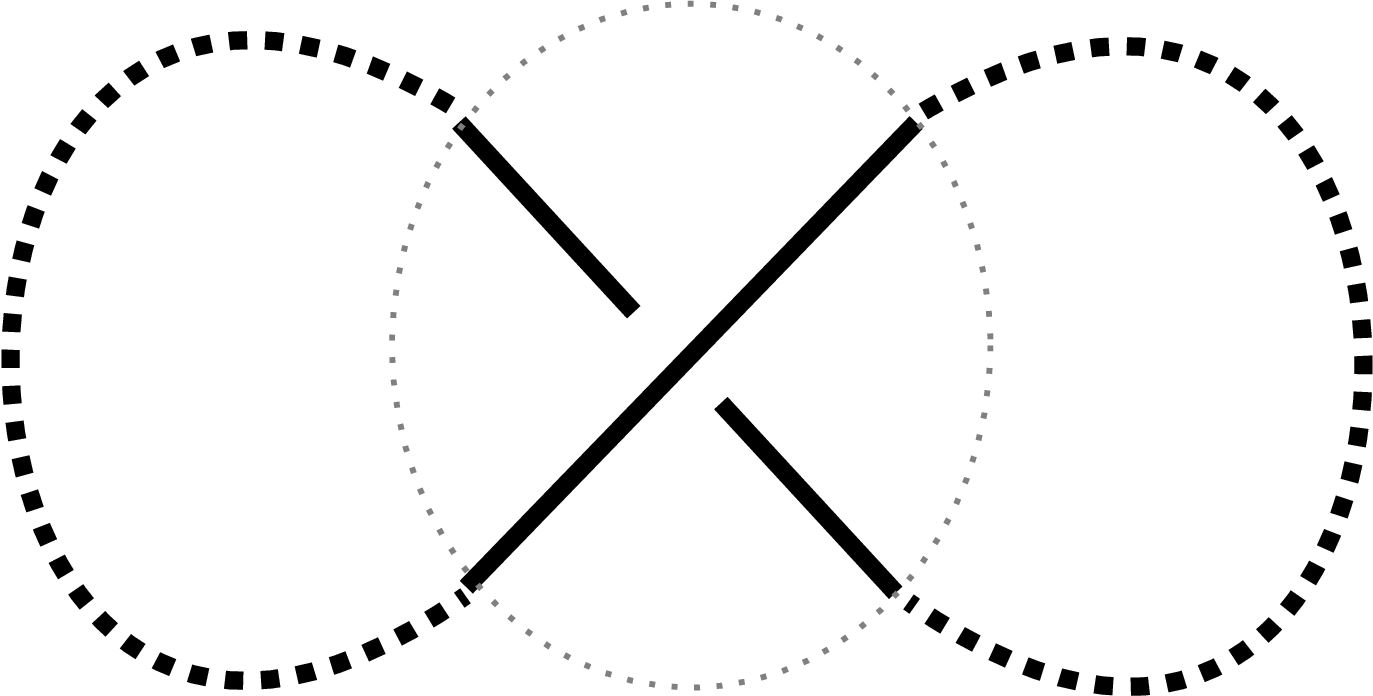}}$ or  $L_0= \adjustbox{valign=c}{\includegraphics[width=0.5cm]{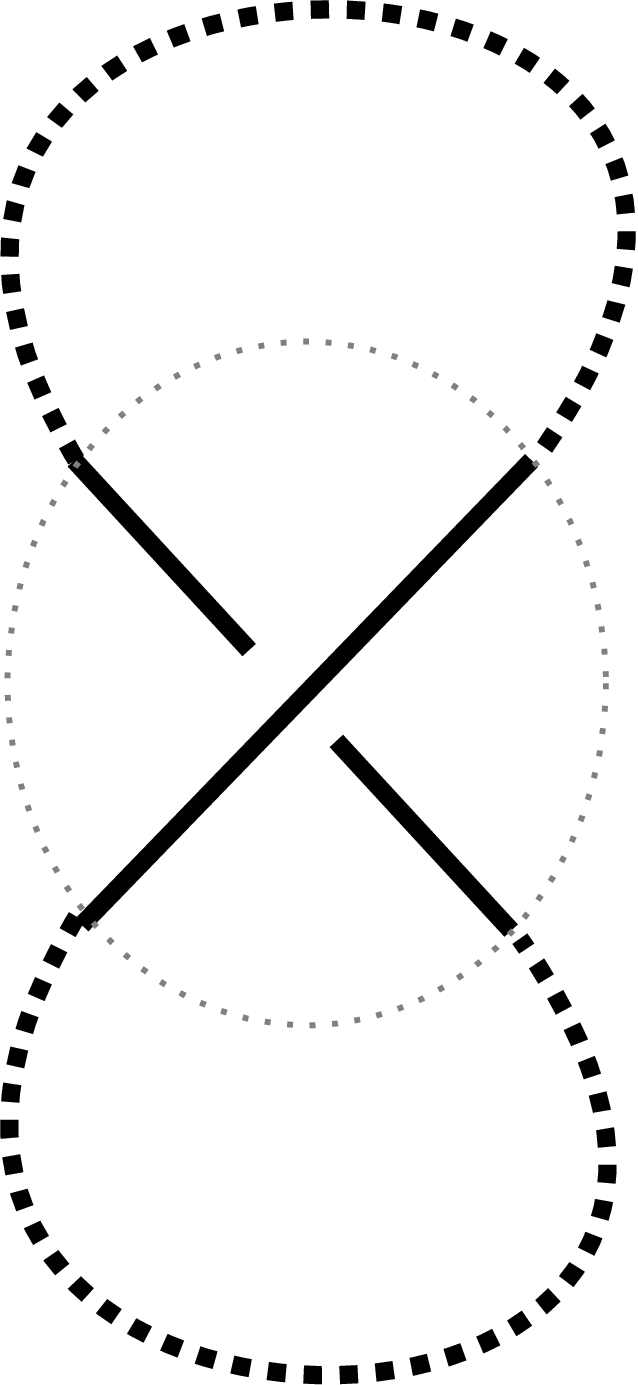}}$ we get 
$$ - A h\left(
 \begin{tikzpicture}[baseline=-0.4ex,scale=0.5,>=stealth] 
\draw [fill=gray!45,gray!45] (-.6,-.6)  rectangle (.6,.6)   ;
\draw[line width=1.2] (-0.4,-0.52) ..controls +(.3,.5).. (-.4,.53);
\draw[line width=1.2] (0.4,-0.52) ..controls +(-.3,.5).. (.4,.53);
\end{tikzpicture}
\right) = -A h \left(  \adjustbox{valign=c}{\includegraphics[width=0.7cm]{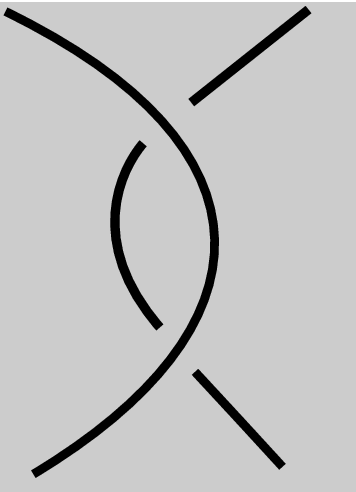}}\right) \stackrel{\mbox{by Case 1}}{=} A^2 h\left( \adjustbox{valign=c}{\includegraphics[width=0.7cm]{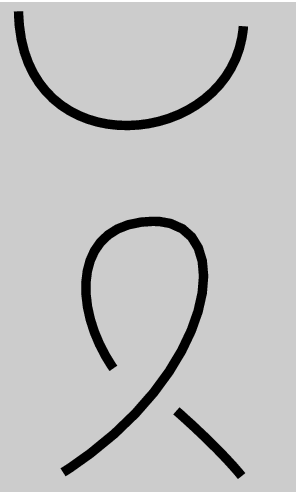}} \right) +h\left(\Crosspos\right) = 
A^{-1} h\left( 
\begin{tikzpicture}[baseline=-0.4ex,scale=0.5,rotate=90]	
\draw [fill=gray!45,gray!45] (-.6,-.6)  rectangle (.6,.6)   ;
\draw[line width=1.2] (-0.4,-0.52) ..controls +(.3,.5).. (-.4,.53);
\draw[line width=1.2] (0.4,-0.52) ..controls +(-.3,.5).. (.4,.53);
\end{tikzpicture}
\right) + h\left(\Crosspos\right).
$$
Where we used the fact that $\theta_{S_1}=+A^3 \id_{S_1}$ in $\mathcal{C}$ in the last equality. So we get again the equality $h([L_0] +A [L_1]+A^{-1}[L_2])=0$. This concludes the proof.

\end{proof}

\begin{proof}[Proof of Theorem \ref{theorem_skeinQG}]
The theorem follows from Propositions \ref{prop_Barett} and \ref{prop_skeinQG}. 
\end{proof}

\begin{definition}\label{def_BCGP_rep} The representation $r_{\omega}: \mathcal{S}_A(\Sigma_g) \to \End(\mathbb{V}(\Sigma_g, \omega))$ is the composition $r_{\omega}= \overline{r}_{\omega} \circ f$ where $\overline{r}_{\omega}$ is defined in Definition \ref{def_rep_twisted}.
\end{definition}

\subsection{Bases and classical shadows}

Given $c \in \mathrm{C}_1(G; \mathbb{C})$ and $c' \in \mathrm{C}_1(G'; \mathbb{C})$ admissible, we want to consider $\mathcal{C}$-colored graphs $(G,c) \subset H_g$ and $(G',c') \subset H'_g$ so that the vector $[G,c] \in \mathbb{V}(\Sigma_g, \omega)$ is well-defined and so is the pairing $\left< [G,c], [G',c'] \right>=\left< (G\cup G', c\cup c')\right>$ (recall that $\left< (G\cup G', c\cup c')\right>$ is the CGP invariants of the colored graph $ (G\cup G', c\cup c')$) . Recall from Section \ref{sec_links} that to define such $\mathcal{C}$-colored graphs, we need to specify a $\sigma$-decoration. We thus fix a $\sigma$-decoration of $G\cup G'$ such that $(1)$ $v_{\infty} \in \Sigma_g \subset S^3\setminus (G\cup G')$ and $(2)$ for $v\in V(G)$ (resp. $v' \in V(G')$) the path $p_v: v\to v_{\infty}$ (resp. $p_{v'}$) lies inside $H_g$ (resp. inside $H_g'$). This condition permits one to define the $\mathcal{C}$-colored graphs $(G,c)\subset H_g$, $(G',c')\subset H_g'$ and $(G\cup G', c\cup c') \subset S^3$. 
\vspace{2mm}
\par 
We now suppose that $\omega$ belongs to the open dense subset $$\mathcal{U}= \{ \omega \in \mathrm{H}^1(\Sigma_g; \mathbb{C}/\mathbb{Z}) \mbox{ such that } \omega(\gamma_e)\notin \left(\frac{1}{2k}\mathbb{Z}\right)/\mathbb{Z}, \forall e \in E(G)\cup E(G')\}.$$
 Recall that the embedded graphs $G\cup G' \subset S^3$ have been chosen in Figure \ref{fig_graphs} such that for any edge $e \in E(G)\cup E(G')$ then $\gamma_e \subset \Sigma_g$ is non-separating. This ensures that indeed $\mathcal{U}$ is non empty.

For $c \in \mathrm{C}_1(G; \mathbb{C})$ such that $\overline{c(e)}=\omega(\gamma_e)$ for all edge $e\in E(G)$, we say that $c$ is $\omega$\textit{-compatible} and denote by  $[G,c]\in \mathbb{V}(\Sigma_g, \omega)$ the corresponding vector. Similarly, for $c'\in \mathrm{C}_1(G; \mathbb{C})$ such that $\overline{c'(e)}=\omega(\gamma_e')$ for all $e\in E(G')$, $c'$ is said $\omega$\textit{-compatible} and we define $[G',c'] \in \mathbb{V}'(\Sigma_g, \omega)$. 

\begin{lemma}\label{lemma_span_space} The space $\mathbb{V}(\Sigma_g, \omega)$ is spanned by the vectors $[G,c]$ for $c$ $\omega$-compatible. 
Similarly, $\mathbb{V}'(\Sigma_g, \omega)$ is spanned by vectors $[G', c']$ with $c'$ $\omega$-compatible.
\end{lemma}

\begin{proof}
We prove the assertion concerning $\mathbb{V}(\Sigma_g, \omega)$, the second assertion is proved similarly.
 Let $[\Gamma] \in \mathbb{V}(\Sigma_g, \omega)$ be the class of a $\mathcal{C}$-colored ribbon graph $\Gamma \subset H_g$ which induces the class $\omega$. We suppose that $\Gamma$ is isotoped such that it intersects each disc $D_e$ transversally along the axis given by the framing. As before, the intersection $\Gamma\cap D_e$ defines an element $X_{\Gamma,e}\in \Rib_{\mathcal{C}}$ sent to an element of $\mathcal{C}_{\omega(e)}$ by the Reshetikhin-Turaev functor. By assumption, $\omega(e)\notin \frac{1}{2k}\mathbb{Z} /\mathbb{Z}$ so $\mathcal{C}_{\omega(e)}$ is semisimple with simple objects the $V_{\alpha}$ with $\overline{\alpha} = \omega(e)$. So 
$$F^{RT}(X_{\Gamma, e})\cong \oplus_{\alpha \in \omega(e)} n_{\alpha}V_{\alpha}$$
for some integers $n_{\alpha}$. 
 Using a skein relation in the neighborhood of each $\mathbb{D}_e$ which replaces the identity strand $\id_{X_{\Gamma, e}}$ by $\sum_{\alpha}n_{\alpha} \id_{(V_{\alpha}, +)}$, we see that $[\Gamma]$ is a linear combination of classes $[\Gamma_i]$ such that each $\Gamma_i$ intersects $D_e$ once exactly along an edge colored by a module $V_{\alpha_e}$ with $\overline{\alpha_e} = {\omega(e)}$. 
 \par While cutting $H_g$ along the discs $\mathbb{D}_e$, we get a collection of balls $B_v$ indexed by $v\in V(G)$. If $e,f,g$ are the three (non-necessarily pairwise distinct)  adjacent edges to $v$, the boundary of $B_v$ is the union of the pair of pants $P_v$ and the discs $\mathbb{D}_e, \mathbb{D}_f, \mathbb{D}_g$. Let $n_v\in \mathbb{Z}$ be the unique integer such that $\alpha_e+\alpha_f +\alpha_g \in \frac{N}{k}n_v + H_N$.
Since the multiplicity module $\Hom(V_{\alpha_e}\otimes V_{\alpha_f} \otimes V_{\alpha_g}, \sigma^n)$ is one-dimensional generated by $H_{n_v}^{\alpha_e, \alpha_f, \alpha_g}$, using a skein relation inside each ball $B_v$, we see that $[\Gamma_i]$ is proportional to a vector $[G,c]$ for  $c$ $\omega$-compatible.

\end{proof}

\begin{lemma}\label{lemma_modulo}  For $c_0 \in C_1(G; \mathbb{Z})$, we have $[G,c+ Nc_0]= q^{ N \omega(\widehat{c_0})} [G,c]$ in $\mathbb{V}(\Sigma_g, \omega)$.
\end{lemma}

\begin{proof}
Let $c' \in \mathrm{C}_1(G', \mathbb{C})$ be such that $\overline{c'(e)}=\omega(\gamma_e)$ for all edges $e\in E(G')$. 
First note that the inclusion $\Sigma_g \subset S^3\setminus (G\cup G')$ is a retract by deformation and that the induced isomorphism $\mathrm{H}^1(\Sigma_g; \mathbb{C}/\mathbb{Z})\cong \mathrm{H}^1(S^3 \setminus (G\cup G') ; \mathbb{C}/\mathbb{Z})$ sends $\omega$ to the shadow $\omega_{\overline{c+c'}}$ of Definition \ref{def_shadow_coloring}. 
By Lemma \ref{lemma_transparent},  the CGP invariant satisfies 
$$\left<(G,c+N c_0)\cup (G', c')\right> = q^{ N \omega(\widehat{c_0})} \left<(G,c)\cup (G',c')\right>.$$
Since this is true for every $c'$ and since, by Lemma \ref{lemma_span_space}, the vectors $[G',c']$ span $\mathbb{V}'(\Sigma_g, \omega)$, then  $(G,c+Nc_0)-e^{4i\pi N \omega(\widehat{c_0})}(G,c)$ belongs to the right kernel of the Hopf pairing and $[G,c+Nc_0]=q^{ N \omega(\widehat{c_0})}[G,c]$.
 \end{proof}
 
Set $$ \col(G) := \{ c : E(G)\to \mathbb{C}, \mbox{ such that } \overline{c(e)}=\omega(\gamma_e) \mbox{ and } 0\leq  \Re( c(e)) < N \mbox{ for all }e\in E(G) \}$$
and 
$$ \mathcal{B}:= \{ [G,c], c\in \col(G) \} \subset \mathbb{V}(\Sigma_g, \omega).$$

While gluing the handlebody $H_g$ with itself along the boundary using the identity map $\id : \Sigma_g \to \Sigma_g$ we obtain the manifold $X_g:=H_g\cup_{\id } H_g \cong (\mathbb{S}^1\times \mathbb{S}^2)^{\# g}$ made of $g$ connected sums of $\mathbb{S}^1\times \mathbb{S}^2$. 
Let $\iota_1, \iota_2 : H_g \hookrightarrow X_g$ the two inclusion maps.
The class $\omega$ induces a class $\overline{\omega}\in \mathrm{H}^1( X_g; \mathbb{C}/\mathbb{Z})$ given by the class of $\omega_{H_g}$ in both copies of $H_g$ inside $X_g$. Given two ribbon graphs $\Gamma, \Gamma' \in \mathscr{S}^{\mathcal{C}}(H_g , \omega_{H_g})$, we get a ribbon graph $\iota_1(\Gamma)\cup \iota_2(\Gamma')$ in $X_g$ compatible with $\overline{\omega}$ so we can consider the   CGP invariant $\left< X_g, \iota_1(\Gamma)\cup \iota_2(\Gamma'), \overline{\omega} \right>\in \mathbb{C}$ defined in  \cite{CGPInvariants}. The pairing 
$$ \left< \cdot, \cdot \right>_{\omega} : \mathscr{S}^{\mathcal{C}}(H_g) \otimes \mathscr{S}^{\mathcal{C}}(H_g) \to \mathbb{C}, \quad \left< \Gamma, \Gamma'\right>_{\omega}:= \left< X_g, \iota_1(\Gamma)\cup \iota_2(\Gamma'), \overline{\omega} \right>, $$
has the same right kernel than the Hopf pairing and 
induces a non-degenerate pairing $ \left< \cdot, \cdot \right>_{\omega}: \mathbb{V}(\Sigma_g, \omega)^{\otimes 2} \to \mathbb{C}$, named the \textit{invariant pairing} which is indeed invariant in the sense that $\left< r_{\omega}(z)\cdot x, y\right> = \left< x, r_{\omega}(z)\cdot y\right>$. The following lemma is the only place where we actually need the whole TQFTs properties of $\mathbb{V}(\Sigma, \omega)$ proved in \cite{BCGPTQFT,  DeRenzi_NSETQFT, DeRenziGeerPatureau_TQFT_QG}.

\begin{lemma}\label{lemma_InvPairing} For $c,c' \in \col(G)$, then  $\left< [G,c], [G, c']\right>=0$ if $c\neq c'$ and $\left< [G,c], [G, c]\right>\neq 0$. \end{lemma}

\begin{proof}
If $c(e)\neq c'(e)$, then one has an open ball $\mathbb{B}\subset X_g$ transversed to the graph $\iota_1(\Gamma)\cup \iota_2(\Gamma')$ such that $\partial \mathbb{B}$ intersects $\iota_1(\Gamma)\cup \iota_2(\Gamma')$ along two points: one colored by $V_{c(e)}$ and the other colored by $V_{c'(e)}$. Since $c(e)\neq c'(e)$ have real part in $[0, N)$ they are not congruent modulo $\mathbb{Z}$ so one has $\Hom(V_{c(e)}, V_{c'(e)})=0$.  
It follows from the TQFTs properties of the CGP invariant $\left<M,\Gamma, \omega\right>$ that whenever we have an embedded sphere $S^2\subset M$ which intersects $\Gamma$ along two points colored by $V,W$ with $\Hom_{\mathcal{C}}(V,W)=0$ then $\left<M,\Gamma, \omega\right>=0$ so this implies that $\left< [G,c], [G, c']\right>=0$. 
\par When $c'=c$, we need again to use the TQFTs property of the CGP invariant to prove that $\left< [G,c], [G, c]\right>$ can be written as a product
of  non-vanishing $3j$-symbols with the invariant of the sphere $S^3$ (see the proof of \cite[Proposition $6.7$]{BCGPTQFT} for details). This implies that  $\left< [G,c], [G, c]\right>\neq 0$ and concludes the proof.
\end{proof}

An analogue of the following was proved in \cite[Proposition $6.7$]{BCGPTQFT} and \cite{BCGP_Bases} in the case of even roots.

\begin{theorem}\label{theorem_basis} $\mathcal{B}$ is a basis of $\mathbb{V}(\Sigma_g, \omega)$.
\end{theorem}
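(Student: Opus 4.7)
The proof has three steps: count $|\mathcal{B}| = N^{3g-3}$, show $\mathcal{B}$ spans $\mathbb{V}(\Sigma_g,\omega)$, and show $\mathcal{B}$ is linearly independent. For the count: $G$ has $3g-3$ edges, and for each edge $e$ the condition $\overline{c(e)} = \omega(\gamma_e) \in \mathbb{C}/2\mathbb{Z}$ combined with the fundamental-domain restriction $-N < \Re(c(e)) \leq N$ leaves exactly $N$ representatives (an interval of real width $2N$ contains $N$ translates by $2$). Hence $|\mathcal{B}| = N^{3g-3}$, which matches the expected dimension.

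To prove spanning, I would take an arbitrary $\mathcal{C}$-colored ribbon graph $\Gamma \subset H_g$ representing an element of the $\omega_{H_g}$-graded part and isotope it into good position with respect to the transverse discs $\{D_e\}_{e \in E(G)}$ of Section~\ref{sec_spaces}. Each cross-section $\Gamma \cap D_e$ then defines an object $X_e$ in the graded block $\mathcal{C}_{\omega(\gamma_e)}$; the hypothesis $\omega \in \mathcal{U}$ forces $\omega(\gamma_e) \notin \tfrac{1}{2}\mathbb{Z}/2\mathbb{Z}$, so this block is semisimple with simples exactly the $V_\alpha$ of the prescribed class (Lemma~\ref{lemma_representations}). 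Inserting the decomposition $\id_{X_e} = \sum_\alpha p_{V_\alpha}$ at each disc, and using that each vertex multiplicity space $\Hom(\mathds{1}, V_{\alpha_1} \otimes V_{\alpha_2} \otimes V_{\alpha_3})$ is at most one-dimensional (Lemma~\ref{lemma_representations}.6) with canonical generators $H^{\alpha_1,\alpha_2,\alpha_3}$ fixed in Section~\ref{sec_graphcalculus}, $\Gamma$ is rewritten as a linear combination of $(G, c)$'s. Lemma~\ref{lemma_modulo} then lets one take representatives $c \in \col(G)$.

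For linear independence, I would exhibit a dual family $\{(G',c') : c' \in \col(G')\} \subset \mathscr{S}^{\mathcal{C}}(H'_g, \omega_{H'_g})$ using the spine $G' \subset H'_g$ of Figure~\ref{fig_graphs} and show that the Hopf-pairing matrix $M_{c,c'} = ((G,c), (G',c'))^H_\omega$ is non-singular. The Heegaard splitting $\mathbb{S}^3 = H_g \cup H'_g$ is arranged so that, edge by edge, each edge of $G$ is linked exactly once with a corresponding edge of $G'$. Applying the trivalent graph calculus of Section~\ref{sec_graphcalculus} and the renormalized-trace evaluation of Section~\ref{sec_links} to the linked union $G \cup G' \subset \mathbb{S}^3$, the pairing factorizes over edges into elementary Hopf-link contributions proportional to $S'(V_{kc(e)}, V_{kc'(e)}) = e^{2i\pi kc(e)c'(e)/N} + e^{-2i\pi kc(e)c'(e)/N}$ by Lemma~\ref{lemma_S}. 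A Vandermonde-type argument shows that the $N \times N$ matrix $(q^{jl} + q^{-jl})_{j,l}$ restricted to a fundamental domain of the involution $(j,l) \mapsto (-j,-l)$ is non-singular; tensoring over the $3g-3$ edges then yields non-singularity of $M$, hence linear independence. The main obstacle is precisely the edge-by-edge factorization of the Hopf pairing: it requires careful tracking of cup/cap/twist normalizations and of the non-vanishing renormalized-trace factors $\mathbf{d}(V_\alpha)$ for $\alpha = kc(e)$, $c \in \col(G)$, and depends on the explicit linking structure of $G \cup G'$. Once factorization is in place, the remaining step is standard finite-dimensional linear algebra.
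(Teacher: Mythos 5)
Your spanning step and your counting of $\col(G)$ reproduce the paper's argument (good position with respect to the discs $D_e$, semisimplicity of the blocks $\mathcal{C}_{\omega(\gamma_e)}$, one-dimensionality of the multiplicity spaces, then Lemma \ref{lemma_modulo}), so that half is fine. The problem is the linear-independence step, and you have correctly identified where it is: the ``edge-by-edge factorization'' of the Hopf pairing is not just unproved, it is false as stated. In the Heegaard configuration of Figure \ref{fig_graphs} the two spines $G$ and $G'$ clasp once per \emph{handle}, so there are $g$ clasps, not $3g-3$; there is no correspondence matching each of the $3g-3$ edges of $G$ with an edge of $G'$ that it links exactly once, hence no tensor decomposition of the pairing matrix indexed by $E(G)$. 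Moreover, evaluating the renormalized invariant of the linked union of two \emph{colored trivalent graphs} is governed by fusion and $6j$-symbols (precisely the machinery of Section \ref{sec_6j} and Appendices \ref{sec_appendixA}--\ref{sec_appendixB}), not by Hopf-link scalars alone; and Lemma \ref{lemma_S} only gives $S'(S_1,V_\alpha)$ and $S'(S_1,S_i)$, not the value $S'(V_{kc(e)},V_{kc'(e)})$ for two typical colors that you invoke. So the matrix $M_{c,c'}$ is not of the form $\bigotimes_e (q^{jl}+q^{-jl})$ and the Vandermonde step has nothing to act on. Non-degeneracy of the Hopf pairing between the graded handlebody skein modules is essentially what \cite{BCGP_Bases} establishes via the full TQFT structure, which is exactly the machinery this paper is trying to avoid.

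The paper's independence argument is much lighter and bypasses the pairing entirely: by Lemma \ref{lemma_S}, each curve operator $r_{\omega}([\gamma_e])$, $e\in E(G)$, acts on $[G,c]$ by the scalar $-(e^{2i\pi c(e)/N}+e^{-2i\pi c(e)/N})$, and this scalar determines $c(e)$ for $c\in\col(G)$; hence the vectors $[G,c]$ are joint eigenvectors of the commutative subalgebra generated by the $[\gamma_e]$ with pairwise distinct characters, and are therefore linearly independent. If you want to salvage your dual-family strategy you would have to actually compute $\left< (G,c)\cup (G',c')\right>$ with $6j$-symbol manipulations and prove invertibility of the resulting (non-diagonal, non-factorized) matrix, which is substantially harder than the eigenvalue-separation argument; I recommend replacing your third step by the latter.
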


In particular $\dim (\mathbb{V}(\Sigma_g, \omega))= N^{|E(G)|} = N^{3g-3} = PI-deg(\mathcal{S}_A(\Sigma_g))$. 

\begin{proof}
Let us first prove that $\mathcal{B}$ spans $\mathbb{V}(\Sigma_g, \omega)$. By Lemma \ref{lemma_span_space}, $\mathbb{V}(\Sigma_g, \omega)$ is spanned by the vectors $[G,c]$ with $c$ $\omega$-compatible. For such a coloring $c$, there exists $c_0 \in \mathrm{C}_1(G; \mathbb{Z})$ and $c_1\in \col(G)$ such that $c=c_1+Nc_0$. By Lemma \ref{lemma_modulo}, we have $[G,c]= q^{ N \omega(c_0)}[G, c_1]$ so $\mathcal{B}$ spans $\mathbb{V}(\Sigma_g, \omega)$.
\vspace{2mm}
\par Let us prove that $\mathcal{B}$ is free. First, by Lemma \ref{lemma_InvPairing}, the vectors $[G,c]$ are non-null. 
Consider the commutative subalgebra $T\subset \mathcal{S}_A(\Sigma_g)$ generated by the classes  $[\gamma_e]$ of the curves in the pants decomposition. By Lemma \ref{lemma_S}, writing $\lambda_e:=  q^{ \omega([\gamma_e])} + q^{-\omega([\gamma_e])} $, one has 
$$r_{\omega}([\gamma_e])\cdot [G,c]= \lambda_{c(e)} [G,c], \quad \mbox{ for all }e\in {E}(G).$$
Note that $\lambda_{c(e)}= \lambda_{c'(e)}$ implies $c(e)=c'(e)$ for every $c,c'\in \col(G)$, so each axis $[G,c]\mathbb{C}$ is an intersection of eigenspaces of the operators in $r_{\omega}(T)$. This implies that $\mathcal{B}$ is free and concludes the proof.
\end{proof}

\begin{proposition}\label{prop_shadow}
The representation $r_{\omega} : \mathcal{S}_A(\Sigma_g) \to \End(\mathbb{V}(\Sigma_g, \omega))$ is central and its classical shadow is the class of the diagonal representation
$$\rho_{\omega} : \gamma \mapsto  \varepsilon_{\gamma}\begin{pmatrix} q^{N  \omega([\gamma])} & 0 \\ 0 & q^{-N  \omega([\gamma])} \end{pmatrix}, $$
where $\varepsilon_{\gamma}=+1$ if $A^N=1$ and $\varepsilon_{\gamma}=(-1)^{w_S([\gamma])}$ if $A^N=-1$.
\end{proposition}

\begin{remark} Here we implicitly used the same spin structure $S$ in the  identifications of Theorems \ref{theorem_skein+1} and \ref{theorem_skeinQG}.
\end{remark}

\begin{proof}
We need to prove that for all $\gamma$, then $T_N( r_{\omega}([\gamma]))=  \varepsilon_{\gamma}(q^{N  \omega([\gamma])} + q^{-N  \omega([\gamma])})\id$. Since the skein algebra at $A=\pm 1$ is generated by the classes of non-separating curves, we can suppose that $\gamma$ is non-separating.
When $e\in E(G)$, then by Lemma \ref{lemma_S} one has 
$$ r_{\omega}([\gamma_e]) \cdot [G,c]= \varepsilon_{\gamma}(q^{ c(e)} + q^{- c(e)} ) [G,c].$$
 Since $T_N(x+x^{-1})= x^N +x^{-N}$, one finds that $T_N(r_{\omega}([\gamma_e])) \cdot [G,c] = \varepsilon_{\gamma}(q^{N \omega([\gamma_e])} + q^{-N \omega([\gamma_e])}) [G,c]$ as expected. Note that the basis $\mathcal{B}$ depends on the choice of a pants decomposition of $\Sigma$ and we have proved that for any pants decomposition $\mathcal{P}=\{\gamma_e\}_e$ such that all $\gamma_e$ are non-separating, all curves $\gamma_e$ satisfy $T_N(r_{\omega}([\gamma_e])) \cdot [G,c] = \varepsilon_{\gamma}(q^{N \omega([\gamma_e])} + q^{-N \omega([\gamma_e])}) [G,c]$.  We conclude using the fact that every non-separating curve $\gamma\subset \Sigma$ belongs to such a pants decomposition of $\Sigma$. 
\end{proof}

\subsection{Irreducibility}

The following classical lemma is the key to prove the irreducibility of $r_{\omega}$.

  \begin{lemma}\label{lemma_irrep}
  Let $\mathcal{A}$ be a $\mathbb{C}$-algebra and $r: \mathcal{A} \to \End(V)$ a finite dimensional representation such that $V$ is equipped with an invariant form $\left<\cdot, \cdot\right>: V \otimes V \to \mathbb{C}$. Let $T\subset \mathcal{A}$ a commutative subalgebra and consider the weight decomposition
  $$ V= \oplus_{\chi: T\to \mathbb{C}} V^{(\chi)}, \quad V^{(\chi)}:=\{v\in V| r(t)v=\chi(t)v, \forall t\in T \}.$$
  Suppose that: 
  \begin{enumerate}
  \item the spaces $V^{(\chi)}$ have dimension  $1$, say $V^{(\chi)}=\mathbb{C} v_{\chi}$; 
  \item for each $\chi, \chi'$, there exists $a \in \mathcal{A}$ such that $\left< v_{\chi'}, r(a)\cdot v_{\chi}\right>\neq 0.$
\end{enumerate}
Then $r$ is irreducible.
\end{lemma}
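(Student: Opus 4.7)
The plan is to show that any non-zero $\mathcal{A}$-submodule $W\subset V$ must equal $V$. The argument proceeds in three steps: reduce $W$ to a sum of weight lines, diagonalize the form in the weight basis, then use condition (2) to propagate.

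First I will exploit the fact that $V=\bigoplus_\chi V^{(\chi)}$ is a genuine direct sum of $T$-eigenlines. Given any $T$-stable subspace $W$, standard Lagrange interpolation in elements of $T$ that separate the finitely many characters appearing in a given vector produces projectors onto each $V^{(\chi)}$ as polynomials in $r(T)$; hence $W=\bigoplus_\chi (W\cap V^{(\chi)})$. Since $\dim V^{(\chi)}=1$, each intersection is $0$ or $\mathbb{C} v_\chi$, so there is a subset $S$ of characters with $W=\bigoplus_{\chi\in S}\mathbb{C} v_\chi$, and $W$ non-zero means $S\neq\emptyset$.

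Next I will show that distinct weight vectors are orthogonal for the invariant form. For any $t\in T$ and any $\chi,\chi'$, invariance gives
$$\chi'(t)\left<v_{\chi'},v_\chi\right>=\left<r(t)v_{\chi'},v_\chi\right>=\left<v_{\chi'},r(t)v_\chi\right>=\chi(t)\left<v_{\chi'},v_\chi\right>,$$
so choosing $t$ with $\chi(t)\neq\chi'(t)$ forces $\left<v_{\chi'},v_\chi\right>=0$ whenever $\chi\neq\chi'$. Consequently, expanding any element as $r(a)v_\chi=\sum_\psi c_\psi(a)\, v_\psi$, one gets the key identity
$$\left<v_{\chi'},r(a)v_\chi\right>=c_{\chi'}(a)\,\left<v_{\chi'},v_{\chi'}\right>.$$

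Finally I will combine the two. Pick any $\chi\in S$ and any target character $\chi'$. Hypothesis (2) produces $a\in\mathcal{A}$ with $\left<v_{\chi'},r(a)v_\chi\right>\neq 0$, which by the identity above forces $c_{\chi'}(a)\neq 0$ (and incidentally $\left<v_{\chi'},v_{\chi'}\right>\neq 0$). But $r(a)v_\chi$ lies in $W=\bigoplus_{\psi\in S}\mathbb{C} v_\psi$, so the nonvanishing of its $v_{\chi'}$-coordinate forces $\chi'\in S$. Since $\chi'$ was arbitrary, $S$ is the full set of characters and $W=V$.

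I expect no substantive obstacle: the argument is a standard "invariant form plus cyclic action in the weight basis implies irreducibility" pattern, with the invariance doing double duty — it diagonalizes the form in the weight basis so that the pairing $\left<v_{\chi'},r(a)v_\chi\right>$ directly reads off the coordinate $c_{\chi'}(a)$ of $r(a)v_\chi$. The only mild point requiring care is the $T$-semisimplicity of $V$, which is built into the hypothesis that the decomposition $V=\bigoplus_\chi V^{(\chi)}$ holds as a genuine direct sum rather than a generalized eigenspace decomposition.
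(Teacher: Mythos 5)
Your proof is correct and follows essentially the same strategy as the paper: decompose the invariant subspace $W$ into $T$-weight lines, then use invariance of the form together with hypothesis (2) to propagate membership in $W$ from one weight line to all of them. If anything, your version is more careful than the paper's, which gets the decomposition of $W$ via an orthogonal projector onto $W$ commuting with $r(\mathcal{A})$ (implicitly assuming the form behaves well on $W$) and leaves the orthogonality of distinct weight lines implicit, whereas your Lagrange-interpolation argument and explicit computation $\chi'(t)\left<v_{\chi'},v_\chi\right>=\chi(t)\left<v_{\chi'},v_\chi\right>$ make both steps airtight.
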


\begin{proof}
Let $0\neq W\subset V$ be an $\mathcal{A}$-invariant subspace and let $\pi_W : V\to W$ be the orthogonal projector with image $W$. Since $W$ is $\mathcal{A}$-invariant, then $\pi_W$ commutes with $r(\mathcal{A})$. In particular it commutes with $r(T)$ so for each character $\chi$, either $V^{(\chi)}\subset W$ or $V^{(\chi)}\cap W =\{0\}$. Since $W\neq 0$, there exists $\chi_0$ such that $v_{\chi_0} \in W$ so the cyclic space $r(\mathcal{A})\cdot v_{\chi_0}$ is included in $W$. For every $\chi$, one has $\left<v_{\chi}, r(\mathcal{A})v_{\chi_0}\right>\neq 0$ so $V^{(\chi)}\subset W$ therefore $W=V$ so $V$ has no non-zero proper submodule.
\end{proof}

We want to apply Lemma \ref{lemma_irrep} to the representation $r_{\omega}$ of  $\mathcal{S}_A(\Sigma_g)$ with torus $T= \{ [\gamma_e], e \in E(G)\}$. In this case, the decomposition writes 
$$ \mathbb{V}(\Sigma_g, \omega) = \oplus_{c \in \col(G)} \mathbb{C} [G,c] $$
where $\mathbb{C} [G,c]$ is the one-dimensional weight space corresponding to the character $\chi_c$ sending $[\gamma_e]$ to $(q^{ \omega(c(e))} + q^{- \omega(c(e))})$. So Lemma \ref{lemma_irrep} shows that $r_{\omega}$ is irreducible if and only if  for each $c, c'\in \col(G)$ there exists an element $\beta \in \mathcal{S}_A(\Sigma_g)$ such that $\left< [G,c'], r_{\omega}(\beta)\cdot [G,c]\right>\neq 0$. 
\par  
Let $\beta_1, \ldots, \beta_g \subset H_g$ be the framed curves drawn in Figure \ref{fig_graphs}. Each $\beta_i$ can be represented uniquely by a cycle $c_{\beta_i} \in \mathrm{Z}_1(G; \mathbb{Z})$ such that $c_{\beta_i}(e)\in \{-1, 0, +1\}$. Let $cl(\beta_i)$ be the set of cycles $\eta: E(G) \to \{-1, 0, +1\}$ such that $\eta(e)=0$ if and only if $c_{\beta_i}(e)=0$.


\begin{notations}\label{notations_Y}
 Let $Y(\omega)\subset \mathbb{C}$ be the finite set of $6j$-symbols of the form $$6S(\eta_1 c(e_a)  +\varepsilon_1 , \eta_2 c(e_b)  +\varepsilon_2 , \eta_3 c(e_c)  +\varepsilon_3; \eta_4, \eta_5)$$ where  $(e_a, e_b, e_c)$ are three adjacent edges of $G$ oriented towards a common vertex, $c\in \col(G)$,  $\eta_u\in \{-1, +1\}$ and $\varepsilon_v \in \{-1, 0, +1\}$ are such that $\varepsilon_1+\varepsilon_2+\varepsilon_3$ is even.
 \end{notations}

\begin{lemma}\label{lemma_step1}
One has 
$$ r_{\omega}([\beta_i]) \cdot [G,c] = \sum_{\eta \in cl(\beta_i)} x_{\eta} [G, c+\eta] $$
where $x_{\eta}$ is a product of elements of $Y(\omega)$ and of non-vanishing quantum binomials of the form $\qbinom{a+N-1}{a}^{-1}$ with $a\in \mathbb{C}\setminus \frac{1}{2k}\mathbb{Z}$.
\end{lemma}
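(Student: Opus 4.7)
The strategy is purely local and diagrammatic: I would isotope $\beta_i$ into a small tubular neighborhood of the subgraph $G_{\beta_i} \subset G$ spanned by those edges $e$ for which $c_{\beta_i}(e) \neq 0$, so that, inside $H_g$, the union of the colored trivalent graph $(G,c)$ and of $\beta_i$ (colored by $S_1$ after the isomorphisms of Definitions~\ref{def_skeinmodule} and~\ref{def_repBCGP} and the inclusion $\iota : \mathcal{S}_A^{tw}(\Sigma_g) \hookrightarrow \mathscr{S}^{\mathcal{C}_{\overline{0}}}(\Sigma_g)$) consists of a blue strand running alongside the edges of $G_{\beta_i}$, connected at every vertex of $G$ visited by $\beta_i$ by a small bridge of blue parallel to the vertex pair of pants. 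The claim will then follow by successively fusing the blue strand into the edges of $G$ and recoupling at the relevant vertices.

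More precisely, on each edge $e$ with $c_{\beta_i}(e) \neq 0$ the edge is colored by $V_{kc(e)}$; by Lemma~\ref{lemma_representations}(7) together with the fact that $kc(e)\in \ddot{\mathbb{C}}$ (which is guaranteed because $\omega(\gamma_e)\notin \tfrac{1}{2}\mathbb{Z}/2\mathbb{Z}$), one has the semisimple decomposition
\[
V_{kc(e)} \otimes S_1 \;\cong\; V_{k(c(e)+1)} \oplus V_{k(c(e)-1)}.
\]
Fusing the blue strand along $e$ using the morphisms $Y$ and $\iota, p$ of Section~\ref{sec_graphcalculus} replaces, up to scalar, the color $V_{kc(e)}$ by $V_{k(c(e)+\eta(e))}$ with $\eta(e)\in\{-1,+1\}$. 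Performing this fusion on every edge of $G_{\beta_i}$ introduces a sum indexed exactly by the maps $\eta \in k(\beta_i)$. At each vertex $v$ of $G$ traversed by $\beta_i$, the blue strand connects the two parallel strands on the two adjacent $G_{\beta_i}$-edges, producing locally the left-hand side of the $6j$-relation
\[
\adjustbox{valign=c}{\includegraphics[width=7cm]{6jRel.eps}}
\]
with the external colors given by $kc(e_a)$, $kc(e_b)$, $kc(e_c)$ shifted by the $\eta(e)$'s on the two active edges and by $0$ on the inactive one; the definition of Section~\ref{sec_6j} then expresses this picture as a single $6j$-symbol of the form appearing in Notation~\ref{notations_Y} (the parity constraint $\varepsilon_1+\varepsilon_2+\varepsilon_3$ even being forced by the admissibility of both $c$ and $c+\eta$ at $v$, which must sit in the same coset $\tfrac{N}{2}kn_v + H_N$ or one differing by an even multiple of $k$), followed by the trivalent graph with vertex multiplicity data yielding the basis vector $[G,c+\eta]$.

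The remaining scalar comes from the normalizations in Definition~\ref{def_morphisms}: fusing the $\iota$ and $p$ coupons that realize $S_1 \hookrightarrow V_{k(N-2)} \twoheadrightarrow S_1$ into a $V_{kc(e)}$-strand and then projecting back onto $V_{k(c(e)\pm 1)}$ using the $Y$ morphisms requires dividing by the coefficient of the ``bubble'' $Y^{\alpha,\beta}_{\gamma}\circ Y^{\gamma}_{\alpha,\beta}$. A direct inspection of the formulas for $D$ and $E$ in Notation~\ref{notations_formulas} shows that this bubble coefficient is a product of factors of the form $\kappa$'s together with one $\qbinom{\frac{\gamma}{k}+N-1}{\frac{\gamma}{k}}$; after simplification and after absorbing the $\kappa$'s into the definition of $[G,c+\eta]$, what remains in $x_\eta$ are the announced inverses $\qbinom{a+N-1}{a}^{-1}$ with $a$ of the form $\frac{1}{k}$ times an entry of the admissibility vector $\partial(c+\eta)(v)$, which lies outside $\tfrac{1}{2}\mathbb{Z}$ thanks to $\omega\notin\mathrm{H}^1(\Sigma_g;\tfrac{1}{2}\mathbb{Z}/2\mathbb{Z})$; hence these binomials are non-vanishing.

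\textbf{Expected obstacle.} The conceptual picture above is straightforward, but the actual bookkeeping is delicate. The main difficulty is to show that, after stripping off the bubble normalizations and the recoupling coefficient produced at each traversed vertex, what is left is \emph{exactly} a product of elements of $Y(\omega)$ and of the listed non-vanishing inverse binomials---with no residual $\kappa$ factors or $\{n\}$-factors unaccounted for. This will be done by carefully comparing the normalizations used in Definitions~\ref{def_morphisms} to define the basis vectors $[G,c]$ with those implicit in the definition of $6S(\alpha,\beta,\gamma;\varepsilon_1,\varepsilon_2)$ in Section~\ref{sec_6j}, ensuring that the scalar factors $\kappa^{\alpha}_n$ that enter both sides cancel. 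Once this cancellation is checked one vertex at a time, the lemma follows by induction on the number of vertices of $G$ visited by $\beta_i$.
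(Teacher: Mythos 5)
Your proposal follows the paper's proof essentially verbatim: the paper applies exactly the two skein relations you describe --- fusing the $S_1$-colored curve $\beta_i$ into each traversed edge via $V_{kc(e)}\otimes S_1\cong V_{k(c(e)+1)}\oplus V_{k(c(e)-1)}$, which produces the sum over $\eta\in k(\beta_i)$ with coefficients $\qbinom{c(e)+\eta(e)+N-1}{c(e)+\eta(e)}^{-1}$, and then removing the resulting blue bridges at each traversed vertex with the $6j$-relation, yielding the product of $6j$-symbols from $Y(\omega)$ --- and it carries out this computation explicitly for $\beta_1$, leaving the other $\beta_i$ to the reader. One small precision: the non-vanishing of those binomials rests on the standing assumption $\omega\in\mathcal{U}$ (so $c(e)+\eta(e)\notin\frac{1}{2}\mathbb{Z}$ for every edge of the pants decomposition), which is stronger than the condition $\omega\notin\mathrm{H}^1(\Sigma_g;\frac{1}{2}\mathbb{Z}/2\mathbb{Z})$ you invoke.
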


\begin{proof}
The proof is a simple computation using the following two skein relations: 
$$\includegraphics[width=12cm]{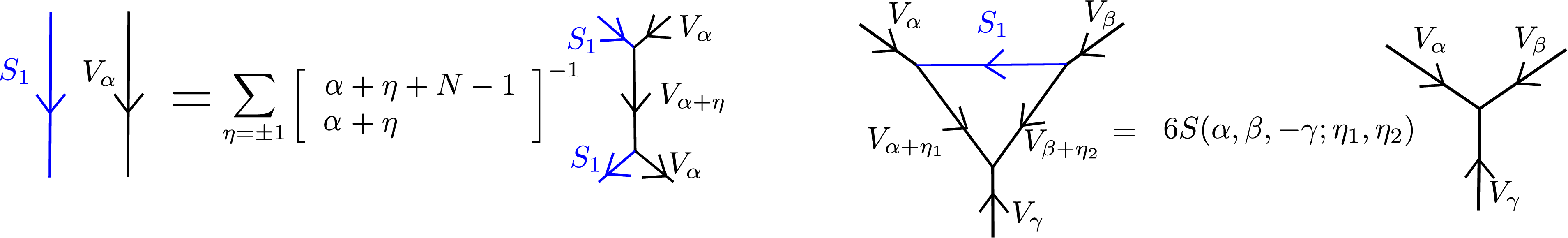}.$$
For instance, for $\beta_1$, writing $x_{\eta_1, \eta_2}:= \qbinom{c_1+\eta_1 +N-1}{c_1 +\eta_1}^{-1} \qbinom{c_2+\eta_2 +N-1}{c_2 +\eta_2}^{-1}$, 
one finds (here we put $c_4:=c_3$ if $g=2$):
\begin{multline*}
 r_{\omega}(\beta_1)\cdot [G,c]= \adjustbox{valign=c}{\includegraphics[width=3cm]{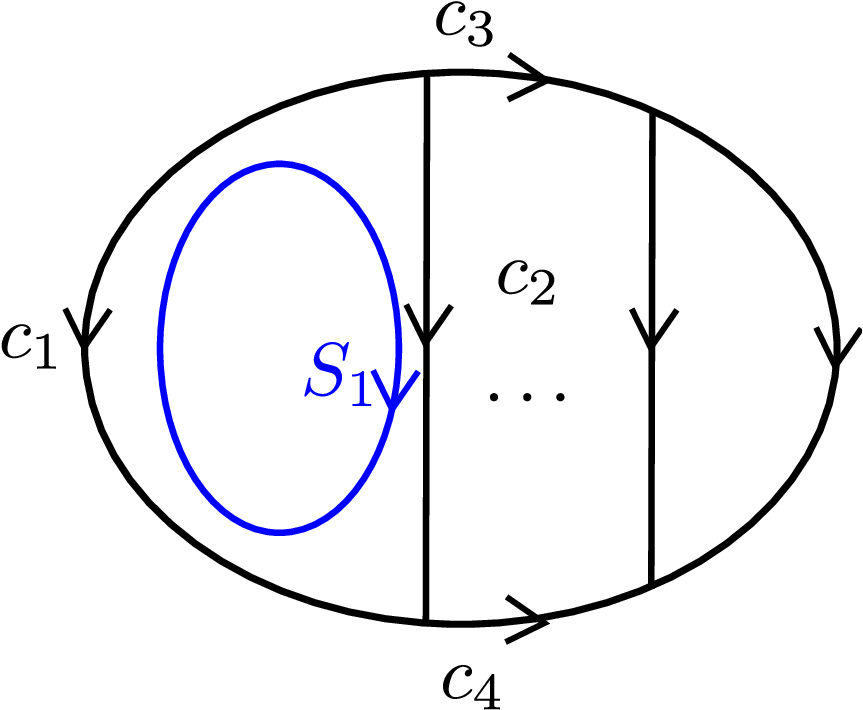}} =\sum_{\eta_1, \eta_2= \pm 1} x_{\eta_1, \eta_2} \adjustbox{valign=c}{\includegraphics[width=3.7cm]{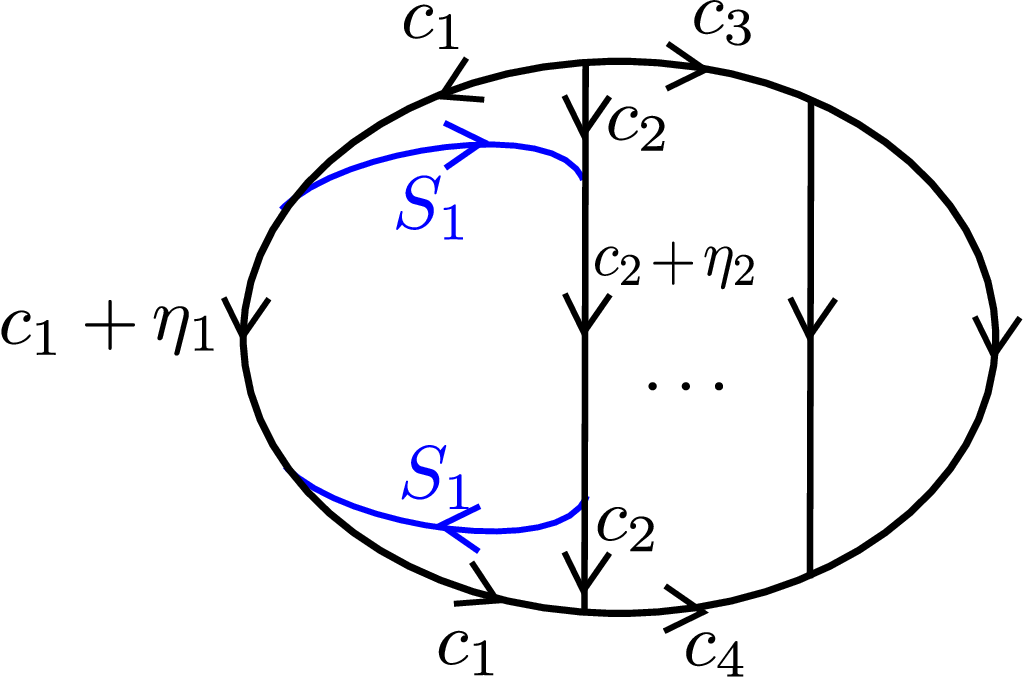}} \\
 = \sum_{\eta_1, \eta_2=\pm 1}x_{\eta_1, \eta_2} 6S(-c_2-\eta_2, -{c_1}-\eta_1 , c_3 ; \eta_2, \eta_1) 6S(c_1 +\eta_1 , c_2 +\eta_2 , c_4 ; -\eta_1, -\eta_2) \adjustbox{valign=c}{\includegraphics[width=3cm]{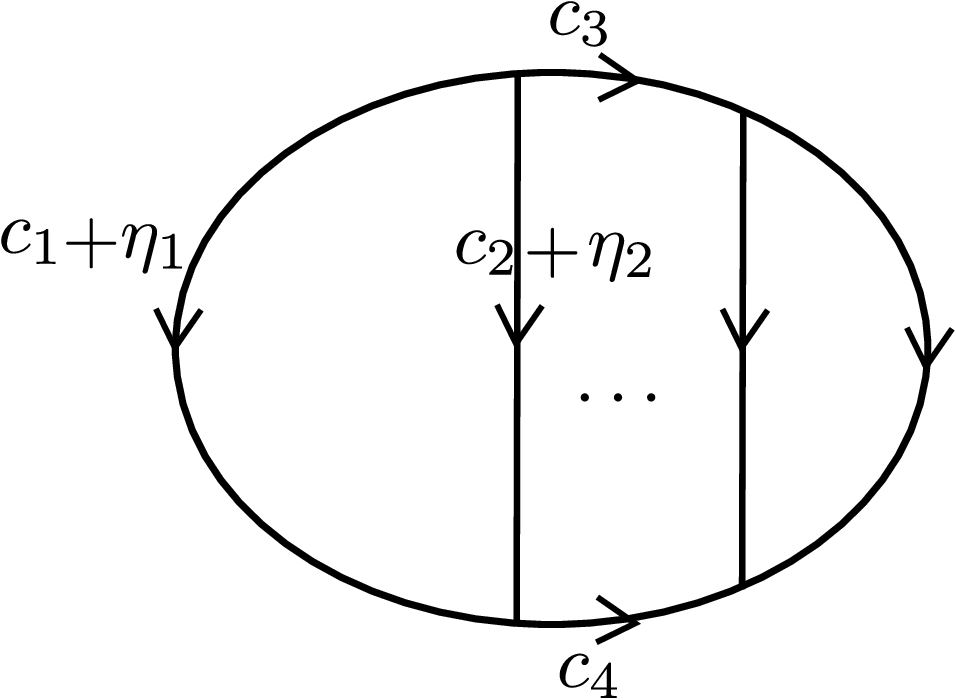}}.
\end{multline*}
The cases of $\beta_i$ for $i>1$ are similar and left to the reader.
\end{proof}

\begin{lemma}\label{lemma_step2}
If $\omega\in \mathcal{U}$ is such that $0 \notin Y(\omega)$, then $r_{\omega}$ is irreducible.
\end{lemma}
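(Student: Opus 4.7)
The plan is to apply Lemma \ref{lemma_irrep} to the representation $r_\omega$ with the commutative subalgebra $T \subset \mathcal{S}_A(\Sigma_g)$ generated by the curves $[\gamma_e]$, $e \in E(G)$, and the invariant pairing $\langle\cdot,\cdot\rangle_\omega$ on $\mathbb{V}(\Sigma_g,\omega)$. The proof of Theorem \ref{theorem_basis} shows that $\mathbb{V}(\Sigma_g,\omega) = \bigoplus_{c\in\col(G)}\mathbb{C}[G,c]$ is the weight decomposition for the $T$-action, with one-dimensional weight spaces carrying pairwise distinct characters (the eigenvalues $\lambda_{c(e)}$ determine $c(e)$). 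The invariant pairing is non-degenerate and orthogonal on $\mathcal{B}$, so $\langle[G,c],[G,c]\rangle_\omega\neq 0$ for every $c$. Hence hypothesis~(1) of Lemma \ref{lemma_irrep} is in place, and what remains is to verify hypothesis~(2): for every $c,c'\in\col(G)$, to produce $\beta\in\mathcal{S}_A(\Sigma_g)$ such that $r_\omega(\beta)[G,c]$ has a nonzero component along $[G,c']$ (equivalently, by orthogonality, $\langle[G,c'],r_\omega(\beta)[G,c]\rangle_\omega\neq 0$).

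I would build such a $\beta$ as a product of the curves $[\beta_1],\ldots,[\beta_g]$. By iterating Lemma \ref{lemma_step1}, the action $r_\omega([\beta_{i_1}]\cdots[\beta_{i_k}])[G,c]$ is a linear combination of basis vectors $[G,c+\eta_1+\cdots+\eta_k]$ with $\eta_j\in k(\beta_{i_j})$, and each one-term contribution is a product of elements of $Y(\omega)$ and non-vanishing quantum binomials; the hypothesis $0\notin Y(\omega)$ ensures that each such product is nonzero. A combinatorial step then guarantees reachability: since $N$ is odd, $2$ is invertible in $\mathbb{Z}/N\mathbb{Z}$, so summing the shifts $(+1,+1)$ and $(+1,-1)$ in $k(\beta_i)$ restricted to any pair of edges of $\beta_i$ produces $(2,0)$, and hence a single-edge shift $\delta_e$ in the subgroup generated. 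Since $\beta_1,\ldots,\beta_g$ collectively traverse every edge of $G$, the subgroup of $\col(G)\cong(\mathbb{Z}/N\mathbb{Z})^{|E(G)|}$ generated by $\bigcup_i k(\beta_i)$ equals $\col(G)$, so any prescribed difference $c'-c$ can be written as $\sum_j \eta_j$ for a suitable sequence.

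The main obstacle is the cancellation problem: when several sequences $(\eta_1,\ldots,\eta_k)$ sum to the same shift $c'-c$, the coefficient of $[G,c']$ is a \emph{sum} of nonzero products, which could a priori vanish. I would overcome this by choosing the sequence $(\beta_{i_j})$ as parsimoniously as possible (for instance, by handling the edges one after the other using the single-edge shifts extracted above), so that $c'-c$ admits an essentially unique decomposition as a sum of the allowed shifts. This forces the coefficient of $[G,c']$ to reduce to a single product of 6j-symbols in $Y(\omega)$ times quantum binomials, all of which are nonzero by hypothesis. This is exactly what motivates defining $Y(\omega)$ to be the finite set of 6j-symbols arising from one step of Lemma \ref{lemma_step1}: once the coefficient is pinned down to a single term, the hypothesis $0\notin Y(\omega)$ does the rest, and Lemma \ref{lemma_irrep} then yields the irreducibility of $r_\omega$.
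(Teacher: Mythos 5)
Your setup is exactly the paper's: apply Lemma \ref{lemma_irrep} to $T=\left<[\gamma_e], e\in E(G)\right>$, use the one-dimensional weight spaces $\mathbb{C}[G,c]$ with pairwise distinct characters, the orthogonality and non-degeneracy of the invariant pairing, Lemma \ref{lemma_step1}, and the fact that $\bigcup_i k(\beta_i)$ generates $\col(G)\cong(\mathbb{Z}/N\mathbb{Z})^{E(G)}$ (using that $2$ is invertible mod $N$). The gap is in your resolution of the cancellation problem. You propose to realize hypothesis $(2)$ by a word in the curves $[\beta_{i}]$ alone and to choose it so that $c'-c$ has an ``essentially unique'' decomposition into allowed shifts; this is not achievable in general. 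Already to realize a single-edge shift $\delta_e$ using a curve $\beta_i$ traversing two edges, any word must apply $\beta_i$ at least $N$ times with both signs occurring on one coordinate, and the coefficient of $[G,c']$ is then a sum over all $\binom{N}{(N+1)/2}$ orderings (times wrap-around choices) of the signs; each ordering passes through different intermediate colorings, so the contributions are distinct products of $6j$-symbols, all nonzero under $0\notin Y(\omega)$, but with no reason their sum cannot vanish. So non-vanishing of $\left<[G,c'],r_\omega(\beta)[G,c]\right>$ is not established by your argument.

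The missing idea, which is how the paper proceeds, is to exploit the fact that the element $a$ in hypothesis $(2)$ of Lemma \ref{lemma_irrep} may be an arbitrary element of $\mathcal{S}_A(\Sigma_g)$, not just a product of the $[\beta_i]$. Declare $c,c'$ \emph{related} if $\left<[G,c],r_\omega(\mathcal{S}_A(\Sigma_g))[G,c']\right>\neq 0$; this is an equivalence relation. Transitivity is where the cancellation issue dissolves: since the characters $\chi_c$ of $T$ are pairwise distinct and the weight spaces are one-dimensional, Lagrange interpolation produces an element $p_{c'}\in T$ acting as the projector onto $\mathbb{C}[G,c']$, and if $a$ relates $c$ to $c'$ and $b$ relates $c'$ to $c''$, then $a\,p_{c'}\,b$ relates $c$ to $c''$ (one also uses $\left<[G,c'],[G,c']\right>\neq 0$, which follows from non-degeneracy plus orthogonality). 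With transitivity in hand, you only ever need the one-step statement: by Lemma \ref{lemma_step1}, since the vectors $[G,c+\eta]$, $\eta\in k(\beta_i)$, are pairwise distinct basis vectors, the coefficient of $[G,c+\eta]$ in $r_\omega([\beta_i])[G,c]$ is the single term $x_\eta\neq 0$, so $c$ and $c+\eta$ are related, and the group-generation argument then relates all of $\col(G)$. Your proof needs this (or an equivalent device) to be complete.
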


\begin{proof}
Let $\mathscr{A}:= r_{\omega}(\mathcal{S}_A(\Sigma_g))\subset \End(\mathbb{V}(\Sigma_g, \omega))$. 
Note that $\mathbb{C}v_c$ is an intersection of eigenspaces of the operators $r_{\omega}(\gamma_e)$. If $\pi \in \End(\mathbb{V}(\Sigma_g, \omega))$ is a projector which commutes with $\mathscr{A}$, it must preserve these eigenspaces so $\pi(v_c) \subset \mathbb{C}v_c$. This means that if $W \subset \mathbb{V}(\Sigma_g, \omega)$ is preserved by $\mathscr{A}$ then there exists $J \subset \col(G)$ such that $W=\Span ( [G,c], c\in J)$. In particular, for $c,c' \in \col(G)$, either 
$\mathscr{A}v_c = \mathscr{A}v_{c'}$, in which case we say that $c,c'$ are \textit{related}, or $\mathscr{A}v_c \cap \mathscr{A}v_{c'}=0$. Since the pairing $\left<\cdot, \cdot \right>$ is $\mathscr{A}$ invariant, then $c,c'$ are related if and only if 
 $\left< [G,c], r_{\omega} (x)  [G,c']\right> \neq 0$ for some $x\in\mathcal{S}_A(\Sigma_g)$. Being related is clearly an equivalence relation.
By Lemma \ref{lemma_irrep}, to prove that $r_{\omega}$ is irreducible,  it suffices to prove that every $c,c' \in \col(G)$ are related. Lemma \ref{lemma_step1} shows that, when $0 \notin Y(\omega)$,  for every $1\leq i \leq g$ and for every $\eta\in cl(\beta_i)$ then $c$ and $c+\eta$ are related. $\col(G)$ has a natural structure of torsor over the abelian group $\mathrm{C}_1(G; \mathbb{Z}/N\mathbb{Z})\cong (\mathbb{Z}/N\mathbb{Z})^{E(G)}$ where the free transitive action is given by sending $c_0\in \mathrm{C}_1(G; \mathbb{Z}/N\mathbb{Z})$ and $c_1\in \col(G)$ to the unique element of $\col(G)$ which induces the class of $c_0+c_1$ in $\mathrm{C}_1(G; \mathbb{C}/N\mathbb{Z})$. The set $\{ \eta; \eta \in k(\beta_i), 1\leq i \leq g\}\subset \col(G)$ generates the group $\mathrm{C}_1(G; \mathbb{Z}/N\mathbb{Z})$ so  every $c,c' \in \col(G)$ are related and $r_{\omega}$ is irreducible.
\end{proof}

\begin{lemma}\label{lemma_step3} There exists $\omega\in \mathcal{U}$ such that $0 \notin Y(\omega)$.
\end{lemma}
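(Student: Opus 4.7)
The plan is to show that the set of $\omega \in \mathcal{U}$ with $0 \in Y(\omega)$ is contained in a proper Zariski-closed subset of $\mathcal{U}$, so that a good $\omega$ must exist.

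First, I would parameterize $\omega$ by $z_e := e^{i\pi k \omega(\gamma_e)}$ for $e \in E(G)$. Since $k = 2k'$ is an even integer, $e^{i\pi k m} = 1$ for every integer $m$. Consequently the dependence on the choice of lift of $\omega(\gamma_e)$, on the choice of $c \in \col(G)$ (since $c(e) - \omega(\gamma_e) \in 2\mathbb{Z}$), and on the integer parameters $\varepsilon_j$ all drop out of the exponentials $e^{i\pi(\eta_j c(e_*)k + \varepsilon_j k)}$. Combined with Proposition \ref{prop_6j}, each element of $Y(\omega)$ thus takes the form $R(z_{e_a}^{\eta_1}, z_{e_b}^{\eta_2}, z_{e_c}^{\eta_3})$ for a non-zero rational fraction $R \in \mathbb{Q}(A)(X_1, X_2, X_3)$ depending only on the vertex $v$ of $G$ and on the $\eta_i$'s. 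Hence $Y(\omega)$ is the image under evaluation at $(z_e)_e$ of a finite set $\{R_1, \ldots, R_M\}$ of non-zero rational fractions.

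Second, the map $\omega \mapsto (z_e)_e$ identifies (the image of) $\mathrm{H}^1(\Sigma_g; \mathbb{C}/2\mathbb{Z})$ with a complex sub-torus $T \subset (\mathbb{C}^*)^{E(G)}$ cut out by the pants relations $z_{e_a} z_{e_b} z_{e_c} = 1$ (for suitable orientations induced by the boundary of each pair of pants) at each vertex of $G$, and $\mathcal{U}$ maps onto a Zariski-open dense subset of $T$. Each $R_i$ restricts to a rational function $\widetilde R_i$ on $T$; provided no $\widetilde R_i$ is identically zero on $T$, the union of their zero loci is a proper Zariski-closed subset of $T$, so some $\omega \in \mathcal{U}$ avoids all of them and satisfies $0 \notin Y(\omega)$.

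The main obstacle is therefore to verify that none of the $\widetilde R_i$'s is identically zero on $T$. Since each $R_i$ is non-zero as a rational function of three independent variables and the relevant pants constraint cuts out a two-dimensional sub-torus of the three-dimensional argument torus, this amounts to checking that $R_i$ is not divisible (up to units) by the Laurent polynomial $X_1^{\eta_1} X_2^{\eta_2} X_3^{\eta_3} - 1$ cutting out that sub-torus. I would verify this directly from the explicit Racah-type formula for $6S$ derived in the proof of Proposition \ref{prop_6j} (Appendix \ref{sec_appendixB}): for a generic $(z_{e_a}, z_{e_b}, z_{e_c}) \in T$ the expression becomes a finite product of factors of the form $\{m\}$ with $m \notin \frac{N}{2}\mathbb{Z}$ and of quantum binomials $\qbinom{*}{*}$ evaluated away from their zeros and poles, hence is non-zero.
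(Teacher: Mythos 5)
Your overall strategy is the same as the paper's: use Proposition \ref{prop_6j} to realize every element of $Y(\omega)$ as the value of one of finitely many non-zero rational fractions at exponential coordinates of $\omega$, and conclude by a genericity/density argument. The paper does this in one stroke: it takes $Z(\omega)$, the product of all elements of $Y(\omega)$, writes $Z(\omega)=R(e^{i\pi\omega(\gamma_{e_1})},\dots,e^{i\pi\omega(\gamma_{e_{3g-3}})})$ with $R\neq 0$, and argues that $Z\equiv 0$ on $\mathcal{U}$ would force $R=0$, treating the $3g-3$ exponential coordinates as filling an open set. You are right that these coordinates are not independent (they satisfy the multiplicative pants relations at each vertex, so the image lies in a proper subtorus $T$), and this is exactly where your route departs from the paper's, which does not impose these relations.

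But that is also where your proposal has a genuine gap: the whole proof now hinges on showing that no $\widetilde R_i$ vanishes identically on $T$, i.e.\ that the three-variable fraction of Proposition \ref{prop_6j} is not identically zero on the two-dimensional subtorus cut out by the relation at the very vertex producing that $6j$-symbol, and you do not prove this. Your sketched verification --- that at a generic point of $T$ the expression ``becomes a finite product of factors $\{m\}$ and quantum binomials away from their zeros, hence is non-zero'' --- misreads the structure of the $6j$-symbol: by Equation \eqref{eq_6j_formula} it is, up to the non-vanishing factor $\mathbb{D}_{0,m,0}$, a \emph{sum} $\mathbb{F}_1+\mathbb{F}_2$ of two such products, and the entire difficulty is to exclude cancellation. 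The non-vanishing proved in Appendix \ref{sec_appendixB} uses an $X_1$-adic valuation over $\mathbb{Q}(A)(X_2,X_3)$, i.e.\ with $X_2,X_3$ treated as independent ($v(\mathbb{F}_1)=7\theta_1-2N+5\neq 7\theta_1-2N+10=v(\mathbb{F}_2)$); after the monomial substitution your subtorus imposes (say $X_3=X_1^{\pm 1}X_2^{\pm 1}$), both valuations shift by the uncomputed $X_3$-adic valuations of $\mathbb{F}_1,\mathbb{F}_2$, and nothing guarantees they stay distinct. So non-divisibility of $R_i$ by $X_1^{\eta_1}X_2^{\eta_2}X_3^{\eta_3}-1$ does not follow from Proposition \ref{prop_6j} or from anything in your argument, and without it your Zariski-density step on $T$ cannot conclude. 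Completing your approach would require redoing the valuation (or some other non-vanishing) computation after the substitution at each vertex, including the degenerate vertices where two of the three adjacent edges coincide --- a genuinely new calculation that your proposal does not supply.
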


\begin{proof}
Let $Z(\omega) \in \mathbb{C}$ be the product of every $6j$-symbols in $Y(\omega)$. By Proposition \ref{prop_6j}, there exists $R \in \mathbb{Q}(A)(X_1, \ldots, X_{3g-3})$ such that $R\neq 0$ and
$$Z(\omega) = R(q^{\frac{\omega(\gamma(e_1))}{2}}, \ldots, q^{\frac{\omega(\gamma(e_{3g-3}))}{2}}).$$
Let  $\varphi : \mathcal{U} \to (\mathbb{C}^*)^{3g-3}$ be the analytic map sending $\omega$ to $(q^{\frac{\omega(\gamma(e_1))}{2}}, \ldots, q^{\frac{\omega(\gamma(e_{3g-3}))}{2}})$;  so $Z$ decomposes as:
$$ Z: \mathcal{U} \xrightarrow{ \varphi} (\mathbb{C}^*)^{3g-3} \xrightarrow{R} \mathbb{C}.$$ 
Since the image $\varphi(\mathcal{U})$ is a non-empty open (analytic) subset, if $Z(\mathcal{U})=\{0\}$, then we would have $R=0$ which would contradict Proposition \ref{prop_6j}. Therefore there exists $\omega \in \mathcal{U}$ such that $Z(\omega)\neq 0$, i.e. such that $0 \notin Y(\omega)$.
\end{proof}

\begin{proof}[Proof of Theorems \ref{main_theorem} and \ref{theorem2}]
By Lemma \ref{lemma_step3}, there exists $\omega \in \mathcal{U}$ such that $0\notin Y(\omega)$ so Lemma \ref{lemma_step2} implies that $r_{\omega}$ is irreducible. By Proposition \ref{prop_shadow}, $r_{\omega}$ is a central representation with classical shadow  $[\rho_{\omega}]\in \mathcal{X}^{(1)}$ and its dimension is equal to the PI-degree of $\mathcal{S}_A(\Sigma_g)$ (i.e. $N^{3g-3}$) by Theorem \ref{theorem_basis}. So Corollary \ref{coro_roro} implies that $[\rho_{\omega}]$ belongs to the Azumaya locus of $\mathcal{S}_A(\Sigma_g)$ and Corollary \ref{coro_X1} implies that the whole strata $\mathcal{X}^{(1)}$ is included in the Azumaya locus.

\end{proof}

\section{Open surfaces}\label{sec_opensurfaces}

We now extend the previous results to the open surfaces $\Sigma_{g,n}$ with $n\geq 1$. Since the skein algebras are commutative (and thus Azumaya) when $(g,n)$ is $(0,0)$, $(0,1)$, $(0,2)$ or $(0,3)$, we now assume that we are not in one of these cases.

\subsection{Center and sliced character varieties}

Let $\gamma_1, \ldots, \gamma_n$ be the peripheral curves encircling once the boundary components of $\Sigma_{g,n}$. 
\begin{theorem}\label{theorem_FKL_open}(Frohman-Kania Bartoszynska, L\^e \cite{FrohmanKaniaLe_UnicityRep, FrohmanKaniaLe_DimSkein}) The center $Z$ of $\mathcal{S}_A(\Sigma_{g,n})$ is the algebra $Ch_A(\mathcal{S}_{+1}(\Sigma_{g,n}))[\gamma_1, \ldots, \gamma_n]$ with relation $T_N(\gamma_i)= Ch_A(\gamma_i)$. Moreover $\mathcal{S}_A(\Sigma_{g,n})$ has PI-degree $N^{3g-3+n}$.
\end{theorem}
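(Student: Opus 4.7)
The plan is to construct a natural algebra map from the proposed presentation into $Z$, compute the PI-degree of $\mathcal{S}_A(\Sigma_{g,n})$ via an embedding into a quantum torus, and then conclude by comparing ranks.

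First, I would identify the candidate central subalgebra. The peripheral curves $[\gamma_i]$ lie in $Z$ because any representative can be isotoped into a collar neighborhood of the $i$-th boundary component, where any strand of a second link can be pushed past it without creating crossings. Combined with the inclusion $Ch_A(\mathcal{S}_{+1}(\Sigma_{g,n})) \subseteq Z$ from Theorem~\ref{theorem_chebyshev}(1) and the defining identity $Ch_A([\gamma_i]) = T_N([\gamma_i])$, this yields a well-defined $\mathbb{C}$-algebra homomorphism
\[
\Phi \colon \widetilde{Z}_0 := Ch_A(\mathcal{S}_{+1}(\Sigma_{g,n}))[X_1, \ldots, X_n]/\bigl(T_N(X_i) - Ch_A([\gamma_i])\bigr)_{i=1}^n \longrightarrow Z,
\]
sending $X_i \mapsto [\gamma_i]$, whose image is the subalgebra of $Z$ displayed in the statement.

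The technical heart of the argument is the PI-degree computation. I would fix an ideal triangulation $\Delta$ of $\Sigma_{g,n}$ and use the Bonahon-Wong quantum trace to embed $\mathcal{S}_A(\Sigma_{g,n})$ into a quantum torus $\mathbb{T}_q^\Delta$ attached to $\Delta$. At a primitive $N$-th root of unity with $N$ odd, the skew form of $\mathbb{T}_q^\Delta$ reduced modulo $N$ controls the Azumaya structure: its radical should have rank exactly $n$, generated (inside the image of the quantum trace) by the $N$-th powers of the peripheral edge-monomials. Localizing over $\mathrm{Frac}(\widetilde{Z}_0)$ produces a central simple algebra of square-dimension $N^{6g-6+2n}$; Posner-Formanek then yields PI-degree $N^{3g-3+n}$ and shows simultaneously that $\mathcal{S}_A(\Sigma_{g,n})$ has rank $N^{6g-6+2n}$ over $\mathrm{Im}(\Phi)$.

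Finally, Theorem~\ref{theorem_AL_dense}(2) gives $\mathcal{S}_A$ rank $N^{6g-6+2n}$ over its full center $Z$ as well. Since $\mathrm{Im}(\Phi) \subseteq Z$ and $\mathcal{S}_A$ is a domain with the same generic rank over both commutative subalgebras, they must coincide, establishing the claimed presentation of $Z$; injectivity of $\Phi$ then follows by comparing ranks of $\widetilde{Z}_0$ and $Z$ over $Ch_A(\mathcal{S}_{+1}(\Sigma_{g,n}))$, both equal to $N^n$ by a degree count on the Chebyshev relations. The main obstacle is the quantum torus step: pinpointing the $n$-dimensional radical of the skew form modulo $N$ within the image of the quantum trace and confirming that it is spanned by peripheral contributions requires a delicate, triangulation-dependent linear algebra computation that I have taken for granted above.
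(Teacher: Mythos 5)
You should first note that the paper does not prove this statement at all: it is quoted verbatim from Frohman--Kania-Bartoszynska--L\^e \cite{FrohmanKaniaLe_UnicityRep}, so your sketch is really an attempted reconstruction of their argument. In outline you do follow their strategy (centrality of peripheral curves, the Chebyshev--Frobenius image, an embedding into a quantum torus via the quantum trace, and a rank count via Posner--Formanek), but two steps are genuine gaps rather than routine details. First, the step you admit to taking for granted --- that the radical of the mod-$N$ skew form of the triangle quantum torus has rank exactly $n$ and is realized, inside the image of the quantum trace, by peripheral monomials --- is precisely the technical heart of the theorem; it is where both the PI-degree $N^{3g-3+n}$ and the identification of the extra central generators come from, and it also requires dealing with the fact that the quantum trace is not surjective, so knowing the center of the quantum torus does not by itself determine the center of $\mathcal{S}_A(\Sigma_{g,n})$ (the published proof sandwiches the skein algebra between two quantum tori and uses a leading-term argument in the multicurve basis to transfer the computation).

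Second, your concluding deduction is logically insufficient: from $\mathrm{Im}(\Phi)\subseteq Z$ and equality of the generic ranks of $\mathcal{S}_A(\Sigma_{g,n})$ over the two subalgebras you only get $[\mathrm{Frac}(Z):\mathrm{Frac}(\mathrm{Im}(\Phi))]=1$, i.e.\ equality of fraction fields, not equality of algebras (compare $\mathbb{C}[t^2,t^3]\subset\mathbb{C}[t]$). To upgrade this you would need, for instance, that $Z$ is integral over $\mathrm{Im}(\Phi)$ (which requires quoting finiteness of $\mathcal{S}_A(\Sigma_{g,n})$ over the Chebyshev--Frobenius image) together with normality of $\mathrm{Im}(\Phi)$, which is not obvious for the branched extension defined by $T_N(X_i)=Ch_A([\gamma_i])$; the actual proof avoids this by characterizing central elements directly through their leading terms. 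For the same reason, your injectivity claim for $\Phi$ "by comparing ranks, both equal to $N^n$" presupposes part of what is to be proved. So the proposal is a reasonable outline of the known proof, but the two steps above are exactly where the real work lies and neither is closed by the argument as written.
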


Therefore the variety $\Specm(Z)$ is isomorphic to the variety 
$$ \widehat{\mathcal{X}}_{\SL_2}(\Sigma_{g,n}) = \{ ([\rho], z_1, \ldots , z_n)\mid [\rho]\in \mathcal{X}_{\SL_2}(\Sigma_{g,n}), z_i \in \mathbb{C} \mbox{ s.t. } T_N(z_i)=-\tr(\rho(\gamma_i)) \}$$
and the Chebyshev-Frobenius morphism $ 
Ch_A : \mathcal{S}_{\varepsilon}(\Sigma_{g,n}) \hookrightarrow Z$ 
defines a branched covering $\pi : \widehat{\mathcal{X}}_{\SL_2}(\Sigma_{g,n}) \to \mathcal{X}_{\SL_2}(\Sigma_{g,n})$ sending $ ([\rho], z_1, \ldots , z_n)$ to $[\rho]$. 
When $\varepsilon=+1$,  we have chosen the Spin structure $S$ such that its quadratic form $w_S$ satisfies $w_S([\gamma_i])=0$. 

\begin{definition}(Fully Azumaya locus) The \textit{fully Azumaya locus} is the subset 
$$ \mathcal{FAL}:= \{ ([\rho] \in \mathcal{X}_{\SL_2}(\Sigma_{g,n}) \mbox{ such that } \pi^{-1}([\rho]) \subset \mathcal{AL} \}$$
i.e. is the set of classes of representations $[\rho]$ such that $([\rho], z_1, \ldots, z_n)$ is in the Azumaya locus of $\mathcal{S}_A(\Sigma_{g,n})$ for every compatible $(z_1, \ldots, z_n)$. 
\end{definition}

Extend the Poisson structure of  $\mathcal{X}_{\SL_2}(\Sigma_{g,n})$ to $\widehat{\mathcal{X}}_{\SL_2}(\Sigma_{g,n}) $ by setting that the elements $\gamma_i$ are Casimir (i.e. that $\{\gamma_i, f\} = 0$ for all $f$). Recall from Section \ref{sec_PO} that we have a Poisson order  $(\mathcal{S}_A(\Sigma_{g,n}), \widehat{\mathcal{X}}_{\SL_2}(\Sigma_{g,n}), Ch_A, D)$ where $D$ is defined as in the closed case with the additional condition $D_{\gamma_i}=0$. A classification of the symplectic leaves is not known in general (see however \cite[Proposition $2$]{FockRosly} and  \cite[Theorem $9.1$]{GHJW_ModSpacesParBd} for related results). 
Consider the morphisms 
$${\nu}: {\mathcal{X}}_{\SL_2}(\Sigma_{g,n}) \to \mathbb{C}^n, \quad {\nu}([\rho]) := (\tr(\rho(\gamma_1)),\ldots, \tr(\rho(\gamma_n)))$$ and 
$$\widehat{\nu}: \widehat{\mathcal{X}}_{\SL_2}(\Sigma_{g,n}) \to \mathbb{C}^n, \quad \widehat{\nu}([\rho], z_1,\ldots, z_n) := (z_1,\ldots, z_n).$$
For $c\in \mathbb{C}^n$ and $z \in \mathbb{C}^n$ such that $T_N(z_i)=c_i$, we set $\mathcal{X}_{\SL_2}(\Sigma_{g,n}, c):= \nu^{-1}(c)$ and $\widehat{\mathcal{X}}_{\SL_2}(\Sigma_{g,n}, z):= \widehat{\nu}^{-1}(z)$. 
$\mathcal{X}_{\SL_2}(\Sigma_{g,n}, c)$ is called the \textit{relative} or \textit{sliced} character variety. Since the $\gamma_i$ are Casimir elements, then the symplectic leaves are contained in the lifts of the sliced character varieties.
 It is proved independently in \cite{Whang_RelCharVar, FKL_GeometricSkein} that $\mathcal{X}_{\SL_2}(\Sigma_{g,n}, c)$ is irreducible, in \cite{Whang_RelCharVar} that it is normal and in \cite[Proposition $2$]{FockRosly} and  \cite[Theorem $9.1$]{GHJW_ModSpacesParBd} that the smooth loci of sliced character varieties are symplectic. However none of these loci is dense in ${\mathcal{X}}_{\SL_2}(\Sigma_{g,n}) $ so we cannot conclude easily like in the case of closed surfaces. The trick is to consider the \textit{sliced skein algebras}: 
 $$ \mathcal{S}_A(\Sigma_{g,n}, z):= \quotient{\mathcal{S}_A(\Sigma_{g,n})}{\left( \gamma_i - z_i , i=1, \ldots, n \right)}.$$
 Let $c=(c_1, \ldots, c_n)$ and $z=(z_1, \ldots, z_n)$ with $c_i=-T_N(z_i)$. 
 It is proved in \cite{FKL_GeometricSkein} that $ \mathcal{S}_A(\Sigma_{g,n}, z)$ is an almost Azumaya algebra with the same PI-degree as  $\mathcal{S}_A(\Sigma_{g,n})$ (i.e. $N^{3g-3+n}$), therefore the inclusion $\mathcal{X}_{\SL_2}(\Sigma_{g,n}, c) \subset \mathcal{X}_{\SL_2}(\Sigma_{g,n})$ sends the Azumaya locus of $\mathcal{S}_A(\Sigma_{g,n}, z)$ into the Azumaya locus of $\mathcal{S}_A(\Sigma_{g,n})$.
\par 
 The Chebyshev morphism induces an isomorphism  $Ch_A: \mathbb{C}[\mathcal{X}_{\SL_2}(\Sigma_{g,n}, c)] \cong \mathcal{Z}\left(\mathcal{S}_A(\Sigma_{g,n}, z)\right)$ 
  and we obtain a Poisson order $( \mathcal{S}_A(\Sigma_{g,n}, z),  \mathcal{X}_{\SL_2}(\Sigma_{g,n}, z), Ch_A, D)$. Since the smooth locus of $\mathcal{X}_{\SL_2}(\Sigma_{g,n},z)$ is symplectic,   we deduce the
 
 \begin{theorem}(Frohman-Kania Bartoszynska-L\^e \cite[Theorem $11.1$]{FKL_GeometricSkein}) \label{theorem_FKL_open}
 If $[\rho]\in \mathcal{X}_{\SL_2}(\Sigma_{g,n}, c)$ belongs to the  smooth loci of $\mathcal{X}_{\SL_2}(\Sigma_{g,n}, c)$, then $[\rho]$ is in the fully Azumaya locus of $\mathcal{S}_A(\Sigma_{g,n})$.
 \end{theorem}

An explicit description of these smooth loci is not known in general but a lot can be deduced: 

\begin{corollary}\label{coro_FKL_open}
\begin{enumerate}
\item (\cite[Proposition $4.3$]{FKL_GeometricSkein}) If $\rho: \pi_1(\Sigma_{0,n}) \to \SL_2$ is irreducible and $c_i=\tr(\rho(\gamma_i)) \neq \pm 2$ for all $1\leq i \leq n$, then $[\rho]\in \mathcal{FAL}(\mathcal{S}_A(\Sigma_{0,n}))$.
\item (\cite[Proposition $4.4$]{FKL_GeometricSkein}) Let $\rho: \pi_1(\Sigma_{g,n}) \to \SL_2$ such that: $(1)$  $c_i=\tr(\rho(\gamma_i)) \neq \pm 2$ for all $1\leq i \leq n$ and $(2)$
 if $c_i=t_i +t_i ^{-1}$ then $t_1t_2\ldots t_n \neq 1$. Then $[\rho] \in \mathcal{FAL}(\mathcal{S}_A(\Sigma_{g,n}))$.
\end{enumerate}
\end{corollary}

Let us first make some simple remarks. First Lemma \ref{lemma_X2} has an obvious analogue for open surfaces.
The group $\mathrm{H}^1(\Sigma_{g,n}; \mathbb{Z}/2\mathbb{Z})$ acts by automorphism on $\mathcal{S}_A(\Sigma_{g,n})$ by the formula $\chi \cdot [\gamma]:= (-1)^{\chi(\gamma)}[\gamma]$. This action induces an action on the center $Z$ which defines an action on $\widehat{\mathcal{X}}_{\SL_2}(\Sigma_{g,n}) \cong \Specm(Z)$ described by
$$ \chi \cdot ([\rho], z_1, \ldots, z_n) := ([\chi\cdot \rho], (-1)^{\chi(\gamma_1)}z_1, \ldots, (-1)^{\chi(\gamma_n)}z_n), \mbox{ where } \chi\cdot \rho (\alpha):= (-1)^{\chi(\alpha)} \rho(\alpha).$$

\begin{lemma}\label{lemma_X2_open}
 The Azumaya locus is preserved by this $\mathrm{H}^1(\Sigma_{g,n}; \mathbb{Z}/2\mathbb{Z})$  action.
 \end{lemma}

\begin{proof}
This follows from the fact that the action of $\chi \in \mathrm{H}^1(\Sigma_{g,n}; \mathbb{Z}/2\mathbb{Z})$ on $\mathcal{S}_A(\Sigma_{g,n})$ induces an isomorphism $\mathcal{S}_A(\Sigma_{g,n})_x \cong \mathcal{S}_A(\Sigma_{g,n})_{\chi \cdot x}$.
\end{proof}

\begin{lemma}\label{lemma_z2} Suppose that $(g,n)\neq (1,1)$. If $x=([\rho], z_1, \ldots, z_n)$ is such that $z_i=-(q+q^{-1})$ for some $i$, then $x$ does not belong to the Azumaya locus of $\mathcal{S}_A(\Sigma_{g,n})$. In particular, writing $c_i:=\tr(\rho(\gamma_i))=-T_N(z_i)$, if there exists $i$ such that $c_i=2$ then $[\rho]\notin \mathcal{FAL}$. So $n\geq 2$ and $c_i=-2$ for some $i$ also implies that $[\rho]\notin \mathcal{FAL}$.
\end{lemma}

\begin{proof}
The condition $(g,n)\neq (1,1)$ ensures that $PI-deg(\mathcal{S}_A(\Sigma_{g,n-1}))<PI-deg(\mathcal{S}_A(\Sigma_{g,n}))$. The inclusion $\Sigma_{g,n} \hookrightarrow \Sigma_{g,n-1}$ ($\Sigma_{g,n}$ is obtained from $\Sigma_{g,n-1}$ by removing an open disc)  induces a surjective morphism $p: \mathcal{S}_A(\Sigma_{g,n}) \to \mathcal{S}_A(\Sigma_{g,n-1})$ and an embedding $i:\widehat{\mathcal{X}}_{\SL_2}(\Sigma_{g,n-1}) \to \widehat{\mathcal{X}}_{\SL_2}(\Sigma_{g,n})$. If $x=([\rho], z_1, \ldots, z_n)$ is such that $z_1=-(q+q^{-1})$ ($z_1$ corresponds to the boundary of the disc we removed) then $x$ is in the image of $i$ so $x=i(x_0)$. Let $r_0: \mathcal{S}_A(\Sigma_{g,n-1}) \to \End(V)$ be an irreducible representation with classical shadow $x_0$. Then $\dim(V) \leq PI-deg(\mathcal{S}_A(\Sigma_{g,n-1}))$, so  $r:= r_0 \circ p : \mathcal{S}_A(\Sigma_{g,n}) \to \End(V)$ is a non-trivial representation of dimension $\dim(V)< PI-deg(\mathcal{S}_A(\Sigma_{g,n}))$ thus its classical shadow $x$ does not belong to the Azumaya locus. So $[\rho]\notin \mathcal{FAL}$ whenever $c_i=2$ for some $i$. If $n\geq 2$, we can always find $\chi\in \mathrm{H}^1(\Sigma_{g,n}; \mathbb{Z}/2\mathbb{Z})$ such that $\chi(\gamma_i)\equiv 1 \pmod{2}$. So Lemma \ref{lemma_X2_open} implies that $[\rho]\notin \mathcal{FAL}$ whenever $c_i=-2$ for some $i$ in this case.
\end{proof}

We will be interested in the loci of diagonal representations.

\begin{lemma}\label{lemma_symp_open} Let $\mathcal{X}^{(1)}_c \subset \mathcal{X}_{\SL_2}(\Sigma_{g,n}, c)$ be the locus of classes of diagonal non-central representations. Then $\mathcal{X}^{(1)}_c$ is a connected smooth irreducible symplectic subvariety. In particular, either $\mathcal{X}^{(1)}_c$ is included in the Azumaya locus or it does not intersect it. 
\end{lemma}

\begin{proof}
We proceed in the same manner than in the proof of Lemma \ref{lemma_X1}. Let $\mathcal{X}_{\mathbb{C}^*}(\Sigma_{g,n})=\mathrm{H}^1(\Sigma_{g,n}; \mathbb{C}^*) = \Specm( \mathbb{C}[\mathrm{H}_1(\Sigma_{g,n}; \mathbb{Z})] )$ equipped with the Poisson structure defined by $\{ X_{[\alpha]}, X_{[\beta]}\}=([\alpha], [\beta]) X_{[\alpha+\beta]}$. Since the kernel of the intersection form $(\cdot, \cdot)$ is generated by the classes $[\gamma_i]$, the symplectic leaves of  $\mathcal{X}_{\mathbb{C}^*}(\Sigma_{g,n})$ are the fibers of the map $f: \mathcal{X}_{\mathbb{C}^*}(\Sigma_{g,n}) \to (\mathbb{C}^*)^n$ sending $\chi$ to $(\chi([\gamma_1]), \ldots, \chi([\gamma_n]))$. Denote by $\mathbb{T}_{\mathbf{z}}:= f^{-1}(z_1, \ldots, z_n)$ such a fiber: it is a symplectic torus of dimension $2g$. 
Define $\Phi: \mathcal{X}_{\mathbb{C}^*}(\Sigma_{g,n}) \to \mathcal{X}^{(1)}\cup \mathcal{X}^{(2)}$ by $\Phi(\chi):=[\varphi \circ \chi]$. Like in the proof of Lemma \ref{lemma_X1}, using Goldman's formula (which also holds for open surfaces as proved in \cite{Lawton_PoissonGeomSL3, KojuTriangularCharVar}), we see that $\Phi$ is a Poisson double covering branched along the locus $\mathcal{X}^{(2)}$ of central representations. The intersection $\mathbb{T}_{\mathbf{z}}^{(2)}:= \Phi^{-1}(\mathcal{X}^{(2)})\cap \mathbb{T}_{\mathbf{z}}$ is finite so $\mathbb{T}_{\mathbf{z}} \setminus \mathbb{T}_{\mathbf{z}}^{(2)}$ is connected.
Thus  $\Phi$ induces a double regular Poisson covering $\Phi: \mathbb{T}_{\mathbf{z}} \setminus \mathbb{T}_{\mathbf{z}}^{(2)} \to  \mathcal{X}^{(1)}_c$ so $ \mathcal{X}^{(1)}_c$ is symplectic and connected.
This concludes the proof.
\end{proof}

\subsection{Relative skein modules}

Let $\mathcal{D}$ be a ribbon category and $M$ a compact oriented $3$-manifold. A \textit{marking} is a tuple $\mathcal{P}=\left( (p_1, \epsilon_1, V_1), \ldots, (p_n, \epsilon_n, V_n) \right)$ where $p_i$ is a marked point in $\partial M$, (i.e. a point in $\partial M$ equipped with a non-zero tangent vector in $T_p \partial M$), $\epsilon_i = \pm 1$  and $V_i \in \mathcal{D}$. The \textit{relative skein module} $\mathscr{S}^{\mathcal{D}}(M, \mathcal{P})$ is the quotient of the linear span of $\mathcal{D}$-colored ribbon graphs $\Gamma \subset M$ such that $\partial \Gamma = \mathcal{P}$ by skein relations (as defined in Definition \ref{def_skeinmodule}). Suppose that  $\mathcal{D}$ is $\mathcal{A}$-graded for $\mathcal{A}$ an abelian group. Like in Section \ref{sec_spaces}, for $H$ a handlebody and $\mathcal{P}$ a marking in $\partial H$,  the relative skein module $\mathscr{S}^{\mathcal{D}}(H, \mathcal{P})$ is $\mathrm{H}^1(H;\mathcal{A})$-graded and we denote by $\mathscr{S}^{\mathcal{D}}(H, \mathcal{P}, \omega_H)$ its $\omega_H\in \mathrm{H}^1(H; \mathcal{A})$-graded part. Let $G_{g,n}, G'_{g,n} \subset \mathbb{S}^3$ be the uni-trivalent ribbon graphs of Figure \ref{fig_graphs_open} and choose tubular neighborhoods $H_{g,n}=N(G_{g,n})$ and $H'_{g,n}=N(G'_{g,n})$ such that $G_{g,n}\cap G'_{g,n}=\{ p_1, \ldots, p_n\}$ is the set of vertices of valence $1$ of both $G_{g,n}$ and $G'_{g,n}$ and such that $H_{g,n} \cap H_{g,n}= \partial H_{g,n}=\partial H'_{g,n}$. Note again that for any edge $e$ of $G_{g,n}\cup G'_{g,n}$ then $\gamma_e$ is non-separating.

\begin{figure}[!h] 
\centerline{\includegraphics[width=12cm]{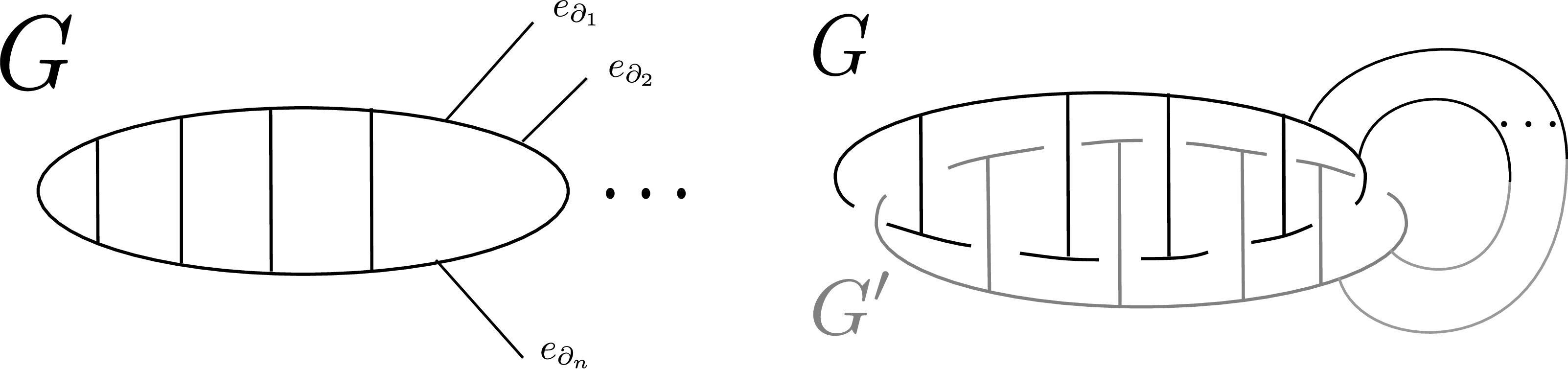} }
\caption{Two uni-trivalent graphs of genus $g$ with $n$ external edges.} 
\label{fig_graphs_open} 
\end{figure}

Let $\Sigma_{g,n}$ be obtained from $\partial H_{g,n}=\partial H'_{g,n}$ by removing a small open disc around each $p_i$.
Suppose that in $\mathcal{P}=\left( (p_i, \epsilon_i, V_i) \right)$, each $V_i$ is homogeneous of degree $a_i \in A$. 
Let $\omega \in \mathrm{H}^1(\Sigma_{g,n}; \mathcal{A})$ be a class such that $\omega([\gamma_i])=a_i$. By restriction, the class $\omega$ induces two classes $\omega_H \in \mathrm{H}^1(H_{g,n}; \mathcal{A})$ and $\omega_{H'}\in \mathrm{H}^1(H'_{g,n}; \mathcal{A})$ such that $\omega$ is completely determined by the tuple $(\omega_H, \omega_{H'}, \{a_i\}_i)$. 
 \par Note that  $\mathscr{S}^{\mathcal{D}}(H_{g,n}, \mathcal{P}, \omega_H)$ and $\mathscr{S}^{\mathcal{D}}(H'_{g,n}, \mathcal{P}, \omega_{H'})$ admit natural structures of left and right $\mathscr{S}^{\mathcal{D}}(\Sigma_{g,n})$-modules respectively and we have a Hopf pairing: 
$$ (\cdot, \cdot)_{\omega}^H : \mathscr{S}^{\mathcal{D}}(H'_{g,n}, \mathcal{P}, \omega_{H'}) \otimes \mathscr{S}^{\mathcal{D}}(H_{g,n}, \mathcal{P}, \omega_H) \to \mathscr{S}^{\mathcal{D}}(\mathbb{S}^3), \quad (\Gamma', \Gamma)_{\omega}^H := \Gamma'\cup \Gamma.$$

\subsection{WRT representations for open surfaces}

Consider the case where $\mathcal{D}=TL$ is the  Cauchy closure of the  Temperley-Lieb category with trivial grading: so as a braided category, $\mathcal{D}$ is equivalent to the category $\mathcal{C}^{small}$ equipped with a modified twist. Consider $\mathcal{P}=\left( (p_1, +, S_{i_1}), \ldots, (p_n, +, S_{i_n}) \right)$ where the punctures are colored by the (non-projective) simple modules $S_{i_k}$ with $i_k \in \{0, \ldots, \frac{N-3}{2}\}$ with non-vanishing q-dimension (as objects in $\mathcal{D}$ they correspond to the Jones-Wenzl idempotents). Using the natural identification $ \mathscr{S}^{TL}(\mathbb{S}^3)\cong \mathbb{C}$ (sending the empty link to $1$) we consider the Hopf pairing with values in $\mathbb{C}$ and define 
$$ V(\Sigma_{g,n}, \mathcal{P}):= \quotient{ \mathscr{S}^{TL}(H_{g,n}, \mathcal{P})}{\Rker (\cdot, \cdot)^H}.$$
The left action of $\mathcal{S}_A(\Sigma_{g,n})=\mathscr{S}^{TL}(\Sigma_{g,n})$ on  $\mathscr{S}^{TL}(H_{g,n}, \mathcal{P})$ induces a representation 
$$ r^{WRT}_{\mathcal{P}}: \mathcal{S}_A(\Sigma_{g,n})\to \End( V(\Sigma_{g,n}, \mathcal{P})).$$
Let $\col(G_{g,n})$ denote the set of maps $c: E(G_{g,n})\to \{0, \ldots, \frac{N-3}{2}\}$ such that $(1)$ $c(e_{\partial_k})=i_k$ where $e_{\partial_k}$ is the edge adjacent to $p_k$ and $(2)$ when $e_i, e_j, e_k$ are three adjacent edges then $c(e_i)+c(e_j)+c(e_k)$ is even and $\leq N-3$ and $c(e_i)+c(e_j)\leq c(e_k)$. For $c\in \col(G_{g,n})$, let $[G, c]\in V(\Sigma_{g,n}, \mathcal{P})$ be the class of the ribbon graph obtained by coloring each edge $e$ of $G_{g,n}$ by $S_{c(e)}$ and each trivalent vertex $v$ with adjacent edges $e_i, e_j, e_k$ by an arbitrary non-zero element of $\Hom(S_{c(e_i)}\otimes S_{c(e_j)}\otimes S_{c(e_k)}, \mathds{1})$. 

\begin{theorem}\label{theorem_BHMV}(\cite{Tu, BHMV2}) The set $\mathcal{B}:= \{ [G, c], c\in \col(G_{g,n})\}$ is a basis of $V(\Sigma_{g,n}, \mathcal{P})$.
\end{theorem}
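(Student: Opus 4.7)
The proof plan parallels that of Theorem \ref{theorem_basis}, with the simplification that the relevant subcategory of $TL$ is semisimple (the simple objects $S_0,\dots,S_{N-2}$ all have non-vanishing quantum dimension, hence are projective in the semisimple sense).

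\textbf{Spanning.} Let $[\Gamma]\in V(\Sigma_{g,n},\mathcal{P})$ be represented by a $TL$-colored ribbon graph $\Gamma\subset H_{g,n}$ with $\partial\Gamma=\mathcal{P}$. For each edge $e$ of $G_{g,n}$ pick a compressing disc $D_e\subset H_{g,n}$ intersecting $G_{g,n}$ transversally in a single point of $e$. Isotope $\Gamma$ so that it meets each $D_e$ transversally. The intersection $\Gamma\cap D_e$ defines an object $X_{\Gamma,e}\in TL$. Because $TL$ is semisimple on the relevant colors, one has $X_{\Gamma,e}\cong\bigoplus_i n_i S_i$ for some $n_i\in\mathbb{N}$, and applying a skein relation in a small cube around $D_e$ (replacing $\id_{X_{\Gamma,e}}$ by the sum of idempotents onto each $S_i$) one writes $[\Gamma]$ as a linear combination of graphs each meeting $D_e$ along a single strand coloured by some $S_{c(e)}$. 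Performing this on every edge and then cutting along all $D_e$, each resulting ball $B_v$ around an internal vertex $v$ of $G_{g,n}$ contains a fragment whose boundary consists of three strands coloured by $S_{c(e_i)},S_{c(e_j)},S_{c(e_k)}$, so it lies in $\Hom(S_{c(e_i)}\otimes S_{c(e_j)}\otimes S_{c(e_k)},\mathds{1})$, which is at most one-dimensional. Either the fragment is a scalar multiple of the standard trivalent coupon (giving a summand $[G_{g,n},c]$) or the multiplicity module vanishes (so the term vanishes). The non-vanishing condition is exactly parity $+$ triangle inequality $+$ upper bound $2N-4$, i.e.\ $c\in\col(G_{g,n})$; at the external vertices $p_k$ one-dimensionality of $\Hom(S_{i_k},S_{c(e_{\partial_k})})$ forces $c(e_{\partial_k})=i_k$.

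\textbf{Linear independence.} Apply the same spanning argument to $H'_{g,n}$ to obtain a spanning family $\mathcal{B}'=\{[G'_{g,n},c'],\ c'\in\col(G'_{g,n})\}$ of the right module $V'(\Sigma_{g,n},\mathcal{P}):=\mathscr{S}^{TL}(H'_{g,n},\mathcal{P})/\mathrm{Lker}(\cdot,\cdot)^H_\omega$. By construction of $V(\Sigma_{g,n},\mathcal{P})$, the Hopf pairing descends to a non-degenerate pairing $(\cdot,\cdot)^H:V'(\Sigma_{g,n},\mathcal{P})\otimes V(\Sigma_{g,n},\mathcal{P})\to\mathbb{C}$; it therefore suffices to show that the Gram matrix $\bigl(([G'_{g,n},c'],[G_{g,n},c])^H\bigr)_{c,c'}$ is diagonal with non-zero entries (after an obvious identification of $\col(G_{g,n})$ with $\col(G'_{g,n})$ coming from the Heegaard dual pairing of edges). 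Since $G_{g,n}$ and $G'_{g,n}$ realize a Heegaard splitting of $\mathbb{S}^3$ whose dual curves $\gamma_e\subset\partial H_{g,n}$ satisfy the standard dual intersection pattern, the link $G_{g,n}\cup G'_{g,n}\subset\mathbb{S}^3$ with coloring $(c,c')$ can be evaluated by sliding each strand coloured $S_{c'(e')}$ through the dual strand coloured $S_{c(e)}$: by Schur's lemma (the projector onto $S_i$ sandwiched between two strands coloured by inequivalent simples is zero), the invariant vanishes whenever $c'\neq c$ and otherwise factorises into a non-zero product of quantum dimensions and theta-networks of admissible triples of simples (all non-zero by standard BHMV computations in the semisimple truncation).

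\textbf{Main obstacle.} The delicate part is the diagonal reduction of the Gram matrix: one must organise the computation of $(G_{g,n}\cup G'_{g,n},c\cup c')$ as a CW-style contraction, sliding strands through dual discs in an order compatible with the pair-of-pants decomposition, and check Schur-orthogonality at each step. The underlying combinatorics is the classical one used in \cite{BHMV2,Tu} for closed surfaces; the only novelty here is the presence of the fixed external colours $S_{i_1},\dots,S_{i_n}$ and the requirement that the edges $e_{\partial_k}$ of $G_{g,n}$ and $G'_{g,n}$ coincide at the vertices $p_k\in\partial H_{g,n}=\partial H'_{g,n}$, but this does not affect Schur-orthogonality since the boundary colours are fixed on both sides. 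Once the diagonal structure is in place, the non-vanishing of the diagonal entries (products of $\qdim(S_i)$'s and theta values) is standard in the non-truncated semisimple setting, concluding both independence and, by dimension count, that $\dim V(\Sigma_{g,n},\mathcal{P})=|\col(G_{g,n})|$.
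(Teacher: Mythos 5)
The paper does not prove this statement at all --- it is quoted from \cite{Tu, BHMV2} --- so I can only judge your argument on its own merits; it is modelled on the proof of Theorem \ref{theorem_basis}, which is a reasonable plan, but the linear-independence step contains a genuine error. You claim the Gram matrix of the \emph{Hopf} pairing, $\bigl(([G'_{g,n},c'],[G_{g,n},c])^H\bigr)$, is diagonal by ``sliding strands through dual strands and Schur's lemma''. That is false: in a Heegaard splitting of $\mathbb{S}^3$ the edges of $G'$ are Hopf-linked with, not parallel to, the corresponding edges of $G$, so there is no embedded sphere separating two strands carrying non-isomorphic simple colours, and Schur orthogonality does not apply. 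Already in genus one the pairing of the core coloured $S_i$ with the dual core coloured $S_j$ is the coloured Hopf link evaluation, an $S$-matrix entry proportional to $[(i+1)(j+1)]$ (compare Lemma \ref{lemma_S}), which is nonzero for many $i\neq j$; the Gram matrix is dense, and its \emph{non-degeneracy} --- the actual content, deduced in \cite{BHMV2,Tu} from modularity, i.e.\ invertibility of the $S$-matrix of the semisimplified category --- is exactly what you end up assuming. (The diagonal pairing you have in mind is the paper's \emph{invariant} pairing on $X_g=H_g\cup_{\id}H_g$, where the two copies of the graph are glued by the identity and hence are parallel; that is a different pairing from the Hopf one used to define $V(\Sigma_{g,n},\mathcal{P})$.) A repair in the spirit of the paper would instead mimic the independence argument of Theorem \ref{theorem_basis}: the operators $r^{WRT}_{\mathcal{P}}([\gamma_e])$, $e\in E(G_{g,n})$, act on $[G,c]$ with eigenvalue $-(q^{c(e)+1}+q^{-(c(e)+1)})$, distinct $c\in\col(G_{g,n})$ give distinct joint characters, and one then has to check separately that each $[G,c]$ is nonzero in the quotient --- which again requires producing some nonvanishing evaluation, not Schur orthogonality.

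There is also a smaller gap in the spanning step: $TL\simeq\mathcal{C}^{small}$ at a root of unity of order $N$ is \emph{not} semisimple, even ``on the relevant colours''. The object $X_{\Gamma,e}$, a summand of a tensor power of $S_1$, in general contains negligible indecomposable summands of vanishing quantum dimension (the analogues of the $P_n$), so the asserted isomorphism $X_{\Gamma,e}\cong\bigoplus_i n_iS_i$ fails in the category itself. The standard fix is to note that any term factoring through such a negligible summand pairs to zero with every graph in $H'_{g,n}$ (the $\mathbb{S}^3$-evaluation is a quantum trace through a negligible object), hence lies in $\ker(\cdot,\cdot)^H$ and can be discarded in $V(\Sigma_{g,n},\mathcal{P})$; with that added, the spanning argument goes through, whereas the independence step needs the more substantial repair described above.
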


\begin{lemma}\label{lemma_WRT_open}
\begin{enumerate}
\item $\dim(V(\Sigma_{g,n}, \mathcal{P})) < PI-deg (\mathcal{S}_A(\Sigma_{g,n}))$.
\item $r^{WRT}_{\mathcal{P}}$ is a central representation with classical shadow $([\rho^0], z_1, \ldots, z_n)$ where $\rho^0$ is central and $z_k:= (-1)^{w_S(\gamma_k)}(q^{i_k+1} + q^{-i_k-1})$.
\end{enumerate}
\end{lemma}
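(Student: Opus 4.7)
All three parts are proved by adapting the closed-surface arguments of Subsection~\ref{sec_WRT} and Section~\ref{sec_representations} to the relative, semisimple setting. For~(1), by Theorem~\ref{theorem_BHMV} we have $\dim V(\Sigma_{g,n}, \mathcal{P}) = |\col(G_{g,n})|$. The graph $G_{g,n}$ has $3g-3+n$ internal edges (dual to a pants decomposition of $\Sigma_{g,n}$) and $n$ external edges whose colors are prescribed by $\mathcal{P}$, hence $|\col(G_{g,n})| \leq (N-1)^{3g-3+n} < N^{3g-3+n}$, the PI-degree from Theorem~\ref{theorem_FKL_open}.

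For~(2), by Theorem~\ref{theorem_FKL_open} it suffices to check that $r^{WRT}_{\mathcal{P}}$ sends the central generators $\gamma_1, \ldots, \gamma_n$ and $Ch_A([\alpha])$, for $\alpha$ a simple closed curve in $\Sigma_{g,n}$, to scalars matching the character associated to $([\rho^0], z_1, \ldots, z_n)$. Pushing $\gamma_k$ into the handlebody $H_{g,n}$ and isotoping it to a small meridian of the $S_{i_k}$-colored strand emanating from $p_k$, Lemma~\ref{lemma_S} combined with the sign corrections coming from $\psi^{Oht}$ and from the chosen spin structure (via Theorem~\ref{theorem_skein+1}) yields $r^{WRT}_{\mathcal{P}}([\gamma_k]) = z_k \cdot \mathrm{id}$. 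For $Ch_A([\alpha])$, the Bonahon-Wong argument behind Theorem~\ref{theorem_WRT}(3) extends verbatim: cabling by the $N$-th Chebyshev polynomial annihilates all quantum corrections and reduces the action to $(-1)^{w_S(\alpha)+1}\tau_{\alpha}(\rho^0)$ times the identity, which is exactly the character value at the prescribed shadow.

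For~(3), I apply Lemma~\ref{lemma_irrep} with the commutative torus $T := \langle [\gamma_e] : e \in E(G_{g,n})\rangle$ and the invariant pairing induced by the Hopf pairing. By Lemma~\ref{lemma_S}, each $[G,c] \in \mathcal{B}$ is a joint eigenvector with eigenvalues $-(q^{c(e)+1}+q^{-c(e)-1})$; these separate the elements of $\col(G_{g,n})$ because $N$ is odd and $c(e) \in \{0, \ldots, N-2\}$, so each weight space is one-dimensional. For the transitivity hypothesis of Lemma~\ref{lemma_irrep}, I mimic Lemmas~\ref{lemma_step1}-\ref{lemma_step3}: acting on $[G, c]$ by the classes $[\beta_i]$ of curves analogous to those in Figure~\ref{fig_graphs} (together with the auxiliary moves encircling the external edges of $G_{g,n}$) expresses the result as a sum of vectors $[G, c+\eta]$ whose coefficients are products of semisimple Racah $6j$-symbols for the modules $S_n$. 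These are known to be nonzero on admissible triples (a classical property of the Verlinde category at odd roots of unity), and a finite composition of such moves connects any two elements of $\col(G_{g,n})$, supplying the remaining hypothesis of Lemma~\ref{lemma_irrep}.

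The main difficulty lies in~(3): one has to verify that every semisimple $6j$-coefficient encountered is nonzero under the admissibility constraints of the Verlinde category, and that the $[\beta_i]$-orbits exhaust $\col(G_{g,n})$ despite the fixed boundary colors $i_k$. Both facts are known in the closed case (they underlie Gelca-Uribe's proof of Theorem~\ref{theorem_WRT}(2)) and extend to the marked setting with some bookkeeping; this is the semisimple counterpart of Proposition~\ref{prop_6j} combined with Lemma~\ref{lemma_step3}.
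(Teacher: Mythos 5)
Parts (1) and (2) are fine and essentially follow the paper: (1) is exactly the counting argument via Theorems \ref{theorem_FKL_open} and \ref{theorem_BHMV}, and for (2) the paper simply repeats the eigenvalue computation of Proposition \ref{prop_shadow} using $S'(S_i,S_1)=q^{i+1}+q^{-(i+1)}$ from Lemma \ref{lemma_S} on the graph bases attached to both handlebodies; your appeal to "the Bonahon--Wong argument extends verbatim" replaces that self-contained computation by a citation of a closed-surface theorem, which is a minor weakness but not the real problem.

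The genuine gap is in (3). Your strategy is to transplant Lemmas \ref{lemma_step1}--\ref{lemma_step3}, and its crux is the assertion that the semisimple $6j$-coefficients appearing in the $[\beta_i]$-action "are known to be nonzero on admissible triples". This is not a classical fact and is exactly the point that cannot be waved away: quantum $6j$-symbols at roots of unity do admit accidental zeros at admissible colorings, and, more importantly, the mechanism that saves the day in the non-semisimple setting — Proposition \ref{prop_6j} plus the genericity argument of Lemma \ref{lemma_step3}, where one moves the continuous parameter $\omega$ to avoid the zero locus of an analytic function — has no counterpart here, since the colors $c(e)\in\{0,\ldots,N-2\}$ are discrete and the prescribed boundary colors $i_k$ are fixed. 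For the same reason, the claim that the $\pm 1$-moves along the $\beta_i$ connect all of $\col(G_{g,n})$ while staying inside the admissibility constraints (parity, triangle inequalities, level bound) is nontrivial and is not addressed; you acknowledge both points but defer them to "known in the closed case", which is precisely the content of Gelca--Uribe's theorem rather than an argument. The paper avoids all of this: it observes that the joint eigenspaces of the operators $r^{WRT}_{\mathcal{P}}([\gamma_e])$, $e\in E(G_{g,n})$, are the lines $\mathbb{C}[G,c]$, and that the vector $[G,c_0]$ (the class of the empty link) is obviously cyclic, because every relative skein in the handlebody can be isotoped into a collar of the boundary and hence lies in $r^{WRT}_{\mathcal{P}}(\mathcal{S}_A(\Sigma_{g,n}))\cdot[G,c_0]$; Lemma \ref{lemma_irrep} then gives irreducibility with no $6j$-symbol computation at all. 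This cyclicity argument is exactly what is available in the Temperley--Lieb/WRT setting and unavailable in the BCGP setting, which is why the two cases are treated so differently in the paper; to repair your proof you should either supply this cyclicity argument or genuinely prove the nonvanishing and connectivity claims you currently assert.
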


\begin{proof}
The first item follows from the inequality $| \col(G_{g,n}) | < N^{3g-3+n}$ together with Theorems \ref{theorem_FKL_open} and \ref{theorem_BHMV}. The proof of the second item is identical to the proof of Proposition \ref{prop_shadow} using the identity $S'( S_i, S_1)=q^{i+1}+q^{-(i+1)}$ of Lemma \ref{lemma_S}. 
\end{proof}

We can now prove the first item of Theorem \ref{theorem_opensurfaces}.

\begin{theorem}\label{theorem_WRT_open} If $x=([\rho], z_1, \ldots, z_n)$ is such that $[\rho]\in \mathcal{X}^{(2)}$ is central and $z_i\neq \pm 2$ for all $1\leq i \leq n$, then $x$ does not belong to the Azumaya locus of $\mathcal{S}_A(\Sigma_{g,n})$.
\end{theorem}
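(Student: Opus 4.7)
The plan is to generalize the proof of Corollary \ref{coro_X2} to the open setting by combining an extended $H^1(\Sigma_{g,n};\mathbb{Z}/2\mathbb{Z})$-action on $\mathcal{S}_A(\Sigma_{g,n})$ with the WRT representations of Lemma \ref{lemma_WRT_open}. The strategy is to show that every $x = ([\rho], z_1, \ldots, z_n)$ satisfying the hypotheses lies in the $H^1$-orbit of the classical shadow of some irreducible WRT representation of dimension strictly less than the PI-degree $N^{3g-3+n}$; Posner's Theorem \ref{theorem_AL_dense}(2) will then exclude this shadow from $\mathcal{AL}$, and the $H^1$-invariance will propagate the exclusion to $x$.

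The first step is to extend Lemma \ref{lemma_X2} to open surfaces. The formula $\omega_*[\gamma] := (-1)^{\sum_i \omega([\gamma^{(i)}])}[\gamma]$ still defines an action of $H^1(\Sigma_{g,n};\mathbb{Z}/2\mathbb{Z})$ by algebra automorphisms on $\mathcal{S}_A(\Sigma_{g,n})$, and one has to check it preserves the center described in Theorem \ref{theorem_FKL_open}. Since each peripheral curve $\gamma_k$ is central and, via $\mathrm{Specm}(Z)\cong \widehat{\mathcal{X}}(\Sigma_{g,n})$, corresponds to the coordinate function $z_k$, the induced action reads
$$\omega\cdot([\rho],z_1,\ldots,z_n) = \bigl(\omega\cdot[\rho],\ (-1)^{\omega(\gamma_1)}z_1,\ldots,(-1)^{\omega(\gamma_n)}z_n\bigr),$$
with $(\omega\cdot\rho)(\gamma) := (-1)^{\omega(\gamma)}\rho(\gamma)$. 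The automorphism $\omega_*$ therefore identifies $\mathcal{S}_A(\Sigma_{g,n})_x$ with $\mathcal{S}_A(\Sigma_{g,n})_{\omega\cdot x}$, so membership in $\mathcal{AL}$ is constant on $H^1$-orbits.

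Next, since the $H^1$-action on $\mathcal{X}^{(2)}$ is regular, I would pick $\omega$ with $\omega\cdot[\rho] = [\rho^0]$, where $\rho^0(\gamma) = (-1)^{w_S(\gamma)}\mathds{1}_2$ is the WRT basepoint of Lemma \ref{lemma_WRT_open}(2). Setting $\tilde{z}_k := (-1)^{\omega(\gamma_k)}z_k$, the hypothesis $z_k \neq \pm 2$ yields $\tilde{z}_k \neq \pm 2$, and one has $T_N(\tilde{z}_k) = 2(-1)^{w_S(\gamma_k)}$. The only nonformal step is realizability: I must find $i_k \in \{0,\ldots,N-2\}$ with $\tilde{z}_k = (-1)^{w_S(\gamma_k)}(q^{i_k+1}+q^{-(i_k+1)})$. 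This is the main point of the argument, and it reduces to the elementary observation that, since $N$ is odd and $q$ is a primitive $N$-th root of unity, $\{q^j+q^{-j}:1\leq j\leq N-1\}$ is exactly the set of $y \in \mathbb{C}\setminus\{2\}$ with $T_N(y)=2$; the oddness of $T_N$ (for $N$ odd) then takes care of the overall sign. I expect no genuine obstacle here.

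With $\mathcal{P} := \bigl((p_k, +, S_{i_k})\bigr)_{k=1}^n$, the WRT representation $r^{WRT}_{\mathcal{P}}$ is irreducible, of dimension strictly less than $N^{3g-3+n}$, and has classical shadow $\omega\cdot x$ by Lemma \ref{lemma_WRT_open}. Theorem \ref{theorem_AL_dense}(2) then forbids $\omega\cdot x \in \mathcal{AL}$, and by the first step, $x \notin \mathcal{AL}$.
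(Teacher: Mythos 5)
Your proposal is correct and follows essentially the same route as the paper: the paper likewise extends the $\mathrm{H}^1(\Sigma_{g,n};\mathbb{Z}/2\mathbb{Z})$-action to $\mathcal{S}_A(\Sigma_{g,n})$ and its center to move $x$ onto the classical shadow of an open-surface WRT representation $r^{WRT}_{\mathcal{P}}$, then uses that this representation is central of dimension strictly below the PI-degree $N^{3g-3+n}$ to exclude that shadow (hence $x$) from the Azumaya locus. The only difference is that you spell out the elementary realizability step $z_k=\pm(q^{i_k+1}+q^{-(i_k+1)})$, which the paper uses implicitly, and your check of it is correct.
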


\begin{proof} 
 Let $i_k\in \{0, \ldots, \frac{N-3}{2}\}$ be the unique integer such that $z_k=\pm (q^{i_k+1} + q^{-(i_k+1)})$ and consider $\mathcal{P}=\left( (p_1, +, S_{i_1}), \ldots, (p_n, +, S_{i_n}) \right)$. On the one hand, there exists $\chi \in \mathrm{H}^1(\Sigma_{g,n}; \mathbb{Z}/2\mathbb{Z})$ (which depends on the choice of spin structure when $\varepsilon=+1$) such that $\chi\cdot x$ is the classical shadow of $r_{\mathcal{P}}^{WRT}$. On the other hand, by Lemma \ref{lemma_WRT_open}, $r_{\mathcal{P}}^{WRT}$ is a central representation with dimension strictly smaller than the PI-degree of $\mathcal{S}_A(\Sigma_{g,n})$, therefore its classical shadow does not belong to the Azumaya locus. So $x$ does not either.
\end{proof}

\subsection{BCGP representations for open surfaces}

Now consider the case where $\mathcal{D}=\mathcal{C}$ is the category of representations of the unrolled quantum group. Let $\omega \in \mathrm{H}^1(\Sigma_{g,n}; \mathbb{C}/\mathbb Z)$ and $\alpha_1, \ldots, \alpha_n \in \ddot{\mathbb{C}}$ such that $\overline{\alpha_i} = \omega([\gamma_i])$ and  $\omega(e)\notin (\frac{1}{2k}\mathbb{Z}) /\mathbb{Z}$ for all $e\in \mathring{E}(G)\cup \mathring{E}(G')$ (here $\mathring{E}(G):=E(G)\setminus \{e_{\partial_1}, \ldots, e_{\partial_n}\}$). Consider $\mathcal{P}:=\left( (p_1,+, V_{\alpha_1}), \ldots, (p_n, +, V_{\alpha_n})\right)$ and consider the Hopf pairing 
$$ (\cdot, \cdot)_{\omega}^H : \mathscr{S}^{\mathcal{C}}(H'_{g,n}, \mathcal{P}, \omega_{H'}) \otimes \mathscr{S}^{\mathcal{C}}(H_{g,n}, \mathcal{P}, \omega_H) \to \mathbb{C}, \quad (\Gamma', \Gamma)_{\omega}^H := \left<\Gamma'\cup \Gamma \right>, $$
where $\left<\Gamma'\cup \Gamma \right>$ denotes the CGP invariants of the ribbon graph $\Gamma'\cup\Gamma$. Since $\alpha_i \in \ddot{\mathbb{C}}$, $\Gamma'\cup\Gamma$ has one edge colored by a projective module  so the CGP invariant is well-defined. Set
$$ \mathbb{V}(\Sigma_{g,n}, \mathcal{P}, \omega):= \quotient{ \mathscr{S}^{\mathcal{C}}(H_{g,n}, \mathcal{P}, \omega_H)}{\Rker (\cdot, \cdot)_{\omega}^H}$$
and consider the representation 
$$ r_{\omega, \mathcal{P}}: \mathcal{S}_A(\Sigma_{g,n}) \xrightarrow{f} \mathscr{S}^{\mathcal{C}^{small}}(\Sigma_{g,n}) \to \End(\mathbb{V}(\Sigma_{g,n}, \mathcal{P}, \omega)).$$
Here the spin structure chosen to define $f$ is the same as the one used to identify $\mathcal{S}_{+1}(\Sigma_{g,n})$ with $\mathbb{C}[\mathcal{X}_{\SL_2}(\Sigma_{g,n})]$. Recall that we made the convention that $w_S([\gamma_i])=0$.
Set $z_i:= -\left( q^{ \alpha_i} + q^{- \alpha_i}\right)$ and define $\rho_{\omega}: \pi_1(\Sigma_{g,n}) \to \SL_2$ by the same formula than in Proposition \ref{prop_shadow}. 
Set
$$\col(G):= \{ c: E(G)\to \mathbb{C} \mbox{ such that } c(e_{\partial_i})=\alpha_i \mbox{ and } \overline{c(e)}=\omega(\gamma_e), 0\leq  \Re(c(e)) < N \mbox{ for all }e\in \mathring{E}(G)\} $$
and consider $\mathcal{B}:= \{ [G,c], c\in \col(G)\}$, where $[G,c]$ is defined by fixing a $\sigma$-decoration of $G\cup G'\subset S^3$ as before.

\begin{proposition}\label{prop_BCGP_open}
\begin{enumerate}
\item $\mathcal{B}$ is a basis of $\mathbb{V}(\Sigma_{g,n}, \mathcal{P}, \omega)$.
\item $r_{\omega, \mathcal{P}}$ is a central representation with classical shadow $([\rho_{\omega}], z_1, \ldots, z_n)$ and dimension $N^{3g-3+n}=PI-deg(\mathcal{S}_A(\Sigma_{g,n}))$.
\item if $g\geq 1$, $r_{\omega, \mathcal{P}}$ is irreducible.
\end{enumerate}
\end{proposition}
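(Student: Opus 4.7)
The proof of the three assertions should closely mirror the closed-surface arguments (Theorem~\ref{theorem_basis}, Proposition~\ref{prop_shadow}, and Lemmas~\ref{lemma_step1}--\ref{lemma_step3}), with additional bookkeeping for the boundary markings. For (1), the spanning argument goes as follows. Isotope a $\mathcal{C}$-colored ribbon graph $\Gamma \subset H_{g,n}$ representing a class in $\mathscr{S}^{\mathcal{C}}(H_{g,n},\mathcal{P},\omega_H)$ so that it intersects each dual disc $D_e$ transversally. For a boundary edge $e_{\partial_i}$, the marking forces the intersection to be colored by $V_{\alpha_i}$. For an internal edge $e \in \mathring{E}(G)$, the condition $\omega(\gamma_e) \notin \frac{1}{2}\mathbb{Z}/2\mathbb{Z}$ ensures $\mathcal{C}_{\omega(\gamma_e)}$ is semi-simple with simples $\{V_\alpha : k\alpha \in \omega(\gamma_e)\}$, so a skein relation at $D_e$ replaces $\id_{X_{\Gamma,e}}$ with a sum of identities of $V_\alpha$'s. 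Inside each ball $B_v$ bounded by a pair of pants, the one-dimensionality of $\Hom(V_{\alpha}\otimes V_{\beta}\otimes V_{\gamma},\sigma^n)$ reduces the vertex coupon to a scalar multiple of the fixed generators. Combined with Lemma~\ref{lemma_modulo}, this expresses $[\Gamma]$ as a linear combination of elements of $\mathcal{B}$. Linear independence follows as in the closed case from the fact that the commutative subalgebra of $\mathcal{S}_A(\Sigma_{g,n})$ generated by $\{[\gamma_e]\}_{e \in E(G)}$ acts diagonally on $\mathcal{B}$ with characters that separate colorings $c \in \col(G)$.

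For (2), part (1) gives $\dim \mathbb{V}(\Sigma_{g,n},\mathcal{P},\omega) = N^{|\mathring{E}(G)|} = N^{3g-3+n}$, which equals $PI$-$\deg(\mathcal{S}_A(\Sigma_{g,n}))$ by Theorem~\ref{theorem_FKL_open}. For the classical shadow, one must verify that $r_{\omega,\mathcal{P}}$ is central with the stated shadow. By Theorem~\ref{theorem_FKL_open}, the center is generated by $Ch_A(\mathcal{S}_{+1}(\Sigma_{g,n}))$ and the peripheral curves $\gamma_1,\ldots,\gamma_n$. For $\gamma_i$, Lemma~\ref{lemma_S} applied at the puncture edge $e_{\partial_i}$ (and the Ohtsuki sign convention) shows that $r_{\omega,\mathcal{P}}([\gamma_i])$ acts on $[G,c]$ by the scalar $z_i$. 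For the remaining generators, it suffices to check $T_N(r_{\omega,\mathcal{P}}([\gamma])) = \tau_\gamma([\rho_\omega])\,\id$ for $\gamma$ in a generating set of $\pi_1(\Sigma_{g,n})$; adapting Proposition~\ref{prop_shadow} and using the analogous dual basis on $\mathscr{S}^{\mathcal{C}}(H'_{g,n},\mathcal{P},\omega_{H'})$ via the Hopf pairing, it is enough to test on curves $\gamma_e$ for $e \in E(G)\cup E(G')$, where Lemma~\ref{lemma_S} again yields the requested eigenvalue.

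For (3), apply Lemma~\ref{lemma_irrep} with the torus $T \subset \mathcal{S}_A(\Sigma_{g,n})$ generated by $\{[\gamma_e]\}_{e \in \mathring{E}(G)}$. The weight decomposition is $\mathbb{V}(\Sigma_{g,n},\mathcal{P},\omega) = \bigoplus_{c\in\col(G)} \mathbb{C}[G,c]$ with one-dimensional weight spaces and pairwise distinct characters. The non-degenerate invariant pairing provided by the CGP invariant (defined since one edge is colored by the projective $V_{\alpha_i}$) plays the role of the invariant form. To verify the cyclicity hypothesis, one adapts Lemma~\ref{lemma_step1}: acting by the curves $\beta_1,\ldots,\beta_g$ on the handlebody cores produces, via the same two local skein relations (pants-move and fusion), linear combinations $\sum_{\eta\in k(\beta_i)} x_\eta [G,c+\eta]$ whose coefficients are products of $6j$-symbols from $Y(\omega)$ and quantum binomials $\qbinom{a+N-1}{a}^{-1}$ with $a\in\mathbb{C}\setminus\frac{1}{2}\mathbb{Z}$ (the latter are nonvanishing by the genericity hypothesis on $\omega$). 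The classes $\{\eta\in k(\beta_i)\}$ still generate the subgroup of $\col(G)$ fixing the boundary values $\alpha_i$, so the 6j-symbol nonvanishing argument of Lemma~\ref{lemma_step3} (via Proposition~\ref{prop_6j}) yields irreducibility for a generic choice of $\omega$ (with $\alpha_i$ fixed).

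Finally, to upgrade irreducibility from generic to arbitrary $\omega$ in the stated locus, invoke the theory of Poisson orders. By Proposition~\ref{prop_symplectic_open}, as $\omega$ varies in the allowed open set with fixed $\alpha_i$, the shadow $([\rho_\omega],z_1,\ldots,z_n)$ stays inside the single symplectic core $\widehat{\mathcal{X}}^{(1)}\cap\widehat{\nu}^{-1}(\mathbf{z})$. For a generic $\omega_0$ in this family the representation $r_{\omega_0,\mathcal{P}}$ is irreducible of maximal dimension, so its shadow is Azumaya. Theorem~\ref{theorem_PO} then propagates the Azumaya property to the whole core, and since $r_{\omega,\mathcal{P}}$ is a central representation whose dimension equals the PI-degree for every $\omega$ in the locus, Corollary~\ref{coro_roro} (the open-surface analogue, which holds verbatim) forces $r_{\omega,\mathcal{P}}$ to be irreducible. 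The main obstacle in this program is checking that the curves $\beta_i$ still generate the required translation subgroup of $\col(G)$ in the bordered setting and that the relevant 6j-symbols, now involving additional boundary parameters $\alpha_i$, form a nontrivial rational family in the sense of Proposition~\ref{prop_6j}; both should follow from the same arguments as the closed case but require verifying that the boundary coloring does not force any of these $6j$-symbols to be identically zero as functions of $\omega$.
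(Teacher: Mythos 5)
Your proposal follows essentially the same route as the paper: items (1) and (2) are obtained exactly as you describe by adapting Theorem \ref{theorem_basis} and Proposition \ref{prop_shadow}, and item (3) is proved by combining the $6j$-symbol genericity argument of Lemmas \ref{lemma_step1}--\ref{lemma_step3} with propagation along the symplectic core $\widehat{\mathcal{X}}^{(1)}\cap\widehat{\nu}^{-1}(\mathbf{z})$ via Theorem \ref{theorem_PO} and the open-surface analogue of Corollary \ref{coro_roro}. The two points you flag as remaining obstacles (generation of the colouring translations in the bordered setting and non-vanishing of the restricted $6j$ family with the boundary colours $\alpha_i$ and $\mathbf{z}$ fixed) are not treated in more detail in the paper either, which simply asserts that the proof of Lemma \ref{lemma_step2} extends word-for-word and that $Z$ is a non-vanishing analytic function on the positive-dimensional locus $\mathcal{U}_{\mathbf{z}}$ by Proposition \ref{prop_6j}.
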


\begin{proof} The proof is a straightforward adaptation of the proofs of Theorem  \ref{theorem_basis}, Proposition \ref{prop_shadow} and Theorem \ref{theorem2}. We only detail the proof of the last item. Suppose $g\geq 1$, fix $\mathbf{z}=(z_1,\ldots, z_n)$ and 
set 
$$\mathcal{U}_{\mathbf{z}}:=\{ \omega \in \mathrm{H}^1(\Sigma_{g,n}; \mathbb{C}/\mathbb{Z}), \mbox{ such that } \omega(\gamma_i)=T_N(z_i), \omega(\gamma_e)\not\in (\frac{1}{2k}\mathbb{Z})/\mathbb{Z} \mbox{ for all }1\leq i \leq n, e\in \mathring{E}(G) \cup \mathring{E}(G')\}.$$
For every $[\omega] \in \mathcal{U}_{\mathbf{z}}$, then $([\rho_{\omega}], \mathbf{z})$ belongs to the symplectic core $\widehat{\mathcal{X}}^{(1)}\cap \widehat{\nu}^{-1}(\mathbf{z})$, therefore either every representation $r_{\omega, \mathcal{P}}$ with $\omega \in \mathcal{U}_{\mathbf{z}}$ is irreducible or none of them is. Since we made the assumptions $n\geq 1$ and $(g,n)\neq (0, i)$ for $i\leq 3$, the analytic set $\mathcal{U}_{\mathbf{z}}$ has dimension $\geq 1$. If $Z(\omega)\in \mathbb{C}$ is the product of all $6j$-symbols in Notations \ref{notations_Y}, then $Z: \mathcal{U}_{\mathbf{z}} \to \mathbb{C}$ is a non-vanishing analytic function by Proposition \ref{prop_6j}, so there exists $\omega \in \mathcal{U}_{\mathbf{z}}$ for which none of these $6j$-symbol vanishes. The proof of Lemma \ref{lemma_step2} extends word-by-word to the open case and proves that $r_{\omega, \mathcal{P}}$ is irreducible (note that this proof would not work if $g=0$ in which case the curves $\beta_1, \ldots, \beta_g$ used in the proof are not defined).
 Since it is true for one $\omega \in \mathcal{U}_{\mathbf{z}}$, it is true for all of them. This concludes the proof.
\end{proof}

We can now prove the fourth part of Theorem \ref{theorem_opensurfaces}.
\begin{theorem}\label{theorem_open2}
If $g\geq 1$ and $x=([\rho], z_1, \ldots, z_n)$ is such that $(1)$ $[\rho] \in \mathcal{X}^{(1)}$ and  $(2)$ for every $1\leq i \leq n$,  either $\tr( \rho(\gamma_i)) \neq \pm 2$ or $z_i=\pm 2$, then $x$ belongs to the Azumaya locus of $\mathcal{S}_A(\Sigma_{g,n})$.
\end{theorem}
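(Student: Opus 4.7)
The plan is to adapt the strategy of Theorem \ref{main_theorem} to the open setting by combining the BCGP representations constructed in Section \ref{sec_representations} with the theory of Poisson orders. By Proposition \ref{prop_symplectic_open}(2), the symplectic core of $x=([\rho],z_1,\ldots,z_n)$ in $\widehat{\mathcal{X}}_{\SL_2}(\Sigma_{g,n})$ is $\widehat{\mathcal{X}}^{(1)}\cap \widehat{\nu}^{-1}(\mathbf{z})$. Theorem \ref{theorem_PO} then reduces the claim to producing a single point in this core which is the classical shadow of an irreducible representation of maximal dimension $N^{3g-3+n}$; by Posner's theorem (Theorem \ref{theorem_AL_dense}(2)) that point, and hence $x$, will automatically lie in $\mathcal{AL}$.

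Such a point will be furnished by Proposition \ref{prop_BCGP_open}. For each puncture $i$ I would pick a highest weight $\alpha_i \in \ddot{\mathbb{C}}$ satisfying
\[
z_i = (-1)^{w_S([\gamma_i])}\bigl(e^{2\pi i\alpha_i/N} + e^{-2\pi i\alpha_i/N}\bigr).
\]
The hypothesis is precisely what makes this possible. If $\tr(\rho(\gamma_i))\neq \pm 2$, then $T_N(z_i)\neq \pm 2$, so $z_i$ is not of the form $\pm 2\cos(\pi k/N)$ for any $k\in \mathbb{Z}$; hence every solution $\alpha_i$ lies outside $\frac{1}{2}\mathbb{Z}$, and so in $\ddot{\mathbb{C}}$. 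If instead $\tr(\rho(\gamma_i))=\pm 2$ and $z_i = \pm 2$, one takes $\alpha_i \in \{0, N/2\}\subset \frac{N}{2}\mathbb{Z}\subset \ddot{\mathbb{C}}$ with signs matched. Conversely, in the excluded situation $\tr(\rho(\gamma_i))=\pm 2$ with $z_i \neq \pm 2$, every solution would lie in $\frac{1}{2}\mathbb{Z}\setminus \frac{N}{2}\mathbb{Z}$, outside $\ddot{\mathbb{C}}$, and the BCGP construction would not apply.

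Having chosen the $\alpha_i$'s, I would extend the peripheral prescription $\omega([\gamma_i]):=\overline{\alpha_i}$ to a class $\omega \in \mathcal{U}_{\mathbf{z}}$. The compatibility $\sum_i \overline{\alpha_i} = 0 \in \mathbb{C}/2\mathbb{Z}$ required for $\omega$ to exist in $H^1(\Sigma_{g,n};\mathbb{C}/2\mathbb{Z})$ takes values in the discrete subgroup $\mathbb{Z}/2\mathbb{Z}\subset \mathbb{C}/2\mathbb{Z}$ (since in any diagonal $\rho$ the relation $\prod_i\rho(\gamma_i)=I$ forces $\sum \alpha_i\in\mathbb{Z}$ for compatible lifts), and can be flipped if necessary by replacing one $\alpha_i$ by $\alpha_i+N$: this substitution preserves both $\ddot{\mathbb{C}}$-membership and the $z_i$-equation, while shifting $\sum_i\overline{\alpha_i}$ by $1\in \mathbb{C}/2\mathbb{Z}$ because $N$ is odd. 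The interior genericity $\omega([\gamma_e])\notin \frac{1}{2}\mathbb{Z}/2\mathbb{Z}$ for $e\in \mathring{E}(G)\cup \mathring{E}(G')$ is then obtained for a generic extension, using the $2g$ remaining degrees of freedom in $H^1$ when $g\geq 1$, and a separate argument using the freedom modulo $N\mathbb{Z}$ in the $\alpha_i$ when $g=0$. Proposition \ref{prop_BCGP_open} then supplies an irreducible $r_{\omega,\mathcal{P}}$ of dimension $N^{3g-3+n}$ with classical shadow $([\rho_\omega],\mathbf{z})$ inside the symplectic core of $x$, concluding the argument. The most delicate step is the genus-zero case $g=0, n\geq 4$, where the interior genericity must be extracted from the narrow discrete freedom remaining after the peripheral prescription is imposed; this is where the existence of at least one $\alpha_i$ in the non-half-integer case (i.e.\ Case 1) is crucially exploited to ensure that all relevant linear combinations $\sum_{i\in I}\overline{\alpha_i}$ avoid $\frac{1}{2}\mathbb{Z}/2\mathbb{Z}$.
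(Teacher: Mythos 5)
Your overall architecture is exactly the paper's: reduce via Posner (Theorem \ref{theorem_AL_dense}) and the Poisson-order/symplectic-core results (Theorem \ref{theorem_PO}, Proposition \ref{prop_symplectic_open}) to exhibiting one point of $\widehat{\mathcal{X}}^{(1)}\cap\widehat{\nu}^{-1}(\mathbf{z})$ that is the shadow of an irreducible representation of dimension $N^{3g-3+n}$, and obtain that point from the BCGP representation of Proposition \ref{prop_BCGP_open}; your analysis of when $\alpha_i\in\ddot{\mathbb{C}}$ exists (and why the excluded case $\tr(\rho(\gamma_i))=\pm 2$, $z_i\neq\pm 2$ fails) also matches the paper, which records it only in the remark following the theorem.

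The genuine problem is the step where you build $\omega$ with $\omega(\gamma_e)\notin\frac{1}{2}\mathbb{Z}/2\mathbb{Z}$ on all interior edges. The values of $\omega$ on the peripheral curves are pinned by $\mathbf{z}$ up to sign and integer shifts (adding $N$ to $\alpha_i$ changes $\overline{\alpha_i}$ only by an integer, since $N$ is odd), so on any interior edge whose dual curve is homologous to a partial sum $\sum_{i\in I}[\gamma_i]$ of peripheral classes the value of $\omega$ is \emph{forced} modulo $\mathbb{Z}$ and signs: neither the $2g$ handle parameters (which do not enter such classes) nor the ``freedom modulo $N\mathbb{Z}$'' you invoke for $g=0$ can move it, because integer shifts never change membership in $\frac{1}{2}\mathbb{Z}/2\mathbb{Z}$. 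Such edges occur in the graphs of Figure \ref{fig_graphs_open} whenever $n\geq 2$, and there are resonant data satisfying hypotheses $(1)$--$(2)$ where every choice fails: e.g.\ for $g=0$, $n=4$ take all four punctures with $\overline{\alpha_i}\equiv \pm\frac{1}{4} \pmod{\mathbb{Z}}$ (consistent with $\sum_i\overline{\alpha_i}=0$, all traces $\neq\pm 2$); then every pair sum $\pm\overline{\alpha_i}\pm\overline{\alpha_j}$ lies in $\frac{1}{2}\mathbb{Z}$, so the interior curve of \emph{any} pants decomposition of $\Sigma_{0,4}$ receives a half-integer value and the set $\mathcal{U}_{\mathbf{z}}$ of admissible classes is empty. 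Having ``at least one Case-1 puncture'' does not rescue this, so your genus-zero argument cannot be completed as described; one would instead need the extended bases involving the modules $S_n$, $P_n$ (cf.\ the paper's closing remark of Section \ref{sec_opensurfaces}). To be fair, the paper itself disposes of this point with a one-line assertion, inside the proof of Proposition \ref{prop_BCGP_open}, that $\mathcal{U}_{\mathbf{z}}$ has dimension $\geq 1$; if you simply cited that proposition as a black box your write-up would coincide with the paper's proof, but the explicit mechanism you propose for producing $\omega$ is where your argument breaks down.
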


\begin{proof}
Let  $x=([\rho], z_1, \ldots, z_n)$ satisfying the conditions $(1)$ and $(2)$ of the theorem. By $(2)$, for every $1\leq i \leq n$, there exists $\alpha_i\in \ddot{\mathbb{C}}$ such that $z_i= - \left( q^{ \alpha_i} + q^{- \alpha_i}\right)$. Let $\mathcal{P}$ denote the associated marked points. By $(1)$, $x$ belongs to the connected symplectic variety $\widehat{\mathcal{X}}^{(1)}_{\mathbf{z}}$. 
 By Proposition \ref{prop_BCGP_open}, there exists $x'\in \widehat{\mathcal{X}}^{(1)}_{\mathbf{z}}$ which is the classical shadow of a representation $r_{\omega, \mathcal{P}}$  which is irreducible and has dimension $N^{3g-3+n}$ by Proposition \ref{prop_BCGP_open}. Therefore $x'$ belongs to the Azumaya locus of $\mathcal{S}_A(\Sigma_{g,n})$ and so does $x$ by Theorem \ref{theorem_PO} and Lemma \ref{lemma_symp_open}.
\end{proof}

\begin{remark}
The definition we gave of the representations $r_{\omega, \mathcal{P}}$ still makes sense under the weaker hypothesis that either at least one puncture of $\mathcal{P}$ is colored by a projective module or that $\omega \notin \mathrm{H}^1(\Sigma_{g,n}; \frac{1}{2}\mathbb{Z}/2\mathbb{Z})$ (this condition ensures that the CGP invariants used in the Hopf pairing are well-defined). Therefore the BCGP representations could also be used in order to determine the intersection of the Azumaya locus with the whole set $\widehat{\mathcal{X}}^{(0)}\cup \widehat{\mathcal{X}}^{(1)}$. The reason we added hypotheses $(1)$ and $(2)$ in Theorem \ref{theorem_open2} is that without those hypotheses, the space $\mathbb{V}(\Sigma_{g,n}, \mathcal{P}, \omega)$ is no longer spanned by trivalent graphs colored by typical modules $V_{\alpha}, \alpha \in \ddot{\mathbb{C}}$ since we need to considerate graphs colored by modules $S_n$ and $P_n$ as well. However, in \cite[Theorem $5.1$]{BCGP_Bases}, the authors still found basis (in the non-closed case) for these spaces and it seems that our techniques might be generalized to this more general setting as well. 
\end{remark}

\subsection{Particular cases}

\subsubsection{One holed torus}\label{sec_holed_torus}

Let $\alpha_1, \alpha_2, \alpha_3 \subset \Sigma_{1,1}$ be the three curves of Figures \ref{fig_holed_torus} and $\gamma_p$ the peripheral curve around the puncture. Fricke showed in \cite{Fricke_CharVarHoledTorus} (see also \cite{Goldman_RelCharVar}) that the map sending $[\rho]$ to $\varphi([\rho]):=(\tr(\alpha_1), \tr(\alpha_2), \tr(\alpha_3))$ defines an isomorphism $\varphi: \mathcal{X}_{\SL_2}(\Sigma_{1,1}) \xrightarrow{\cong} \mathbb{C}^3$. Moreover, if $\varphi([\rho])=(t_1, t_2, t_3)$ then  $\tr(\rho(\gamma_p))= t_1^2 +t_2^2 +t_3^3 -t_1t_2t_3 -2$, so the sliced character variety writes 
$$ \mathcal{X}_{\SL_2}(\Sigma_{1,1}, c) \cong \{(t_1,t_2,t_3) | t_1^2 + t_2^2 +t_3^2 - t_1t_2t_3-2 =c \}.$$

 \begin{figure}[!h] 
\centerline{\includegraphics[width=7cm]{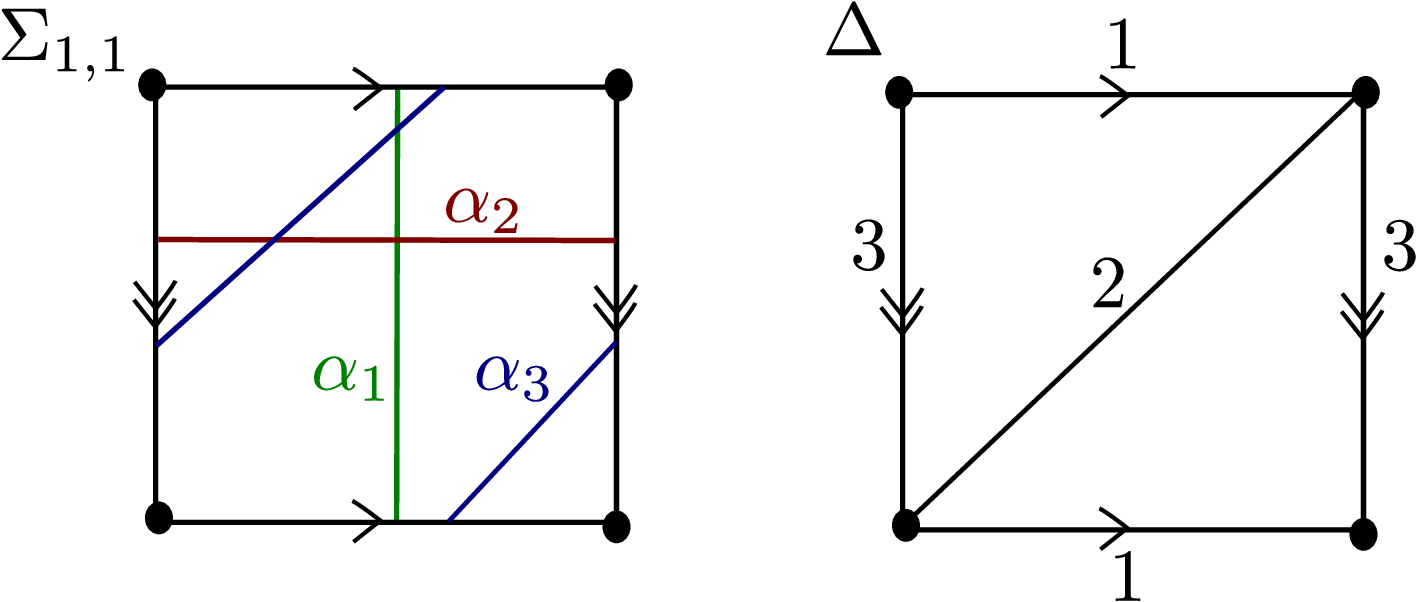} }
\caption{Three curves $\alpha_1, \alpha_2, \alpha_3$ in the holed torus $\Sigma_{1,1}$ and a triangulation $\Delta$.} 
\label{fig_holed_torus} 
\end{figure} 

The \textit{quaternionic representation} $\rho_0 : \pi_1(\Sigma_{1,1}) \to \SL_2$ is defined by 
$$ \rho_0(\alpha_1):=\mathbf{i}= \begin{pmatrix} i & 0 \\ 0 & -i \end{pmatrix}, \quad \rho_0(\alpha_2) := \mathbf{j}=\begin{pmatrix} 0 & 1 \\ -1 & 0 \end{pmatrix}, \quad \rho_0(\alpha_3):=\mathbf{k}=\begin{pmatrix} 0 & i \\ i & 0 \end{pmatrix}.$$
Note that $[\rho_0] \in \mathcal{X}_{\SL_2}(\Sigma_{1,1}, -2)$ has coordinate $\varphi([\rho_0])=(0,0,0)$. For $\varepsilon, \varepsilon'=\pm1$, let $\rho_{\varepsilon, \varepsilon'}$ be the central representation with coordinates $(\varepsilon 2, \varepsilon'2, \varepsilon \varepsilon'2)$.

\begin{theorem}\label{theorem_holed_torus} Let $([\rho], z) \in \widetilde{\mathcal{X}}_{\SL_2}(\Sigma_{1,1})$ be such that $([\rho], z) \neq ([\rho_{\varepsilon, \varepsilon'}], -2)$ for all $\varepsilon, \varepsilon'=\pm1$.
Then $([\rho],z)$ belongs to the  Azumaya locus of $\mathcal{S}_A(\Sigma_{1,1})$
if and only if either $[\rho]$ is  neither central, nor quaternionic, or $[\rho]=[\rho_0]$ and  $z=+ 2$. In particular, the fully Azumaya locus is the locus of classes of representations which are neither central nor quaternionic.
\end{theorem}

The techniques of the present paper are insufficient to determine whether the four points $([\rho_{\varepsilon, \varepsilon'}], -2)$ belong to the Azumaya locus or not (though either all of them or neither of them belong to this locus). 
Soon after the prepublication of the present paper, Yu proved in \cite{Yu_ALSkein_SmallSurfaces} that they do belong to the Azumaya locus.

\begin{lemma}\label{lemma_Torus1} Let $[\rho] \in \mathcal{X}_{\SL_2}(\Sigma_{1,1})$ and set $c:= \tr(\rho(\gamma_p))$. Then $[\rho]$ is a smooth point of $\mathcal{X}_{\SL_2}(\Sigma_{1,1}, c)$ if and only if $[\rho]$ is not central and $[\rho]\neq [\rho_0]$. 
\end{lemma}

\begin{proof} Let $f : \mathbb{C}^3 \to \mathbb{C}$ be defined by $f(t_1,t_2,t_3)= t_1^2 +t_2^2 +t_3^2 - t_1t_2t_3 -2$. Then $\mathcal{X}_{\SL_2}(\Sigma_{1,1}, c) \cong f^{-1}(c)$ so $[\rho]\in \mathcal{X}_{\SL_2}(\Sigma_{1,1}, c)$ with $\varphi([\rho])=(t_1,t_2,t_3)$ is smooth if and only if $D_{(t_1,t_2,t_3)}f \neq (0,0,0)$. We compute: 
$$ \left\{ \begin{array}{l} \partial_{t_1} f (t_1,t_2,t_3) =0 \\  \partial_{t_2} f (t_1,t_2,t_3) =0 \\  \partial_{t_3} f (t_1,t_2,t_3) =0 \end{array} \right.
\Leftrightarrow    \left\{ \begin{array}{l} 2t_1 = t_2t_3 \\ 2t_2=t_1t_3 \\ 2t_3=t_1t_2 \end{array} \right. \Leftrightarrow (t_1,t_2,t_3)= (0,0,0) \mbox{ or }(\varepsilon_1 2, \varepsilon_2 2, \varepsilon_1\varepsilon_22) \mbox{ where }\varepsilon_i=\pm1.$$
This concludes the proof.
\end{proof}

When $\rho$ is a central representation and $z=-q^n -q^{-n}$ for $n\in \{1, \ldots, N-1\}$, then $([\rho], z)$ is not in the Azumaya locus by Theorem \ref{theorem_WRT_open}. Therefore the only remaining cases are the cases $(1)$ of elements $([\rho_{\varepsilon, \varepsilon'}], -2)$  and $(2)$ the quaternionic cases $([\rho_0], +(q^n+q^{-n}))$ with $n=0,\ldots, N-1$. 

\par In \cite{BullockPrzytycki_00}, Bullock and Przytycki proved that $\mathcal{S}_A(\Sigma_{1,1})$ is finitely presented by the generators $\alpha_1, \alpha_2, \alpha_3$ and the following relations: 
\begin{equation}\label{eq_presentations_torus}
 A \alpha_i \alpha_{i+1} - A^{-1} \alpha_{i+1}\alpha_i = (q-q^{-1})\alpha_{i+2} \quad \mbox{,  for } i \in \mathbb{Z}/3\mathbb{Z}.
 \end{equation}
 Let $U'_q\mathfrak{so}_3$ be the algebra with generators $I_1, I_2, I_3$ and relations $A I_i I_{i+1} - A^{-1}I_{i+1}I_i = I_{i+2}$ for $i\in \mathbb{Z}/3\mathbb{Z}$ introduced in  \cite{HavlicekPosta_Uqso3}.
 Obviously, we have an isomorphism $\Psi: \mathcal{S}_A(\Sigma_{1,1}) \cong U_q'\mathfrak{so}_3$ sending $(\alpha_1, \alpha_2, \alpha_3)$ to $((q-q^{-1})I_1, (q-q^{-1})I_2, (q-q^{-1})I_3)$. 
 Large classes of irreducible representations of $U_q'\mathfrak{so}_3$ have been defined and studied by Havl\'i\v{c}ek-Po\v{s}ta
in \cite{HavlicekPosta_Uqso3} from which Takenov deduced in \cite[Theorem $15$]{Takenov_Azumaya} that if $[\rho]$ is neither central, nor quaternionic and  satisfies the additional hypothesis that 
$$(\tr(\rho(\alpha_i)), \tr(\rho(\alpha_{i+1})), \tr(\rho(\alpha_{i+2}))) \neq (\varepsilon_1 2, \varepsilon_2 \frac{2}{\sqrt{3}}, \varepsilon_1 \varepsilon_2 \frac{2}{\sqrt{3}}), \mbox{ for }i\in \mathbb{Z}/3\mathbb{Z}\mbox{ and }\varepsilon_1, \varepsilon_2= \pm 1, $$
 then $[\rho]$ is in the fully Azumaya locus. So  Takenov's result is a weaker form (of the particular case $(g,n)=(1,1)$) of Theorem \ref{theorem_FKL_open}. The following representations were introduced in \cite{HavlicekPosta_Uqso3}.
 
 \begin{definition}(Havl\'i\v{c}ek-Po\v{s}ta representations)
 For $1\leq r \leq N$, the representation $S_r$ has basis $\{v_0, \ldots, v_{r-1} \}$ and module structure: 
\begin{align*}
{}& \alpha_1v_j = -q^{-1/2}\frac{q-q^{-1}}{q^{-j+\frac{r-1}{2}}+q^{j- \frac{r-1}{2}}} ( q[j][j-r] v_{j-1} + v_{j+1}); \\
{}& \alpha_2 v_j = i q^{1/2} \frac{q-q^{-1}}{q^{-j+\frac{r-1}{2}} + q^{j-\frac{r-1}{2}}} ( q^{\frac{r}{2}-j} [j][j-r]v_{j-1} + q^{-\frac{r}{2}+j}v_{j+1} ); \\
{}& \alpha_3 v_j= -i (q^{-j +\frac{r-1}{2}}-q^{j-\frac{r-1}{2}})v_j;
\end{align*}
where we set $v_r:=v_{-1}=0$.
\end{definition}

The following lemma will be used in order to compute the classical shadow of $S_r$.
\begin{lemma}\label{lemma_HP_preliminary}
Let $N\geq 1$ be an odd integer and $M=(M_{ij})_{1\leq i,j \leq N}  \in \Mat_N(\mathbb{C})$ an $N\times N$ matrix. Suppose that 
\begin{enumerate}
\item there exists $t \in \mathbb{C}$ such that $T_N(M) = t\id $ and
\item we have $M_{ij} = 0$ whenever $| i- j | \neq 1$.
\end{enumerate}
Then $t=0$.
\end{lemma}

\begin{proof}
Let us first prove that the second hypothesis implies that $M$ is not invertible by induction on $N$. If $N=1$, the second hypothesis implies that $M=(0)$. Now suppose that the results is true for $N-2$ and let $M'$ be the $(N-2)\times (N-2)$ matrix obtained from $M$ by removing the two first lines and two first columns. Clearly $M'$ satisfies the second hypothesis so is not invertible. By developing the determinant $\det(M)$ first along the first line then along the first column, we obtain the identity $\det(M)= - M_{2,1}M_{1,2} \det(M') = 0$ so $M$ is not invertible. By induction, this is true for all $N$ odd.
\par Now using the first hypothesis and writing $t= x^N + x^{-N}$ for some $x\in \mathbb{C}^*$, the identity $T_N(M)= t$ implies that the spectrum of $M$ is included in the set $\{ q^n x + q^{-n}x^{-1}, n=0, \ldots, N-1\}$. Since $M$ is not invertible, $0$ belongs to this spectrum so there exists $n\geq 0$ such that 
$$ q^n x + q^{-n} x^{-1}= 0 \Leftrightarrow x = \pm i q^{-n} \Rightarrow t = \pm (i-i) = 0.$$
So $t=0$.
\end{proof}

\begin{lemma}\label{lemma_HP}
 $S_r$ is a well-defined irreducible representation with classical shadow $([\rho_0], q^{r} +q^{-r})$
\end{lemma}

\begin{proof} The fact that $S_r$ is a  well-defined irreducible representations follows from the fact that the induced $U'_q\mathfrak{so}_3$-modules obtained using $\Psi: \mathcal{S}_A(\Sigma_{1,1}) \cong U_q'\mathfrak{so}_3$ were proved to be well-defined and irreducible in \cite[Note $4(a)$]{HavlicekPosta_Uqso3} (compare with Equations $(22)$, $(23)$, $(24)$ in \cite{HavlicekPosta_Uqso3} where $v_j$ is denoted $x_{-j}$). Let $C\in U_q'\mathfrak{so}_3$ be the central element defined by $C:= q^2I_1^2 + I_2^2 +q^2I_3^2 - (q^{5/2}-q^{1/2})I_1I_2I_3$. Then it is proved in \cite{HavlicekPosta_Uqso3} that $C$ acts by the scalar $c=-q[\frac{r-1}{2}][\frac{r+1}{2}]$. Since $\gamma_p=-q^{-1}(q-q^{-1})^2\Psi^{-1}(C) +(q+q^{-1})$ we deduce that $\gamma_p$ acts by the scalar $z:=q^r + q^{-r}$. 
\par Suppose that $T_N(\alpha_i)$ acts by $t_i \id$ on $S_r$. One has: 
$$ T_N(\alpha_3) v_j = T_N\left( (-iq^{-j+\frac{r-1}{2}}) + (-iq^{-j+\frac{r-1}{2}})^{-1}\right) v_j = \left( (-iq^{-j+\frac{r-1}{2}})^N + (-iq^{-j+\frac{r-1}{2}})^{-N} \right) v_j=0.$$
So $t_3=0$. Applying Lemma \ref{lemma_HP_preliminary} to the matrices $\rho(\alpha_1)$ and $\rho(\alpha_2)$, we obtain that $t_1=t_2=0$.
So $S_r$ has classical shadow $([\rho_0], q^r +q^{-r})$. 

\end{proof}

\begin{proof}[Proof of Theorem \ref{theorem_holed_torus}]  If $[\rho]$ is not central nor quaternionic, then $[\rho]\in \mathcal{FAL}$ by Theorem \ref{theorem_FKL_open} and  Lemma \ref{lemma_Torus1}.   
When $[\rho]$ is central and $z\neq -2$, then $([\rho],z)$ does not belong to the Azumaya locus by Theorem \ref{theorem_WRT_open}.
When $[\rho]=[\rho_0]$ is  quaternionic and $r\in \{1, \ldots, N\}$  then by Lemma \ref{lemma_HP}, $([\rho_0], + (q^{r} + q^{-r}))$ is the classical shadow of the irreducible representation $S_r$ of dimension $r$ so it belongs to the Azumaya locus if and only if $r=N$. This concludes the proof.

\end{proof}

\subsection{Indecomposable semi-weight representations}

Recall that $Z_{\Sigma}$ is the center of $\mathcal{S}_A(\Sigma)$, that $Z_{\Sigma}^0\subset Z_{\Sigma}$ is the image of the Chebyshev morphism. We defined a weight module as a module which is semisimple over $Z_{\mathbf{\Sigma}}$. For open surfaces, we rather consider the following larger class.
\begin{definition}(Semi-weight modules) A module over $\mathcal{S}_A(\Sigma)$ is a \textit{semi-weight module} if it is semisimple as a module over $Z_{\mathbf{\Sigma}}^0$.
\end{definition}
So in semi-weight modules, the peripheral curves do not necessarily acts semi-simply. For instance the representations coming from non-semisimple TQFTs (\cite{BCGPTQFT}) where we chose a surface with punctures colored by the projectives $P_n$ are semi-weight modules which are not weight (because the Casimir does not act semi-simply on $P_n$).
 Recall that we identify $\Specm(Z_{\Sigma}^0)$ with the character variety $\mathcal{X}_{\SL_2}(\Sigma)$ so an indecomposable semi-weight module has a {classical shadow} which lies in $\mathcal{X}_{\SL_2}(\Sigma)$. For $[\rho] \in \mathcal{X}_{\SL_2}(\Sigma)$, we denote by $\mathfrak{m}_{[\rho]} \subset Z_{\Sigma}^0$ the corresponding maximal ideal and write: 
$$ Z([\rho]) := \quotient{Z_{\Sigma}}{\mathfrak{m}_{[\rho]}Z_{\Sigma}}, \quad \mathcal{S}_A(\Sigma)_{[\rho]}:= \quotient{\mathcal{S}_A(\Sigma)}{\mathfrak{m}_{[\rho]}\mathcal{S}_A(\Sigma)}.$$

\begin{theorem}(Brown-Gordon\cite[Corollary $2.7$]{BrownGordon_ramificationcenters})\label{theorem_BG_SW} If $[\rho]$ is in the fully Azumaya locus of $\mathcal{S}_A(\Sigma)$, then we have an algebra isomorphism $\mathcal{S}_A(\Sigma)_{[\rho]} \cong \Mat_D(Z([\rho]))$.
\end{theorem}

\begin{lemma}\label{lemma_SW} Let $[\rho]\in \mathcal{X}_{\SL_2}(\Sigma_{g,n})$. For $1\leq i \leq n$ write $c_i:= \tr(\rho(\gamma_i))$ and let $$\mathcal{A}_i:= \left\{ \begin{array}{ll} \mathbb{C}^{\oplus N} & \mbox{if }c_i \neq \pm 2; \\ \mathbb{C} \oplus \left(\quotient{\mathbb{C}[X]}{(X)^2} \right)^{\oplus (N-1)/2} & \mbox{if }c_i=\pm 2. \end{array} \right.$$
Then 
$$ Z([\rho]) \cong \mathcal{A}_1\otimes \ldots \otimes \mathcal{A}_n.$$
\end{lemma}

\begin{proof} 
Let us first make a simple remark. Let $P(X)\in\mathbb{C}[X]$ be a polynomial with decomposition $P(X)=\prod_{i=1}^d (X-\lambda_i)^{n_i}$ where $n_i \geq 1$ and the $\lambda_i$ are pairwise distinct. 
\par  \underline{Claim:} We have an isomorphism of algebras:
 $$ \quotient{\mathbb{C}[X]}{(P(X))} \cong \oplus_{i=1}^d \quotient{\mathbb{C}[X]}{ (X^{n_i})}.$$
 Indeed, since the ideals $(X-\lambda_i)^{n_i}$ are pairwise coprime, by the chinese reminder theorem we have an isomorphism $ \quotient{\mathbb{C}[X]}{(P(X))} \cong \oplus_{i=1}^d \quotient{\mathbb{C}[X]}{ \left((X-\lambda_i)^{n_i}\right)}$ and setting $Y:=X-\lambda_i$ one has $\quotient{\mathbb{C}[X]}{ (X-\lambda_i)^{n_i}}\cong \quotient{\mathbb{C}[Y]}{ (Y)^{n_i}}$.
\par 
Recall from Theorem \ref{theorem_FKL_open} that ${Z}_{\Sigma} \cong \quotient{ {Z}_{\Sigma}^0 [X_1, \ldots, X_n]}{(T_N(X_i)- \gamma_i )}$ from which we deduce that 
$$ Z([\rho]) \cong  \quotient{ \mathbb{C} [X_1, \ldots, X_n]}{(T_N(X_i)+c_i)}\cong \otimes_{i=1}^n \quotient{\mathbb{C}[X]}{(T_N(X)+c_i)}.$$
If $c_i\neq \pm 2$, then the polynomial $T_N(X)+c_i$ has its zeroes of multiplicity one, so $ \quotient{\mathbb{C}[X]}{(T_N(X)+c_i)}\cong \mathbb{C}^{\oplus N}$. If $c_i = \pm 2$, then $T_N(X) \pm 2 = (X \pm 1)\prod_{i=1}^{(N-1)/2}(X\pm (q^i +q^{-i}))^2$ so $\quotient{\mathbb{C}[X]}{(T_N(X)+c_i)} \cong \mathbb{C}\oplus \left(\quotient{\mathbb{C}[X]}{(X^2)} \right)^{\oplus (N-1)/2}$. This concludes the proof.
\end{proof}

\begin{corollary}\label{coro_dimension} If $[\rho] \in \mathcal{X}_{\SL_2}(\Sigma_{g,n})$ is in the fully Azumaya locus of $\mathcal{S}_A(\Sigma_{g,n})$ then we have 
$$\dim\left(  \mathcal{S}_A(\Sigma_{g,n})_{[\rho]} \right) = N^{6g-6+3n}.$$
\end{corollary}

\begin{proof}
Let $D=PI-deg(\mathcal{S}_A(\Sigma_{g,n}))$.
By Theorem \ref{theorem_BG_SW}, we have
$$ \mathcal{S}_A(\Sigma_{g,n})_{[\rho]} \cong \Mat_D(Z([\rho])) \cong \Mat_D(\mathbb{C})\otimes Z([\rho]).$$
By Lemma \ref{lemma_SW}, $\dim(Z([\rho]))=N^n$ so 
$$\dim\left(  \mathcal{S}_A(\Sigma_{g,n})_{[\rho]} \right) = N^{n} D^2= N^{n} (N^{3g-3+n})^2 = N^{6g-6+3n}.$$ 
\end{proof}

Theorem \ref{theorem_BG_SW} can be used to produce new indecomposable semi-weight $\mathcal{S}_A(\Sigma_{g,1})$-modules which are  not weight modules whenever we can find $[\rho]$ in the fully Azumaya locus such that $c:=\tr(\rho(\gamma_p))=-2$ (when $n> 1$, such a $[\rho] \in \mathcal{X}_{\SL_2}(\Sigma_{g,n})$ cannot exist in virtue of Lemmas \ref{lemma_X2_open} and \ref{lemma_z2}).  For instance consider the case $(g,n)=(1,1)$ and let $[\rho]\in \mathcal{X}_{\SL_2}(\Sigma_{1,1})$ be such that $c= \tr(\rho(\gamma_p))=-2$ and $[\rho]\neq [\rho_0]$. By Theorem \ref{theorem_holed_torus}, $[\rho]$ is in the fully Azumaya locus so using Theorem \ref{theorem_BG_SW} and Lemma \ref{lemma_SW}, for every $n\in \{1, \ldots, (N-1)/2\}$ one has: 
$$ \quotient{\mathcal{S}_A(\Sigma_{1,1})_{[\rho]}}{(\gamma_p- q^n -q ^{-n})^2\mathcal{S}_A(\Sigma_{1,1})_{[\rho]}} \cong \Mat_D \left(\quotient{\mathbb{C}[X]}{(X)^2} \right).$$
Let $W$ be the two dimensional representation of $\quotient{\mathbb{C}[X]}{(X^2)}$ with basis $\{x, y\}$ such that $X \cdot x= 0$ and $X \cdot y=x$ (so $W$ is free of rank $1$). Let $V$ be the $N$-dimensional simple module of $\Mat_N(\mathbb{C})$ and consider the representation
 $$\Mat_N\left( \quotient{\mathbb{C}[X]}{(X^2)}\right)\cong \Mat_N(\mathbb{C}) \otimes \left(\quotient{\mathbb{C}[X]}{(X^2)}\right) \to \End(V)\otimes \End(W) \cong\End(V\otimes W).$$
   We thus obtain an indecomposable representation of dimension $2N$:  
   $$ r_n : \mathcal{S}_A(\Sigma_{1,1}) \to  \quotient{\mathcal{S}_A(\Sigma_{1,1})_{[\rho]}}{(\gamma_p- q^n -q ^{-n})^2\mathcal{S}_A(\Sigma_{1,1})_{[\rho]}} \cong \Mat_D \left(\quotient{\mathbb{C}[X]}{(X^2)} \right) \to \End(V\otimes W).$$
   This representation is not a weight representation since $r_n(\gamma_p)$ is not semisimple and contains the Azumaya irreducible representation with classical shadow $([\rho], q^n +q^{-n})$ as a sub-representation.

\section{Applications}

\subsection{Representations of the Torelli group}\label{sec_MCG}
Suppose $g\geq 1$.
Let $\Mod(\Sigma_{g,n})$ be the mapping class group of $\Sigma_{g,n}$ and let $\mathcal{I}(\Sigma_{g,n}) \subset \Mod(\Sigma_{g,n})$ denote its Torelli subgroup. By definition, a mapping class belongs to $\mathcal{I}(\Sigma_{g,n})$ if it acts trivially in homology. Define a left action of $\Mod(\Sigma_{g,n})$ on $\mathcal{S}_A(\Sigma_{g,n})$ by $\phi\cdot [\gamma]:= [\phi(\gamma)]$ for $\phi$ a mapping class and $\gamma\subset \Sigma_g$ a multicurve. The mapping class group also acts on $\mathcal{X}_{\SL_2}(\Sigma_{g,n})$ on the right by the formula $\rho \cdot \phi (\gamma) = \rho (\phi(\gamma)) $ for $\rho: \pi_1(\Sigma_{g,n})\to \SL_2$ and $\phi \in \Mod(\Sigma_{g,n})$. Note that the composition 
$$ \mathcal{O}(\mathcal{X}_{\SL_2(\Sigma_{g,n})}) \cong \mathcal{S}_{\varepsilon}(\Sigma_{g,n}) \xrightarrow{ Ch_A} \mathcal{S}_A(\Sigma_{g,n})$$
is equivariant for the associated left $\Mod(\Sigma_{g,n})$ action. 
 Let $\omega \in \mathrm{H}^1(\Sigma_{g,n}; \mathbb{C}/ \mathbb{Z})\setminus \mathrm{H}^1(\Sigma_{g,n}; (\frac{1}{2k}\mathbb{Z})/\mathbb{Z})$ and $\mathbf{z}$ compatible as before, and consider the associated representation $\rho_{\omega}: \pi_1(\Sigma_{g,n}) \to \SL_2$ as before.  Since every element of $\mathcal{I}(\Sigma_{g,n})$ fixes $[\rho_{\omega}]$ (by definition of the Torelli group), the action of $\mathcal{I}(\Sigma_{g,n})$ on $\mathcal{S}_A(\Sigma_{g,n})$ passes to the quotient to an action on $\mathcal{S}_A(\Sigma_{g,n})_{([\rho_{\omega}], \mathbf{z})}$.  By Theorems \ref{main_theorem} and \ref{theorem_opensurfaces}, $([\rho_{\omega}], \mathbf{z})$ belongs to the Azumaya locus of $\mathcal{S}_A(\Sigma_{g,n})$ so $\mathcal{S}_A(\Sigma_{g,n})_{([\rho_{\omega}], \mathbf{z})} \xrightarrow[\cong]{r_{\omega, \mathcal{P}}}\End(\mathbb{V}(\Sigma_{g,n}, \mathcal{P}, \omega))$ is a matrix algebra and since every automorphism of a matrix algebra is inner, for every $\phi \in \mathcal{I}(\Sigma_{g,n})$, there exists an operator $\pi_{\omega, \mathcal{P}} (\phi) \in \End(\mathbb{V}(\Sigma_{g,n}, \mathcal{P}, \omega))$, unique up to multiplication by a non-zero scalar, such that 
 $$ \phi\cdot x = \pi_{\omega, \mathcal{P}}(\phi)^{-1} x \pi_{\omega, \mathcal{P}}(\phi), \quad \mbox{ for all } x\in \mathcal{S}_A(\Sigma_{g,n})_{([\rho_{\omega}], \mathbf{z})}.$$
 The assignation $\phi \mapsto \pi_{\omega, \mathcal{P}}(\phi)$ defines a projective representation
 $$ \pi_{\omega, \mathcal{P}} : \mathcal{I}(\Sigma_{g,n}) \to \PGL(\mathbb{V}(\Sigma_{g,n}, \mathcal{P},  \omega))$$
 of the Torelli group. This representation (or rather its variant where $q$ has order $2N$) first appeared in \cite{BCGPTQFT} where the whole structure of (non-semisimple) TQFT were used to construct it. The alternative definition we gave here, based on Theorems \ref{main_theorem} and \ref{theorem_opensurfaces}, is much simpler. 
 
 \subsection{Reduced skein modules of $3$-manifolds}\label{sec_onedim}
 
 Let $M$ be a compact connected $3$-manifold. As proved by L\^e in \cite{LeKauffmanBracket}, the Chebyshev-Frobenius morphism $Ch_A: \mathcal{S}_{\varepsilon}(M) \to \mathcal{S}_A(M)$ is still well-defined for $3$-manifolds (though not necessarily injective): it sends a framed link $L=\gamma_1\cup \ldots \cup \gamma_n \subset M$ to $Ch_A(L):= T_N(\gamma_1)\cup \ldots \cup T_N(\gamma_p)$. A framed link colored by $T_N$ is \textit{transparent} in the sense that if $L_1, L_2\subset M$ are two framed links, then the class $[Ch_A(L_1)\cup L_2]\in \mathcal{S}_A(M)$ does not depend on how $L_1$ and $L_2$ are entangled. So 
  $Ch_A$ endows $\mathcal{S}_A(M)$ with a structure of $\mathcal{S}_{\varepsilon}(M)$-module via $[L_1]\rhd [L_2]:=[Ch_A(L_1)\cup L_2]$. Fixing an identification $\Psi: \mathcal{O}[\mathcal{X}_{\SL_2}(M)] \cong \mathcal{S}_{\varepsilon}(M)$ as before, we thus obtain that $\mathcal{S}_A(M)$ defines a coherent sheaf $\mathscr{L}^M$ over $\mathcal{X}_{\SL_2}(M)$. The section of the fiber $\restriction{\mathscr{L}^M}{[\rho]}$ over a point $[\rho]$ will be called the \textit{reduced skein module}.
 
  \begin{definition} Let $[\rho]\in \mathcal{X}_{\SL_2}(M)$ be a closed point which corresponds to a maximal ideal $\mathfrak{m}_{[\rho]} \subset \mathcal{S}_{\varepsilon}(M)$ through $\Psi$. The \textit{reduced skein module} is the quotient
 $$ \mathcal{S}_A(M)_{[\rho]}:= \quotient{\mathcal{S}_A(M)}{Ch_A(\mathfrak{m}_{[\rho]})\mathcal{S}_A(M)}.$$
 \end{definition}

  \begin{theorem}\label{theorem_reduced} Let $[\rho]\in \mathcal{X}_{\SL_2}(M)$ be the class of a non-central representation and $M$ a closed $3$-manifold. Then the reduced skein module $ \mathcal{S}_A(M)_{[\rho]}$ is one-dimensional. 
\end{theorem}
Said differently, the restriction of $\mathscr{L}^M$ to the locus of non-central representations is a line bundle. 
Theorem \ref{theorem_reduced} was proved in \cite[Theorem $12.2$]{FKL_GeometricSkein} under the weaker hypothesis that $[\rho]$ is irreducible and mostly relied on the fact that for closed surfaces the Azumaya loci of skein algebras contain irreducible representations. Our proof is very similar to the proof of \cite[Theorem $12.2$]{FKL_GeometricSkein} where we use Theorem \ref{main_theorem}. Since the proof is short and enlightening, we repeat it for self-completeness. 
\par The following proposition is interesting on its own and is an extension of \cite[Theorem $12.1$]{FKL_GeometricSkein}. The inclusion $\partial M \subset M$ induces a morphism $\mathcal{X}_{\SL_2}(M) \to \mathcal{X}_{\SL_2}(\partial M)$ sending $[\rho]$ to, say, $[\rho_{\partial}]$.

\begin{proposition}\label{prop_reduced} Let $M$ be a $3$-manifold homeomorphic to a thickened surface $\Sigma \times [0,1]$. Let $[\rho] \in \mathcal{X}_{\SL_2}(M)$ such that the restriction of $\rho_{\partial}$ to every connected component of $\partial M$ is non-central. Then $\mathcal{S}_A(M)_{[\rho]}$ is simple as a module over $\mathcal{S}_A(\partial M)_{[\rho_{\partial}]}$. 
\end{proposition}

We will use the following extension of \cite[Proposition $12.3$]{FKL_GeometricSkein}.

\begin{lemma}\label{lemma_reduced}
Let $H_g$ be a genus $g$ handlebody and $\gamma_1, \ldots, \gamma_g$ some free generators of $\pi_1(H_g)$. Let $\rho: \pi_1(H_g)\to \SL_2$ a representation whose class $[\rho]\in \mathcal{X}_{\SL_2}(H_g)$ is non-central. Then there exists a homeomorphism $\phi\in \Mod(H_g)$ which satisfies one of the following conditions: 
\begin{enumerate}
\item either $\rho$ is irreducible or $g=1$  and $\tr(\rho (\phi(\gamma_i)))\neq \pm 2$ for all $1\leq i \leq g$ and $\tr(\rho(\phi(\gamma_1\ldots \gamma_g)))\neq \pm 2$; 
\item or $\rho$ is diagonal, $g\geq 2$ and $\tr(\rho (\phi(\gamma_i)))\neq \pm 2$ for all $1\leq i \leq g$; 
\item or $\rho$ is diagonal, $g\geq 2$ and $\tr(\rho (\phi(\gamma_i)))\neq \pm 2$ for all $2\leq i \leq g$ and $\tr(\rho(\phi(\gamma_1\ldots \gamma_g)))\neq \pm 2$. 
\end{enumerate}
\end{lemma}

\begin{proof} If $g=1$, the lemma is tautological. 
$(1)$ If $\rho: \pi_1(H_g)\to \SL_2$ is irreducible, then the lemma is proved in \cite[Proposition $12.3$]{FKL_GeometricSkein} so Assertion $(1)$ holds. 
\par $(2)$ Assume that $\rho$ is diagonal non-central, so $\rho(\gamma_i) =\begin{pmatrix} \lambda_i & 0 \\ 0 & \lambda_i^{-1} \end{pmatrix}$ for some $\lambda_i \in \mathbb{C}^*$.  Let $I= \{ i \in \{1, \ldots, g\} \mbox{ such that }\lambda_i=\pm 1\}$. Since $\rho$ is not central, there exists an index $i$ such that $i\notin I$. 
Up to permuting the indices, we can thus suppose that $1\notin I$. Recall that the map $\Mod(H_g) \to \Aut(\mathbb{F}_g)$ sending a mapping class to the corresponding automorphism of the free group $\mathbb{F}_g=\pi_1(H_g)=\left< \gamma_1, \ldots, \gamma_g\right>$ is surjective. 
Let $\phi\in \Mod(H_g)$ be a mapping class corresponding to the automorphism of $\mathbb{F}_g$ sending $\gamma_i$ to $\gamma_i \gamma_1$ if $i\in I$ and $\gamma_j$ to $\gamma_j$ if $j\notin I$. Then $\tr(\rho (\phi(\gamma_i)))\neq \pm 2$ for all $1\leq i \leq g$ and we have proved Assertion $(2)$. 
\par $(3)$ Eventually assume that $\rho$ is diagonal non-central, so by $(2)$ up to automorphism, we can suppose that $\rho(\gamma_i)\neq \pm \mathds{1}$ for $1\leq i \leq g$. If $\rho(\gamma_1\ldots \gamma_g)=\pm \mathds{1}_2$, i.e. if $\lambda_1\ldots \lambda_n=\pm 1$, then we precompose $\rho$ by the mapping class corresponding to the automorphism $\phi$ of $\mathbb{F}_g$ sending $\gamma_1$ to $\gamma_1\gamma_2$ and $\gamma_i$ to $\gamma_i$ for $i\neq 1$. 
Then $\psi \circ \phi$ satisfies the conclusion of Assertion $(3)$.
\end{proof}

\begin{lemma}\label{lemma_reduced2}
Let $H_g$ be a genus $g$ handlebody and $\rho: \pi_1(H_g)\to \SL_2$ a non-central representation. Then $\dim\left( \mathcal{S}_A(H_g)_{[\rho]}\right)= N^{3g-3}$. 
\end{lemma}

\begin{proof}
By Corollary \ref{coro_dimension}, it suffices to find an open surface $\Sigma_{g_0,n}$ and an homeomorphism $\phi: H_g\cong \Sigma_{g_0,n}\times [0,1]$ such that the class of the induced representation $\rho_{\Sigma}:=\rho\circ \phi_*: \pi_1(\Sigma_{g_0,n})\to \SL_2$ lies in the fully Azumaya locus of $\mathcal{S}_A(\Sigma_{g_0,n})$. Indeed, an homeomorphism $H_g\cong \Sigma_{g_0,n}\times [0,1]$ exists if and only if $2g_0+n-1=g$ in which case the conclusion of Corollary \ref{coro_dimension} writes:
$$ \dim(\mathcal{S}_A(H_g)_{[\rho]}) = \dim(\mathcal{S}_A(\Sigma_{g_0, n})_{[\rho_{\Sigma}]}) = N^{6g_0+3n-6}= N^{3g-3}.$$

 Note that  $\rho_{\Sigma}$ is non-central since $\rho$ is non-central. If $g=1$, we chose $\Sigma_{g_0,n}=\Sigma_1$ and since, by Corollary \ref{coro_genusOne},  any non-central representation of $\pi_1(\Sigma_1)$ is fully Azumaya, we have the desired equality. If $g\geq 2$ and $\rho$ is irreducible, we  choose $\Sigma_{g_0,n}=\Sigma_{0, g-1}$. By Lemma \ref{lemma_reduced} $(1)$, we can choose the homeomorphism $H_g\cong \Sigma_{0,g-1}\times [0,1]$ such that $\tr(\rho_{\Sigma}(\gamma_{p}))\neq \pm 2$ for all inner puncture $p$ so $\rho_{\Sigma}$ is fully Azumaya by Corollary \ref{coro_FKL_open}. If $g\geq 2$ and $\rho$ is diagonal non-central, we choose $\Sigma_{g_0,n}=\Sigma_{1,g-1}$. Again, by Lemma \ref{lemma_reduced} $(3)$, we can choose the homeomorphism $H_g\cong \Sigma_{1,g-1}\times [0,1]$ such that $\tr(\rho_{\Sigma}(\gamma_{p}))\neq \pm 2$ for all inner puncture $p$ so $\rho_{\Sigma}$ is fully Azumaya by Theorem \ref{theorem_open2}. We conclude using the fact that in the GIT quotient $\mathcal{X}_{\SL_2}(H_g)$, any non-central representation $\rho$ has the same class that a representation which is either diagonal or irreducible. 

\end{proof}

\begin{proof}[Proof of Proposition \ref{prop_reduced}]
Without loss of generality, we can suppose that $\Sigma$ is connected. Let $D_{\Sigma}$ and $D_{\partial M}$ the PI-degrees of $\mathcal{S}_A(\Sigma)$ and $\mathcal{S}_A(\partial M)$.  By hypothesis, $[\rho_{\partial}]$ is in the Azumaya locus of $\mathcal{S}_A(\partial M)$ so it suffices to prove that $\dim\left( \mathcal{S}_A(M)_{[\rho]} \right) = D_{\partial M}$. 
The identification $M\cong \Sigma \times [0,1]$ induces an isomorphism $\mathcal{X}_{\SL_2}(M)\cong \mathcal{X}_{\SL_2}(\Sigma)$.  If $\Sigma$ is closed, then $[\rho]$ is in the Azumaya locus of $\mathcal{S}_A(\Sigma)$ so $\mathcal{S}_A(M)_{[\rho]}\cong \Mat_{D}(\mathbb{C})$ has dimension $D_{\Sigma}^2$. Since $\partial M =- \Sigma \sqcup \Sigma$ in that case, we have $D_{\partial M}= D_{\Sigma}^2$ as well. 
\par Now if $\Sigma=\Sigma_{g_0,n}$ for $n\geq 1$, set $g:=2g_0+n-1$.  Then $M \cong \Sigma_{g_0,n}\times [0,1] \cong H_{g}$ and $\partial M \cong \Sigma_{g}$ so $D_{\partial M}= N^{3g-3}$. 
 Thus by Lemma  \ref{lemma_reduced2} we have $\dim (\mathcal{S}_A(M)_{[\rho]})= D_{\partial M}$.
This concludes the proof. 

\end{proof}
 
 \begin{proof}[Proof of Theorem \ref{theorem_reduced}]
  Let $M$ be a closed oriented $3$ manifold and consider a genus $g$ Heegaard splitting $M=H_g^{(1)} \cup_{\Sigma_g} H_g^{(2)}$ where $H_g^{(i)}$ are genus $g$ handlebodies glued along some oriented homeomorphisms $\phi_1: \Sigma_g \cong -\partial H_g^{(1)}$ and $\phi_2: \Sigma_g \cong \partial H_g^{(2)}$. The attaching maps endow $\mathcal{S}_A(H_g^{(1)})$ and $\mathcal{S}_A(H_g^{(2)})$ with some structures of right and left $\mathcal{S}_A(\Sigma_g)$-modules respectively and a classical argument of Hoste and Przyticki (detailed in \cite{Przytycki_skein}) shows that the inclusion morphisms $\mathcal{S}_A(H_g^{(i)}) \to M$ induce an isomorphism 
 \begin{equation}\label{eq_HP}
  \mathcal{S}_A(M) \cong \mathcal{S}_A(H_g^{(1)}) \otimes_{\mathcal{S}_A({\Sigma}_g)} \mathcal{S}_A(H_g^{(2)}).
  \end{equation}
Similarly, at the classical level, the Van Kampen theorem asserts that we have a push-out square of groups
 $$ \begin{tikzcd}
  \pi_1(\Sigma_g, v)
   \ar[r, "(\phi_1)_*"] \ar[d, "(\phi_2)_*"] 
   & 
   \pi_1(H_g^{(1)}, v) \ar[d] \\
  \pi_1(H_g^{(2)},v ) \ar[r] & \pi_1(M, v)
  \end{tikzcd}
  $$
which induces an isomorphism 
$$ \mathcal{O}[\mathcal{X}_{\SL_2}(M)]\cong  \mathcal{O}[\mathcal{X}_{\SL_2}(H_g^{(1)})]\otimes_{ \mathcal{O}[\mathcal{X}_{\SL_2}(\Sigma_g)]}  \mathcal{O}[\mathcal{X}_{\SL_2}(H_g^{(2)})]$$
as proved, for instance, in \cite[Proposition $2.10$]{MarcheCharVarSkein}. Through this isomorphism, the isomorphism of \eqref{eq_HP} becomes an isomorphism of $ \mathcal{O}[\mathcal{X}_{\SL_2}(M)]$-modules.
 Let  $[\rho_i] \in \mathcal{X}_{\SL_2}(H_g^{(i)})$ and $[\rho_{\Sigma}] \in \mathcal{X}_{\SL_2}(\Sigma_g)$ the classes of the representations obtained by precomposing  $\rho$ with the group morphisms $\pi_1(H_g^{(i)}, v) \to \pi_1(M, v)$ and $\pi_1(\Sigma_g, v)\to \pi_1(M,v)$. Since $[\rho]$ is non-central, so are $[\rho_{\Sigma}]$ and $[\rho_i]$. By Theorem \ref{main_theorem}, $[\rho]$ is in the Azumaya locus of $\mathcal{S}_A(\Sigma_g)$. Therefore $\mathcal{S}_A(\Sigma_g)_{[\rho_{\Sigma}]}\cong \Mat_D(\mathbb{C})$ and $\mathcal{S}_A(H_g^{(i)})_{[\rho_i]}$ is isomorphic to the standard simple (left or right) module of $\Mat_D(\mathbb{C})$ by Proposition \ref{prop_reduced}. Therefore
 one obtains isomorphisms:
 $$  \mathcal{S}_A(M)_{[\rho]} \cong \mathcal{S}_A(H_g^{(1)})_{[\rho_1]} \otimes_{\mathcal{S}_A(\Sigma_g)_{[\rho_{\Sigma}]}} \mathcal{S}_A(H_g^{(2)})_{[\rho_2]} \cong \mathbb{C}^D \otimes_{\Mat_D(\mathbb{C})} \mathbb{C}^D \cong \mathbb{C}.$$
 This concludes the proof.

 \end{proof}
 
  \subsection{Skein modules reduced at central representations}
  
  Let $\mathcal{S}_v(M)$ be the skein module over the ring $K:=\mathbb{Q}[v^{\pm 1}]$, so here $v$ is a generic formal parameter and $\mathcal{S}_A(M)=\mathcal{S}_v(M)\otimes_K \mathbb{C}_A$ (here $\mathbb{C}_A$ is $\mathbb{C}$ with the $K$-module structure obtained by replacing $v$ by $A$). Following  \cite{DetcherryKalfagianniSikora_SkeinDim}, we say that the skein module $\mathcal{S}_v(M)$ is $A$-\textit{tame} if is a direct sum of a cyclic $K$-modules which does not contain any summand of the form $\quotient{K}{(\phi_N(v))}$ if $\varepsilon = +1$ or of the form $\quotient{K}{(\phi_{2N}(v))}$ if $\varepsilon=-1$. Here $\phi_N(X)$ is the cyclotomic polynomial and the summands we exclude are the ones which do not vanish once tensored with $\mathbb{C}_A$. Adapting the arguments in \cite{DetcherryKalfagianniSikora_SkeinDim} to our purpose, we prove the following
  
  \begin{theorem}\label{theorem_central}
 Let $M$ be a closed $3$-manifold such that $(1)$ the character variety $\mathcal{X}_{\SL_2}(M)$ consists in a finite number of points and is reduced and $(2)$ the skein module $\mathcal{S}_v(M)$ is $A$-tame. Then for every central representation $[\rho_0]\in \mathcal{X}_{\SL_2}(M)$ then $ \mathcal{S}_A(M)_{[\rho]}$ is one-dimensional. So $\mathscr{L}^M \to \mathcal{X}_{\SL_2}(M)$ is a line bundle.
 \end{theorem}
 
 The following is a straightforward adaptation of \cite[Lemma $2.6$]{DetcherryKalfagianniSikora_SkeinDim}.
 
 \begin{lemma}\label{lemma_central_rep} Let $M$ be a closed $3$-manifold such that  the character variety $\mathcal{X}_{\SL_2}(M)$ has a finite number of points. Then the linear morphism
 $$ \Phi: \mathcal{S}_A(M) \to \oplus_{[\rho] \in \mathcal{X}_{\SL_2}(M)} \mathcal{S}_A(M)_{[\rho]}, $$ 
 obtained by taking the direct sum of the quotient maps, is surjective.
 \end{lemma}
 
 \begin{proof}
 For each $[\rho]\in \mathcal{X}_{\SL_2}(M)$, let $f_{[\rho]}\in \mathcal{S}_{\varepsilon}(M)$ be skein element corresponding under $\Psi$ to the regular function $g_{[\rho]}=\Psi(f_{[\rho]})$ defined by $g_{[\rho]}([\rho']):= \delta_{[\rho], [\rho']}$.
 Let $(w_{1, [\rho]}, \ldots, w_{d_{[\rho]}, [\rho]})$ be a basis of $\mathcal{S}_A(M)_{[\rho]}$, so $(w_{i,[\rho]})_{i, [\rho]}$ is a basis of $ \oplus_{[\rho] \in \mathcal{X}_{\SL_2}(M)} \mathcal{S}_A(M)_{[\rho]}$. Since the quotient map $\mathcal{S}_A(M) \to \mathcal{S}_A(M)_{[\rho]}$ is surjective, we can find $v_{i, [\rho]}\in \mathcal{S}_A(M)$ which is sent to $w_{i, [\rho]}$ by the quotient map. Then the set $\{ Ch_A(f_{[\rho]}) v_{i,[\rho]} \} \subset \mathcal{S}_A(M)$ is sent to the basis $(w_{i,[\rho]})_{i, [\rho]}$ by $\Psi$, so $\Psi$ is surjective.
 \end{proof}
 
 \begin{proof}[Proof of Theorem \ref{theorem_central}]
 By Theorem \ref{theorem_reduced}, $\dim(\mathcal{S}_A(M)_{[\rho]})=1$ whenever $[\rho]$ is not central. By Lemma \ref{lemma_X2_open}, when $[\rho]$ is central then $d:= \dim(\mathcal{S}_A(M)_{[\rho]})$ is the same for all central representations. It follows from the non-trivialness of the Witten-Reshetikhin-Turaev invariants, proved in \cite[Proposition $2.4$]{DetcherryKalfagianniSikora_SkeinDim},  that $d\geq 1$. 
 By Lemma \ref{lemma_central_rep} we have $\dim ( \mathcal{S}_A(M) ) \geq |\mathcal{X}^{(0)}\cup \mathcal{X}^{(1)} | + d |\mathcal{X}^{(2)}|$ so it suffices to prove that $\dim(\mathcal{S}_A(M))\leq |\mathcal{X}_{\SL_2}(M)|$ to conclude that $d=1$. We argue like in the proof of \cite[Theorem $3.1$]{DetcherryKalfagianniSikora_SkeinDim}. By the $A$-tameness assumption, we have 
 $$\mathcal{S}_v(M) \cong F \oplus \oplus_{i=1}^n \quotient{\mathbb{Q}[v^{\pm 1}]}{(P_i(v))}$$
 where $F$ is a free $K$-module and the cyclotomic polynomial $\phi_N(v)$ (resp. $\phi_{2N}(v)$) does not divide $P_i(v)$ if $\varepsilon =+1$ (resp. if $\varepsilon =-1$). So, while tensoring with $\mathbb{C}_A$ all torsion modules vanishes and $\mathcal{S}_A(M)\cong \mathcal{S}_v(M) \otimes_K \mathbb{C}_A \cong F\otimes_K \mathbb{C}$ has dimension $\dim_K(F)$. On the other hand, we have
  $$\mathcal{O}[\mathcal{X}_{\SL_2}(M)] \cong \mathcal{S}_{\varepsilon}(M)\cong \mathcal{S}_v(M)\otimes_K \mathbb{C}_{\varepsilon} \cong (F\otimes_K \mathbb{C}) \oplus \mathbb{C}^n $$
  where $n$ is the number of elements with torsion $(v- \varepsilon 1)^n$. Since we assumed that $\mathcal{X}_{\SL_2}(M)$ is reduced (i.e. that its nilradical is trivial) then $\dim(\mathcal{O}[\mathcal{X}_{\SL_2}(M)])=|\mathcal{X}_{\SL_2}(M)|$, so 
  $$ |\mathcal{X}_{\SL_2}(M)| = \dim(\mathcal{S}_{\varepsilon}(M)) \geq \dim_K(F) = \dim (\mathcal{S}_A(M)).$$
  This implies that $\dim(\mathcal{S}_A(M))= |\mathcal{X}_{\SL_2}(M)|$ so $d=1$. This concludes the proof.
 \end{proof}
 
 \subsection{Concluding remarks}\label{sec_conclusion}
 
 Let $M$ be a closed oriented $3$ manifold. The coherent sheaf $\mathscr{L}^M \to \mathcal{X}_{\SL_2}(M)$ is expected to be a line bundle over the whole character variety. 
 The Costantino-Geer-Patureau Mirand invariants of \cite{CGPInvariants} form a section $\sigma^{CGP} \in \Gamma(\mathcal{X}^{(1)}, \mathcal{L}^*)$ of the restriction of the dual of $\mathcal{L}$ to the locus of diagonal non-central representations (which depends on choice of signature for a bounding $4$-manifold $W$ with $\partial W=M$). 
 The fiber of $\sigma^{CGP}$ over $[\rho_{\omega}]$ is the linear map $\mathcal{S}_A(M)_{[\rho_{\omega}]}\to \mathbb{C}$ sending $[\gamma]$ to the CGP invariant $\left< M, [\gamma], \omega\right>$. Similarly, the Witten-Reshetikhin-Turaev invariants form a section of the dual of $\mathscr{L}^M$ over the locus of central representations.
  An important problem is to extend these sections   to the whole character variety  $\mathcal{X}_{\SL_2}(M)$, that is to define invariants $\left< M, [\gamma], [\rho] \right>$ for $[\rho]\in \mathcal{X}^{(2)}$. It is conjectured that such invariants fit into a $\SL_2$-HQFT which would refine the work of Baseilhac-Benedetti \cite{BaseilhacBenedettiHTQFT} and generalize the non-semisimple TQFTs of \cite{BCGPTQFT}. The link invariants of this conjectural HQFT were defined independently in \cite{BGPR_Biquandle} and \cite{KojuQgroupsBraidings}. The construction of these conjectural HQFTs is the main motivations for the authors to consider Problem \ref{problem_classification} and for the present paper.

\appendix

\section{Proof of Proposition \ref{prop_appendixA}}\label{sec_appendixA}

First, the facts that $w_{\alpha}$ is an equivariant isomorphism and the identities of Equation \eqref{eq_Cap} follow from easy computations left to the reader. To prove the other identities, we will adapt the work of Costantino and Murakami in \cite{CostantinoMurakami_6j} to our context. The non-triviality of this adaptation comes from the facts that
\begin{enumerate}
\item we work with $q$ a root of unity of order $N$, whereas it has order $2N$ in \cite{CostantinoMurakami_6j}, 
\item we work with the unrolled quantum group with the additional generator $H$ which is not considered in \cite{CostantinoMurakami_6j}, 
\item we followed the conventions of \cite{CGPInvariants, CGP_unrolledQG, BCGPTQFT, BCGP_Bases} which are completely different from the ones in \cite{CostantinoMurakami_6j}.
\end{enumerate}
To remedy the third problem, let us first establish a dictionary between our notations and the ones in \cite{CostantinoMurakami_6j}. Let $\mathbf{U}$ be the quantum group considered in \cite{CostantinoMurakami_6j} and denote by $\mathbf{E}$, $\mathbf{F}$, $\mathbf{K}^{\pm 1}$ its generators. By definition, they satisfy the relations
$$ \mathbf{K}\mathbf{E}=q\mathbf{E}\mathbf{K}, \quad \mathbf{K}\mathbf{F}=q^{-1}\mathbf{F}\mathbf{K}, \quad [\mathbf{E},  \mathbf{F}]=\frac{\mathbf{K}^2-\mathbf{K}^{-2}}{q-q^{-1}}$$
with the coproduct
$$\Delta(\mathbf{K})=\mathbf{K}\otimes \mathbf{K}, \quad  \Delta(\mathbf{E})=\mathbf{E}\otimes \mathbf{K}+\mathbf{K}^{-1}\otimes \mathbf{E}, \quad  \Delta(\mathbf{F})=\mathbf{F}\otimes \mathbf{K} +\mathbf{K}^{-1}\otimes \mathbf{F}.$$
Therefore, we establish a morphism between the quantum group $\mathbf{U}$ of \cite{CostantinoMurakami_6j} and our quantum group $U^H_q\mathfrak{sl}_2$ by sending $\mathbf{K}$ to $K^{1/2}$, $\mathbf{E}$ to $K^{-1/2}E$ and $\mathbf{F}$ to $FK^{1/2}$, where $K^{1/2}$ is the element acting as $q^{H/2}$. The authors consider highest weight $\mathbf{U}$-modules $V^a$ with the basis $(e_i^a)_{i=0}^{N-1}$ defined by 
$$ \mathbf{E} e_i^a = [i] e_{i-1}^a, \quad \mathbf{F} e_i^a = [2a-i] e_{i+1}^a, \quad \mathbf{K} e_i^a = q^{a-i}e_i^a \quad (e_{-1}^a =e_N^a=0).$$
Setting $2a = \alpha+N-1$, the $\mathbf{U}$-module $V^a$ is isomorphic to the $U_q^H\mathfrak{sl}_2$-module $V_{\alpha}$ via the isomorphism identifying $v_i$ with $\kappa_i^{\alpha}e_i^a$. 
\par Let $\alpha, \beta, \gamma$ and consider $a,b,c$ such that $2a=\alpha+N-1$, $2b=\beta+N-1$ and $2c=\gamma+N-1$. The identity $\alpha+\beta-\gamma\in H_N$ corresponds to the identity $a+b-c \in \{0, \ldots, N-1\}$ and the authors consider $Y_c^{a,b}: V^c\to V^a\otimes V^b$ defined by $Y_c^{a,b} (e_n^c)=\sum_{i,j} \sqrt{-1}^{-c+a+b}C_{i,j,n}^{\alpha, \beta, \gamma}e_i^a\otimes e_j^b$ which, under the above isomorphism corresponds to our operator $Y_{\gamma}^{\alpha, \beta}$ by definition up to the multiplication by the factor $\sqrt{-1}^{-c+a+b}$. Its absence in our formulas will be explained below.
 Similarly, under the same correspondence, our operators $Y_{\alpha, \beta}^{\gamma}$, $\cap_{\alpha}$ and $\cup_{\alpha}$ correspond to the operators $Y_{a,b}^c$, $\cap_{a, N-1-a}$ and $\cup_{a, N-1-a}$ in \cite{CostantinoMurakami_6j} (up to a factor $\sqrt{-1}^{-c+a+b}$ in $Y_{a,b}^c$). To prove Proposition \ref{prop_appendixA}, we need to prove that 
 \begin{itemize}
 \item[(i)] $Y_{\gamma}^{\alpha, \beta}$ is equivariant by verifying that the computations in Appendix $A.1$ of \cite{CostantinoMurakami_6j} still hold when $q$ has order $N$; 
 \item[(ii)] Equation \eqref{eq_Y} holds by verifying that the computations in Appendix $A.2$ of \cite{CostantinoMurakami_6j} still hold when $q$ has order $N$; 
 \item[(iii)] Equation \eqref{eq_3j} holds by verifying that the computations in Appendix $A.3$ of \cite{CostantinoMurakami_6j} still hold when $q$ has order $N$: this is where the fact that we removed the factor $\sqrt{-1}^{-i-j+n}$ from the formula of $C_{i,j,n}^{\alpha, \beta, \gamma}$ is important.
 \end{itemize}

\par $(i)$ First, it is a trivial verification (left to the reader) that the computations done in  \cite[Appendix $A.1$]{CostantinoMurakami_6j} hold word-by-word in the case where $q$ has order $N$ instead of $2N$.
\par $(ii)$ The fact that the equality of \cite[Lemma $1.8$]{CostantinoMurakami_6j} still holds when $q$ has order $N$ is a non-trivial fact; let us detail the computation. Let $\alpha, \beta, \gamma$ and $a,b,c$ as before and write $m:= a+b-c$. The operators $V_{\alpha}\otimes V_{\beta} \to V_{\gamma}$ of the middle and right hand side of Equation \eqref{eq_Y} both send $v_0\otimes v_m$ to a vector proportional to $v_0$. Equating the two proportionality factors reduces to prove that 
$$ C_{N-1, 0, m}^{-\alpha, \gamma, \beta} q^{-a(N-1)} = C_{0, N-1-m,0}^{\gamma, -\beta, \alpha}q^{b(N-1) +m}.$$
Denote by LHS (resp. RHS) the left-hand-side (resp. right-hand-side) of this equality. Developing the formulas in Notations \ref{notations_formulas}, one finds that 
$$
 LHS = \left( q^{(N-1)a} q^{\frac{m}{2}(2b-m+(N-1))}(-1)^m \qbinom{N-1-2a+m}{N-1-2a} \right) q^{-a(N-1)} 
= (-1)^m q^{bm -\frac{m(m+1)}{2}} \qbinom{N-1-2a+m}{N-1-2a}.
$$
Similarly
$$
 RHS = \left(  q^{\frac{(N-1-m)(-2b-(N-1)-(N-1-m))}{2}} \qbinom{2a}{2a-m} \right) q^{b(N-1)+m} 
= q^{bm -\frac{m(m+1)}{2}}  \qbinom{2a}{2a-m}.
$$
And we derived the equality $LHS=RHS$ from the equality
$$ \qbinom{N-1-2a+m}{N-1-2a} = (-1)^m  \qbinom{2a}{2a-m}.$$

\par $(iii)$ Again, that the computations of  \cite[Appendix $A.3$]{CostantinoMurakami_6j} still hold for $q$ of order $N$ is non-trivial and must be checked by a careful analysis. The only difference with the case where $q$ has order $2N$ are that: 
\begin{enumerate}
\item while passing from the last line of page $26$ of  \cite[Appendix $A.3$]{CostantinoMurakami_6j} to the first line of page $27$, the authors used the identity $\qbinom{N-1-y}{N-1-x}=\qbinom{x}{y}$ three times. When $q$ has order $N$, this identity is replaced by  $\qbinom{N-1-y}{N-1-x}=(-1)^{x-y}\qbinom{x}{y}$. It results that the identity is multiplied by three new factors $(-1)^v$, $(-1)^{m}$ and $(-1)^{m+t}$ where $m=a+b-c$ as before and $v+t=N-1-m$, so the formula in the first line of page $27$ gets multiplied by a factor $(-1)^m$.
\item In the last equality of the proof of Lemma $1.10$ in  \cite[Appendix $A.3$]{CostantinoMurakami_6j}, the authors used a  formula of the form $\qbinom{x}{y}=(-1)^{x-y}\qbinom{x-N}{y-N}$ which becomes $\qbinom{x}{y}=\qbinom{x-N}{y-N}$ when $q$ has order $N$. It results that the identity gets multiplied by an additional factor $(-1)^m$ which compensates the previous one.
\end{enumerate}

\section{Proof of Proposition \ref{prop_6j}} \label{sec_appendixB}
 
 Recall that $6S(\alpha, \beta, \gamma; \varepsilon_1, \varepsilon_2)$ is the scalar defined by the skein relation
 $$\adjustbox{valign=c}{\includegraphics[width=8cm]{6jRel.eps}}$$
Without loss of generality, we can suppose that $\alpha+\beta-\gamma \in H_N$.
Set $m:= \frac{\alpha + \beta - \gamma}{2}+\frac{N-1}{2}\in \{0, \ldots, N-1\}$. By evaluating each operator of the above equality in the vector $v_0$, we obtain a vector proportional to $v_0\otimes v_m$. Equating both proportionality scalars we obtain the equality
$$ D_{0, m, 0}^{\alpha, \beta, \gamma} 6S(\alpha, \beta, \gamma; \varepsilon_1, \varepsilon_2)= \sum_{a,b=0, \ldots, N-1} \sum_{n=0,1} D_{a, b, 0}^{\alpha+ \varepsilon_1, \beta+\varepsilon_2, \gamma}
D_{0, N-2+n, a}^{\alpha, k(N-2), \alpha+\varepsilon_1k} E_{N-2+n, b, m}^{(N-2), \beta+\varepsilon_2, \beta}.$$
Set $\theta_1:= \frac{\varepsilon_1-1}{2}$ and $\theta_2:= m + \frac{\varepsilon_2 +1}{2}$.
The summand in the right hand side of the equality vanishes if  $(a,b)\neq (\theta_1 +n, \theta_2-n)$. 
 Indeed, by definition, both $C_{i,j,k}^{\alpha, \beta, \gamma}$ and $E_{i,j,k}^{\alpha, \beta, \gamma}$ vanish if  $i+j-k \neq  \frac{\alpha+\beta-\gamma + (N-1)}{2}$. We deduce the implications $D_{a,b,0}^{\alpha+ \varepsilon_1, \beta + \varepsilon_2, \gamma}\neq 0 \Rightarrow  a+b=m + \frac{\varepsilon_1+\varepsilon_2}{2}$  and $E_{N-2+n, b, m}^{N-2, \beta+ \varepsilon_2, \beta}\neq 0 \Rightarrow b= \theta_2 - n$; so both inequalities can happen only if  $(a,b)=(\theta_1 +n, \theta_2-n)$.

\par 
Therefore one has 
\begin{multline}\label{eq_6j_formula}
\left( D_{0, m, 0}^{\alpha, \beta, \gamma} \right) 6S(\alpha, \beta, \gamma; \varepsilon_1, \varepsilon_2) = 
D_{\theta_1, \theta_2, 0}^{\alpha+\varepsilon_1, \beta+\varepsilon_2, \gamma}
D_{0, N-2, \theta_1}^{\alpha, N-2, \alpha + \varepsilon_1}
E_{N-2, \theta_2, m}^{N-2, \beta+ \varepsilon_2, \beta}
\\ + 
D_{\theta_1 +1, \theta_2-1, 0}^{\alpha+\varepsilon_1, \beta+\varepsilon_2, \gamma}
D_{0, N-1, \theta_1+1}^{\alpha, N-2, \alpha + \varepsilon_1}
E_{N-1, \theta_2-1, m}^{N-2, \beta+ \varepsilon_2, \beta}.
\end{multline}
 
 Write $K:= \mathbb{Q}(A)(X_1, X_2,X_3)$ and let us define a rational function $\mathbb{D}_{i,j,n}(X_1, X_2, X_3) \in  K$ such that $D_{i,j,n}^{\alpha, \beta, \gamma}=\mathbb{D}_{i,j,n}(q^{\frac{\alpha}{2}}, q^{\frac{\beta}{2}}, q^{\frac{\gamma}{2}})$. Set 
 $$ {\mathds{k}}_n (X):= X^{-1}q^{\frac{n(n-1)}{2}}(X^2q^{-1}-X^{-2}q)(X^2q^{-2}-X^{-2}q^2)\ldots (X^2q^{-n}-X^{-2}q^n)\in \mathbb{Q}(A)(X)$$
 so that $\kappa^{\alpha}_n = \mathds{k}_n(q^{\frac{\alpha}{2}})$.  Also set
 $$ f_{a,b}(X) := \frac{1}{ \{b\}!} (X^2 q^{a-1} - X^{-2}q^{1-a}) (X^2q^{a-2} - X^{-2}q^{2-a}) \ldots (X^2 q^{a-b}-X^{-2}q^{b-a})\in \mathbb{Q}(A)(X)$$
 so that $\qbinom{\alpha + N-1+a}{\alpha+N-1+a-b} = f_{a,b}(q^{\frac{\alpha}{2}})$. Writing $m:=  \frac{\alpha + \beta - \gamma}{2}+\frac{N-1}{2}$ as before, we set 
 \begin{multline*}
  \mathbb{C}_{i,j,n}:= (-1)^{j-n} q^{\frac{i-j}{2}}X_1^{-i}X_2^j (f_{0,n}(X_3))^{-1}f_{0,m}(X_3) \\
   \sum_{z+w=n} (-1)^z q^{-\frac{n}{2}(2z-n)}X_3^{2z-n} \qbinom{i+j-n}{i-z}f_{u+z,z}(X_1)f_{w-j,w}(X_2).
 \end{multline*}
 so that $C_{i,j,n}^{\alpha, \beta, \gamma} = \mathbb{C}_{i,j,n}(q^{\frac{\alpha}{2}}, q^{\frac{\beta}{2}}, q^{\frac{\gamma}{2}})$.  Set 
 $$ \mathbb{D}_{i,j,n}(X_1,X_2,X_3):= \frac{\mathds{k}_n(X_3)}{\mathds{k}_i(X_1)\mathds{k}_j(X_2)}  \mathbb{C}_{i,j,n}(X_1,X_2,X_3)$$
 and 
 $$
\mathbb{E}_{i,j,n}:= \frac{\mathds{k}_i(X_1)\mathds{k}_j(X_2)}{\mathds{k}_n(X_3)} \mathbb{C}_{N-1-j, N-1-i, N-1-n}(X_2^{-1}, X_1^{-1}, X_3^{-1})$$
 so that $D_{i,j,n}^{\alpha, \beta, \gamma}= \mathbb{D}_{i,j,n}(q^{\frac{\alpha}{2}}, q^{\frac{\beta}{2}}, q^{\frac{\gamma}{2}})$ and $E_{i,j,n}^{\alpha, \beta, \gamma}= \mathbb{E}_{i,j,n}(q^{\frac{\alpha}{2}}, q^{\frac{\beta}{2}}, q^{\frac{\gamma}{2}})$. Eventually set 
 $$\mathbb{F}_1(X_1,X_2,X_3):=  \mathbb{D}_{\theta_1,\theta_2, 0}(X_1A^{\varepsilon_1}, X_2A^{\varepsilon_2}, X_3) 
 \mathbb{D}_{0, N-2, \theta_1}(X_1, q^{-1}, X_1A^{\varepsilon_1})
 \mathbb{E}_{N-2, \theta_2, m}(q^{-1}, X_2A^{\varepsilon_2}, X_2)$$
  and 
  $$ \mathbb{F}_2(X_1,X_2,X_3):=   \mathbb{D}_{\theta_1+1,\theta_2-1, 0}(X_1A^{\varepsilon_1}, X_2A^{\varepsilon_2}, X_3) 
 \mathbb{D}_{0, N-1, \theta_1+1}(X_1, q^{-1}, X_1A^{\varepsilon_1})
 \mathbb{E}_{N-1, \theta_2-1, m}(q^{-1}, X_2A^{\varepsilon_2}, X_2).
 $$
 
 \begin{definition} Let $\alpha, \beta, \gamma$ such that $\alpha+\beta-\gamma \in H_N$ and $\varepsilon_1, \varepsilon_2 = \pm 1$. The rational function $R \in K$ is
$$ R(X_1,X_2,X_3):= \left( \mathbb{D}_{0,m,0}(X_1,X_2,X_3) \right)^{-1} \left( \mathbb{F}_1(X_1,X_2,X_3) + \mathbb{F}_2(X_1, X_2, X_3) \right).$$
 \end{definition}
 It follows from Equation \eqref{eq_6j_formula} that 
 $ 6S(\alpha, \beta, \gamma; \varepsilon_1, \varepsilon_2)= R(q^{\frac{\alpha}{2}}, q^{\frac{\beta}{2}}, q^{\frac{\gamma}{2}})$, so it remains to prove that $R\neq 0$. Note that, since $R_{i,j,n}^{\alpha, \beta, \gamma} \neq 0$ and $E_{i,j,n}^{\alpha, \beta, \gamma}\neq 0$, then $\mathbb{D}_{i,j,n}\neq 0$ and $\mathbb{E}_{i,j,n} \neq 0$ so $\mathbb{F}_1\neq 0$ and $\mathbb{F}_2 \neq 0$. We need to show that $\mathbb{F}_1\neq -\mathbb{F}_2$.
 \par For $L$ a field, define a discrete valuation $v: L(X) \to \mathbb{Z}\cup \{ \infty \}$ by setting $v(0)=\infty$ and for $f(X)=X^n\frac{P(X)}{Q(X)}$ with $P(0)Q(0)\neq 0$, set $v(f):=n$. For instance, when $L=\mathbb{Q}(A)$, one has: 
 $$ v( \mathds{k}_i(X))= -1 -2i \quad \mbox{ and } \quad v( f_{a,b}(X) ) = -2b.$$
 Setting $L= \mathbb{Q}(A)(X_2,X_3)$, and by precomposing with the natural embedding $K\hookrightarrow L(X_1)$, we get a discrete valuation $v$ on $K$ (for instance $v(X_1^aX_2^bX_3^c)=a$). Using the facts that $v(fg)=v(f)+v(g)$ and that $v(f+g)\geq \min(v(f), v(g))$ with equality when $v(f)\neq v(g)$, we easily compute the following
 $$ v \left( \mathbb{D}_{\theta_1, \theta_2, 0}(X_1A^{\varepsilon_1}, X_2 A^{\varepsilon_2}, X_3) \right) = 2 \theta_1, 
 \quad v \left( \mathbb{D}_{0,N-2,\theta_1}(X_1, q^{-1}, X_1A^{\varepsilon_1}) \right) =5 \theta_1-2N+5, $$
 so 
 $$ v(\mathbb{F}_1)= 7 \theta_1-2N+5.$$
 Similarly
  $$ v \left( \mathbb{D}_{\theta_1+1, \theta_2-1, 0}(X_1A^{\varepsilon_1}, X_2 A^{\varepsilon_2}, X_3) \right) = 2 \theta_1 +2, 
 \quad v \left( \mathbb{D}_{0,N-1,\theta_1+1}(X_1, q^{-1}, X_1A^{\varepsilon_1}) \right) =5 \theta_1-2N+8, $$
 so 
 $$ v(\mathbb{F}_2)= 7 \theta_1-2N+10.$$
 Therefore $v(\mathbb{F}_1)\neq v(\mathbb{F}_2)$ so $\mathbb{F}_1+\mathbb{F}_2 \neq 0$. This implies that $R\neq 0$ and concludes the proof.

\bibliographystyle{amsalpha}
\bibliography{biblio}

\end{document}